\theoremstyle{definition} 
\newtheorem{theorem}{Theorem}[section]
\newtheorem{lemma}[theorem]{Lemma}
\newtheorem{lem}[theorem]{Lemma}
\newtheorem{proposition}[theorem]{Proposition}
\newtheorem{prop}[theorem]{Proposition}
\newtheorem{corollary}[theorem]{Corollary}
\newtheorem{algorithm}[theorem]{Algorithm}
\newtheorem{definition}[theorem]{Definition}
\newtheorem{notation}[theorem]{Notation}
\newtheorem{example}[theorem]{Example}
\newtheorem{remark}[theorem]{Remark}
\numberwithin{equation}{section}
\newtheorem{question}[theorem]{Question}
\definecolor{forGreen}{RGB}{0,120,0}
\newcommand*\circled[1]{\tikz[baseline=(char.base)]{
\node[blue,shape=circle,draw,inner sep=0.7pt] (char) {#1};}} 
\newcommand*\circledplain[1]{\tikz[baseline=(char.base)]{
\node[shape=circle,draw,inner sep=0.7pt] (char) {#1};}} 
\newcommand*\starred[1]{\tikz[baseline=(char.base)]{
\node[violet,shape=diamond,draw,inner sep=0.7pt] (char) {#1};}}
\newcommand*\rectangled[1]{\tikz[baseline=(char.base)]{
\node[red,shape=rectangle,draw,inner sep=1.1pt] (char) {#1};}}
\newcommand{\rw}[1]{\left[{#1}\right]} 
\newcommand{\Coxeterlength}{\ell}
\newcommand{\pidown}{\pi_\downarrow}
\DeclareMathOperator{\sort}{{\sf sort}_{\it c}}
\newcommand{\SpecialSort}[1]{{\sf sort}_{#1}}
\DeclareMathOperator{\supp}{supp}
\DeclareMathOperator{\tamari}{Tamari}
\DeclareMathOperator{\cTamari}{c_{\tamari}}
\DeclareMathOperator{\DiagonalReadingWord}{{\mathcal R}_c}
\newcommand{\DiagWordSubseq}[1]{{\mathcal R(#1)}}
\newcommand{\HeapDiagonalReading}{H_c}
\DeclareMathOperator{\MapPermutation}{Perm}
\DeclareMathOperator{\reverse}{rev}
\newcommand{\RC}{\mathcal{R}}
\newcommand{\HeapLEQ}{\preccurlyeq}
\newcommand{\HeapLessThan}{\prec}
\newcommand{\bir}{{\sf Birk}}
\newcommand{\aff}{{\sf Aff}}
\newcommand{\ord}{\mathcal{O}}
\newcommand{\heap}{{\sf Heap}}
\newcommand{\nulabel}{\nu}
\newcommand{\upperbarletter}{u}
\newcommand{\lowerbarletter}{d}
\newcommand{\Heapwnot}{\heap(\sort(w_0))}
\newcommand\LatticeC{{\mathcal L}({\text{$c$-singletons})}}
\newcommand\SetOfCSingletons{\{\text{$c$-singletons}\}}
\newcommand{\prefixR}[1]{b_{#1}}
\newcommand{\X}{\color{red}\mathsf{X}}
\newcommand{\swpi}{\Pi^{\mathsf{SW}}_c}
\newcommand{\nepi}{\Pi^{\mathsf{NE}}_c}
\newcommand{\PermMatrix}[1]{X(#1)}
\newcommand{\wsuff}{w_{\text{suf}}}
\title{
 $c$-Birkhoff polytopes
}
\author
{Esther Banaian}\address{Department of Mathematics, University of California, Riverside, Riverside, CA (USA)}
\author
{Sunita Chepuri}\address{Department of Mathematics and Computer Science, University of Puget Sound, Tacoma, WA (USA)}
\author
{Emily Gunawan}\address{Department of Mathematics and Statistics, University of Massachusetts Lowell, Lowell, MA (USA)}
\author
{Jianping Pan}\address{School of Mathematical and Statistical Sciences, Arizona State University, Tempe, AZ (USA)}
\subjclass[2020]{52B20,  
05A05,  
06A07}  	
\begin{document}

\begin{abstract}
In a 2018 paper, Davis and Sagan studied several pattern-avoiding polytopes.  They found that a particular pattern-avoiding Birkhoff polytope had the same normalized volume as the order polytope of a certain poset, leading them to ask if the two polytopes were unimodularly equivalent.  Motivated by Davis and Sagan's question, in this paper we define a pattern-avoiding Birkhoff polytope called a $c$-Birkhoff polytope for each Coxeter element $c$ of the symmetric group.  We then show that the $c$-Birkhoff polytope is unimodularly equivalent to the order polytope of the heap poset of the $c$-sorting word of the longest permutation.  When $c=s_1s_2\dots s_{n}$, this result recovers an affirmative answer to Davis and Sagan's question.  Another consequence of this result is that the normalized volume of the $c$-Birkhoff polytope is the number of the longest chains in the (type A) $c$-Cambrian lattice.
\end{abstract}

\keywords{
     Birkhoff polytope, Order polytope, Heap, Cambrian lattice, c-singleton}

\maketitle

\section{Introduction}
The \emph{Birkhoff--von Neumann polytope} 
$\mathcal{B}_n$ is the convex polytope of $n\times n$ doubly stochastic matrices, i.e., non-negative matrices whose rows and columns all sum to $1$.
It is often known as the assignment polytope, the perfect matching polytope of the complete bipartite graph, or simply the Birkhoff polytope. The Birkhoff polytope is in several well-studied classes of polytopes; it is a transportation polytope, a matching polytope, and a flow polytope.
Due to its rich geometric and combinatorial structure, the Birkhoff polytope shows up in many branches of mathematics, including combinatorics~\cite{Igor,BPD03,A05,P13}, representation theory~\cite{BHNP09}, optimization~\cite{M88,BS96,BS03} and statistics~\cite{BRT97,DE06,PSU19}.
The combinatorial properties of $\mathcal{B}_n$ have been thoroughly studied. For example, the Birkhoff--von Neumann Theorem~\cite{birkhoff1946three} shows that the vertices of $\mathcal{B}_n$ are the permutation matrices and that $\mathcal{B}_n$ has $n^2$ facets, and the faces of $\mathcal{B}_n$ are studied in~\cite{P13}.
There has been substantial work on finding the volume for Birkhoff polytopes~\cite{CRD99,BPD03,A05}.  The first exact formula was given by~\cite{DLY} via a combinatorial model called arborescences.  Their formula involves a double summation over permutations and arborescences.

It is natural to consider a subpolytope of a Birkhoff polytope given by a subset of the permutation matrices; for example, see  \cite{onn1993geometry, burggraf2013volumes, BC24}. In a 2018 paper, Davis and Sagan studied Birkhoff subpolytopes whose vertices correspond to pattern-avoiding permutations \cite{DS18}. They noticed that the sequence of normalized volumes of Birkhoff subpolytopes whose vertices avoid 132 and 312 is the sequence \cite[A003121]{oeis}. 
This sequence also counts 
shifted standard tableaux of staircase shape~\cite{FN14}, 
longest chains in the Tamari lattice, 
and the number of reduced words in a certain commutation class of the longest permutation $w_0$. 
In addition, since the set of 132 and 312 avoiding permutations forms a distributive sublattice of the right weak order, the number of $132,312$-avoiding permutations equals the number of order ideals of the poset of the join irreducibles of $132,312$-avoiding permutations.
As the $132,312$-avoiding Birkhoff subpolytope and the order polytope of the join irreducibles of $132,312$-avoiding permutations have the same volume and number of vertices, Davis and Sagan asked whether the two polytopes might be unimodularly equivalent.

In the same paper, Davis and Sagan pointed out~\cite[Remark~3.6]{DS18} that the $132,312$ avoiding permutations
are known to be the \emph{$c$-singletons} for a certain Coxeter element $c$ for the symmetric group.
These $c$-singleton permutations appear in several areas of representation theory and cluster algebras.  Given any Coxeter element $c$ of a finite Coxeter group $W$, the $c$-singletons are a subset of the \emph{$c$-sortable elements}, a special subset of $W$ introduced by Reading in \cite{reading07-clusters-paper4} to study the relationship between $W$-noncrossing partitions and generalized associahedra, as in \cite{chapoton2002polytopal}. The $c$-sortable elements of $W$ also have a connection to the theory of cluster algebras and tilting theory \cite{amiot2012preprojective,gyoda2022lattice}. 
The restriction of the weak order onto the $c$-sortable elements yields the $c$-Cambrian lattice, which is a generalization of the Tamari lattice \cite{Reading07-paper5}.

When constructing polytopes whose normal fan coincides with the $c$-Cambrian fan, Hohlweg, Lange, and Thomas introduced $c$-singletons \cite{HLT11}. These can be seen as the $c$-sortable elements which sit on the longest chains in the $c$-Cambrian lattice. These also have a formulation in terms of the commutation class of the $c$-sorting word of $w_0$ \cite{HLT11} and in terms of pattern avoidance \cite{reading2006cambrian}. The $c$-singletons for general $W$ and $c$ were enumerated in \cite{LL20}. 

The $c$-Cambrian lattice and its $c$-singletons have connection to maximal green sequences, an important concept in the theory of cluster algebras~\cite{CAI}. The $c$-Cambrian lattice is the oriented exchange graph of the cluster algebra whose initial quiver comes from~$c$. 
A maximal chain in the $c$-Cambrian lattice corresponds to clusters on a maximal green sequence of a quiver corresponding to~$c$; 
every longest chain in the $c$-Cambrian lattice corresponds to a longest maximal green sequence, and so our $c$-singletons correspond to clusters on the longest maximal  green sequences. For more information on maximal green sequences, see for example \cite{BDP14, Muller16, GarverMusiker17}.

In this paper, we study and define \emph{$c$-Birkhoff polytopes}, $\bir(c)$, which are Birkhoff subpolytopes whose vertices are the $c$-singletons for a fixed Coxeter element $c$ in $W = A_n$. The vertex set of these polytopes coincides with the set of order ideals of the heap of a certain reduced word of $w_0$ associated to $c$ (see \cite[Proposition 3]{LL20} and Proposition~\ref{prop:vertex-bijection}). Our main result, Theorem~\ref{thm:main}, is the following:

\vspace{0.5\baselineskip}
\noindent\textbf{Main Theorem.}
The $c$-Birkhoff polytope~$\bir(c)$ is unimodularly equivalent to the order polytope $\ord(H)$ where $H$ is the heap poset of the $c$-sorting word of $w_0$.

\vspace{0.5\baselineskip}

When $c = s_1s_2 \cdots s_n$, our result gives an affirmative answer to \cite[Question 5.1]{DS18}. Our result also has immediate corollaries regarding the volume of $c$-Birkhoff polytopes in terms of poset-theoretic information (see 
Corollary~\ref{cor:volume}).

In order to prove our main result, we must explicitly understand the affine span of the vertices of $\bir(c)$.
We do this in Section~\ref{Sec:Relations} by studying the consequences of the characterization of $c$-singletons in terms of pattern avoidance from \cite{reading2006cambrian}. The fact that $\ord(H)$ is full-dimensional implies that our relations provide a complete description of this affine space (see Remark~\ref{rmk:RelationsAreEverything}). When $c = s_1s_2\cdots s_n$, that is, when the $c$-singletons coincide with 132 and 312 avoiding permutations, the relations have a simple reformulation which may be of independent interest (see Corollary \ref{cor:SummingRelationsTamari}).

The paper is organized as follows. In Section~\ref{sec: sec 2}, we review necessary background on heap posets and $c$-singletons. Section~\ref{sec:backgroud.polytopes} defines the order polytopes (Section~\ref{sec:order polytope}), the $c$-Birkhoff polytopes (Section~\ref{subsec.cbir}), and unimodular equivalence of polytopes (Section~\ref{subsec.unimodular}).
In Section~\ref{Sec:Relations}, we describe linear relations on the affine span of the $c$-Birkhoff polytopes.
Section~\ref{sec.lattice} constructs a unimodular transformation of the $c$-Birkhoff polytope to a polytope living in a lower-dimensional ambient space via a lattice-preserving projection $\Pi_c$ (Definition~\ref{defn.projection}).
Finally in Section~\ref{sec:UnimodularEquivalence} we prove our main theorem (Theorem~\ref{thm:main}) by defining a unimodular transformation from $\Pi_c(\bir(c))$ to $\ord(H)$.  
In Section~\ref{Sec:ex}, we provide some examples including the Tamari orientation (Section~\ref{subsec.tamari}) and bipartite orientation (Section~\ref{subsec.bipartite}).
We end in Section~\ref{Sec:future} with some discussions on potential future directions.

Note that an extended abstract for this paper appeared in the FPSAC 2024 proceedings~\cite{BCGP}.

\section{Background: Heaps of $c$-singletons}\label{sec: sec 2}

Below we give some necessary notation for type A Coxeter group. For general theory on Coxeter groups, see for example~\cite{BB05}.
Denote the symmetric group on $n+1$ elements by $A_n$.
We can represent a permutation $w\in A_n$ in \emph{one-line notation} as $w = w(1)w(2)\cdots w(n+1)$. 
For each $i \in \{1, \ldots, n\}$, we write $s_i \in A_n$ to denote the \emph{simple reflection} (or \emph{adjacent transposition})
that swaps $i$ and $i+1$ and fixes all other letters. 
Every permutation can be expressed as a product of simple reflections. Given $w\in A_n$, the minimum number of simple reflections among all such expressions for $w$ is called the \emph{(Coxeter) length} of $w$, and is denoted by $\ell(w)$. 
A \emph{reduced decomposition} of $w$ is an expression $w=s_{i_1} \cdots s_{i_{\ell(w)}}$ realizing the Coxeter length of $w$. To simplify notation, we refer to such a  decomposition via its \emph{reduced word} $\rw{i_1 \cdots i_{\ell(w)}}$. 
The \emph{support} $\supp(w)$ of a permutation $w$ is the set of letters that appear in a reduced words of $w$; this set only depends on $w$ and not the choice of reduced words. 
For example, consider  $w = 51342 \in A_4$. 
One of its reduced decompositions is 
$s_{4}s_{2}s_{3}s_{2}s_{4}s_{1} $ 
with $\rw{423241}$ as the corresponding reduced word, its length is  $\Coxeterlength(w)=6$, and $\supp(w)=\{1,2,3,4\}$.

A \emph{Coxeter element} $c$ in 
$A_n$ is a product of all $n$ simple reflections in any order, where each reflection appears exactly once.  
The \emph{longest permutation} of $A_n$ is the permutation $w_0= (n+1)n \dots 321$ and 
$\ell(w_0)=\binom{n+1}{2}$.

Simple reflections satisfy \emph{commutation relations} of the form $s_i s_j=s_j s_i$ for $|i-j| > 1$. 
An application of a commutation relation to a product of simple reflections is called a \emph{commutation move}. 
When referring to reduced words, 
we will say adjacent letters $i$ and $j$ in a reduced word \emph{commute} when $|i - j| > 1$. 
Given a reduced word $\rw{u}$ of a permutation, the equivalence class consisting of all words that can be obtained from $\rw{u}$ by a sequence of commutation moves is the \emph{commutation class} of $\rw{u}$.

\subsection{Heaps}

We begin by reviewing the classical theory of heaps~\cite{Viennot86lecturenotes}, which was used in~\cite{Ste96} to study fully commutative elements of a Coxeter group. 
Heaps also appeared as ``the natural partial orders" and were used to study certain acyclic domains
in \cite[Definition 6]{GR08} and \cite[Definition 1]{LL20}. 
For a detailed list of attributions on the theory of heaps, see~\cite[Solutions to Exercise 3.123(ab)]{Sta12}.

\begin{definition}
\label{defn:heap of a reduced word}
Given a reduced word $\rw{a}=\rw{a_1 \cdots a_\ell}$ of a permutation, 
consider the
partial order $\HeapLEQ$ on the set $\{ 1, \dots, \ell\}$ obtained via the transitive closure of the relations 
\[x
\HeapLessThan
y\]
for $x < y$ such that
$|a_x-a_y| \leq 1$. 
For each $1 \leq x \leq \ell$, the \emph{label} of the poset element $x$ is  $a_x$. This labeled poset is called the \emph{heap} for $\rw{a}$, denoted $\heap(\rw{a})$. We identify the elements of $\heap(\rw{a})$ with the elements of its underlying poset, and refer to them accordingly. The Hasse diagram for this poset with elements $\{1,\ldots, \ell\}$ replaced by their labels is called the \emph{heap diagram} for $\rw{a}$. 
The labels in the heap diagram are drawn in increasing order
from left to right.  
\end{definition}

Note that a label $j$ corresponds to the simple reflection $s_j$. In our figures, we represent each label~$j$ by $s_j$ for clarity.

\newcommand{\posetelt}[1]{{#1}}
\newcommand{\posetLABEL}[1]{{#1}}
\newcommand{\posetLABELwithS}[1]{s_{#1}}
\begin{figure}[htb!]
\begin{center}
\begin{tikzpicture}[scale=0.7]
\node (s1a) at (0,0) {$\posetelt{1}$}; 
\node (s2a) at (1,1) {$\posetelt{2}$}; 
\node (s3a) at (2,2) {$\posetelt{5}$}; 
\node (s4a) at (3,1) {$\posetelt{4}$}; 
\node (s1b) at (0,2) {$\posetelt{3}$}; 
\node (s2b) at (1,3) {$\posetelt{6}$}; 
\node (s3b) at (2,4) {$\posetelt{9}$}; 
\node (s4b) at (3,3) {$\posetelt{8}$}; 
\node (s1c) at (0,4) {$\posetelt{7}$}; 
\node (s2c) at (1,5) {$\posetelt{10}$};

\draw [
black, thick, shorten <=-2pt, shorten >=-2pt] (s1a) -- (s2a);
\draw [
black, thick, shorten <=-2pt, shorten >=-2pt] (s2a) -- (s3a);
\draw [
black, thick, shorten <=-2pt, shorten >=-2pt] (s3a) -- (s4a);

\draw [
black, thick, shorten <=-2pt, shorten >=-2pt] (s1b) -- (s2b);
\draw [
black, thick, shorten <=-2pt, shorten >=-2pt] (s2b) -- (s3b);
\draw [
black, thick, shorten <=-2pt, shorten >=-2pt] (s3b) -- (s4b);

\draw [black, thick, shorten <=-2pt, shorten >=-2pt] (s1c) -- (s2c);

\draw [thick, shorten <=-2pt, shorten >=-2pt] (s2a) -- (s1b); 
\draw [thick, shorten <=-2pt, shorten >=-2pt] (s3a) -- (s4b);
\draw [thick, shorten <=-2pt, shorten >=-2pt]  (s3a) -- (s2b) -- (s1c);
\draw [thick, shorten <=-2pt, shorten >=-2pt] (s3b) -- (s2c);
\end{tikzpicture}
\qquad
\qquad
\qquad
\begin{tikzpicture}[scale=0.7]

\filldraw [rounded corners=10pt, pink!20]
(0-.6,0+.2)  
-- (2,2+.6) 
-- (3+.6,1+.1) 
-- (3,1-.6)  
-- (2,2-.6) 
-- (0,0-.6) 
-- cycle;

\filldraw [rounded corners=10pt,yellow!20]
(0-.6,0+.2 +2)  
-- (2,2+.6 +2) 
-- (3+.6,1+.1 +2) 
-- (3,1-.6 +2)  
-- (2,2-.6 +2) 
-- (0,0-.6 +2) 
-- cycle;

\filldraw [rounded corners=10pt, forGreen!20] 
(0-.6,0+.2 +4)  
-- (1,1+.6 +4)  
-- (1+.6,1-.2 +4) 
-- (0,0-.6 +4) 
-- cycle;

\node (s1a) at (0,0) {$\posetLABELwithS{1}$};
\node (s2a) at (1,1) {$\posetLABELwithS{2}$};
\node (s3a) at (2,2) {$\posetLABELwithS{3}$};
\node (s4a) at (3,1) {$\posetLABELwithS{4}$};
\node (s1b) at (0,2) {$\posetLABELwithS{1}$};
\node (s2b) at (1,3) {$\posetLABELwithS{2}$};
\node (s3b) at (2,4) {$\posetLABELwithS{3}$};
\node (s4b) at (3,3) {$\posetLABELwithS{4}$};
\node (s1c) at (0,4) {$\posetLABELwithS{1}$};
\node (s2c) at (1,5) {$\posetLABELwithS{2}$};

\draw [
black, thick, shorten <=-2pt, shorten >=-2pt] (s1a) -- (s2a);
\draw [
black, thick, shorten <=-2pt, shorten >=-2pt] (s2a) -- (s3a);
\draw [
black, thick, shorten <=-2pt, shorten >=-2pt] (s3a) -- (s4a);
\draw [
black, thick, shorten <=-2pt, shorten >=-2pt] (s1b) -- (s2b) node[pos=0.5, left=5mm]{};
\draw [
black, thick, shorten <=-2pt, shorten >=-2pt] (s2b) -- (s3b);
\draw [
black, thick, shorten <=-2pt, shorten >=-2pt] (s3b) -- (s4b);

\draw [
black, thick, shorten <=-2pt, shorten >=-2pt] (s1c) -- (s2c);

\draw [
black, thick, 
shorten <=-2pt, shorten >=-2pt] (s2a) -- (s1b); 
\draw [thick, shorten <=-2pt, shorten >=-2pt] (s3a) -- (s4b);
\draw [thick, shorten <=-2pt, shorten >=-2pt]  (s3a) -- (s2b) -- (s1c);
\draw [thick, shorten <=-2pt, shorten >=-2pt] (s3b) -- (s2c);
\end{tikzpicture}
\caption{Left: Hasse diagram of the underlying poset of $\heap([1214321432])$. Right: the heap diagram of $\heap([1214321432])$, with each label $j$ replaced by $s_j$. }
\label{fig:longest element heap A4 for c 1243}
\end{center}
\end{figure}

\begin{example}\label{ex:fig:longest element heap A4 for c 1243}
Consider 
a reduced word $\rw{a}=\rw{a_1 \dots a_{10}}=\rw{1 21 4321 432}$  
of the longest element $w_0$ in $A_4$. 
\begin{enumerate}
\item 
Figure~\ref{fig:longest element heap A4 for c 1243} (left) shows a Hasse diagram of the poset $\HeapLEQ$ corresponding to $\rw{a}$.
Here $\ell=10$ and so the elements of the heap poset $\heap(\rw{a})$ 
are $\{1,2,\dots, 10 \}$.
\item 
Figure~\ref{fig:longest element heap A4 for c 1243} (right) shows the heap diagram for $\heap(\rw{a})$. The possible labels of the poset elements are $\{1,2,3,4\}$.
\end{enumerate}
\end{example}

\begin{remark}\label{rem:poset elements with the same label are comparable}
If two elements $i$ and $j$ of the heap of a reduced word have the same label, then $i$ and $j$ are comparable.
\end{remark}

We now explain how linear extensions of $\heap(\rw{a})$ relate to 
the commutation class of $\rw{a}$.

\begin{definition}\label{def:Labeled linear extensions}
A \emph{linear extension} 
$\pi = \pi(1) \cdots \pi(\ell)$ 
of a partial order $\HeapLEQ$ on $\{1,\ldots, \ell\}$
is a total order on the poset elements that is consistent with the structure of the poset, that  is, $\pi(x) \HeapLessThan \pi(y)$ implies $x < y$.
A \emph{labeled linear extension} of the heap of a reduced word $\rw{a}=\rw{a_1 \cdots a_\ell}$
is a word $\rw{a_{\pi(1)} \cdots a_{\pi(\ell)}}$, 
where 
$\pi = \pi(1) \cdots \pi(\ell)$ 
 is a linear extension of the heap. 
\end{definition}

\begin{proposition}
{\cite[Proof of Proposition~2.2]{Ste96} and~\cite[Solutions to Exercise 3.123(ab)]{Sta12}}
\label{prop:set of labeled linear extensions is commutativity class Stembridge}
Given a reduced word $\rw{a}$,  the set of labeled linear extensions of the heap for $\rw{a}$ is the commutation class of $\rw{a}$.
\end{proposition}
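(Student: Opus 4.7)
The plan is to show that both sides of the claimed equality are generated from the base word $\rw{a}$ by the same local move: swapping two adjacent entries. The commutation class is by definition generated by commutation moves, which swap adjacent letters $a_i, a_{i+1}$ with $|a_i - a_{i+1}| > 1$. The set of all (labeled) linear extensions of a finite poset is generated from any one linear extension by adjacent transpositions that swap two \emph{incomparable} poset elements (a standard fact, provable by induction on the number of inversions relative to a reference extension). So the task reduces to showing that these two types of local moves coincide.

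The technical heart of the argument is the following equivalence, which I would prove first: if $\pi = \pi(1) \cdots \pi(\ell)$ is a linear extension of $\heap(\rw{a})$ and $1 \leq k < \ell$, then the poset elements $\pi(k)$ and $\pi(k+1)$ are incomparable in $\heap(\rw{a})$ if and only if $|a_{\pi(k)} - a_{\pi(k+1)}| > 1$. For the direction ``comparable labels $\Rightarrow$ comparable elements,'' if $|a_{\pi(k)} - a_{\pi(k+1)}| \leq 1$ and (without loss of generality) $\pi(k) < \pi(k+1)$ as integers, then $\pi(k) \prec \pi(k+1)$ by one of the defining relations of the heap. Conversely, suppose $\pi(k)$ and $\pi(k+1)$ are comparable in $\heap(\rw{a})$, say $\pi(k) \prec \pi(k+1)$, and pick a saturated chain $\pi(k) = j_0 \prec j_1 \prec \cdots \prec j_m = \pi(k+1)$. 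Each generating relation $j_{t-1} \prec j_t$ holds only when $j_{t-1} < j_t$ as integers, so $\pi(k) \leq j_t \leq \pi(k+1)$ for all $t$; since $\pi$ is a linear extension, every $j_t$ must be placed between positions $k$ and $k+1$ in $\pi$, forcing $m \leq 1$. Hence there is a direct relation $\pi(k) \prec \pi(k+1)$ in the generating set of the heap, which by definition gives $|a_{\pi(k)} - a_{\pi(k+1)}| \leq 1$.

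Given this equivalence the proof is immediate. The identity linear extension corresponds to the word $\rw{a}$ itself, which belongs to both the set of labeled linear extensions and the commutation class. Moving in either direction, an adjacent transposition of incomparable elements in a linear extension is exactly the same operation (by the equivalence) as a commutation move on the corresponding word. Since both the set of labeled linear extensions of $\heap(\rw{a})$ and the commutation class of $\rw{a}$ are precisely the orbits of $\rw{a}$ under these coinciding local moves, the two sets are equal. I expect the most delicate step to be the converse direction of the key equivalence above, where one must use the linear-extension property together with the fact that the generating relations of the heap only compare elements whose integer indices lie in a small interval, in order to rule out ``indirect'' comparisons through intermediate chain elements.
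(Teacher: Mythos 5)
Your proof is correct, and it is essentially the standard argument behind this proposition, which the paper itself does not prove but quotes from Stembridge and Stanley: one identifies the local moves on linear extensions (adjacent swaps of incomparable elements, which connect all linear extensions of a finite poset) with commutation moves on words, via the equivalence that consecutive entries of a linear extension are incomparable in $\heap(\rw{a})$ exactly when their labels differ by more than $1$. One small slip worth fixing: in the converse direction of your key equivalence, the clause deducing $\pi(k)\le j_t\le \pi(k+1)$ \emph{as integers} does not by itself force the intermediate elements into positions between $k$ and $k+1$; what does is the poset chain $\pi(k)\HeapLessThan j_t\HeapLessThan \pi(k+1)$ together with the linear-extension property (each $j_t$ must appear strictly after position $k$ and strictly before position $k+1$), which your chain already supplies, so the conclusion $m\le 1$ — hence a generating relation and $|a_{\pi(k)}-a_{\pi(k+1)}|\le 1$ — stands.
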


\begin{example}\label{ex:linear ext A4 for c 1243}
Labeled linear extensions of $\heap(\rw{a})$ from Example~\ref{ex:fig:longest element heap A4 for c 1243} include 
$\rw{a}$ itself,
$\rw{1243 1243 12}$, 
and 
$\rw{4123  4123  12}$. Notice that these reduced words all belong to the same commutation class due to Proposition~\ref{prop:set of labeled linear extensions is commutativity class Stembridge}. 
\end{example}

We consider order ideals of $\heap(\rw{a})$ to retain their labels.  
Note that if $I$ is an order ideal of $\heap(\rw{a})$ then $I$ is the heap of a prefix of some labeled linear extension of $\rw{a}$.

\subsection{$c$-sorting words and $c$-sortable permutations}\label{sec:subwords K1 K2 etc}
In this section, we review $c$-sorting words and $c$-sortable elements, which were introduced in~\cite{reading07-clusters-paper4}. 
Fix a reduced word $\rw{a_1 a_2 \dots a_n}$ for a Coxeter element $c$, and define an infinite word
\[
c^{\infty} \coloneqq a_1 a_2 \dots a_n \mid a_1 a_2 \dots a_n\mid \cdots
\]
consisting of repeated copies of the given reduced word for $c$. 
The symbols ``$\mid$" are ``dividers" which facilitate the definition of sortable elements. 
The \emph{$c$-sorting word} of $w\in A_n$ is 
the lexicographically first (as a sequence of positions in $c^\infty$) subword of $c^\infty$ that is a reduced word for $w$. 
We denote this word by $\sort(w)$. 
If a word $\rw{u}=\rw{u_1 \dots u_\ell}$ is the $c$-sorting word of a permutation, we refer to $\rw{u}$ as a \emph{$c$-sorting word}.

Section 5 of the survey chapter \cite{ReadingBeyond} describes the following useful algorithm to find the $c$-sorting word of a permutation.

\begin{algorithm}[Finding the $c$-sorting word]\label{algorithmForSortC} 
Fix a Coxeter element $c \in A_n$ with reduced word $\rw{a_1 a_2 \dots a_n}$. 
Let $w \in A_n$ with length $\ell \coloneqq \ell(w)$. Set $w' \coloneqq w$ and $\rw{u}$ to the empty word.
\begin{enumerate}[(Step 1)]
\item\label{step.1} If $w'$ is the identity permutation, the algorithm will terminate here. Otherwise, we continue with~\ref{step.2}.

\item\label{step.2} 
For $i=1,2,...,n$, we perform the following: 
\indent 
\begin{itemize}
\item 
Let $q=a_i$. 
\item If $q$ is initial in $w'$ (that is, if $w'$ has a reduced word starting with $q$), then append $q$ to $\rw{u}$ and set $w'\coloneqq s_q w$.
\end{itemize}

\noindent 
At the end of the loop, continue with~\ref{step.1}. 
\end{enumerate}
The reduced word $\rw{u_1\dots u_{\ell}}$ is the $c$-sorting word $\sort(w)$ for $w$. 
\end{algorithm}

If $w$ is not the identity permutation, 
we can think of $\sort(w)$ as a concatenation of nonempty subwords of $\rw{a_1 \ldots a_n}$:
\[
\sort(w)= \rw{K_1 \mid K_2  \mid ... \mid K_p}. 
\]
Each of the subwords $K_1, K_2, \dots,K_p$ occurs between two adjacent dividers, 
so we have $x \in K_j$ if $x$ is in the $j\textsuperscript{th}$ copy of $\rw{a_1 \ldots a_n}$ inside $c^\infty$. 
We say that the identity permutation is \emph{$c$-sortable}, and  
a non-identity permutation $w$ is \emph{$c$-sortable} if $K_1 \supseteq K_2 \supseteq \cdots \supseteq K_p$ as sets.

\begin{remark}
The definition of a $c$-sorting word requires a choice of reduced word for $c$. However, note that the $c$-sorting words for $w$ arising from different reduced words for $c$ are related by commutation of letters, with no commutations across dividers. 
For this reason, the set of $c$-sortable permutations does not depend on the choice of reduced word for $c$.  
\end{remark}

\begin{example}
Consider the Coxeter element $c = s_1 s_2 s_3 s_4=\rw{1234}$  of $A_4$. 
Then the $c$-sorting word of the permutation $42351$ is 
$\rw{1 2 3 4 \mid  2 \mid 1}$. 
Our subwords are $K_1 = \rw{1234}$, $K_2 = \rw{2}$, and $K_3 = \rw{1}$.  
Since $K_2\not\supseteq  K_3$ as sets, these subwords do not form a nested sequence and therefore $42351$ is not $c$-sortable.
On the other hand, the permutation $43215$ has $c$-sorting word $\rw{123 \mid 12 \mid 1}$ and is $c$-sortable. 
\end{example}

\begin{proposition}[Corollary 4.4 of \cite{reading07-clusters-paper4}]
Given any Coxeter element $c$, the longest permutation $w_0$ is $c$-sortable.
\end{proposition}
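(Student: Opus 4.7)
The plan is to argue by induction on $n$. The base case $n=1$ is immediate, since $w_0 = s_1$ and $\sort(w_0) = \rw{1}$ consists of a single trivially nested block.

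For the inductive step with Coxeter element $c = \rw{a_1 \cdots a_n}$, the first task is to show that the initial block $K_1$ of $\sort(w_0)$ equals all of $c$, i.e., $K_1 = \{a_1,\ldots,a_n\}$. Two facts drive this. First, every simple reflection is a left descent of $w_0$, since $w_0 = w_0^{-1}$ reverses the word $1,2,\ldots,n+1$. Second, any Coxeter element $c$ lies below $w_0$ in the right weak order (equivalently, $c$ is a prefix of some reduced word for $w_0$), so $\ell(c^{-1}w_0) = \binom{n+1}{2} - n = \binom{n}{2}$. Since left multiplication by a simple reflection changes length by $\pm 1$, a total decrease of $n$ over $n$ multiplications forces each partial product $s_{a_i}\cdots s_{a_1}w_0$ to have length exactly $\ell(w_0) - i$. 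Hence at each step of pass~1 of Algorithm~\ref{algorithmForSortC}, the next letter $a_i$ is initial in the current intermediate permutation, so the algorithm successfully extracts $K_1 = c$.

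After pass~1, the remaining permutation is $w_0' := c^{-1}w_0$ of length $\binom{n}{2}$, and $\sort(w_0)$ is the concatenation of $K_1$ with $\sort(w_0')$. Containment of any subsequent block $K_j$ in $K_1$ is automatic, since $K_1$ already contains every simple reflection, so the remaining task is to verify that $\sort(w_0')$ itself has the nested-blocks property.

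The main obstacle is that $w_0'$ is typically not the longest element of any smaller parabolic subgroup; this happens only for special choices of $c$ such as $c = s_1s_2\cdots s_n$, where $c^{-1}w_0$ is the longest element of $\langle s_1,\ldots,s_{n-1}\rangle \cong A_{n-1}$, to which the induction hypothesis would apply directly. To handle the general case, I would strengthen the induction hypothesis and prove by strong induction on length that $c^{-k}w_0$ is $c$-sortable for every $k\geq 0$, tracking at each stage which simple reflections remain left descents; iterating the argument of Step~1 then peels off successive blocks, with containment of each block in its predecessor following from this tracking. As an alternative, one could invoke the recursive characterization of $c$-sortable elements from~\cite{reading07-clusters-paper4}, which transfers $c$-sortability of $w$ to $(s_{a_1}cs_{a_1})$-sortability of $s_{a_1}w$ whenever $s_{a_1}$ is initial in $w$, and apply this reduction repeatedly starting from $w_0$.
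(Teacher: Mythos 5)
Note first that the paper does not prove this proposition at all: it is imported verbatim as Corollary~4.4 of Reading's paper \cite{reading07-clusters-paper4}, so there is no internal argument to compare yours against, and your attempt has to stand on its own. The first half of it does: every simple reflection is a left descent of $w_0$, and since $c\le w_0$ in right weak order each partial product $s_{a_i}\cdots s_{a_1}w_0$ has length $\ell(w_0)-i$, so pass one of Algorithm~\ref{algorithmForSortC} accepts every letter and $K_1=\{a_1,\dots,a_n\}$. That part is correct.

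The gap is everything after that. The entire content of the statement is the nestedness $K_2\supseteq K_3\supseteq\cdots$, and a full first block gives no leverage on it: the paper's own example $42351$ has $c$-sorting word $\rw{1234\mid 2\mid 1}$, so $K_1$ can be all of $c$ while the element fails to be sortable; hence ``tracking which simple reflections remain left descents'' is precisely the missing argument, not routine bookkeeping. Your strengthened hypothesis, that $c^{-k}w_0$ is $c$-sortable for all $k$, is also the wrong invariant: after the second pass the remaining element is $K_2^{-1}c^{-1}w_0$, which equals $c^{-2}w_0$ only when $K_2$ is again all of $c$, and this already fails for $c=s_1s_2\cdots s_n$, where $\sort(w_0)=\rw{12\cdots n\mid 12\cdots(n-1)\mid\cdots\mid 1}$; moreover $\ell(c^{-k}w_0)$ is not monotone in $k$, so ``strong induction on length'' over these elements has no decreasing parameter, and no induction step is actually supplied. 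The fallback you mention, Reading's recursion ($w$ is $c$-sortable iff $s w$ is $scs$-sortable when the initial letter $s$ is a left descent of $w$), can be made to work, but only if one shows at every stage either that some initial letter of the current rotated Coxeter element is a descent of the current element, or that the current element lies in the parabolic subgroup omitting that letter when it is not -- which is the nestedness problem in disguise; as written this is an appeal to machinery rather than a proof. For comparison, Reading's proof of the cited corollary bypasses the sorting algorithm entirely and deduces sortability of $w_0$ from his characterization of sortable elements, where the relevant condition holds trivially because every reflection is an inversion of $w_0$.
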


\begin{example}\label{eg.tamari.sort}
Let $c=\rw{12 ... n}$. Then the $c$-sorting word for $w_0$ is 
\[\sort(w_0)=\rw{1 ... (n-1)n \mid 1 ... (n-1) \mid ... \mid 12 \mid 1 }.
\]
We can see in this case that $w_0$ is indeed $c$-sortable.
\end{example}

\begin{remark}
\label{remark:layers of H are subwords of c}
\begin{enumerate}[(1)] 
\item \label{itm:remark:layers of H are subwords of c:1}
We can construct the heap diagram $H\coloneqq\Heapwnot$ by gluing ``layers" labeled by the letters of subwords $K_1, K_2, \dots, K_p$. 
For example, consider Figure~\ref{fig:longest element heap A4 for c 1243} (right) which illustrates the heap diagram for $\sort(w_0)$ for $c=\rw{1423}$. 
We have the subwords $K_1=\rw{1423}$, $K_2=\rw{1423}$, and $K_3=\rw{12}$ of $c$. The bottom-most layer of $H$ corresponds to $K_1$, the second layer to $K_2$, and the top layer to $K_3$.
In general, $H$ can be partitioned into these layers of $\heap(K_1), \heap(K_2), \dots, \heap(K_p)$.

\item \label{itm:2:remark:layers of H are subwords of c}
Conversely, given the heap 
$H\coloneqq\Heapwnot$, we can obtain $\sort(w_0)$ by reading $H$ one layer of $\heap(\rw{a_1 \dots a_n})$ at a time, from bottom to top, so that $K_1=\rw{a_1 \dots a_n}$ corresponds to the bottom layer, and $K_2$ is the second subheap of $\heap(c)$, and so on. We can think of this linear extension of $H$ obtained by reading one layer of $c$ at a time as the \emph{$c$-reading word} of~$H$.
\end{enumerate}
\end{remark}

We will describe the precise construction of $\Heapwnot$ in Algorithm \ref{alg:algorithm for heap using diagonal reading word}.

Reading showed in~\cite[Theorem 1.2]{Reading07-paper5}
that the restriction of the right weak order to $c$-sortable elements is a lattice which is isomorphic to an important quotient of the right weak order called the $c$-Cambrian lattice~\cite{reading2006cambrian}; see also a representation-theoretic proof in \cite[Theorem 7.8]{LatticeTheoryOfTorsionClasses23}. 
For the Coxeter element $c=s_1 s_2 \dots s_n$, the $c$-sortable elements form the Tamari lattice. For this reason, we refer to this Coxeter element as the ``Tamari" Coxeter element of $A_n$. 
Cambrian lattices and $c$-sortable elements  have strong connections
to cluster algebras, representation theory, and many areas of combinatorics, and they are widely studied; see for example \cite{gyoda2022lattice, ingalls2009noncrossing, 
reading2009cambrian,BGMS23}.
We will be interested in a subclass of $c$-sortable elements, called $c$-singletons, which we describe next.

\subsection{$c$-singleton permutations}
\label{sec:c-singleton}

There is an order-preserving projection $\pidown^c$ from the right weak order on $A_n$ to itself which sends an element $w$ to the largest $c$-sortable element that is weakly below $w$~\cite[Proposition 3.2]{Reading07-paper5}. 
In \cite{HLT11}, Hohlweg, Lange, and Thomas used this map to introduce an important subclass of $c$-sortable elements:  
A $c$-sortable $w$ is called a 
\emph{$c$-singleton} if the preimage of $\{ w\}$ under $\pidown^c$
is the singleton $\{ w\}$ itself.

\begin{remark}\label{rem:emily temp}
It follows from the definition of $c$-singletons that  $w$ is a $c$-singleton if and only if $w$ lies in a longest chain in the $c$-Cambrian lattice.
\end{remark}

In~\cite{Tho06AnalogueOfDistributivity,ingalls2009noncrossing}, it is shown that the $c$-Cambrian lattice is \emph{trim}. 
The \emph{spine} of a trim lattice, introduced in~\cite{Tho06AnalogueOfDistributivity}, consists of elements that lie in some longest chain of the lattice.  
Therefore, the $c$-singletons form the spine of the $c$-Cambrian lattice.

We will use the following equivalent characterization of $c$-singletons.

\begin{theorem}{\cite[Theorem 2.2]{HLT11}}\label{thm:c-sing-prefix}
A permutation $w$ is a $c$-singleton 
if and only if some reduced word of $w$ is a prefix of a word in the commutation class of $\sort(w_0)$, the $c$-sorting word of the longest permutation $w_0$.
\end{theorem}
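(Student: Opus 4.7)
The plan is to prove both directions by rephrasing them in terms of maximal chains in the $c$-Cambrian lattice via \Cref{rem:emily temp}, which characterizes the $c$-singletons as those elements sitting on some longest chain. A saturated chain $e = x_0 \lessdot x_1 \lessdot \cdots \lessdot x_{\ell(w_0)} = w_0$ of $c$-sortables of the maximum length $\ell(w_0)$ records a reduced word $\rw{u_1 u_2 \cdots u_{\ell(w_0)}}$ of $w_0$ such that each prefix $\rw{u_1 \cdots u_k}$ is a reduced word for $x_k$, so the theorem reduces to the auxiliary claim that the ``$c$-sortable'' reduced words of $w_0$ (those with every prefix $c$-sortable) are exactly the commutation class of $\sort(w_0)$.

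For the $(\Leftarrow)$ direction, suppose $\rw{v_1 \cdots v_{\ell(w)}}$ is a reduced word of $w$ which extends to a word $\mathbf{a} = \rw{v_1 \cdots v_{\ell(w_0)}}$ in the commutation class of $\sort(w_0)$. By \Cref{prop:set of labeled linear extensions is commutativity class Stembridge}, $\mathbf{a}$ is a labeled linear extension of $H = \heap(\sort(w_0))$. I would induct on the minimum number of commutation swaps needed to turn $\sort(w_0)$ into $\mathbf{a}$ to show that every prefix of $\mathbf{a}$ is $c$-sortable. The base case uses \Cref{algorithmForSortC} to observe that prefixes of $\sort(w_0)$ are themselves $c$-sorting words of $c$-sortable permutations, and a single commutation swap of consecutive commuting letters $v_i, v_{i+1}$ only alters the $i$-th prefix, replacing its last simple reflection by a commuting one, an operation that preserves $c$-sortability. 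The resulting chain of $c$-sortable prefixes has length $\ell(w_0)$ and passes through $w$, so $w$ is a $c$-singleton by \Cref{rem:emily temp}.

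For the $(\Rightarrow)$ direction, assume $w$ is a $c$-singleton, and take a longest chain of the $c$-Cambrian lattice through $w$, producing a reduced word $\rw{u_1 \cdots u_{\ell(w_0)}}$ of $w_0$ whose length-$\ell(w)$ prefix is a reduced word for $w$. To show this word lies in the commutation class of $\sort(w_0)$, I would induct on $k$ to identify $\rw{u_1 \cdots u_k}$, up to commutation, with a prefix of $\sort(w_0)$. The inductive step exploits the nested condition $K_1 \supseteq K_2 \supseteq \cdots$ from the definition of $c$-sortability of $\rw{u_1 \cdots u_{k+1}}$ to force $u_{k+1}$ to be the label of a heap element of $H$ that is minimal among those not yet used. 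By \Cref{prop:set of labeled linear extensions is commutativity class Stembridge}, this identifies the full word $\rw{u_1 \cdots u_{\ell(w_0)}}$ with a labeled linear extension of $H$, and hence with an element of the commutation class of $\sort(w_0)$.

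The main obstacle will be the auxiliary claim underlying both directions: every reduced word of $w_0$ all of whose prefixes are $c$-sortable can be turned into $\sort(w_0)$ by commutation moves alone, using no braid moves. Proving this requires a careful analysis of how the nested layers $K_1 \supseteq K_2 \supseteq \cdots$ interact with the heap structure of $H$ and with the cover relations in the $c$-Cambrian lattice; once established, both implications of the theorem follow directly.
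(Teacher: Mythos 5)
First, note that the paper does not prove this statement at all: it is imported verbatim from \cite[Theorem 2.2]{HLT11}, so there is no internal argument to compare yours against; what matters is whether your sketch is self-contained, and it is not. Your reduction via \Cref{rem:emily temp} to the auxiliary claim (``a reduced word of $w_0$ all of whose prefix elements are $c$-sortable is exactly a word in the commutation class of $\sort(w_0)$'') is a reasonable reorganization, but both implications of that auxiliary claim are asserted rather than proved, and they are precisely where the content of the theorem lives. In the $(\Leftarrow)$ induction, the step you need is: if $P_{i-1}$, $P_{i-1}s_{v_i}$ and $P_{i-1}s_{v_i}s_{v_{i+1}}$ are $c$-sortable with $s_{v_i}s_{v_{i+1}}=s_{v_{i+1}}s_{v_i}$ and lengths increasing, then $P_{i-1}s_{v_{i+1}}$ is $c$-sortable. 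You offer no argument for this ``square'' claim, and nothing in the paper supplies it; it cannot be deduced from the theorem itself without circularity (it is clear for $c$-singletons, but your inductive invariant only carries sortability). Even the base case — that each prefix of $\sort(w_0)$ is itself a $c$-sorting word of a $c$-sortable element — is a genuine lemma (the greedy/prefix property of $c$-sorting words), not an immediate consequence of \Cref{algorithmForSortC}.

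The $(\Rightarrow)$ direction has the same problem in sharper form. You claim that $c$-sortability of the prefix elements ``forces $u_{k+1}$ to be the label of a heap element of $H$ minimal among those not yet used,'' but the nested-layer condition $K_1\supseteq K_2\supseteq\cdots$ applies to the $c$-sorting word of the prefix element, which is in general a completely different reduced word from $\rw{u_1\cdots u_{k+1}}$ read off the chain; your sketch conflates the two, and bridging them (i.e., showing the chain word is a labeled linear extension of $\heap(\sort(w_0))$, so that \Cref{prop:set of labeled linear extensions is commutativity class Stembridge} applies) is essentially the theorem. You acknowledge this yourself in the final paragraph, which makes the proposal a plan rather than a proof. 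Two further cautions: the claim that longest chains in the $c$-Cambrian lattice have length $\ell(w_0)$ already relies on your unproved base case, and while \Cref{rem:emily temp} precedes the theorem in this paper, in the literature that longest-chain characterization of $c$-singletons is usually derived from (not prior to) the prefix characterization and the lattice structure of singletons, so a self-contained argument along your lines would need to verify it independently; the actual proof in \cite{HLT11} proceeds instead through properties of the projection $\pi_\downarrow^c$ and the combinatorics of $c$-sorting words.
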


\begin{corollary}[{Corollary of Theorem \ref{thm:c-sing-prefix} and Proposition \ref{prop:set of labeled linear extensions is commutativity class Stembridge}}]
\label{corollary:c-singleton iff linear extension of order ideal}
A permutation $w$ is a $c$-singleton if and only if there exists a reduced word $\rw{u}$ 
of $w$ and an order ideal $I$ of $\Heapwnot$ such that $I=\heap(\rw{u})$. 

\end{corollary}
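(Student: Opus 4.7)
The plan is to chain \Cref{thm:c-sing-prefix} and \Cref{prop:set of labeled linear extensions is commutativity class Stembridge} together with a direct correspondence between prefixes of labeled linear extensions of $H \coloneqq \Heapwnot$ and (labeled) order ideals of $H$. Since both cited results are now available, the proof is essentially a chain of ``if and only if'' reformulations.

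First, I would apply \Cref{thm:c-sing-prefix} to rewrite the $c$-singleton condition on $w$ as: some reduced word $\rw{a}$ of $w$ is a prefix of a word in the commutation class of $\sort(w_0)$. Then by \Cref{prop:set of labeled linear extensions is commutativity class Stembridge} this commutation class equals the set of labeled linear extensions of $H$, so the condition becomes: some reduced word $\rw{a}$ of $w$ arises as the prefix of a labeled linear extension of $H$.

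It then remains to argue that a word $\rw{a}$ arises as such a prefix if and only if there is an order ideal $I$ of $H$ with $\heap(\rw{a}) = I$ as labeled posets (with $I$ carrying its labels inherited from $H$). For the forward direction, if $\rw{a}=\rw{a_{\pi(1)} \cdots a_{\pi(k)}}$ is the length-$k$ prefix of the labeled linear extension coming from a linear extension $\pi$ of $H$, then $I \coloneqq \{\pi(1),\dots,\pi(k)\}$ is downward closed in $H$ because $\pi$ respects the heap order, and the bijection $i \mapsto \pi(i)$ identifies $\heap(\rw{a})$ with $I$ as labeled posets. For the reverse direction, given an order ideal $I$ with $I = \heap(\rw{a})$, any linear extension of $I$ extends to a linear extension of $H$ (since $H\setminus I$ is an order filter), producing a labeled linear extension of $H$ whose length-$|I|$ prefix is, by \Cref{prop:set of labeled linear extensions is commutativity class Stembridge}, a reduced word in the commutation class of $\rw{a}$ and hence a reduced word of $w$.

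No step is truly delicate; the only point that deserves a line of verification is that the restricted labeled poset structure on $I$ really coincides with $\heap(\rw{a})$. This follows because being an order ideal guarantees that any chain in $H$ witnessing $x \HeapLessThan y$ with $y\in I$ stays entirely inside $I$, so the transitive closure defining the heap relations is unchanged by passing between $I \subseteq H$ and the abstract heap of the prefix word.
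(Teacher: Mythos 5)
Your proposal is correct and follows exactly the route the paper intends: chain \Cref{thm:c-sing-prefix} with \Cref{prop:set of labeled linear extensions is commutativity class Stembridge} and then use the correspondence between prefixes of labeled linear extensions of $\Heapwnot$ and its order ideals (the paper only records this last point as a remark, which you verify in detail). The extra check that the induced labeled poset on an order ideal agrees with the heap of the prefix word is handled correctly, since order ideals are closed under the chains generating the heap relations.
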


In view of Corollary~\ref{corollary:c-singleton iff linear extension of order ideal}, we can write an algorithm for finding the $c$-sorting word of a  $c$-singleton using heaps; it is equivalent to Algorithm~\ref{algorithmForSortC} when the permutation is a $c$-singleton. In this algorithm, we will consider the infinite heap $H^\infty$ of the  word $c^\infty$ and then we will iterate through the elements in $H^\infty$ layer by layer.

Fix a Coxeter element $c \in A_n$ with reduced word $\rw{a_1 a_2 \dots a_n}$. 
Following Definition~\ref{defn:heap of a reduced word}, we can construct a heap, which we denote as $H^\infty$, for the infinite word $c^\infty$.  Recall that $c^\infty=ccc\ldots =\rw{a_1 a_2 \dots }=\rw{a_i}_{i\in \mathbb{N}}$ where $a_{kn+i} = a_i$ for $k\in \mathbb{Z}_{\geq0}$ and $1\leq i \leq n$.  As in Definition~\ref{defn:heap of a reduced word}, the underlying poset of $H^\infty$ is the partial order $\HeapLEQ$ on the set of positive integers obtained via the transitive closure of the relations $x
\HeapLessThan
y$ for $x < y$ such that
$|a_x-a_y| \leq 1$, and the label of the poset element $x$ is $a_x$.

\begin{algorithm}[Finding a $c$-sorting word using heaps]\label{algorithmForSortC_c-singletons} 
First, we identify $\Heapwnot$ with an order ideal $H$ of $H^\infty$ (see Remark~\ref{remark:layers of H are subwords of c}\ref{itm:remark:layers of H are subwords of c:1}).   Note that the cardinality of $H$ is $\Coxeterlength(w_0)$, but in general the set of elements of $H$ is not $\{1,2,\dots,\Coxeterlength(w_0) \}$.  In contrast, the set of elements of $\Heapwnot$, defined as per Definition~\ref{defn:heap of a reduced word}, is $\{1,2,\dots,\Coxeterlength(w_0) \}$.

Let $I$ be an  order ideal of $H$.  
Set $I'\coloneqq I$ and let $\rw{u}$ be the empty word.

\noindent For $z=1,2,3,\dots$:
\begin{itemize}
\item
Let $q$ be the label of the poset element $z$. 
\item
If $z$ is a minimal element of $I'$, then append $q$ to $\rw{u}$ and set $I'\coloneqq I'  \setminus \{z\}$.
\item
If $I'$ is empty, terminate the loop.
\end{itemize}
\end{algorithm}

\begin{remark}
Notice that Algorithm~\ref{algorithmForSortC_c-singletons} always outputs a reduced word: since $I$ is an order ideal of $H$, $\rw{u}$ is a prefix of some reduced word of $w_0$.
\end{remark}

\begin{prop}\label{prop:u_is csorting_for_w_and_a_is_csorting for_w0}
Let $H$ be as defined in Algorithm~\ref{algorithmForSortC_c-singletons}, and let $I$ be an order ideal of $H$.  
Let $\rw{u}$ be the word constructed in Algorithm~\ref{algorithmForSortC_c-singletons} and let $w$ be the permutation obtained from $\rw{u}$. 
Then $\rw{u}$ is the $c$-sorting word of $w$.
\end{prop}
\begin{proof}
First note that $I$ is an order ideal of $H^\infty$. 
For each element $z$ of $H^\infty$, $z=kn+i$ for some $k\in\mathbb{Z}$ and $1\leq i\leq n$.  We will show by induction that the $\rw{u}$ we get after applying the $z\textsuperscript{th}$ loop of Algorithm~\ref{algorithmForSortC_c-singletons} on $I$, which we will call $\rw{u}_z$, is equivalent to the $\rw{u}$ we get after applying the $i\textsuperscript{th}$ loop of \ref{step.2} of Algorithm~\ref{algorithmForSortC}  on $w$ for the $(k+1)\textsuperscript{st}$ time, which we will call $\rw{u}_{\tilde{z}}$, if both algorithms go through this many loops.

Both algorithms initialize with the empty word.  Thus we will define $\rw{u}_0=\rw{u}_{\tilde{0}}$.
We will now assume that for some $z\geq 0$, $\rw{u}_z=\rw{u}_{\tilde{z}}$ and that neither algorithm terminates at this stage.
\begin{itemize}
\item First consider Algorithm~\ref{algorithmForSortC_c-singletons}.  Since $z+1$ is the smallest element remaining in the linear extension $1,2,3,\dots$ of $H^\infty$, $z+1$ is either a minimal element in $I'$ or it is not in $I'$.  Let $q$ be the label of $z+1$ in $H^\infty$.  This means $\rw{u}_{z+1}=\rw{u}_{z}[q]$ if $z+1$ is in $I$ and $\rw{u}_{z+1}=\rw{u}_z$ if $z+1$ is not in $I$.
\item Now consider Algorithm~\ref{algorithmForSortC}.  Let $z+1=kn+i$ with $k\in\mathbb{Z}$ and $1\leq i\leq n$.  At this step in Algorithm~\ref{algorithmForSortC} we set $q=a_i$ which is the same as the label of $z+1$ in $H^\infty$.  If $\rw{u} = \rw{u_1 \dots u_\ell}$ then $\rw{u}_{\tilde{z}}=\rw{u}_z = \rw{u_1 \dots u_j}$ for some $1\leq j\leq \ell$.  By construction, at this stage $w'$ is the permutation obtained from $\rw{u_{j+1} \dots u_\ell}$,
which is the suffix of $\rw{u}$ that contains the letters added in the $(z+1)\textsuperscript{st}$ and later loops of Algorithm~\ref{algorithmForSortC_c-singletons}.  Thus, if $z+1$ is in $I'$ then $q$ is initial in $w'$.  In this case, this step of Algorithm~\ref{algorithmForSortC} updates $\rw{u}_{\widetilde{z+1}}=\rw{u}_{\tilde{z}}\rw{q}$. If $z+1$ is not in $I'$ then by Remark~\ref{rem:poset elements with the same label are comparable} there is no element labeled $q$ in $I'$.  This means $q$ is not in the support of $w'$ and so $q$ is not initial in $w'$.  Thus in this case, $\rw{u}_{\tilde{z}}=\rw{u}_{\tilde{z}}$ remains the same.
\end{itemize}
Therefore the output of the two algorithms after this loop match.

If Algorithm~\ref{algorithmForSortC} does not output $\rw{u}$ then this is because it terminated before Algorithm~\ref{algorithmForSortC_c-singletons} updated $\rw{u}_z$ to $\rw{u}$ or because it changed $\rw{u}_{\tilde{z}}$ after $\rw{u}_{\tilde{z}}$ was set to $\rw{u}$.  In the first situation, Algorithm~\ref{algorithmForSortC} outputs $\rw{u}_{\tilde{z}}=\rw{u}_z$ for some $z$ value where the length of $\rw{u}_z$ is less than that of $\rw{u}$.  In the second situation Algorithm~\ref{algorithmForSortC} outputs a word of length longer than $\rw{u}$.  Neither of these are possible because $w$ has the same length as $\rw{u}$.  

Thus, we can conclude that Algorithm~\ref{algorithmForSortC_c-singletons} and Algorithm~\ref{algorithmForSortC} give the same output.  This means~$\rw{u}$ is the $c$-sorting word for $w$.
\end{proof}

For a poset $P$, we define $J(P)$ to be the lattice of order ideals of $P$.  The following  tells us that every order ideal of $\Heapwnot$ is the heap of a $c$-sorting word. 

\begin{proposition}
\label{prop:I=Heap([r])}
Let $\rw{v}=\rw{v_1 v_2 \dots v_{\Coxeterlength(w_0)}}$ denote $\sort(w_0)$, and consider the labeled poset $H=\heap(\rw{v})$ on $\{1,2,...,\Coxeterlength(w_0) \}$, following Definition~\ref{defn:heap of a reduced word}. 
Given an order ideal $I$ of $H$, 
let $\rw{v}_I=\rw{v_i}_{i\in I}$ denote the subword of $\rw{v}$ at positions $I$.
\begin{enumerate}[(1)]
\item\label{prop:I=Heap([r]):itm:1} Then $\rw{v}_I$ is a $c$-sorting word. 
\item \label{prop:I=Heap([r]):itm:2}
The map 
\begin{align}
\label{eq:MapPermutation}
\MapPermutation \colon 
J(H)
& \to  
\SetOfCSingletons 
\\
\nonumber I &\mapsto w,
\end{align}
where $w$ is the permutation obtained from the reduced word $\rw{v}_I$, is well-defined. 
Furthermore, $\heap(\sort(w))=I$.
\end{enumerate}
\end{proposition}

\begin{proof}
\ref{prop:I=Heap([r]):itm:1} The statement follows from Proposition~\ref{prop:u_is csorting_for_w_and_a_is_csorting for_w0}.

\ref{prop:I=Heap([r]):itm:2}
By construction of $\rw{v}_I$, we have $I=\heap(\rw{v}_I)$.   Corollary~\ref{corollary:c-singleton iff linear extension of order ideal} then tells us that $w$ is a $c$-singleton.
Furthermore, since $\rw{v}_I$ is a $c$-sorting word by part \ref{prop:I=Heap([r]):itm:1}, we have $\rw{v}_I=\sort(w)$, and thus  $\heap(\sort(w))=I$.
\end{proof}

The set of $c$-singletons forms a distributive sublattice of the right weak order due to \cite[Proposition 2.5]{HLT11}. We denote this lattice by $\LatticeC$.
We have that $v \leq w$ in $\LatticeC$ if and only if $\heap(\sort(v))$ is an order ideal of $\heap(\sort(w))$.

A proof appeared in \cite[Proposition 3]{LL20} that $\LatticeC$ is isomorphic to the lattice of order ideals of $\heap(\sort(w_0))$. 
In the following proposition, we add the additional detail that each order ideal of $H$ is the heap of the $c$-sorting word for the corresponding $c$-singleton from~Proposition~\ref{prop:I=Heap([r])}.

\begin{proposition}\label{prop:vertex-bijection}
Let $H\coloneqq\Heapwnot$. Then 
the map 
\begin{align*}
f: 
\LatticeC 
& \to J(H) 
\\
w & \mapsto \heap(\sort(w))
\end{align*}
is a poset isomorphism. 
\end{proposition}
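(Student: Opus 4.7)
The plan is to establish three things in sequence: (a) the map $f$ is well-defined, i.e., $\heap(\sort(w)) \in J(H)$ for every $c$-singleton $w$; (b) $f$ is a bijection, with the map $\MapPermutation$ from Lemma~\ref{lemma:I=Heap([r])} as its two-sided inverse; and (c) the order on $\LatticeC$ described immediately before the statement matches inclusion in $J(H)$ under $f$. The whole proof is essentially an assembly of the three tools already in hand: the prefix characterization of $c$-singletons (Theorem~\ref{thm:c-sing-prefix}), the identification of commutation classes with labeled linear extensions of heaps (Proposition~\ref{prop:set of labeled linear extensions is commutativity class Stembridge}), and Lemma~\ref{lemma:I=Heap([r])}.

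For (a), fix a $c$-singleton $w$. By Theorem~\ref{thm:c-sing-prefix}, some reduced word $\rw{a}$ of $w$ is a prefix of a word in the commutation class of $\sort(w_0)$. Proposition~\ref{prop:set of labeled linear extensions is commutativity class Stembridge} identifies that commutation class with the labeled linear extensions of $H$, so $\rw{a}$ is a prefix of such an extension; hence $\heap(\rw{a})$ is an order ideal of $H$. Applying Lemma~\ref{lemma:I=Heap([r])} to this order ideal then returns $w$ itself and gives $\heap(\sort(w)) = \heap(\rw{a})$, so $f(w) \in J(H)$ as required. For (b), Lemma~\ref{lemma:I=Heap([r])} says directly that $f \circ \MapPermutation = \mathrm{id}_{J(H)}$. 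For the opposite composition, given a $c$-singleton $w$, set $w' \coloneqq \MapPermutation(f(w))$; Lemma~\ref{lemma:I=Heap([r])} yields $\heap(\sort(w')) = f(w) = \heap(\sort(w))$, so $\sort(w')$ and $\sort(w)$ are labeled linear extensions of the same heap. By Proposition~\ref{prop:set of labeled linear extensions is commutativity class Stembridge} they lie in the same commutation class and therefore represent the same permutation, giving $w' = w$.

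For (c), the order on $\LatticeC$ is defined by $v \leq w$ iff $\heap(\sort(v))$ is an order ideal of $\heap(\sort(w))$, while the order on $J(H)$ is inclusion. Both $f(v)$ and $f(w)$ are order ideals of $H$ by (a); from this it is immediate that $f(v)$ is an order ideal of $f(w)$ (as a subposet of $H$) if and only if $f(v) \subseteq f(w)$ in $H$, so $f$ and $f^{-1}$ are both order-preserving. The main obstacle, if any, is not in any single step but in keeping straight the two viewpoints of an element of $J(H)$ — as a downward-closed subset of $H$ and as the heap of a prefix of a linear extension — and verifying that the linear-extension choices implicit in $\MapPermutation$ do not affect the resulting permutation; both points are handled cleanly by Proposition~\ref{prop:set of labeled linear extensions is commutativity class Stembridge} together with Lemma~\ref{lemma:I=Heap([r])}.
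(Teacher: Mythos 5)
Your proposal is correct and follows essentially the same route as the paper: well-definedness via the prefix characterization of $c$-singletons together with Proposition~\ref{prop:set of labeled linear extensions is commutativity class Stembridge} (which the paper packages as Corollary~\ref{corollary:c-singleton iff linear extension of order ideal}) and Lemma~\ref{lemma:I=Heap([r])}, inverse map $\MapPermutation$, and order-preservation from the stated characterization of $\leq$ in $\LatticeC$. You merely spell out the two-sided inverse verification a bit more explicitly than the paper does; there is no substantive difference.
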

\begin{proof}
First, we show that  $f(w)$ is indeed an order ideal in $J(H)$ when $w$ is a $c$-singleton. 
By Corollary~\ref{corollary:c-singleton iff linear extension of order ideal}, there is a reduced word $\rw{u}$ of $w$ and an order ideal $I$ of $H$, such that $\heap(\rw{u})=I$. 
Proposition~\ref{prop:I=Heap([r])} tells us that $I=\heap(\sort(w))$, and so $f(w)=\heap(\sort(w))$ is in $J(H)$. 

The inverse map of $f$ sends each order ideal $I$ of $H$ to the permutation $\MapPermutation(I)$ defined in~\eqref{eq:MapPermutation}. 
Moreover, the map $f$ is a poset isomorphism because $v \leq w$ in $\LatticeC$ if and only if $f(v)=\heap(\sort(v)) \subseteq \heap(\sort(w))=f(w)$. 
\end{proof}

\subsection{A pattern-avoidance criterion for $c$-singletons}\label{subsec:PatternAvoidanceCsingletons}

We present here an alternate classification of $c$-singletons via a pattern-avoidance property, following \cite{reading2006cambrian}. 
Let $c$ be a Coxeter element in $A_n$. 
We partition the set $[2,n]:= \{2,3,\ldots,n-1,n\}$ into two sets, $\overline{[2,n]}$ and $\underline{[2,n]}$: if $i$ appears to the right of $i-1$ in a reduced word of $c$, then $i \in \underline{[2,n]}$; otherwise $i \in \overline{[2,n]}$. If $i \in \underline{[2,n]}$, we will call $i$ a ``lower-barred number'' and sometimes emphasize this by writing $\underline{i}$. Similarly if $i \in \overline{[2,n]}$ we will call $i$ an ``upper-barred number'' and sometimes emphasize this by writing $\overline{i}$. We denote the lower-barred numbers by $\lowerbarletter_1< \dots < \lowerbarletter_r$ and 
the upper-barred numbers by $\upperbarletter_1 < \dots < \upperbarletter_s$. Occasionally we will also include a lower or upper bar on this notation for emphasis.  For convenience, we will define $\lowerbarletter_{0} = 1$ and $\lowerbarletter_{r+1} = n+1$ (note $1$ and $n+1$ are not lower-barred numbers).

\begin{remark}\label{rem:Coxeter element in cycle notation} 
It is well-known that we can write the 
Coxeter element $c$ in $A_n$ in cycle notation as \[c=
(1~  \underline{\lowerbarletter_1} ~ ...~  \underline{\lowerbarletter_r} ~ (n+1) ~ \overline{\upperbarletter_s} ~ ... ~ \overline{\upperbarletter_1})
=
(\overline{\upperbarletter_s} ~ ... ~\overline{\upperbarletter_1} ~1~ \underline{\lowerbarletter_1} ~ ... ~ \underline{\lowerbarletter_r} ~(n+1)).\] 
See, for example, \cite[Section 3]{reading07-clusters-paper4}. Therefore we can compute $c^k(u_t)$ as 
\[
c^k(u_t) = \begin{cases}
    u_{t-k}, & \text{if } t>k\,,\\
    d_{k-t}, & \text{if } t\leq k\,.
 \end{cases}
\]
\end{remark}

We say $w \in A_n$ \emph{avoids the pattern $31\underline{2}$} if there is no $i<j<k$ such that $w(j) < w(k) < w(i)$ and $w(k) \in \underline{[2,n]}$. That is, $w$ avoids all patterns $312$ where the ``2'' is a lower-barred number. One can define avoidance of the patterns $13\underline{2}, \overline{2}13,$ and $ \overline{2}31$ analogously. 

The following result characterizes 
$c$-sortable and $c$-singleton permutations using pattern-avoidance.

\begin{proposition}{\cite[Proposition 5.7]{reading2006cambrian}}
\label{prop:pattern avoidance characterization type A}
A permutation $w \in A_n$ is $c$-sortable if and only if $w$ avoids the patterns $31\underline{2}$
and 
$\overline{2}31$. 
Furthermore, 
a $c$-sortable permutation $w$ is a $c$-singleton if and only if $w$ avoids the patterns $13\underline{2}$
and 
$\overline{2}13$. 
\end{proposition}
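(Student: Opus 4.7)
The plan is to prove both equivalences by induction on $\ell(w)$, leveraging the recursion that governs $c$-sortable elements with respect to an initial letter of $c$. Fix $s = s_i$ initial in $c$, and let $\tilde c \coloneqq scs$ be the cyclic rotation moving $s$ to the end of $c$. Two recursions will drive the argument. First, if $s$ is a left descent of $w$, then $w$ is $c$-sortable if and only if $sw$ is $\tilde c$-sortable, and $\sort(w) = s \cdot \SpecialSort{\tilde c}(sw)$. Second, if $s$ is not a left descent of $w$, then $w$ is $c$-sortable if and only if $s \notin \supp(w)$, in which case $w$ reduces to a $c'$-sortable element of the parabolic subgroup generated by $\supp(c) \setminus \{s\}$. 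Both facts follow directly from \Cref{algorithmForSortC}.

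For the first equivalence, I would show by induction on $\ell(w)$ that the pattern-avoidance conditions $31\underline{2}$ and $\overline{2}31$ respect the same recursion. The main calculation is tracking how the bar labels transform under $c \mapsto \tilde c$: since the bar decoration on $\{2,\ldots,n\}$ is read off from $\heap(c)$, only the bar status of $i$ and possibly $i+1$ can change. When $s$ is a left descent of $w$, left multiplication by $s_i$ swaps the values $i$ and $i+1$ in the one-line notation of $w$, and one verifies directly that $w$ contains a forbidden pattern relative to the bars of $c$ if and only if $sw$ contains the corresponding pattern relative to the bars of $\tilde c$. In the other case, $s \notin \supp(w)$ forces the positions of $i$ and $i+1$ in $w$ to make patterns involving these values trivially absent, and the induction reduces to rank $n-1$. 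A base case $w = e$ completes the first equivalence.

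For the second equivalence, I would use that a $c$-sortable $w$ is a $c$-singleton if and only if every upper cover $ws_j$ of $w$ in right weak order is itself $c$-sortable; this uses the fact that for a $c$-sortable $w$, any non-$c$-sortable cover $ws_j$ must satisfy $\pidown^c(ws_j) = w$ by definition of $\pidown^c$. Combined with the first equivalence, $w$ is a $c$-singleton if and only if no ascent $s_j$ of $w$ produces a $31\underline{2}$ or $\overline{2}31$ pattern in $ws_j$. A case analysis on how the swap $w(j) \leftrightarrow w(j+1)$ interacts with the three positions of the forbidden pattern shows that this failure is equivalent to $w$ containing a $13\underline{2}$ or $\overline{2}13$ pattern, respectively.

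The main obstacle will be the bookkeeping for how bar labels and forbidden patterns interact under the rotation $c \mapsto \tilde c$ and under left multiplication by $s$. Since the bar data is read off the short zig-zag poset $\heap(c)$, only one or two labels change bar status under a single rotation, yet these are precisely the labels that can appear in the ``$3$'' of $31\underline{2}$ or the ``$\overline{2}$'' of $\overline{2}31$ after passing to $sw$. Boundary cases when $s \in \{s_1, s_n\}$, as well as configurations in the second equivalence where $j$ or $j+1$ is itself a barred letter, require separate but routine verification.
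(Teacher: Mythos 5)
First, note that the paper does not prove this proposition at all: it is quoted from Reading's Cambrian lattices paper \cite[Proposition 5.7]{reading2006cambrian}, so there is no internal proof to compare against. Your plan is a from-scratch reconstruction in the spirit of Reading's own inductive arguments, and its skeleton is sound: the sortability recursion with respect to an initial letter $s=s_i$, the observation that passing from $c$ to $\tilde c = scs$ flips the bar status of exactly $i$ and $i+1$, and the characterization ``a $c$-sortable $w$ is a $c$-singleton iff every upper cover $ws_j$ is $c$-sortable'' (your justification of that characterization via $\pidown^c$ is correct, using that a cover admits nothing strictly between and that $\pidown^c$ is order-preserving).

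There are, however, two concrete gaps as written. First, in the non-descent case your recursion is misstated: ``$w$ is $c$-sortable iff $s\notin\supp(w)$'' is false; the correct statement is that $w$ is $c$-sortable iff $w$ lies in the parabolic subgroup avoiding $s$ \emph{and} is sortable there for the restricted Coxeter element. More importantly, to close the induction in this case you must prove the pattern-side analogue: if $s_i$ is not a left descent of $w$ and $w$ avoids $31\underline{2}$ and $\overline{2}31$, then $s_i\notin\supp(w)$. This does not ``trivially'' follow; it needs a short argument that any inversion of $w$ crossing the value $i$ (some $b\geq i+1$ preceding some $a\leq i$) creates a $31\underline{2}$ with barred letter $i+1$ or a $\overline{2}31$ with barred letter $i$, using precisely that $i$ is upper-barred and $i+1$ lower-barred when $s_i$ is initial. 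Second, in the singleton part your ``case analysis on the swap $w(j)\leftrightarrow w(j+1)$'' cleanly gives only one direction (a forbidden pattern in a cover $ws_j$ forces a $13\underline{2}$ or $\overline{2}13$ in $w$, because $c$-sortability of $w$ forces the pattern to use both swapped positions). For the converse you must produce, from an arbitrary occurrence of $13\underline{2}$ (resp.\ $\overline{2}13$) in $w$, an ascent whose cover contains $31\underline{2}$ (resp.\ $\overline{2}31$); this requires an extra step, e.g.\ a gap-minimization argument showing one can always choose an occurrence in which the ``1'' and ``3'' (resp.\ the ``1'' and ``3'' of $213$) occupy adjacent positions. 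Both gaps are fillable by routine arguments, and the remaining bulk (``one verifies directly'') is a genuine but feasible multi-case check, so the outline is viable once these steps are supplied.
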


The pattern-avoidance criteria for $c$-singletons can be rephrased as the following.

\begin{corollary}\label{cor:PatternAvoidanceRephrased}
 A permutation $w \in A_n$ is a $c$-singleton if and only if, when written in one-line notation, we have the following.
 \begin{itemize}
     \item For each lower-barred number $\lowerbarletter$, all numbers in $[1,\lowerbarletter-1]$ appear after  $\lowerbarletter$ or all numbers in $[\lowerbarletter+1,n+1]$ appear after $\lowerbarletter$.
     \item For each upper-barred number $\upperbarletter$, all numbers in $[1,\upperbarletter-1]$ appear before $u$ or all numbers in $[\upperbarletter+1,n+1]$ appear before $u$. 
 \end{itemize}
\end{corollary}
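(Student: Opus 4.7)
The plan is to deduce this corollary directly from Proposition \ref{prop:pattern avoidance characterization type A}, by reinterpreting each of the four forbidden patterns as a local condition at a single barred entry. Since Proposition \ref{prop:pattern avoidance characterization type A} tells us that $w$ is a $c$-singleton iff $w$ avoids all four of $31\underline{2}$, $\overline{2}31$, $13\underline{2}$, $\overline{2}13$, the task reduces to showing that for each barred value, the combined avoidance condition is precisely the one stated in the corollary.

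First I would handle the lower-barred case. Fix a lower-barred number $\underline{d}$ occurring at position $k$ in $w$. Unpacking the definitions, a $31\underline{d}$ pattern is a pair of positions $i<j<k$ with $w(j)<d<w(i)$, while a $13\underline{d}$ pattern is a pair $i<j<k$ with $w(i)<d<w(j)$. Consequently, $w$ avoids both $31\underline{2}$ and $13\underline{2}$ at this occurrence of $\underline{d}$ if and only if there do not exist two positions $i,j<k$ with $w(i)$ and $w(j)$ on opposite sides of $d$; equivalently, all values strictly before $d$ in one-line notation lie entirely in $[1,d-1]$ or entirely in $[d+1,n+1]$. Negating, this is exactly: all of $[d+1,n+1]$ appears after $d$, or all of $[1,d-1]$ appears after $d$, which is the first bullet of the corollary.

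Next I would repeat the analogous argument for each upper-barred number $\overline{u}$ at position $i$. A $\overline{u}31$ pattern is a pair $i<j<k$ with $w(k)<u<w(j)$, and a $\overline{u}13$ pattern is a pair $i<j<k$ with $w(j)<u<w(k)$. Thus avoiding both patterns at $\overline{u}$ is equivalent to asking that no two positions strictly after $i$ have $w$-values on opposite sides of $u$; equivalently, the values appearing after $u$ all lie in $[1,u-1]$ or all lie in $[u+1,n+1]$. This gives the second bullet, since "all values after $u$ are $>u$" is the same as "all of $[1,u-1]$ appears before $u$," and symmetrically for the other case.

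The only subtlety to watch, which I do not expect to be a real obstacle, is that "$[1,d-1]$ appears after $d$" is a condition about values rather than positions, so I should state the equivalence carefully: if $w(k)=d$, then this reads as $w(j)>d$ for all $j<k$, matching the negation of an $i<j<k$ witness. Once this bookkeeping is set up, the two bullets follow in a single line each, and combining them over all barred entries recovers exactly the four-pattern avoidance condition from Proposition \ref{prop:pattern avoidance characterization type A}.
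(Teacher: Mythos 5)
Your unpacking of the four patterns is correct: for a lower-barred $\lowerbarletter$ at position $k$, avoiding $31\underline{2}$ and $13\underline{2}$ with that $\lowerbarletter$ as the ``2'' is exactly the statement that the values preceding $\lowerbarletter$ do not lie on both sides of $\lowerbarletter$, and symmetrically for upper-barred $\upperbarletter$ with $\overline{2}31$ and $\overline{2}13$, which is precisely the direct rephrasing of Proposition \ref{prop:pattern avoidance characterization type A} that the paper intends (it states the corollary without further argument). So your proposal is correct and takes essentially the same route as the paper, just written out in more detail.
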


\begin{example}
Let $c = s_1s_2s_5s_4s_3s_6 = (1 \, \underline{2} \, \underline{3} \, \underline{6} \, 7 \, \overline{5} \, \overline{4})$. The heap diagram for 
$\heap(\rw{125436})$ is given below.

\begin{center}
\begin{tikzpicture}[scale=0.7]
\node (1) at (0,0) {$\posetLABELwithS{1}$}; 
\node (2) at (1,1) {$\posetLABELwithS{2}$}; 
\node (3) at (2,2) {$\posetLABELwithS{3}$}; 
\node (4) at (3,1) {$\posetLABELwithS{4}$}; 
\node (5) at (4,0) {$\posetLABELwithS{5}$}; 
\node (6) at (5,1) {$\posetLABELwithS{6}$}; 

\draw [
black, thick, shorten <=-2pt, shorten >=-2pt] (1) -- (2);
\draw [
black, thick, shorten <=-2pt, shorten >=-2pt] (3) -- (2);
\draw [
black, thick, shorten <=-2pt, shorten >=-2pt] (3) -- (4);
\draw [
black, thick, shorten <=-2pt, shorten >=-2pt] (4) -- (5);
\draw [
black, thick, shorten <=-2pt, shorten >=-2pt] (6) -- (5);
\end{tikzpicture}
\end{center}

We have $\underline{[2,6]} = \{2,3,6\}$ and $\overline{[2,6]} = \{4,5\}$. We see that the permutation $2167345$ is not a $c$-singleton because it contains several instances of $13\underline{2}$, for example $\mathbf{\circledplain{2}}1\mathbf{\circledplain{6}}7\mathbf{\circledplain{3}}45$. 
One can check that the permutation $3672145$ avoids all required patterns so it is a $c$-singleton. 
\end{example}

\subsection{Longest chains in the $c$-Cambrian lattice}

In this section, we describe a generalization of some of the objects counted by the sequence \cite[A003121]{oeis}.

\begin{lemma}\label{lem:threesets}
The following sets are in one-to-one correspondence.
\begin{enumerate}[(1)]
\item \label{lem:threesets:itm:linearextensions} linear extensions of $\Heapwnot$
\item 
\label{lem:threesets:itm:commutationclass}
words in the commutation class of $\sort(w_0)$
\item 
\label{lem:threesets:itm:latticeC}
maximal chains in 
$\LatticeC$ 
\item 
\label{lem:threesets:itm:Cambrian}
longest chains in the  $c$-Cambrian lattice
\item 
\label{lem:threesets:itm:patterns}
maximal chains in the lattice of permutations which avoid the four patterns $31\underline{2}$,  
$\overline{2}31$,  
$13\underline{2}$, 
and 
$\overline{2}13$, as a sublattice of the weak order on the symmetric group $A_n$
\end{enumerate}
\end{lemma}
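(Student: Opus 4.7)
The plan is to establish the five equivalences by linking each set to the next via results already proven in Section~\ref{sec: sec 2}, giving a chain of bijections $(1)\leftrightarrow(2)\leftrightarrow(3)\leftrightarrow(4)$ together with the identification $(5)=(3)$. Each step is nearly immediate, so the proof is primarily bookkeeping; the only substantive point is verifying that maximal chains in the sublattice of $c$-singletons correspond to \emph{longest} (rather than merely maximal) chains in the full $c$-Cambrian lattice.

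First, the equivalence $(1)\leftrightarrow(2)$ is a direct application of Proposition~\ref{prop:set of labeled linear extensions is commutativity class Stembridge} to the reduced word $\sort(w_0)$: the labeled linear extensions of $H\coloneqq\Heapwnot$ are precisely the reduced words in the commutation class of $\sort(w_0)$. Next, for $(1)\leftrightarrow(3)$, I invoke Proposition~\ref{prop:vertex-bijection}, which provides a poset isomorphism $\LatticeC \cong J(H)$. Under this isomorphism, maximal chains of $\LatticeC$ correspond to maximal chains of $J(H)$, and the standard bijection for distributive lattices sends a linear extension $\pi$ of $H$ to the saturated chain
\[
\emptyset \subsetneq \{\pi(1)\} \subsetneq \{\pi(1),\pi(2)\} \subsetneq \cdots \subsetneq H
\]
in $J(H)$. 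Composing these gives a bijection between (1) and (3).

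For $(3)\leftrightarrow(4)$, Remark~\ref{rem:emily temp} states that the $c$-singletons are exactly the elements lying on some longest chain of the $c$-Cambrian lattice. Consequently, every longest chain of the Cambrian lattice consists entirely of $c$-singletons and hence is a chain in $\LatticeC$. Since $\LatticeC\cong J(H)$ is distributive, every maximal chain of $\LatticeC$ has length $|H|=\ell(w_0)$, which equals the length of a longest chain in the $c$-Cambrian lattice. Because cover relations in weak order increase Coxeter length by exactly~$1$, a simple length count forces each cover in a maximal chain of $\LatticeC$ to also be a cover in the Cambrian lattice, so the maximal chains of $\LatticeC$ are precisely the longest chains of the $c$-Cambrian lattice.

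Finally, $(4)\leftrightarrow(5)$ follows from Proposition~\ref{prop:pattern avoidance characterization type A}, which identifies the set of $c$-singletons with the set of permutations in $A_n$ avoiding the four patterns $31\underline{2}$, $\overline{2}31$, $13\underline{2}$, and $\overline{2}13$. Since $\LatticeC$ is a distributive sublattice of weak order (see the discussion after Lemma~\ref{lemma:I=Heap([r])}), the sublattice in~(5) equals $\LatticeC$ as posets, so its maximal chains are exactly the maximal chains of $\LatticeC$. The main (and essentially only) obstacle is the length-counting argument in $(3)\leftrightarrow(4)$; everything else is an immediate citation of earlier results.
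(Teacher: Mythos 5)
Your proposal is correct and follows essentially the same route as the paper: Proposition~\ref{prop:set of labeled linear extensions is commutativity class Stembridge} for (1)$\leftrightarrow$(2), the isomorphism of Proposition~\ref{prop:vertex-bijection} for (3), Remark~\ref{rem:emily temp} together with distributivity/gradedness of $\LatticeC$ for (3)$\leftrightarrow$(4), and Proposition~\ref{prop:pattern avoidance characterization type A} to identify (5) with (3). The only cosmetic differences are that you link (1) to (3) directly via the standard linear-extension/maximal-chain bijection in $J(H)$ instead of passing through (2), and you make the length-counting in the (3)$\leftrightarrow$(4) step slightly more explicit than the paper does.
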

\begin{proof} 
Let $H=\Heapwnot$. 
By Proposition~\ref{prop:set of labeled linear extensions is commutativity class Stembridge}, the commutation class of $\sort(w_0)$ correspond to the linear extensions of $H$, so the sets 
\ref{lem:threesets:itm:linearextensions} and \ref{lem:threesets:itm:commutationclass} are in bijection.

The poset isomorphism given in Proposition~\ref{prop:vertex-bijection} tells us that the maximal chains in $\LatticeC$ are in one-to-one correspondence with words in the commutation class of $\sort(w_0)$, proving that the sets
\ref{lem:threesets:itm:commutationclass} and \ref{lem:threesets:itm:latticeC} are in bijection.

Recall that $\LatticeC$ is a sublattice of the $c$-Cambrian lattice.  Remark~\ref{rem:emily temp} tells us that a permutation $w$ is a $c$-singleton if and only if $w$ lies in a longest chain in the $c$-Cambrian lattice, so it follows that the longest chains in the $c$-Cambrian lattice are precisely the longest chains in $\LatticeC$. 
Since $\LatticeC$ is a distributive lattice, it is a graded lattice, that is, all maximal chains have the same length; thus the longest chains in $\LatticeC$ are the same as maximal chains in $\LatticeC$.  
Thus, the sets \ref{lem:threesets:itm:latticeC} and \ref{lem:threesets:itm:Cambrian} are equal.

Proposition~\ref{prop:pattern avoidance characterization type A}
tells us that 
a permutation $w$ is a $c$-singleton if and only if $w$ avoids the patterns $31\underline{2}$,  
$\overline{2}31$,  
$13\underline{2}$, 
and 
$\overline{2}13$, so the sets \ref{lem:threesets:itm:latticeC} and \ref{lem:threesets:itm:patterns} are equal.
\end{proof}

\begin{remark}
For
$\cTamari=\rw{12 ... n}$, each set in Lemma~\ref{lem:threesets} is enumerated by 
the sequence \cite[A003121]{oeis}. 
The four patterns given in \ref{lem:threesets:itm:patterns} collapse to two patterns $132$ and $312$, so the set \ref{lem:threesets:itm:patterns} is the set of maximal chains in the lattice of $132,312$-avoiding permutations, as a sublattice of the weak order on the symmetric group. For example, there are exactly 12 maximal chains in this sublattice of the weak order on $A_4$. 
\end{remark}

\section{Background: polytopes}
\label{sec:backgroud.polytopes}

In this section, we provide background on order polytopes, $c$-Birkhoff polytopes, and unimodular equivalence of polytopes.
For more general treatment on polytopes, see~\cite{Z}.

\subsection{Order polytopes}\label{sec:order polytope}

We define an order polytope following \cite{Sta86}; however, it will be most convenient for us to use an opposite convention.

\begin{definition}
We define the \emph{order polytope} of a poset $P$ on the set $[n] = \{1,2,\dots,n\}$ with inequality denoted $\HeapLEQ$ by \[\ord(P) = \{\mathbf{x} \in \mathbb{R}^{n} \mid  0 \leq x_i \leq 1 \text{ for all } i \in [n]\text{ and } x_i \geq x_j \text{ whenever } i \HeapLEQ j\}\]
\end{definition}

The following facts about $\ord(P)$ will be useful for us later.

\begin{theorem}[\cite{Sta86}]\label{thm:BasicFactsOrderPolytope}
Let $P$ be a poset on the set $[n]$.
\begin{enumerate}[(1)]
\item (Section 1) The polytope $\ord(P)$ is $n$-dimensional.
\item (Corollary 1.3) The vertices of $\ord(P)$ are in bijection with order ideals of $I$. In particular, for each order ideal $I$, the corresponding vertex is $\sum_{i \in I} \mathbf{e}_i$. Note that $\sum_{i \in I} \mathbf{e}_i$ is the indicator function of~$I$.
\item \label{itm:thm:BasicFactsOrderPolytope:volume}
(Corollary 4.2) The volume of $\ord(P)$ is given by $e(P)/n!$ where $e(P)$ is the number of linear extensions of $P$.
\end{enumerate}
\end{theorem}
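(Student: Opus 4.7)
The plan is to prove each of the three parts independently, following the classical arguments for order polytopes.

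For part (1), I would exhibit an interior point of $\ord(P)$ in $\mathbb{R}^n$. Choose any linear extension $\pi$ of $P$ (one exists since $P$ is finite) and set $x_i = (n+1-\pi(i))/(n+1)$ for each $i \in [n]$. Then $0 < x_i < 1$ for all $i$, and whenever $i \HeapLessThan j$ in $P$ one has $\pi(i) < \pi(j)$, so $x_i > x_j$. Every defining inequality of $\ord(P)$ is therefore strict at $x$, so $\ord(P)$ has nonempty interior and is $n$-dimensional.

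For part (2), I would show both that each indicator $\chi_I = \sum_{i\in I}\mathbf{e}_i$ of an order ideal is a vertex and that every vertex arises this way. For the first direction, downward-closedness of $I$ under $\HeapLEQ$ forces $\chi_I(i) \geq \chi_I(j)$ when $i \HeapLEQ j$, so $\chi_I \in \ord(P)$; being a $\{0,1\}$-vector in a polytope contained in $[0,1]^n$, it is a vertex of the cube and hence of $\ord(P)$. For the converse, given $x \in \ord(P)$ and $t \in [0,1)$, define $I_t = \{i \in [n] : x_i > t\}$; the inequality $x_i \geq x_j$ for $i \HeapLEQ j$ guarantees that $I_t$ is an order ideal. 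Only finitely many distinct ideals $I_1,\ldots,I_k$ appear as $t$ varies, giving the convex decomposition
\[
x \;=\; \int_0^1 \chi_{I_t}\,dt \;=\; \sum_{j=1}^{k}\lambda_j\,\chi_{I_j},
\]
with $\lambda_j \geq 0$ summing to $1$. This writes every point of $\ord(P)$ as a convex combination of order-ideal indicators, so no other vertices exist.

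For part (3), I would use the canonical triangulation of $\ord(P)$ by linear extensions. For each linear extension $\pi$ of $P$, let
\[
\Delta_\pi \;=\; \{x \in [0,1]^n : 1 \geq x_{\pi^{-1}(1)} \geq x_{\pi^{-1}(2)} \geq \cdots \geq x_{\pi^{-1}(n)} \geq 0\}.
\]
The map $y_k \mapsto x_{\pi^{-1}(k)}$ realizes $\Delta_\pi$ as a unimodular image of the standard simplex $\{0 \leq y_n \leq \cdots \leq y_1 \leq 1\}$, so $\mathrm{vol}(\Delta_\pi) = 1/n!$. Because $\pi$ is a linear extension, the chain of inequalities defining $\Delta_\pi$ implies $x_i \geq x_j$ whenever $i \HeapLEQ j$, giving $\Delta_\pi \subseteq \ord(P)$. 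Conversely, any $x \in \ord(P)$ can be coordinate-sorted into a weakly decreasing sequence; any ordering used to break ties is automatically a linear extension of $P$, placing $x$ in the corresponding $\Delta_\pi$. Summing volumes would then give $e(P)/n!$, provided distinct simplices overlap in a set of measure zero. This overlap bound is the main technical point: I would argue that on the interior of each $\Delta_\pi$ the strict inequalities $x_{\pi^{-1}(1)} > \cdots > x_{\pi^{-1}(n)}$ determine $\pi$ uniquely, so $\Delta_\pi \cap \Delta_\sigma$ for $\pi \neq \sigma$ lies in a finite union of hyperplanes $\{x_i = x_j\}$ of codimension one, as required.
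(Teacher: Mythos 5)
This theorem is not proved in the paper at all: it is quoted from Stanley \cite{Sta86}, stated in the paper's reversed convention ($x_i \geq x_j$ whenever $i \HeapLEQ j$, so that vertices are indicator vectors of order ideals rather than filters). Your proposal is a correct reconstruction of the classical arguments in that convention: the interior point in (1), the ``layer-cake'' decomposition $x=\int_0^1 \chi_{I_t}\,dt$ in (2), and the triangulation of $\ord(P)$ into the $e(P)$ simplices $\Delta_\pi$ of volume $1/n!$ in (3) are exactly the standard (essentially Stanley's) proofs. The one statement you should repair is in part (3): it is \emph{not} true that any tie-breaking order is automatically a linear extension of $P$ --- if $x_i=x_j$ with $i \HeapLessThan j$, a tie-break placing $j$ before $i$ violates the linear-extension condition. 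What you actually need, and what is true, is that \emph{some} compatible tie-break exists: since $x\in\ord(P)$ gives $x_a\geq x_b$ whenever $a\HeapLessThan b$, the value class of $a$ is weakly earlier than that of $b$; so listing the value classes in decreasing order and, within each class, using any linear extension of the induced subposet produces a linear extension $\pi$ of $P$ with $x\in\Delta_\pi$. With that one-line fix the covering of $\ord(P)$ by the $\Delta_\pi$, the measure-zero overlaps, and the count $e(P)/n!$ all go through, and parts (1) and (2) are fine as written.
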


\begin{example}
Let $P$ be the poset whose Hasse diagram is drawn below.

\begin{center}
\begin{tikzpicture}[scale=0.7]
\node (1) at (0,0) {$\posetelt{1}$}; 
\node (2) at (2,0) {$\posetelt{2}$}; 
\node (3) at (1,1) {$\posetelt{3}$}; 
\node (4) at (0,2) {$\posetelt{4}$}; 
\node (5) at (2,2) {$\posetelt{5}$}; 
\node (6) at (1,3) {$\posetelt{6}$}; 

\draw [
black, thick, shorten <=-2pt, shorten >=-2pt] (1) -- (3);
\draw [
black, thick, shorten <=-2pt, shorten >=-2pt] (3) -- (2);
\draw [
black, thick, shorten <=-2pt, shorten >=-2pt] (3) -- (4);
\draw [
black, thick, shorten <=-2pt, shorten >=-2pt] (3) -- (5);
\draw [
black, thick, shorten <=-2pt, shorten >=-2pt] (6) -- (5);
\draw [
black, thick, shorten <=-2pt, shorten >=-2pt] (6) -- (4);
\end{tikzpicture}
\end{center}

The order polytope $\ord(P)$ is all points of the form $(x_1,\ldots,x_6)$ in $\mathbb{R}^6$ where $0 \leq x_i \leq 1$ for all $x_i$ and coordinates have the following relations:
\[
x_1 \geq x_3, \quad x_2 \geq x_3, \quad  x_3 \geq x_4, \quad x_3 \geq x_5, \quad x_4 \geq x_6, \quad x_5 \geq x_6\
\]

The vertices of $\ord(P)$ are listed below. 
\begin{multicols}{3}
\begin{itemize}
    \item $(0,0,0,0,0,0)$
    \item $(1,0,0,0,0,0)$
    \item $(0,1,0,0,0,0)$
    \item $(1,1,0,0,0,0)$
    \item $(1,1,1,0,0,0)$
    \item $(1,1,1,1,0,0)$
    \item $(1,1,1,0,1,0)$
    \item $(1,1,1,1,1,0)$
    \item $(1,1,1,1,1,1)$
\end{itemize}
\end{multicols}

There are four linear extensions of $P$ so by Stanley's result the volume of $\ord(P)$ is $4/6!$. 

\end{example}

\begin{remark}\label{rmk:vertex-bij}
Recall that Proposition~\ref{prop:vertex-bijection} defined a poset isomorphism between the lattice of $c$-singletons and the lattice of order ideals of $\Heapwnot$. As a consequence, the $c$-singletons are in bijection with the vertices of $\ord(\heap(\sort(w_0)))$.
\end{remark}

\subsection{$c$-Birkhoff polytopes}
\label{subsec.cbir}

The Birkhoff polytope is the set of doubly stochastic matrices in $\mathbb{R}^{(n+1)\times (n+1)}$, that is, \[
\left\{X \in \mathbb{R}^{(n+1)\times (n+1)} \mid x(i,j) \geq 0, \sum_{k=1}^{n+1}x(i,k) = 1, \sum_{k=1}^{n+1}x(k,j) = 1 \text{ for all } i,j \in [n+1]\right\}.
\]

Given $w \in S_{n+1}$, let $X(w)$ be the corresponding permutation matrix. 
Specifically, let $X(w)$ be the matrix with $1$'s in row $i$ and column $w(i)$ for all $i \in [n+1]$ and $0$'s everywhere else.
The Birkhoff polytope can be defined equivalently as the convex hull of all permutation matrices \cite{birkhoff1946three}.
We define a subpolytope of the Birkhoff poytope coming from the $c$-singletons.

\begin{definition}\label{def:cBirkhoff}
The \emph{$c$-Birkhoff polytope}, denoted $\bir(c)$, is the convex hull of $$\{X(w)\ \vert\ w \text{ is a }c\text{-singleton}\}.$$
We also define $\aff(c)$ to be the affine span of $\{X(w)\ \vert\ w\text{ is a }c\text{-singleton}\}$.
\end{definition}

\begin{example}\label{ex:cBirk132}
If $c = s_1s_3s_2$, $\bir(c)$ is the convex hull of the following 9 points in $\mathbb{R}^{16}$. For each $c$-singleton $w$, we list $w$ beneath $X(w)$.

\begin{center}
\begin{tabular}{ccccc}
$\begin{pmatrix}
1&0&0&0\\
0&1&0&0\\
0&0&1&0\\
0&0&0&1\\
\end{pmatrix}$&
$\begin{pmatrix}
0&1&0&0\\
1&0&0&0\\
0&0&1&0\\
0&0&0&1\\
\end{pmatrix}$&
$\begin{pmatrix}
1&0&0&0\\
0&1&0&0\\
0&0&0&1\\
0&0&1&0\\
\end{pmatrix}$&
$\begin{pmatrix}
0&1&0&0\\
1&0&0&0\\
0&0&0&1\\
0&0&1&0\\
\end{pmatrix}$&
$\begin{pmatrix}
0&1&0&0\\
0&0&0&1\\
1&0&0&0\\
0&0&1&0\\
\end{pmatrix}$\\
Id & $s_1$ & $s_3$&$s_1s_3$&$s_1s_3s_2$\\
$\begin{pmatrix}
0&0&0&1\\
0&1&0&0\\
1&0&0&0\\
0&0&1&0\\
\end{pmatrix}$&
$\begin{pmatrix}
0&1&0&0\\
0&0&0&1\\
0&0&1&0\\
1&0&0&0\\
\end{pmatrix}$&
$\begin{pmatrix}
0&0&0&1\\
0&1&0&0\\
0&0&1&0\\
1&0&0&0\\
\end{pmatrix}$&
$\begin{pmatrix}
0&0&0&1\\
0&0&1&0\\
0&1&0&0\\
1&0&0&0\\
\end{pmatrix}$&\\
$s_1s_3s_2s_1$ & $s_1s_3s_2s_3$ & $s_1s_3s_2s_1s_3$ &  $s_1s_3s_2s_1s_3s_2$ & \\
\end{tabular}
\end{center}

\end{example}

\begin{remark}\label{rem:vertices of birk c}
Note the Birkhoff polytope does not have any interior lattice points, and so neither do its subpolytopes. This means the vertices of $\bir(c)$ are exactly the permutation matrices $\{ X(w) \ \vert\ w \text{ is  a }c\text{-singleton}\}$.
\end{remark}

\subsection{Unimodular equivalence of polytopes} 
\label{subsec.unimodular}

Given a polytope $P$, let $\aff(P)$ denote the affine span of $P$. 
For two integral polytopes $P$ in $\mathbb{R}^m$ and $Q$ in $\mathbb{R}^n$,
we say that $P$ and $Q$ are \emph{unimodularly equivalent} if there exists an affine  transformation $T\colon \mathbb{R}^m \to \mathbb{R}^n$ whose restriction to $P$ is a bijection $P \to Q$ and whose restriction to
$\aff(P) \cap \mathbb{Z}^m$ is a bijection 
$\aff(P) \cap \mathbb{Z}^m\to \aff(Q)\cap \mathbb{Z}^n$.
We refer to such a map $T$ as a \emph{unimodular transformation on $P$}. 
In particular, when $m = n$, two integral polytopes $P$ and $Q$ are unimodularly equivalent if and only if we can find an affine map  with determinant $\pm1$ that sends $P$ to $Q$.

Our main result given in Theorem~\ref{thm:main} is that the $c$-Birkhoff polytope $\bir(c)$ is unimodularly equivalent to the order polytope $\ord(\heap(\sort(w_0)))$ of $\heap(\sort(w_0))$. 

Polytopes that are unimodularly equivalent share many similar properties, including the same volume and Ehrhart polynomial. 
We also remark that unimodular equivalence is often referred to as integral equivalence in the literature
(for example, see \cite{MMS19,AStoryOfFlowAndOrderPolytopes19}).

\section{Relations on $\aff(c)$}\label{Sec:Relations}

The goal of this section is to describe linear relations on elements of $\aff(c)$, which will later be useful to describe a projection on $\aff(c)$. We will begin with a set of relations which also holds in the affine span of all permutation matrices. 

\begin{lemma}[Row and column relations]\label{lem:RowAndCol}
Let $X$ be a point in $\aff(c)$. Then, for each $i \in [n+1]$, we have \[
\sum_{j=1}^{n+1} X(i,j) = 1
\]
and for each $j \in [n+1]$, we have \[
\sum_{i=1}^{n+1}X(i,j) = 1.
\]
\end{lemma}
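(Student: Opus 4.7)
The plan is to exploit the fact that $\aff(c)$ is by definition the affine hull of the permutation matrices $\{X(w) : w \text{ is a } c\text{-singleton}\}$, so every point in $\aff(c)$ is an affine combination of such matrices. Since the row-sum and column-sum functionals $X \mapsto \sum_j X(i,j)$ and $X \mapsto \sum_i X(i,j)$ are linear, it suffices to check the claimed identities on each generator $X(w)$ and then conclude by affinity.

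First I would fix $i \in [n+1]$ and observe that for any permutation $w \in S_{n+1}$, the permutation matrix $X(w)$ has exactly one nonzero entry in row $i$, namely a $1$ in column $w(i)$. Hence $\sum_{j=1}^{n+1} X(w)(i,j) = 1$ for every permutation, in particular for every $c$-singleton. The identical reasoning on columns, using that $X(w)$ has exactly one $1$ in each column, gives $\sum_{i=1}^{n+1} X(w)(i,j) = 1$.

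Now write an arbitrary $X \in \aff(c)$ as $X = \sum_{k} \lambda_k X(w_k)$, where the $w_k$ range over some finite set of $c$-singletons and $\sum_k \lambda_k = 1$. Interchanging the finite sums and applying the per-generator identity just established yields
\begin{align*}
\sum_{j=1}^{n+1} X(i,j) &= \sum_{j=1}^{n+1} \sum_k \lambda_k X(w_k)(i,j) = \sum_k \lambda_k \sum_{j=1}^{n+1} X(w_k)(i,j) = \sum_k \lambda_k = 1,
\end{align*}
and the analogous computation for columns completes the proof. No obstacle is expected here; this lemma is essentially a sanity check that the row and column constraints defining the ambient Birkhoff polytope persist on the affine hull, and the real content of Section~\ref{Sec:Relations} will be in the additional relations that cut $\aff(c)$ down further inside the Birkhoff affine subspace.
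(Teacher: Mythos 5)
Your proposal is correct and follows the same route as the paper: verify the row and column sum identities on each permutation matrix $X(w)$ and then transfer them to all of $\aff(c)$ because affine combinations (coefficients summing to $1$) preserve any affine-linear relation. The paper states this argument more tersely, but the content is identical.
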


\begin{proof}
These relations hold for each $X(w), w \in A_n$, so in particular they hold for the $c$-singletons as well. It then follows that the relations will hold for any point in $\aff(c)$.
\end{proof}

We will now produce further relations which come from the  pattern-avoidance characterization of $c$-singletons. We will do this by exhibiting properties of $c$-singletons and then translating these into relations on $\aff(c)$.

Recall from Section \ref{subsec:PatternAvoidanceCsingletons} that, from $c$, we partition of $\{2,\ldots,n\}$ into lower-barred numbers $\underline{[2,n]} = \{d_1 < \cdots <d_r\}$ 
and upper-barred numbers $\overline{[2,n]} = \{u_1 <\cdots <u_s\}$. 
In general, we define $\overline{[a,b]}:=\overline{[2,n]}\cap[a,b]$ and $\underline{[a,b]}:= \underline{[2,n]}\cap[a,b]$.

\begin{lemma}\label{lem:zero_relations_on_permutation}
Let $w$ be a $c$-singleton.  Given an upper-barred number $u$, if $1 \leq i \leq \min(u-1,n+1-u)$, then we cannot have $w(i) = u$. Given a lower-barred number $d$, if $\max(d+1,n+3-d) \leq i \leq n+1$, then we cannot have $w(i) = d$.  
\end{lemma}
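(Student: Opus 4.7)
My plan is to apply the pattern-avoidance reformulation given in Corollary \ref{cor:PatternAvoidanceRephrased} directly, treating the two halves of the lemma symmetrically via a counting argument on how many entries must lie to one side of a specified position.

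For the upper-barred case, suppose for contradiction that $w$ is a $c$-singleton with $w(i) = u$ for some upper-barred $u$ and some $i$ with $1 \leq i \leq \min(u-1,\, n+1-u)$. By Corollary \ref{cor:PatternAvoidanceRephrased}, since $u \in \overline{[2,n]}$, either every element of $[1, u-1]$ appears before position $i$, or every element of $[u+1, n+1]$ appears before position $i$. The first option requires at least $u-1$ entries in positions $1, \ldots, i-1$, forcing $i - 1 \geq u - 1$, i.e.\ $i \geq u$, which contradicts $i \leq u-1$. The second option requires at least $n+1-u$ entries in positions $1, \ldots, i-1$, forcing $i \geq n+2-u$, which contradicts $i \leq n+1-u$. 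Either way we get a contradiction.

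The lower-barred case is completely analogous, using the fact that for $d \in \underline{[2,n]}$, Corollary \ref{cor:PatternAvoidanceRephrased} says either every element of $[1, d-1]$ appears after $d$ or every element of $[d+1, n+1]$ appears after $d$. If $w(i) = d$, then there are $n+1-i$ positions after $i$; the two options require $n+1-i \geq d-1$ or $n+1-i \geq n+1-d$, i.e.\ $i \leq n+2-d$ or $i \leq d$. Both are violated precisely when $i \geq \max(d+1,\, n+3-d)$, yielding the contradiction.

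There is essentially no obstacle here: the lemma is a direct translation of the ``all-before'' / ``all-after'' dichotomy into a counting inequality on the number of available positions. The only thing to be careful about is bookkeeping with $\min$ and $\max$ so that the bounds $u-1$, $n+1-u$, $d-1$, $n+1-d$ are matched correctly to the two cases of the pattern-avoidance criterion. Since each half only uses one of the two symbols $\overline{u}$ or $\underline{d}$, the two statements can be proved independently and the argument will be short.
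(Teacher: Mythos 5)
Your proof is correct and follows the same route as the paper, which simply states that the lemma "follows immediately from Corollary \ref{cor:PatternAvoidanceRephrased}"; you have merely written out the position-counting details that the paper leaves implicit. The bookkeeping with $\min$ and $\max$ in both cases checks out.
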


\begin{proof}
The statement follows immediately from Corollary \ref{cor:PatternAvoidanceRephrased}. 
\end{proof}

\begin{example}\label{ex:zero_relations1}
We demonstrate  Lemma~\ref{lem:zero_relations_on_permutation} for some specific Coxeter elements.
\begin{enumerate}[(a)]
\item\label{ex:zero_relations:itm:a} If $c =  s_1s_3s_2$, then $2$ is a lower-barred number. Since $\max(2+1,6-2) = 4$, we see that we cannot have $w(4) = 2$. Indeed, if the one-line notation of a permutation in $S_4=A_3$ ends with $2$, then since both 1 and 3 will appear before $2$, we cannot avoid both $13\underline{2}$ and $31\underline{2}$. Similarly, since $3$ is an upper-barred number, a $c$-singleton $w$ will never have $w(1)  = 3$.    
    \item If $c = s_1s_2 \cdots s_n$, then for all $i > \frac{n+2}{2}$, we will never have $w(i) = j$ where $i > j$ and $i+j > n+2$. The fact that all numbers in $[2,n]$ are lower-barred for this choice of $c$ explains why all of our restrictions concern $w(i)$ for $i > \frac{n+2}{2}$. 
\end{enumerate}
\end{example}

Lemma~\ref{lem:zero_relations_on_permutation} extends to the following statement about $\aff(c)$.

\begin{proposition}[Zero relations]\label{prop:ZeroRelationsOnMatrix}
Let $X$ be a point in $\aff(c)$.  Then we have the following.
 \begin{itemize} 
 \item For each upper-barred number $u$, we have $X(i,u) = 0$ for all $1 \leq i \leq \min(u-1,n+1-u)$.
 \item For each lower-barred number $d$, we have $X(i,d) = 0$ for all $\max(d+1,n+3-d) \leq i \leq n+1$.
\end{itemize}
\end{proposition}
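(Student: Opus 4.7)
The plan is to lift the permutation-level statement of Lemma \ref{lem:zero_relations_on_permutation} to the affine span by using the fact that the value of a fixed matrix entry is a linear (in particular affine) functional on $\mathbb{R}^{(n+1)\times(n+1)}$, and such functionals are determined by their values on any spanning set.

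First I would note that by Definition \ref{def:cBirkhoff}, any $X \in \aff(c)$ can be written as an affine combination
\[
X \;=\; \sum_{w} \lambda_w\, X(w), \qquad \sum_w \lambda_w = 1,
\]
where $w$ ranges over the $c$-singletons and the $\lambda_w \in \mathbb{R}$. Consequently, for every pair of indices $(i,j)$,
\[
X(i,j) \;=\; \sum_{w} \lambda_w\, X(w)(i,j).
\]
So to prove that $X(i,j) = 0$ for some fixed $(i,j)$, it suffices to show that $X(w)(i,j) = 0$ for every $c$-singleton $w$, since then the right-hand side is $\sum_w \lambda_w \cdot 0 = 0$ regardless of the coefficients $\lambda_w$.

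Next I would invoke Lemma \ref{lem:zero_relations_on_permutation} directly. For an upper-barred number $u$ and an index $i$ with $1 \leq i \leq \min(u-1,\,n+1-u)$, the lemma asserts that no $c$-singleton $w$ satisfies $w(i) = u$. Recalling that $X(w)(i,u) = 1$ precisely when $w(i) = u$ and equals $0$ otherwise, this means $X(w)(i,u) = 0$ for every $c$-singleton $w$. The same argument applied to the second bullet of Lemma \ref{lem:zero_relations_on_permutation} handles the lower-barred case: for any lower-barred number $d$ and any $i$ with $\max(d+1,\,n+3-d) \leq i \leq n+1$, no $c$-singleton $w$ has $w(i) = d$, so $X(w)(i,d) = 0$ for all such $w$. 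Combined with the affine-combination observation above, both claimed relations follow immediately.

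There is no real obstacle here: the substantive content has already been done in Lemma \ref{lem:zero_relations_on_permutation} (which in turn is a direct translation of the pattern-avoidance criterion of Corollary \ref{cor:PatternAvoidanceRephrased}). The only thing to make explicit in the proof is the passage from the vertex set to its affine hull, which is standard and uses nothing beyond linearity of coordinate projections.
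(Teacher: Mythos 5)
Your proof is correct and follows essentially the same route as the paper: both deduce the vanishing of these entries for every $c$-singleton permutation matrix from Lemma \ref{lem:zero_relations_on_permutation} and then pass to $\aff(c)$ by linearity of the coordinate functionals. You merely spell out the affine-combination step that the paper leaves implicit.
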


\begin{proof}
The statement is true for all $X(w)$ where $w$ is a $c$-singleton by  Lemma~\ref{lem:zero_relations_on_permutation}. It follows that it is true for all points in $\aff(c)$, the affine span of these $X(w)$'s.
\end{proof}

\begin{example}
Consider $c=s_1s_3s_2$ as in Example~\ref{ex:cBirk132} and Example~\ref{ex:zero_relations1}\ref{ex:zero_relations:itm:a}.  Since 2 is a lower-barred number, the above proposition tells us that $X(4,2)=0$ for any point $X\in \aff(c)$.  Similarly, since 3 is an upper-barred number, $X(1,3) = 0$ for any point $X\in \aff(c)$.  Note that for any matrix $X(w)$ in Example~\ref{ex:cBirk132} those two entries are always 0. Moreover, for any other pair $(i,j)$, we can find a matrix $X(w)$ in Example~\ref{ex:cBirk132} such that $X(w)(i,j) = 1$.
\end{example}

\begin{example}\label{ex:zero_relations}
Let $c=\rw{1 432 65 7} \in A_7$, 
with $\underline{\lowerbarletter_1}, \underline{\lowerbarletter_2}, \underline{\lowerbarletter_3} = \underline{2}, \underline{5}, \underline{7}$ and $\overline{\upperbarletter_1} , \overline{\upperbarletter_2}, \overline{\upperbarletter_3} = \overline{3}, \overline{4}, \overline{6}$.    
In Figure~\ref{fig.zero_relations} (left), we cross out the entries guaranteed to be $0$ by 
Proposition~\ref{prop:ZeroRelationsOnMatrix} in the permutation matrix of a $c$-singleton for $c = [1432657]$.
Figure 2 (right) shows the permutation matrix for $s_1 s_4 s_3 s_2$, a $c$-singleton for this $c$. Note that the crossed out entries on the left are all $0$ on the right.
\end{example}

\begin{figure}[h]
\begin{minipage}{0.45\textwidth}
\begin{tabular}{|c|c|c|c|c|c|c|c|}
\hline
  \text{      } &  & $\X$  & $\X$ &  & $\X$ &  &  \text{      } \\ \hline
  &    & $\X$  & $\X$ &  & $\X$ & &  \\ \hline
  &    &    & $\X$ &  &  &  &  \\ \hline
  &    &    &   &  &  &  &  \\ \hline
  &    &    &   &    &  &  &  \\ \hline
  &   &    &   &  $\X$  &   &  &  \\ \hline
 &  & &   &  $\X$  &   &    &  \\ \hline
 & $\X$  &  &  &  $\X$  &   & $\X$   &    \\ \hline
\end{tabular}
\end{minipage}\hfill
\begin{minipage}{0.45\textwidth}
\begin{tabular}{|c|c|c|c|c|c|c|c|}
\hline
$0$ & ${1}$ & $0$ & $0$ & ${0}$ & $0$ & ${0}$ & ${0}$\\ \hline
$0$ & $0$ & $0$ & $0$ & ${1}$ & $0$ & ${0}$ & ${0}$ \\ \hline
$1$ & $0$ & $0$ & $0$ & ${0}$ & ${0}$ & ${0}$ & ${0}$ \\ \hline
$0$ & $0$ & $1$ & $0$ & ${0}$ & ${0}$& ${0}$ & ${0}$ \\ \hline
$0$ & $0$ & $0$ & $1$ & $0$ & ${0}$ & ${0}$ & ${0}$ \\ \hline
$0$ & ${0}$ & $0$ & $0$ & $0$ & $1$ & ${0}$ & ${0}$\\ \hline
${0}$ & ${0}$ & ${0}$& $0$ & $0$ & $0$ & $1$ & ${0}$\\ \hline
${0}$ & $0$ & ${0}$ & ${0}$ & $0$ & $0$ & $0$ & $1$ \\ \hline
\end{tabular}
\end{minipage}
\caption{Left: 
The zero relations of  Proposition~\ref{prop:ZeroRelationsOnMatrix} in the permutation matrix of a $c$-singleton for $c = [1432657]$. 
Right: The permutation matrix for $s_1 s_4 s_3 s_2$, a $c$-singleton for this $c$.
}\label{fig.zero_relations}
\end{figure}

Let $\overline{\upsilon}_c$ denote the cardinality of the upper-barred numbers in 
$\overline{[2, \frac{n+1}{2}}]$. 
We define $\nulabel_c: [1,n+1] \to [-\overline{\upsilon}_c, n-\overline{\upsilon}_c]$ as the bijection given in the table below. 

\begin{center}
\begin{tabular}{*{14}{|c}|}
\hline
$i$ & $u_{\overline{\upsilon}_c}$ & $u_{\overline{\upsilon}_c-1}$  & \dots & $u_1$ & $d_0$ & $d_1$ & \dots & $d_r$ & $d_{r+1}$ & $u_s$ & $u_{s-1}$ & \dots & $u_{\overline{\upsilon}_c+1}$\\\hline
$\nulabel_c(i)$& $-\overline{\upsilon}_c$ & $1-\overline{\upsilon}_c$ &\dots&-1&0&1&\dots&$r$&$r+1$&$r+2$&$r+3$ & \dots & $n-\overline{\upsilon}_c$\\ \hline
\end{tabular}
\end{center}

Note that if $j > \overline{\upsilon}_c$, then $\nulabel_c(u_j) = n+1-j = s+r+2 - j$. These are equivalent since $s+r = n-1$.

\begin{example}\label{ex:nulabel}
$ $
\begin{enumerate}[(1)]
\item\label{ex:nulabel:itm1} First, let $c = \rw{1432657}$, so that $\underline{[2,7]} = \{2,5,7\}$, $\overline{[2,7]} = \{3,4,6\}$, and $\overline{\upsilon}_c = 2$. We demonstrate the application of $\nulabel_c$ to $[1,8]$.
\begin{center}
\begin{tabular}{*{9}{|c}|}
\hline
$i$&4&3&1&2&5&7&8&6\\\hline
$\nulabel_c(i)$&-2&-1&0&1&2&3&4&5\\ \hline
\end{tabular}
\end{center}
In the usual ordering for $i$, the table for $\nulabel_c$ is the following.
\begin{center}
\begin{tabular}{*{9}{|c}|}
\hline
$i$&1&2&3&4&5&6&7&8\\\hline
$\nulabel_c(i)$&0&1&-1&-2&2&5&3&4\\ \hline
\end{tabular}
\end{center}
\item Now we consider a case with $n$ even. 
Let 
 $c = \rw{1\,4\,3\,2\,5\,7\,6\,9\,8(10)}$,   
so that $\overline{[2,10]} = \{3,4,7,9\}$ and $\underline{[2,10]} = \{2,5,6,8,10\}$. 
We see $\overline{\upsilon}_c = 2$. We demonstrate the application of $\nulabel_c$ to $[1,11]$.
\begin{center}
\begin{tabular}{*{12}{|c}|}
\hline
$i$&4&3&1&2&5&6&8&10&11&9&7\\\hline
$\nulabel_c(i)$&-2&-1&0&1&2&3&4&5&6&7&8\\ \hline
\end{tabular}
\end{center}
In the usual ordering for $i$, the table for $\nulabel_c$ is the following.
\begin{center}
\begin{tabular}{*{12}{|c}|}
\hline
$i$&1&2&3&4&5&6&7&8&9&10&11\\\hline
$\nulabel_c(i)$&0&1&-1&-2&2&3&8&4&7&5&6\\ \hline
\end{tabular}
\end{center}
\end{enumerate}
\end{example}

We begin by collecting some small, technical results which will aid our proof of the next family of relations. 

\begin{lemma}\label{lem:content_of_interval}
Given $1 \leq i \leq r$, the interval $(1,d_i)$, as a set, is equal to $\{d_1,\ldots,d_{i-1}\} \cup \{u_1,\ldots,u_{d_i-i-1}\}$ and the interval $(d_i,n+1)$ is equal to $\{d_{i+1},\ldots,d_r\} \cup \{u_{d_i-i},\ldots,u_{s}\}$. 

Similarly, given $1 \leq i \leq s$, the interval $(1,u_i)$, as a set, is equal to $\{u_1,\ldots,u_{i-1}\} \cup \{d_1,\ldots,d_{u_i-i-1}\}$ and the interval $(u_i,n+1)$ is equal as a set to $\{u_{i+1},\ldots,u_s\} \cup \{d_{u_i-i},\ldots,d_r\}$.
\end{lemma}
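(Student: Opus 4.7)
The plan is to prove this by a straightforward counting argument, using only the fact that $\{d_1 < \cdots < d_r\}$ and $\{u_1 < \cdots < u_s\}$ are two increasing sequences whose disjoint union is $[2,n]$, so that $r + s = n-1$.

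First I would fix $i$ with $1 \leq i \leq r$ and consider the integer interval $(1, d_i) = \{2, 3, \ldots, d_i - 1\}$, which has $d_i - 2$ elements. Because $d_1 < d_2 < \cdots < d_r$ and $d_i$ is the $i$-th term in this increasing sequence, the lower-barred numbers strictly less than $d_i$ are exactly $\{d_1, \ldots, d_{i-1}\}$, contributing $i - 1$ elements. The remaining $(d_i - 2) - (i - 1) = d_i - i - 1$ elements of $(1, d_i)$ must therefore be upper-barred, and since the $u_j$ are also listed in increasing order they must be precisely $\{u_1, \ldots, u_{d_i - i - 1}\}$. This proves the first equality. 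The identity for $(d_i, n+1)$ then follows immediately by taking complements in $[2,n]$: the lower-barred elements above $d_i$ are those $d_j$ with $j > i$, i.e.\ $\{d_{i+1}, \ldots, d_r\}$, and the upper-barred elements above $d_i$ are the complement of $\{u_1, \ldots, u_{d_i - i - 1}\}$ in $\{u_1, \ldots, u_s\}$, namely $\{u_{d_i - i}, \ldots, u_s\}$.

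The two statements about an upper-barred number $u_i$ are proved by the same reasoning with the roles of the two sequences exchanged; no step uses any additional property of either subset beyond its being an increasing enumeration of one part of the partition. There is no substantive obstacle here—the identities are bookkeeping forced by the partition structure. The only thing worth checking carefully is boundary behavior: at $i = 1$ or $i = r$ (and the analogous extremes for $u_i$) one of the listed subsets degenerates, and one should confirm that the index ranges reduce correctly in these cases, which they do once one unpacks the conventions $d_0 = 1$ and $d_{r+1} = n+1$ fixed earlier.
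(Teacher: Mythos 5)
Your proposal is correct and matches the paper's proof: the paper likewise counts the $d_i-2$ elements of $(1,d_i)$, notes that exactly $i-1$ of them are the lower-barred numbers $d_1,\ldots,d_{i-1}$, concludes the remaining $d_i-i-1$ are the initial upper-barred numbers, and handles the other three cases as "similar" (your complement argument). No issues.
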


\begin{proof}
The interval $(1,d_i)$ must contain the $i-1$ lower-barred numbers $d_1,\ldots,d_{i-1}$. There are $d_i-2$ total numbers in this interval, so the remaining $d_i-i-1$ must be upper-barred numbers. The proof for the other three cases is similar.
\end{proof}
 
\begin{lemma}\label{lem:how_first_columns_are_labeled}
For all $y \leq \frac{n+1}{2}$, the set $\{\nulabel_c(1),\ldots,\nulabel_c(y)\}$ is a connected interval of size $y$.
Similarly, if $y \geq \frac{n+1}{2}$, the set $\{\nulabel_c(y),\ldots,\nulabel_c(n+1)\}$ is a connected interval of size $n+2-y$.

If $\frac{n+2}{2}$ is an integer and is lower-barred, then both statements are true for $y=\frac{n+2}{2}$. 
\end{lemma}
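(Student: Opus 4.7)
The plan is to identify the elements of $[1,y]$ (respectively $[y,n+1]$) by counting lower-barred and upper-barred integers inside them, and then apply the piecewise definition of $\nulabel_c$ to read off the image set directly. The proof is essentially a direct bookkeeping argument rather than a conceptual one, relying only on the definitions of $\nulabel_c$ and $\overline{\upsilon}_c$.

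For the first statement, I would let $a$ be the number of lower-barred integers in $[2,y]$ and $b$ the number of upper-barred integers in $[2,y]$, so that $a+b=y-1$ and the elements of $[1,y]$ are exactly $1=d_0,d_1,\ldots,d_a,u_1,\ldots,u_b$. The crucial observation is that $y\leq(n+1)/2$ forces $u_b\leq(n+1)/2$, hence $b\leq\overline{\upsilon}_c$, so every upper-barred index $j$ appearing in this list satisfies $j\leq\overline{\upsilon}_c$. By the definition of $\nulabel_c$, this gives $\nulabel_c(u_j)=-j$ for $1\leq j\leq b$ and $\nulabel_c(d_i)=i$ for $0\leq i\leq a$. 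The image is therefore $\{-b,-b+1,\ldots,0,1,\ldots,a\}$, a consecutive interval of size $a+b+1=y$.

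For the second statement, I would run the symmetric argument on $[y,n+1]$. Writing $a'$ and $b'$ for the number of lower-barred and upper-barred integers in $[y,n]$, the elements of $[y,n+1]$ are $d_{r-a'+1},\ldots,d_r,d_{r+1}=n+1,u_{s-b'+1},\ldots,u_s$. The hypothesis $y\geq(n+1)/2$ now forces each such $u_j$ to satisfy $j>\overline{\upsilon}_c$, so the definition of $\nulabel_c$ gives $\nulabel_c(u_j)=n+1-j$, and the image becomes $\{r-a'+1,\ldots,r,r+1,r+2,\ldots,r+1+b'\}$, a consecutive interval of size $n+2-y$. The final clause about $y=(n+2)/2$ being lower-barred (which forces $n$ even) then follows from the same two computations: in that case $(n+2)/2$ contributes as a lower-barred $d_i$ to both intervals $[1,(n+2)/2]$ and $[(n+2)/2,n+1]$ without disturbing the index bounds on the upper-barred entries, so both statements go through.

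The main obstacle, and essentially the only technical point, is confirming that the hypothesis on $y$ funnels every upper-barred element of the relevant interval into the correct branch of the two-branch definition of $\nulabel_c$ (either $j\leq\overline{\upsilon}_c$ or $j>\overline{\upsilon}_c$). Once the inequality $b\leq\overline{\upsilon}_c$ is verified in the first direction, and the analogous strict bound $s-b'+1>\overline{\upsilon}_c$ in the second direction, the remainder is direct enumeration and the consecutive-interval conclusion drops out.
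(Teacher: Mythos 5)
Your proof is correct and takes essentially the same route as the paper's: the paper likewise just counts the lower- and upper-barred numbers in $[1,y]$ (resp.\ $[y,n+1]$) and reads the image off the definition of $\nulabel_c$, asserting it equals $[-a,b]$ (resp.\ $[(r+1)-b,(r+1)+a]$), so you are merely spelling out the branch-checking that the paper leaves implicit. One shared caveat: when $y=\frac{n+1}{2}$ is an integer that happens to be upper-barred, your claim that every upper-barred $u_j\in[y,n+1]$ has index $j>\overline{\upsilon}_c$ fails (and the stated interval property itself fails there, e.g.\ for $c=\rw{1432657}$ and $y=4$ the set $\{\nulabel_c(4),\ldots,\nulabel_c(8)\}=\{-2,2,3,4,5\}$ is not connected), but the paper's proof has the identical gap at this boundary case, which is never invoked in the later applications.
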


\begin{proof}
If $y \leq \frac{n+1}{2}$, then it follows from the definition of $\nulabel_c$ that $\{\nulabel_c(1),\ldots,\nulabel_c(y)\} = [-a,b]$ where $a= \vert \overline{[1,y]} \vert$ and $b = \vert \underline{[1,y]} \vert$. Since $a+b = y-1$, the claim follows. 

Now suppose $y \geq \frac{n+1}{2}$. Then the set $\{\nu_c(y),\ldots,\nu_c(n+1)\}$ is equal to the interval $[(r+1)-b, (r+1)+a]$ where $a= \vert \overline{[y,n+1]} \vert$ and $b = \vert \underline{[y,n+1]} \vert$. Since $a+b = (n+1)-y$ here, the claim follows. 

If $n+1$ is odd and $\frac{n+2}{2}$ is a lower-barred number, then the same arguments hold for $\{\nulabel_c(1),\ldots$, $\nulabel_c(\frac{n+2}{2})\}$ and $\{\nulabel_c(\frac{n+2}{2}),\ldots$, $\nulabel_c(n+1)\}$.
\end{proof}

We are now ready to prove our second main result of this section, which shows restrictions on where a $c$-singleton can send the first $y$ numbers in $[n+1]$, for small $y$. This shows that a $c$-singleton $w$ rearranges the numbers in $[1,n+1]$ in such a way that respects the first condition in Lemma \ref{lem:how_first_columns_are_labeled}. 

\begin{lemma}\label{lem:first_entries_distinct_equiv_classes}
Let $w$ be a $c$-singleton. For $2 \leq y \leq \frac{n+1}{2}$ and any integer $z$, there is exactly one value in $\{\nulabel_c(w(1)),\ldots,\nulabel_c(w(y))\}$ which is equivalent to $z$ modulo $y$. If $\frac{n+2}{2}$ is a lower-barred number, the same statement holds for $y=\frac{n+2}{2}$.
\end{lemma}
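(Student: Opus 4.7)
My plan is to establish the stronger claim that $\nulabel_c(\{w(1), \ldots, w(y)\})$ is a connected interval of $y$ consecutive integers in the image $[-\overline{\upsilon}_c, n - \overline{\upsilon}_c]$ of $\nulabel_c$. The lemma then follows immediately since any $y$ consecutive integers form a complete set of residues modulo $y$. Set $V = \{w(1),\ldots,w(y)\}$.

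I would first extract two structural constraints on $V$ from \Cref{cor:PatternAvoidanceRephrased}. For an upper-barred $u \in V$ at some position $\leq y$, the pattern condition that either $[1,u-1]$ or $[u+1,n+1]$ appears entirely before $u$, combined with the cardinality bound $|V| = y \leq \frac{n+1}{2}$, forces $V \supseteq [1,u]$ when $u$ is small upper-barred and $V \supseteq [u,n+1]$ when $u$ is large upper-barred (the alternative containment would require $|V| > y$). For a lower-barred $d \notin V$ sitting at a position $> y$, the corresponding pattern condition forces $V \subseteq [1,d-1]$ or $V \subseteq [d+1,n+1]$; equivalently, every lower-barred number in $[\min V, \max V]$ must lie in $V$.

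Next I would let $u_p$ denote the largest small upper-barred in $V$ (taking $p=0$ when none exists) and $u_{p'}$ the smallest large upper-barred in $V$ (taking $p' = s+1$ when none exists). Combining the two constraints pins down $V$ as the disjoint union of $[1,u_p]$, the consecutive block $\{d_{u_p-p},\ldots,d_{u_{p'}-p'-1}\}$ of lower-barred numbers between $u_p$ and $u_{p'}$, and $[u_{p'},n+1]$, with natural degenerations at the boundary cases $p=0$ or $p' = s+1$. Applying \Cref{lem:content_of_interval} to compute $\nulabel_c$ piecewise then yields the consecutive intervals $[-p,\, u_p-p-1]$, $[u_p-p,\, u_{p'}-p'-1]$, and $[u_{p'}-p',\, n+1-p']$, whose union telescopes to $[-p,\, n+1-p']$, a single connected interval of length $n+2+p-p' = |V| = y$.

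The main obstacle will be the bookkeeping in the case split: keeping track of how the endpoints $1, n+1$ enter $V$ in the degenerate cases $p=0$ or $p'=s+1$, and verifying the cardinality identity in each sub-case. The edge case $y = \frac{n+2}{2}$ when $\frac{n+2}{2}$ is lower-barred should go through identically: the pattern dichotomy $u \leq y$ or $u \geq n+2-y$ collapses to $u \leq y$ or $u \geq y$, but the cardinality argument still separates small from large upper-barred, while the lower-barred hypothesis rules out the borderline value $u = y$ being upper-barred, so the same interval formula applies.
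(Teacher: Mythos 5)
Your proposal is correct, and it takes a genuinely different route from the paper's proof. The paper argues by contradiction on pairs: assuming $\nulabel_c(w(a))\equiv\nulabel_c(w(b))\pmod{y}$ for two positions $a<b\le y$, it splits into three cases (both lower-barred, mixed, both upper-barred) and in each case uses \Cref{cor:PatternAvoidanceRephrased} together with \Cref{lem:content_of_interval} to exhibit at least $ky+1$ values that would have to occupy the first $y$ positions; pigeonhole then upgrades ``no repeated residue'' to ``each residue exactly once,'' and the case $y=\tfrac{n+2}{2}$ needs the extra observation that $w^{-1}(\tfrac{n+2}{2})\le y$. You instead prove the stronger structural fact that $\nulabel_c(\{w(1),\dots,w(y)\})$ is a block of $y$ consecutive integers --- the natural extension of \Cref{lem:how_first_columns_are_labeled} from the identity to an arbitrary $c$-singleton --- after which the residue statement is automatic. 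Your two constraints are exactly the right consequences of \Cref{cor:PatternAvoidanceRephrased}: an upper-barred $u$ in the prefix forces $[1,u]\subseteq V$ (hence $u\le y$, so $\nulabel_c(u)<0$) or $[u,n+1]\subseteq V$ (hence $u\ge n+2-y$, so $\nulabel_c(u)>r+1$), while a missing lower-barred number pushes $V$ entirely to one side, so $V\cap\{1,d_1,\dots,d_r,n+1\}$ is consecutive in the chain $1<d_1<\dots<d_r<n+1$; since small upper-barred elements drag in $1$ (the value $0$) and large ones drag in $n+1$ (the value $r+1$), the three pieces glue into one interval, and your endpoint computation $[-p,\,n+1-p']$ of length $n+2+p-p'=y$ checks out against \Cref{lem:content_of_interval} in the non-degenerate case (when $p=0$ or $p'=s+1$ the interval is instead a consecutive run inside $[0,r+1]$ glued to at most one outer block, which is the routine bookkeeping you flag). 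What your route buys: a single uniform argument with no modular counting, and the boundary case $y=\tfrac{n+2}{2}$ comes for free, since any window of $y$ consecutive values inside the $(2y-1)$-element image of $\nulabel_c$ automatically contains the middle value $\nulabel_c(\tfrac{n+2}{2})$, subsuming the paper's separate step. What the paper's route buys: its both-lower-barred counting argument works for every $y$, which is what gets reused in \Cref{cor:SummingRelationsTamari}; for general $c$ your interval statement genuinely requires $y\le\tfrac{n+2}{2}$ (for larger $y$ the prefix can wrap around the cut in the cyclic order of $c$), although in the Tamari case it too holds for all $y$ and recovers that corollary just as quickly.
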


\begin{proof}

\textbf{Part 1: For $y \leq \frac{n+1}{2}$ and any integer $z$, there are no two values in the set $\{\nulabel_c(w(1)),\ldots,\nulabel_c(w(y))\}$ equivalent to $z$ modulo $y$.}

Throughout the proof, let $1 \leq a < b \leq y$. We will assume for sake of contradiction that $\nulabel_c(w(a)) \equiv \nulabel_c(w(b)) \pmod{y}$. 
We split this into three cases based on whether $w(a)$ and $w(b)$ are lower-barred or upper-barred. 

\textbf{Case i: $w(a)$ and $w(b)$ are both lower-barred.} 

Since $\nulabel_c(d_i) = i$, if $w(a)$ and $w(b)$ are both lower-barred, then $w(a) \equiv w(b) \pmod{y}$ only if $w(a) = d_i$ and $w(b) = d_{i+ k y}$ for an integer $k \neq 0$. Since $w$ must avoid patterns $13\underline{2}$ and $31\underline{2}$, none of the lower-barred numbers between $d_i$ and $d_{i+ky}$ can appear after both $d_i$ and $d_{i+k y}$ in the one-line notation for $w$. However, this means that we must fit $(\vert k\vert y-1)+2$ numbers in the first $y$ positions of the one-line notation for $w$, which is impossible for any $k \neq 0$.  Therefore, if $w(a)$ and $w(b)$ are lower-barred, $\nulabel_c(w(a)) \not\equiv \nu_c(w(b))\pmod{y}$. 

\textbf{Case ii: $w(a)$ is lower-barred and $w(b)$ is upper-barred or vice versa.} 

Now suppose $\{w(a),w(b)\} = \{u_i,d_j\}$ such that $\nulabel_c(u_i) \equiv \nulabel_c(d_j) \pmod{y}$.  Lemma~\ref{lem:zero_relations_on_permutation} already rules out certain upper-barred numbers from appearing in $\{w(1),\ldots,w(y)\}$. In particular, we know $\min(u_i-1,n+1-u_i) < y$, so there are two cases based on whether $u_i-1$ or $n+1-u_i$ is smaller. 

Suppose first $u_i-1 \leq n+1-u_i$ and therefore $u_i \leq y$ . Since $u_i \leq y \leq \frac{n+1}{2}$, we have $\nulabel_c(u_i)=-i$. As we also have $\nulabel_c(d_j)=j$, we know $j=ky-i$ for some $k \geq 1$. By Lemma \ref{lem:how_first_columns_are_labeled}, there is exactly one number in $\{\nulabel_c(1),\ldots,\nulabel_c(y)\}$ which is equivalent to $-i$ modulo $y$. Since $u_i \leq y$, this number must be $\nulabel_c(u_i)$, implying $d_{ky-i} > y \geq u_i$. Now, by Lemma \ref{lem:content_of_interval},  there are $u_i-i-1$ lower-barred numbers in $(1,u_i)$ and $ky-i-1$ in $(1,d_{ky-i})$. Therefore,  there are $ky-u_i$ lower-barred numbers in $(u_i,d_{ky-i})$.  As before, since $w$ avoids patterns $13\underline{2}$ and $31\underline{2}$, we cannot have any of these values appear after both $u_i$ and $d_{ky-i}$ in the one-line notation of $w$.  So these numbers must also occur in the first $y$ positions of $w$.  Combining this analysis with Corollary~\ref{cor:PatternAvoidanceRephrased}, this implies we must include at least $(ky-u_i) + (u_i-1) + 2 = ky+1$ numbers in the first $y$ positions,  which is impossible for any $k \geq 1$. 
The argument in the case where $u_i > (n+1)-y$ is parallel.

\textbf{Case iii: $w(a)$ and $w(b)$ are both upper-barred.} 

Suppose $\{w(a),w(b)\} = \{u_i,u_j\}$ and without loss of generality that $i<j$.
Again, by  Lemma~\ref{lem:zero_relations_on_permutation}, we must have $u_i < y+1$ or $u_i > n+1-y$ and similarly for $u_j$. Based on the definition of $\nulabel_c$,  it must be that $u_i < y+1$ and $u_j > n+1-y$. Then, since $\nulabel_c(u_i) = -i$, we must have $\nulabel_c(u_j) = n+1-j = ky-i$ for some positive integer $k$. This means we have $n+1-ky = j-i$. 

As stated in Corollary \ref{cor:PatternAvoidanceRephrased}, any $c$-singleton will have all numbers smaller than $u_i$ or all numbers larger than $u_i$ appear before $u_i$ in $w$ and similarly for $u_j$. It is impossible for all numbers greater than $u_i$ to appear in the first $y \leq \frac{n+1}{2}$ positions and similarly for all numbers less than $u_j$. This means all values in $\{1,\ldots,u_i-1\}$ must appear in $w$ before $u_i$, and similarly all values in $\{u_j+1,\ldots,n+1\}$ must appear before $u_j$. Moreover, by Lemma \ref{lem:content_of_interval}, we can deduce that there are $(u_j-u_i) - (j-i)$ lower-barred numbers in the interval $(u_i,u_j)$. 
We have thus listed $2 + (u_i-1) + (n+1-u_j) + (u_j-u_i) - (j-i) = n+2 - (n+1-ky) = ky+1$ numbers which we would be required to include in the first $y$ positions in order to include both $u_i$ and $u_j$. This is impossible since $k$ is a positive integer.

\textbf{Part 2: For $y \leq \frac{n+1}{2}$ and $0 \leq z \leq y-1$ there is at least one value in $\{\nulabel( w(1)_,\ldots,\nulabel_c(w(y))\}$ equivalent to $z$ modulo $y$.}

There are $y$ elements in $\{\nulabel( w(1)_,\ldots,\nulabel_c(w(y))\}$.  From part 1 we know that no two of them are in the same equivalence class modulo $y$. Since the set of equivalence classes modulo $y$ has $y$ elements, by the Pigeonhole Principle, there must be exactly one value in each equivalence class.

\textbf{Part 3: If $\frac{n+2}{2}$ is a lower-barred integer, the statement holds for $y = \frac{n+2}{2}$.} Suppose $y = \frac{n+2}{2}$ is a lower-barred integer. We can apply Lemma \ref{lem:how_first_columns_are_labeled} to both $\{\nulabel_c(1),\ldots,\nulabel_c(\frac{n+2}{2})\}$ and $\{\nulabel_c(\frac{n+2}{2}),\ldots,\nulabel_c(n+1)\}$, implying that both sets contain values in distinct equivalence classes modulo $\frac{n+2}{2}$. Consequently, given $i < \frac{n+2}{2}$, there exists $j > \frac{n+2}{2}$ such that $\nulabel_c(i) \equiv \nulabel_c(j) \pmod{\frac{n+2}{2}}$. The existence of at least two numbers with equivalent values under $\nulabel_c$ means for $z = \nulabel_c(i)$, $i < \frac{n+2}{2}$ we can use the same arguments as above to show our claim. Finally, there is one special value $z = \nulabel_c(\frac{n+2}{2})$ such that there is no other $i \in [n+1], i \neq \frac{n+2}{2}$ such that $\nulabel_c(i) \equiv z \pmod{\frac{n+2}{2}}$. However, it is a consequence of  Lemma~\ref{lem:zero_relations_on_permutation} that $w^{-1}(\frac{n+2}{2}) \in [1,\frac{n+2}{2}]$ when $\frac{n+2}{2}$ is lower-barred, so our claim remains true in this case. 
\end{proof}

We note that the relations given in Lemma \ref{lem:first_entries_distinct_equiv_classes} can be phrased in a simpler manner in the case when $c = s_1s_2 \cdots s_n$ since the function $\nulabel_c$ is simply $\nulabel_c(i) = i-1$. This is stated in Corollary \ref{cor:SummingRelationsTamari}. 

We now use Lemma \ref{lem:first_entries_distinct_equiv_classes} to describe relations on the affine span of the $c$-singleton permutation matrices. 

\begin{theorem}[Top sum relations]\label{thm:first_half_relations}
If $\frac{n+2}{2}$ is a lower-barred number let $2 \leq y \leq \frac{n+2}{2}$ and otherwise let $2 \leq y < \frac{n+2}{2}$. Let $z$ be an integer.  Then for $X \in \aff(c)$, \[
\sum_{\{j:\nulabel_c(j) \equiv z\}} \ \sum_{i=1}^y X(i,j) = 1
\]
where our equivalence in the first sum is modulo $y$.
\end{theorem}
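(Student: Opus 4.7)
The plan is to reduce to checking the identity on the vertices $X(w)$ for $c$-singletons $w$, and then to invoke the affine-extension principle used already in the proofs of Lemma~\ref{lem:RowAndCol} and Proposition~\ref{prop:ZeroRelationsOnMatrix}. The heavy lifting has been done in Lemma~\ref{lem:first_entries_distinct_equiv_classes}; the present theorem is essentially its translation into a linear equation on matrices.

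More precisely, I would first fix $y$ and $z$ as in the hypothesis and let $w$ be any $c$-singleton. For the permutation matrix $X(w)$, the inner sum $\sum_{i=1}^{y} X(w)(i,j)$ equals $1$ when $j \in \{w(1),\dots,w(y)\}$ and $0$ otherwise, so
\[
\sum_{\{j : \nulabel_c(j) \equiv z\}} \ \sum_{i=1}^{y} X(w)(i,j) \;=\; \#\bigl\{\, t \in [1,y] \,:\, \nulabel_c(w(t)) \equiv z \pmod{y}\,\bigr\}.
\]
By Lemma~\ref{lem:first_entries_distinct_equiv_classes}, this count is exactly $1$ for every residue class $z$ modulo $y$ (using the second clause of that lemma precisely when $\tfrac{n+2}{2}$ is a lower-barred integer and $y=\tfrac{n+2}{2}$). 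Hence the identity holds for every vertex of $\bir(c)$.

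Next, I would extend to $\aff(c)$ by affine linearity. Any $X \in \aff(c)$ can be written as $X = \sum_k \lambda_k X(w_k)$ with $c$-singletons $w_k$ and coefficients $\lambda_k$ satisfying $\sum_k \lambda_k = 1$. Applying the linear functional $X \mapsto \sum_{\{j : \nulabel_c(j) \equiv z\}} \sum_{i=1}^y X(i,j)$ and using the vertex computation gives $\sum_k \lambda_k \cdot 1 = 1$, as desired.

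The only potentially subtle point, which I would flag but not belabor, is the boundary case where $y = \tfrac{n+2}{2}$ is a lower-barred integer: here one must be sure that Lemma~\ref{lem:first_entries_distinct_equiv_classes} really covers all residue classes $0 \le z \le y-1$, and not just those attained inside $\{\nulabel_c(1),\dots,\nulabel_c(y)\}$. This is precisely Part~3 of that lemma, so no additional argument is needed. No other obstacles arise; the proof should be two or three lines once the vertex identity is stated.
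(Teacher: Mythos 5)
Your proof is correct and follows the same route as the paper: verify the identity on the vertices $X(w)$ via Lemma~\ref{lem:first_entries_distinct_equiv_classes} (whose Part~3 indeed handles the boundary case $y=\tfrac{n+2}{2}$ lower-barred), then extend to $\aff(c)$ by affine linearity of the functional, exactly as the paper does in one line. Your write-up just makes the vertex computation and the affine-combination step explicit.
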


\begin{proof}
Lemma \ref{lem:first_entries_distinct_equiv_classes} tells us that the statement is true for all $X(w)$, which allows us to extend the statement to all points in $\aff(c)$. 
\end{proof}

Given a Coxeter element $c = s_{i_1}\cdots s_{i_n}$, its inverse is $c^{-1} = s_{i_n}\cdots s_{i_1}$. Note that if $c$ partitions $\{2,\ldots,n\}$ into $\{d_1,\ldots,d_r\} \sqcup \{u_1,\ldots,u_s\}$, then $c^{-1}$ corresponds to the partition $\{d'_1,\ldots,d'_s\}\sqcup \{u'_1,\ldots,u'_r\}$ where $d'_i = u_i$ and $u'_i = d_i$. 
The following lemma can be deduced from \cite[Proposition 2.1]{HLT11}.

\begin{lemma}\label{lem:temp Esther}
Let $w^{\reverse}$ denote the permutation whose one-line notation is the reverse of the one-line notation of $w$, in other words, $w^{\reverse}=w w_0$. 
We have that $w$ is a $c$-singleton if and only if $w^{\reverse}$ is a $c^{-1}$-singleton.
\end{lemma}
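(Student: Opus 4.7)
The plan is to prove this by combining the pattern-avoidance characterization (\Cref{prop:pattern avoidance characterization type A}) with the simple observation that reversing the one-line notation reverses patterns, while passing from $c$ to $c^{-1}$ swaps the two kinds of barring. These two operations combine to permute the set of four forbidden patterns among themselves.

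First I would verify that the barring is exchanged: $i \in \underline{[2,n]}$ with respect to $c$ exactly when $i \HeapGreaterThan i-1$ in $\heap(c)$, and since $\heap(c^{-1})$ is obtained by reversing the reduced word of $c$ (which flips the heap order), we have $i \HeapGreaterThan i-1$ in $\heap(c)$ iff $i \HeapLessThan i-1$ in $\heap(c^{-1})$. Hence $\underline{[2,n]}$ with respect to $c$ equals $\overline{[2,n]}$ with respect to $c^{-1}$, and vice versa.

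Next I would check how each of the four forbidden patterns transforms. Suppose $w$ contains a pattern $31\underline{2}$, witnessed by positions $i<j<k$ with $w(i)>w(k)>w(j)$ and $w(k) \in \underline{[2,n]}_c$. Passing to $w^{\reverse}$, the same three values occupy the positions $n+2-k < n+2-j < n+2-i$ in the order $w(k), w(j), w(i)$, which is a $213$-shaped pattern whose ``$2$'' is still $w(k)$. Since $w(k)$ is lower-barred for $c$ and hence upper-barred for $c^{-1}$, this is a $\overline{2}13$-pattern for $c^{-1}$. Carrying out the same bookkeeping on the remaining three patterns yields the bijection
\begin{align*}
31\underline{2}_c &\longleftrightarrow \overline{2}13_{c^{-1}},\\
\overline{2}13_c &\longleftrightarrow 31\underline{2}_{c^{-1}},\\
\overline{2}31_c &\longleftrightarrow 13\underline{2}_{c^{-1}},\\
13\underline{2}_c &\longleftrightarrow \overline{2}31_{c^{-1}},
\end{align*}
so the four-pattern set is preserved by the map $w \mapsto w^{\reverse}$ combined with $c \mapsto c^{-1}$.

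Finally, by \Cref{prop:pattern avoidance characterization type A} (equivalently \Cref{cor:PatternAvoidanceRephrased}), $w$ is a $c$-singleton iff it avoids all four patterns with respect to $c$, and analogously for $w^{\reverse}$ with respect to $c^{-1}$. The bijection above gives the equivalence. The only mildly delicate step is keeping the positional indexing and the barring correspondences straight; no deeper input than pattern avoidance is required, so the cited Proposition 2.1 of \cite{HLT11} is not strictly needed for this argument, although it provides an alternative route via the explicit description of the $c^{-1}$-sorting word of $w_0$.
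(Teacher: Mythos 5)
Your proof is correct, but it takes a genuinely different route from the paper: the paper gives no self-contained argument at all and simply deduces the lemma from \cite[Proposition~2.1]{HLT11}, which works at the level of reduced words and the commutation class of $\sort(w_0)$, whereas you argue directly through Reading's pattern-avoidance characterization. Your key steps all check out: since every reduced word of $c^{-1}$ is the reverse of one for $c$, the relative order of $i$ and $i-1$ in the heap flips, so $\underline{[2,n]}$ for $c$ is exactly $\overline{[2,n]}$ for $c^{-1}$ (the paper records this swap just before the lemma); and since $w^{\reverse}(m)=w(n+2-m)$, reversal sends occurrences of the four patterns for $c$ bijectively to occurrences of the four patterns for $c^{-1}$ exactly as in your table (I verified all four cases, not just the one you wrote out). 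The one point worth making explicit is that you are using the combined statement ``$w$ is a $c$-singleton iff $w$ avoids all four patterns,'' which follows from \Cref{prop:pattern avoidance characterization type A} because a $c$-singleton is in particular $c$-sortable; the paper itself uses this packaging in the proof of \Cref{lem:threesets}, so this is safe. The trade-off: your argument is self-contained given material already quoted in the paper, at the cost of some positional bookkeeping, while the citation to \cite{HLT11} buys brevity and avoids invoking the type-$A$-specific pattern criterion, staying closer to the reduced-word/prefix description of $c$-singletons in \Cref{thm:c-sing-prefix}.
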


Notice that the permutation matrix for $w^{\reverse}$ is the result of reflecting the permutation matrix for~$w$ across a horizontal axis. We give further relations on $\aff(c)$ by looking at the pattern avoidance of $c^{-1}$-singletons.

\begin{theorem}[Bottom sum relations]\label{thm:second_half_relations}
If $\frac{n+2}{2}$ is an upper-barred number, let $2 \leq y \leq \frac{n+2}{2}$, and otherwise let $2 \leq y < \frac{n+2}{2}$.  Let $z$ be an integer.  Then for $X \in \aff(c)$,
\[
\sum_{\{j:\nulabel_{c^{-1}}(j) \equiv z\}}\  \sum_{i=n+2-y}^{n+1} X(i,j) = 1
\]
where our equivalence in the first sum is modulo $y$.
\end{theorem}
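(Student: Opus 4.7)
The plan is to deduce this from \Cref{thm:first_half_relations} (the top sum relations), applied to the \emph{inverse} Coxeter element $c^{-1}$, by exploiting the horizontal reflection of permutation matrices.

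First, I would set up the key translation map. Define $\rho : \mathbb{R}^{(n+1)\times(n+1)} \to \mathbb{R}^{(n+1)\times(n+1)}$ by $\rho(X)(i,j) := X(n+2-i, j)$, i.e.\ reflection across the horizontal axis of symmetry. For any permutation $w$, the permutation matrix of $w^{\reverse} = w w_0$ is exactly $\rho(X(w))$. By \Cref{lem:temp Esther}, $w \mapsto w^{\reverse}$ is a bijection between $c$-singletons and $c^{-1}$-singletons, so $\rho$ sends the vertex set of $\bir(c)$ bijectively onto the vertex set of $\bir(c^{-1})$. Since $\rho$ is a linear involution, this gives an affine bijection $\aff(c) \to \aff(c^{-1})$.

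Next I would check that the hypothesis on $y$ transports correctly. Passing from $c$ to $c^{-1}$ swaps the lower-barred and upper-barred numbers, so $\frac{n+2}{2}$ is upper-barred for $c$ if and only if it is lower-barred for $c^{-1}$. Thus the hypothesis of the present theorem on $y$ is exactly the hypothesis needed to invoke \Cref{thm:first_half_relations} for $c^{-1}$.

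Now, given $X \in \aff(c)$ with the allowed values of $y$ and $z$, \Cref{thm:first_half_relations} applied to $\rho(X) \in \aff(c^{-1})$ yields
\[
\sum_{\{j\,:\, \nulabel_{c^{-1}}(j) \equiv z\}} \sum_{i=1}^{y} \rho(X)(i,j) = 1,
\]
with the first sum taken modulo $y$. Substituting $\rho(X)(i,j) = X(n+2-i, j)$ and re-indexing the inner sum by $i' = n+2-i$, which makes $i'$ range over $\{n+2-y, n+3-y, \ldots, n+1\}$, this rewrites as
\[
\sum_{\{j\,:\, \nulabel_{c^{-1}}(j) \equiv z\}} \sum_{i'=n+2-y}^{n+1} X(i',j) = 1,
\]
which is exactly the identity claimed.

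I do not expect any real obstacle: all of the combinatorial content sits in \Cref{lem:first_entries_distinct_equiv_classes} and \Cref{thm:first_half_relations}. The only work is to observe that the horizontal reflection $\rho$ intertwines the $c$- and $c^{-1}$-setups via \Cref{lem:temp Esther}, converting a statement about the top $y$ rows for $c^{-1}$ into a statement about the bottom $y$ rows for $c$.
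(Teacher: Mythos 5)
Your proof is correct and is essentially the paper's argument: both reduce to the top sum relations for $c^{-1}$ via \Cref{lem:temp Esther} and the horizontal reflection $X(i,j) \mapsto X(n+2-i,j)$ relating $\aff(c)$ and $\aff(c^{-1})$. The only cosmetic difference is the direction of the reflection (you push $X \in \aff(c)$ forward to $\aff(c^{-1})$, the paper pulls back), and you spell out the barred-number swap for the hypothesis on $y$, which the paper leaves implicit.
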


\begin{proof}
From Theorem \ref{thm:first_half_relations}, we know that for $X = X(i,j)$ in $\aff(c^{-1})$, $y \leq \frac{n+2}{2}$ with equality only allowed when $\frac{n+2}{2}$ is a lower-barred number (with respect to $c^{-1}$), we have \[
\sum_{\{j:\nulabel_{c^{-1}}(j) \equiv z\}}\  \sum_{i=1}^{y} X(i,j) = 1.
\]

By Lemma \ref{lem:temp Esther}, the vertices of $\bir(c)$ and $\bir(c^{-1})$ are related by sending $X(w) \to X(w^{\reverse})$. Therefore, if $X \in \aff(c^{-1})$ then the point $X'$ defined by $X'(i,j) = X(n+2-i,j)$ is in $\aff(c)$. The claim follows from applying the relation for $\aff(c^{-1})$ to $X'$.
\end{proof}

\begin{example}\label{ex:SummingRelations}
Consider the Coxeter element $c=\rw{1432657}$ from Example~\ref{ex:zero_relations} and Example~\ref{ex:nulabel} part \ref{ex:nulabel:itm1}. 
We begin by writing all  relations described by Theorem \ref{thm:first_half_relations} for an arbitrary point $X \in \aff(c)$. 

\[
\sum_{j\in \{1,4,5,8\}} \sum_{i=1}^2 X(i,j) = 1 \qquad \sum_{j\in \{2,3,6,7\}} \sum_{i=1}^2 X(i,j) = 1 
\]

\[
\sum_{j\in \{1,7\}} \sum_{i=1}^3 X(i,j) = 1 \qquad \sum_{j\in \{2,4,8\}} \sum_{i=1}^3 X(i,j) = 1 \qquad \sum_{j\in \{3,5,6\}} \sum_{i=1}^3 X(i,j) = 1 
\]

\[
\sum_{j\in \{1,8\}} \sum_{i=1}^4 X(i,j) = 1 \quad \sum_{j\in \{2,6\}} \sum_{i=1}^4 X(i,j) = 1 \quad  \sum_{j\in \{4,5\}} \sum_{i=1}^4 X(i,j) = 1 \quad \sum_{j\in \{3,7\}} \sum_{i=1}^4 X(i,j) = 1   
\]

Now, we will record the relations described by Theorem \ref{thm:second_half_relations}. This requires computing $\nulabel_{c^{-1}}$ for $c^{-1} = [1432657]^{-1} = [7562341]$, where the upper-barred and lower-barred numbers have swapped. We include these alongside the values for $\nulabel_c$ for comparison.

\begin{center}
\begin{tabular}{*{9}{|c}|}
\hline
$i$&1&2&3&4&5&6&7&8\\\hline
$\nulabel_c(i)$&0&1&-1&-2&2&5&3&4\\\hline 
$\nulabel_{c^{-1}}(i)$&0&-1&1&2&6&3&5&4\\
\hline
\end{tabular}
\end{center}

We are now ready to compute the relations from  Theorem \ref{thm:second_half_relations}. When $y = 4$, these are already implied by the relations from Theorem \ref{thm:first_half_relations} for $y =4$ plus the fact that all columns will sum to 1, so we omit these. 

\[
\sum_{j \in \{1,5,6\}} \sum_{i = 6}^8 X(i,j) = 1 \qquad \sum_{j \in \{2,4,7\}} \sum_{i=6}^8 X(i,j) = 1 \qquad \sum_{j \in \{3,8\}} \sum_{i=6}^8 X(i,j) = 1 
\]

\[
\sum_{j \in \{1,4,5,8\}} \sum_{i=7}^8 X(i,j) = 1 \qquad \sum_{j \in \{2,3,6,7\}} \sum_{i=7}^8 X(i,j) = 1
\]

\end{example}

We now discuss the independence of the relations exhibited in this section.

\begin{proposition}\label{prop:IndependenceOfRelations}
The following set of relations on all points $X \in \aff(c)$ are linearly independent.
\begin{itemize}
    \item The relations from Lemma~\ref{lem:RowAndCol} for $i \in [n+1]$ and $j \in [2,n+1]$.
    \item All relations from Proposition~\ref{prop:ZeroRelationsOnMatrix}.
    \item The relations from Theorem~\ref{thm:first_half_relations} when $2 \leq y \leq \frac{n+2}{2}$ and $z=\nulabel_c(x)$ for $x\in[2,y]$.
    \item The relations from Theorem~\ref{thm:second_half_relations} when $2 \leq y \leq \frac{n-1}{2}$ and $z=\nulabel_{c^{-1}}(x)$ for $x\in[2,y]$.
\end{itemize}
\end{proposition}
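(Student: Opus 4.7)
The plan is to establish linear independence by a triangular pivot argument on the ambient space $\mathbb{R}^{(n+1)^2}$. I would order the relations into four blocks — zero relations, row/column relations, top sum relations, and bottom sum relations — and for each relation exhibit a specific matrix entry $X(i,j)$ whose coefficient in that relation is nonzero but is zero in every previously listed one. Such a triangular structure forces linear independence.

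I would process the zero relations of \Cref{prop:ZeroRelationsOnMatrix} first. Each is supported on a single distinct entry, so independence within this family is immediate, and I may effectively pass to the restriction on the complement of the forbidden entries for later steps. For the row and column relations, since the column relations only run over $j \in [2, n+1]$, the usual dependence $\sum_i R_i = \sum_j C_j$ is broken; the entries $X(i,1)$ (never forbidden, because $1$ is unbarred) serve as pivots for the row relations $R_i$, while any non-forbidden entry in column $j$ serves as a pivot for the column relation $C_j$, giving independence within this block and from the zero block.

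I would then turn to the sum relations. For fixed $y$, the sets $\{j : \nulabel_c(j) \equiv z \pmod{y}\}$ as $z$ ranges over $\{1, \ldots, y-1\}$ are disjoint by \Cref{lem:first_entries_distinct_equiv_classes}, so different $z$ at the same $y$ are independent. For different values of $y$ I would induct downward from the largest admissible $y$: because the top sum relation at $(y, z)$ is supported only on rows $1, \ldots, y$, the entry in row $y$ provided by \Cref{lem:first_entries_distinct_equiv_classes} yields a pivot that is not seen by any smaller-$y$ top sum relation. The symmetric argument, applied after the reflection $i \mapsto n+2-i$, handles the bottom sum relations. To combine the four blocks I would exploit the asymmetric bounds $y \leq (n+2)/2$ for the top sum and $y \leq (n-1)/2$ for the bottom sum, which make the two sum families supported on disjoint row ranges; the only possible boundary case $y = (n+2)/2$ in the top sum block is, by the hypothesis of \Cref{thm:first_half_relations}, admitted only when $(n+2)/2$ is lower-barred, and in that situation \Cref{thm:second_half_relations} explicitly excludes $y = (n+2)/2$ from the bottom sum, so no overlap occurs.

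The main obstacle will be the combinatorial check that in each induction step a suitable pivot entry actually exists in the intended row and residue class and is not already forced to vanish by \Cref{prop:ZeroRelationsOnMatrix}. I expect this to follow from combining the row-range analysis of the top and bottom sum relations with the explicit vanishing conditions of \Cref{lem:zero_relations_on_permutation}; the careful case analysis already carried out in \Cref{Sec:Relations} makes the required verifications tractable but not automatic, and this is the step where the bulk of the bookkeeping will concentrate.
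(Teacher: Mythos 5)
Your overall strategy (a triangular pivot argument, with pivots in column $1$ for the row relations, pivots in row $y$ for the level-$y$ sum relations, and the observation that the two sum families occupy disjoint row ranges) is the same one the paper uses, but the triangularity as you set it up fails. You list the blocks in the order zero relations, row/column relations, top sums, bottom sums, and require each pivot to be \emph{zero in every previously listed relation}. No such pivots exist for the sum relations: every entry $X(i,j)$ occurs with coefficient $1$ in the row relation for row $i$, so once the full row-relation family has been listed, no later relation can have a pivot at all. Concretely, your intended pivot for the top-sum relation at level $y$ is an entry $(y,j)$ with $\nulabel_c(j)\equiv z \pmod y$; since $z\neq 0$ forces $j\geq 2$, this entry lies in the row relation for row $y$, in the column relation for column $j$, and (when $j\le y$) in exactly one top-sum relation at every higher level $y'>y$ as well, because $0<|\nulabel_c(j)|<y'$ gives $\nulabel_c(j)\not\equiv 0\pmod{y'}$ --- all of these are listed before it in your order. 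The same issue afflicts ``any non-forbidden entry in column $j$'' as a pivot for a column relation, since it lies in a row relation. Your downward induction on $y$ protects against the smaller-$y$ relations, which are listed \emph{later}; it is the earlier-listed relations that destroy the claimed triangularity, and your closing worry about avoiding only the zero relations misses this.

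The argument is rescued by running the triangularity in the opposite direction, which is what the paper does: order the relations as row relations, column relations, top sums with $y$ decreasing, bottom sums, zero relations last, choose a bespoke ordering of the $(n+1)^2$ coordinates, and check that the relation matrix is in row echelon form --- i.e.\ each relation has a leading entry occurring in no \emph{later} relation (nonzero entries of earlier relations above a pivot are harmless, which is exactly what your formulation forbids). Under that convention your pivot locations work: $X(i,1)$ for the rows (column $1$ meets no zero relation and no sum relation, since $z=0$ is excluded), and for the level-$y$ top sums the entries $(y,2),\dots,(y,y)$, one per residue class $1,\dots,y-1$ by \Cref{lem:how_first_columns_are_labeled}, none of which is a zero-relation entry because $j\le y$ is incompatible with $y\le\min(j-1,n+1-j)$. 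The genuinely new ingredient you are missing is the pivot for the column relations: it must sit in a row met by no zero relation and by neither sum family, and the paper takes row $\tfrac{n+3}{2}$ (for $n+1$ even; the odd case is adjusted), which lies strictly between the top-sum rows $i\le\tfrac{n+2}{2}$ and the bottom-sum rows $i\ge\tfrac{n+5}{2}$ and is never forbidden by \Cref{prop:ZeroRelationsOnMatrix}. With that choice and the reversed vanishing condition, your residue-class and row-range observations complete the proof; as written, the key triangularity step of your proposal does not go through.
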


\begin{proof}
We first suppose $n+1$ is even. 

We will regard each $X \in \aff(c)$ as a vector in $\mathbb{R}^{(n+1)^2}$ by reading the entries in the following order.
\begin{itemize}
    \item Label the entries in the first column, from row 1 to row $n+1$.
    \item Label the entries in row $\frac{n+3}{2}$, from column 2 to column $n+1$.
    \item Beginning with $y = \frac{n+1}{2}$ and working backwards to $y = 2$, label the entries $(y,2),(y,3),\ldots,(y,y)$.
    \item Beginning with $y = \frac{n-1}{2}$ and working backwards to $y = 2$, label the entries $(n+2-y,2),(n+2-y,3),\ldots,(n+2-y,y)$.
    \item Label the entries guaranteed to be 0 from Proposition~\ref{prop:ZeroRelationsOnMatrix} in lexicographic order with respect to their index $(i,j)$.
    \item Label all remaining entries in lexicographic order with respect to their index $(i,j)$.
\end{itemize}

We can regard each relation as an equation $\mathbf{v} \cdot X = 0$ or $\mathbf{v} \cdot X = 1$ for some $\mathbf{v} \in \mathbb{R}^{(n+1)^2}$. We will show these relations are linearly independent by showing the associated vectors are linearly independent. To see this, we create a matrix with these relation vectors as rows in the following order:
\begin{itemize}
\item Write the row relation vectors, in order from row 1 to row $n+1$.
\item Write the column relation vectors, in order from column 2 to column $n+1$.
\item Write the vectors for the relations from Theorem~\ref{thm:first_half_relations}, working backwards from $y = \frac{n+1}{2}$ to $y = 2$, and for a fixed $y$, working from $z = \nulabel_c(2)$ to $ z = \nulabel_c(y)$.
\item Write the vectors for the relations from Theorem~\ref{thm:second_half_relations}, working backwards from $y = \frac{n-1}{2}$ to $y = 2$, and for a fixed $y$, working from $z = \nulabel_{c^{-1}}(2)$ to $ z = \nulabel_{c^{-1}}(y)$.
\item Write the vectors for the zero relations from Proposition~\ref{prop:ZeroRelationsOnMatrix} in lexicographic order with respect to their index $(i,j)$.
\end{itemize}

By construction, this matrix is in row echelon form, so it is full rank. Thus, this set of relations is independent.

If $n+1$ is odd, then we can use a similar proof but we instead consider $2 \leq y \leq \frac{n}{2}$ for the top and bottom sum relations.
\end{proof}

\begin{example}
We give an example of the ordering on the entries of $X \in \aff(c)$ from the previous proof for $c=\rw{1432657}$.

\begin{center}
\begin{tabular}{|c|c|c|c|c|c|c|c|}
\hline
1 & 37 & 25 & 26 & 38 & 27 & 39 & 40\\ \hline
 2 & 21 & 28  & 29 & 41 & 30 & 42 & 43 \\ \hline
 3 &  19  &  20  & 31 &44  & 45 & 46 & 47 \\ \hline
 4 &  16  &  17  &  18 & 48 & 49 & 50 &  51\\ \hline
 5 &  9  & 10   & 11  &  12  & 13 & 14 & 15 \\ \hline
 6 &  22 &  23  & 52  & 32  &  53 & 54 & 55 \\ \hline
7 & 24 & 56  & 57 &  33  &  58 &  59 & 60\\ \hline
8 & 34  & 61 & 62  &  35  &  63  & 36 &  64  \\ \hline
\end{tabular}
\end{center}

\end{example}

Using Proposition~\ref{prop:IndependenceOfRelations}, we can give an upper bound on the dimension of $\aff(c)$. 

\begin{corollary}\label{cor:Affc has dimension at most n+1 choose 2}
$\aff(c)$ has dimension at most $\binom{n+1}{2}$.    
\end{corollary}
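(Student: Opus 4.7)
The plan is to prove the corollary by counting the independent relations provided by Proposition~\ref{prop:IndependenceOfRelations} and showing they number exactly $(n+1)^2 - \binom{n+1}{2} = \binom{n+2}{2}$; the dimension bound then follows immediately by subtracting from the ambient dimension $(n+1)^2$.

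The count splits into four pieces. First, Lemma~\ref{lem:RowAndCol} contributes $n+1$ row relations together with $n$ column relations (for $j \in [2,n+1]$), totaling $2n+1$. Second, Proposition~\ref{prop:ZeroRelationsOnMatrix} contributes $\sum_{x=2}^{n}\min(x-1,\, n+1-x)$ zero relations, since for each $x \in \{2,\dots,n\}$ the number of forbidden entries in column $x$ works out to $\min(x-1, n+1-x)$ regardless of whether $x$ is upper- or lower-barred. By the symmetry of the summand about $(n+1)/2$, this evaluates to $(n^2-1)/4$ when $n$ is odd and $n^2/4$ when $n$ is even. Third, per the proof of Proposition~\ref{prop:IndependenceOfRelations}, the top-sum and bottom-sum relations are indexed by pairs $(y,z)$ where $y \in [2,(n+1)/2]$ for top and $y \in [2,(n-1)/2]$ for bottom when $n$ is odd, and $y \in [2,n/2]$ for both top and bottom when $n$ is even, with $y-1$ choices of $z$ for each $y$. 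Summing $\sum_y (y-1)$ over these ranges gives a combined top-plus-bottom contribution of $(n-1)^2/4$ when $n$ is odd and $n(n-2)/4$ when $n$ is even.

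Adding the three ingredients and simplifying yields
\[
(2n+1) + \lfloor n^2/4 \rfloor + (\text{top}+\text{bottom}) \;=\; (2n+1) + \frac{n^2-n}{2} \;=\; \frac{(n+1)(n+2)}{2} \;=\; \binom{n+2}{2}
\]
in both parities. Since Proposition~\ref{prop:IndependenceOfRelations} guarantees these $\binom{n+2}{2}$ relations are linearly independent, we obtain $\dim \aff(c) \leq (n+1)^2 - \binom{n+2}{2} = \binom{n+1}{2}$, as desired.

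The proof is essentially a bookkeeping exercise granted Proposition~\ref{prop:IndependenceOfRelations}; the only mildly fiddly step is confirming that the ranges of $y$ in the two parity cases contribute the triangular sums above, which the direct calculation verifies.
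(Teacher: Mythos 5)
Your proposal is correct and follows essentially the same route as the paper: both count the independent relations of \Cref{prop:IndependenceOfRelations} and subtract from the ambient dimension $(n+1)^2$, with your tally of $2n+1$ row/column relations plus $\binom{n}{2}$ zero-plus-sum relations (hence $\binom{n+2}{2}$ total) agreeing with the paper's count, which it organizes as $(n-1)+(n-3)+\cdots$ zero relations and $(n-2)+(n-4)+\cdots$ sum relations. The only difference is bookkeeping: you make the parity casework and the formula $\min(x-1,\,n+1-x)$ for the zero relations explicit, which the paper leaves implicit.
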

\begin{proof}
To prove this statement, we compute the number of relations listed in Proposition~\ref{prop:IndependenceOfRelations}. The number of zero relations is $(n-1) + (n-3) + \cdots$. It is easiest to enumerate the top and bottom sum relations by counting how many hold for a fixed $z$. For instance, when $z = 1$, we get $n-2$ relations and when $z = 2$, we get $n-4$ relations. That is, the total number of top and bottom sum relations is $(n-2) + (n-4) + \cdots$. 
In particular, adding this to our count of the zero relations yields ${n\choose 2}$ relations. 
Then the claim holds since $(n+1)^2 - (2n+1) - {n\choose 2} = {n+1 \choose 2}$.
\end{proof}

\section{A lattice-preserving projection}\label{sec.lattice}

In the previous section, we developed an understanding of the subspace of $\mathbb{R}^{(n+1)^2}$ containing $\aff(c)$.
We now use this description to provide a projection from $\mathbb{R}^{(n+1)^2}$ to $\mathbb{R}^{{n+1 \choose 2}}$ whose restriction to $\aff(c)$ is injective and preserves the integer lattice.

Consider a Coxeter element $c=(\upperbarletter_s ~ ... ~ \upperbarletter_1 ~  1 ~ \lowerbarletter_1 ~ ...~ \lowerbarletter_r ~ (n+1))=
(1~  \lowerbarletter_1 ~ ...~  \lowerbarletter_r ~ (n+1) ~ \upperbarletter_s ~ ... ~ \upperbarletter_1)$.
Let $\sigma_c$ be the permutation 
$(n+1) \, {d_r} \, {d_{r-1}} \dots \, {d_1} \, 1 \, {u_1} {u_2} \,\dots \, {u_s}$ written in one-line notation.  When $c$ is understood, we will just write $\sigma$. Note that $\sigma_c$ is simply the result of dropping the parentheses in the cycle-notation for $c^{-1}= ((n+1) ~ \lowerbarletter_r ~ ... ~\lowerbarletter_1 ~ 1 ~ \upperbarletter_1 ~ ... ~ \upperbarletter_s)$ and regarding this as a permutation in one-line notation. We also let $\underline{\upsilon}_c = \vert \underline{[2,\frac{n+1}{2} ]} \vert$, analogous to the definition of $\overline{\upsilon}_c$ in the previous section.

\begin{lemma}\label{lem:LabelSigma}
For a Coxeter element $c$, 
\[
\nulabel_{c^{-1}}(\sigma_c(i)) = \begin{cases} s+i & 1\leq i \leq r-\underline{\upsilon}_c+1\\ i - (r+2) & r-\underline{\upsilon}_c+1<i\leq n+1
\end{cases}
\]
\end{lemma}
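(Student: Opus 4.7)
The plan is to unfold both $\sigma_c$ and $\nulabel_{c^{-1}}$ using their explicit definitions and then verify the piecewise formula by a direct case analysis on the position~$i$. The only conceptual ingredient is the observation that inverting $c$ swaps the roles of the upper-barred and lower-barred numbers; everything else is bookkeeping.

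First, I would record that under the passage $c \mapsto c^{-1}$, the cycle notation for $c$ is reversed, so the lower-barred numbers for $c^{-1}$ are exactly $\{u_1 < \cdots < u_s\}$ and the upper-barred numbers for $c^{-1}$ are exactly $\{d_1 < \cdots < d_r\}$. In particular $\overline{\upsilon}_{c^{-1}} = \underline{\upsilon}_c$. Applying the definition of $\nulabel$ to $c^{-1}$ (together with the ``$n+1-j$'' formula stated right after the definition of $\nulabel_c$ for the second half of the upper-barred numbers, now with the roles swapped) I would extract the four formulas
\begin{align*}
\nulabel_{c^{-1}}(1) &= 0, & \nulabel_{c^{-1}}(n+1) &= s+1,\\
\nulabel_{c^{-1}}(u_j) &= j \quad (1\leq j\leq s), & \nulabel_{c^{-1}}(d_j) &= \begin{cases} -j & \text{if } 1\leq j\leq \underline{\upsilon}_c,\\ n+1-j & \text{if } \underline{\upsilon}_c < j\leq r.\end{cases}
\end{align*}

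Next, I would read off the values of $\sigma_c(i)$ directly from the one-line notation $(n+1)\,d_r\,d_{r-1}\cdots d_1\,1\,u_1\cdots u_s$: for $i=1$ we get $n+1$; for $2\leq i\leq r+1$ we get $d_{r+2-i}$; for $i=r+2$ we get $1$; and for $r+3\leq i\leq n+1$ we get $u_{i-r-2}$. Combining these with the $\nulabel_{c^{-1}}$-values above, and using $r+s=n-1$, I would verify in each range:
\begin{itemize}
\item $i=1$: $\nulabel_{c^{-1}}(n+1)=s+1=s+i$, which is the first case of the formula (this case applies since $r-\underline{\upsilon}_c+1\geq 1$).
\item $2\leq i\leq r-\underline{\upsilon}_c+1$: here $r+2-i>\underline{\upsilon}_c$, so $\nulabel_{c^{-1}}(d_{r+2-i})=n+1-(r+2-i)=s+i$.
\item $r-\underline{\upsilon}_c+2\leq i\leq r+1$: here $r+2-i\leq \underline{\upsilon}_c$, so $\nulabel_{c^{-1}}(d_{r+2-i})=-(r+2-i)=i-(r+2)$.
\item $i=r+2$: $\nulabel_{c^{-1}}(1)=0=(r+2)-(r+2)$.
\item $r+3\leq i\leq n+1$: $\nulabel_{c^{-1}}(u_{i-r-2})=i-r-2=i-(r+2)$.
\end{itemize}
The last four bullet points together cover exactly the range $r-\underline{\upsilon}_c+1<i\leq n+1$ and give the second case of the formula.

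The main ``obstacle'' is really just keeping the indices straight across the boundary $i=r-\underline{\upsilon}_c+1$ (the transition from the ``large $j$'' to the ``small $j$'' branch of $\nulabel_{c^{-1}}$ on the upper-barred numbers of $c^{-1}$) and remembering to use $r+s=n-1$ to simplify $n-1-r+i$ to $s+i$. No other ideas are needed; the lemma is a direct computation once the four pieces of $\nulabel_{c^{-1}}$ have been written down.
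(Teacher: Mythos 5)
Your proposal is correct and is essentially the paper's own argument: the paper's proof simply states that the lemma "follows immediately from the definition of $\nulabel_c$," noting that $\sigma_c(2),\dots,\sigma_c(r-\underline{\upsilon}_c+1)$ are exactly the upper-barred numbers for $c^{-1}$ lying in $[\tfrac{n+2}{2},n+1]$, which is precisely the case split you carry out explicitly. Your unfolding of $\nulabel_{c^{-1}}$ (using the swap $d'_i=u_i$, $u'_i=d_i$ and $\overline{\upsilon}_{c^{-1}}=\underline{\upsilon}_c$) and the position-by-position check of $\sigma_c$ is the same computation, just written out in full.
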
 

\begin{proof}
This follows immediately from the definition of $\nulabel_c$. Note that $\sigma_c(2),\ldots,\sigma_c(r-\underline{\upsilon}_c+1)$ correspond exactly to the upper-barred numbers with respect to $c^{-1}$ which are in $[ \frac{n+2}{2}, n+1]$.
\end{proof}

\begin{example} Recall we computed $\nu_{c^{-1}}$ for $c = [1432657]$ in Example \ref{ex:SummingRelations}. We reorder the entries according to $\sigma$ to see an instance of the above result. Notice that here $r = s = 3$ and there is one lower-barred number less than $\frac{n+1}{2} = 4$, so $\underline{\upsilon}_c = 1$. 

\begin{center}
\begin{tabular}{*{9}{|c}|}
\hline
$i$&1&2&3&4&5&6&7&8\\\hline
$\sigma(i)$&8&7&5&2&1&3&4&6\\\hline 
$\nulabel_{c^{-1}}(\sigma(i))$&4&5&6&-1&0&1&2&3\\
\hline
\end{tabular}
\end{center}
\end{example}

\begin{definition}\label{defn.projection}
We define a projection $\Pi_c$ from the space of $(n+1) \times (n+1)$ matrices with entries in $\mathbb{R}$ to $\mathbb{R}^{n+1 \choose 2}$ which will be used to compare the $c$-Birkhoff polytope and the order polytope of the corresponding heap. 
To define $\Pi_c$, the first entries  we will read are from columns $\lowerbarletter_1,\dots, \lowerbarletter_r$: 

\begin{align*}
&({\lowerbarletter_1}-1 , {\lowerbarletter_1}), ({\lowerbarletter_1} - 2 , {\lowerbarletter_1}), \dots, (1, {\lowerbarletter_1}),\\
& \dots \\
    &({\lowerbarletter_r}-1,{\lowerbarletter_r}), ({\lowerbarletter_r} - 2 , {d_r}), \dots, (1, {\lowerbarletter_r}), \\
    & (n,n+1), (n-1,n+1), \dots, (1,n+1).
\end{align*}
The remaining entries are determined using the upper-barred integers ${\upperbarletter_s}, \dots, {\upperbarletter_1}$, in decreasing order. 
For each upper-barred integer ${\upperbarletter}$,
 we take ${\upperbarletter}-1$ entries from our matrices as follows: 
\begin{enumerate}
\item Let $m= \mu(u):= \min({\upperbarletter} - 1, n+1- {\upperbarletter})$.   
\item First take the $m$ entries $(n+1,c^1(\upperbarletter)), (n,c^2(\upperbarletter)), \dots, (n+2-m, c^m(\upperbarletter))$. Note that we are thinking of $c$ as a permutation here so $c^k$ is the permutation applied $k$ times.
\item Then, if $\upperbarletter > \frac{n+2}{2}$, take the
additional ${\upperbarletter} - 1 - m$ entries  
 $({\upperbarletter} -1 , {\upperbarletter}), ({\upperbarletter} - 2, {\upperbarletter}), \dots, (m+1, {\upperbarletter})$. 

\end{enumerate}
After we are done reading the entries, we reverse them.
\end{definition}

In the rest of the paper, we will call the entries of $\Pi_c$ that we obtained from the enumerated list above with $u=u_t$ entries \emph{associated to} $u_t$.

\begin{example}\label{eg.proj} Continuing with Example \ref{ex:SummingRelations} for $c = [1432657] = (1{\color{blue}257}8{\color{red}643})$, we compute the projection $\Pi_c$ in Figure~\ref{fig.proj} (left).

\begin{figure}[htbp]
\begin{center}
\begin{minipage}{0.45\textwidth}
\begin{tabular}{|c|c|c|c|c|c|c|c|}
\hline
  & $28$ & $\X$  & $\X$ & $24$ & $\X$ & $18$ & $11$ \\ \hline
  &    & $\X$  & $\X$ & $25$ & $\X$ & $19$ & $12$ \\ \hline
  &    &    & $\X$ & $26$ & $6$ & $20$ & $13$ \\ \hline
  &    &    &   & $27$ & $7$ & $21$ & $14$ \\ \hline
  &    &    &   &    & $8$ & $22$ & $15$ \\ \hline
  & $3$  &    &   &  $\X$  &   & $23$ & $16$ \\ \hline
$4$ & $1$  & $9$ &   &  $\X$  &   &    & $17$ \\ \hline
$2$ & $\X$  & $5$  & $10$ &  $\X$  &   & $\X$   &    \\ \hline
\end{tabular}
\end{minipage}\hfill
\begin{minipage}{0.45\textwidth}
\begin{tabular}{|c|c|c|c|c|c|c|c|}
\hline
$0$ & $\Circled{1}$ & $0$ & $0$ & $\Circled{0}$ & $0$ & $\Circled{0}$ & $\Circled{0}$\\ \hline
$0$ & $0$ & $0$ & $0$ & $\Circled{1}$ & $0$ & $\Circled{0}$ & $\Circled{0}$ \\ \hline
$1$ & $0$ & $0$ & $0$ & $\Circled{0}$ & $\Circled{0}$ & $\Circled{0}$ & $\Circled{0}$ \\ \hline
$0$ & $0$ & $1$ & $0$ & $\Circled{0}$ & $\Circled{0}$& $\Circled{0}$ & $\Circled{0}$ \\ \hline
$0$ & $0$ & $0$ & $1$ & $0$ & $\Circled{0}$ & $\Circled{0}$ & $\Circled{0}$ \\ \hline
$0$ & $\Circled{0}$ & $0$ & $0$ & $0$ & $1$ & $\Circled{0}$ & $\Circled{0}$\\ \hline
$\Circled{0}$ & $\Circled{0}$ & $\Circled{0}$& $0$ & $0$ & $0$ & $1$ & $\Circled{0}$\\ \hline
$\Circled{0}$ & $0$ & $\Circled{0}$ & $\Circled{0}$ & $0$ & $0$ & $0$ & $1$ \\ \hline
\end{tabular}
\end{minipage}
\caption{Left: We depict the projection $\Pi_c$ for $c = [1432657]$. We also place red \textcolor{red}{X}'s in the entries which are guaranteed to be zero by  Proposition~\ref{prop:ZeroRelationsOnMatrix}. Right: We draw the permutation matrix for $s_1s_4s_3s_2$ and circle the entries which are recorded by $\Pi_c$.}\label{fig.proj}
\end{center}
\end{figure}
\end{example}

\begin{prop}\label{prop.sw}
For each upper-barred integer $u$ and each $k \leq \mu(u)$, the entry  $(n+2-k,c^k(u))$ is strictly below the main diagonal.
\end{prop}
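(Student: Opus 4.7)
The plan is to write $u = u_t$ for some $t \in \{1,\dots,s\}$ and apply Remark \ref{rem:Coxeter element in cycle notation} to compute $c^k(u_t)$ explicitly, splitting on whether $k < t$ or $k \geq t$. In each case, the inequality $c^k(u_t) < n+2-k$ will follow from the constraint $k \leq \mu(u_t) = \min(u_t - 1, n+1-u_t)$ together with an elementary bound on the value of $c^k(u_t)$.

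First I would handle the case $k < t$, where Remark \ref{rem:Coxeter element in cycle notation} gives $c^k(u_t) = u_{t-k}$. Since the $u_i$ are strictly increasing, $u_{t-k} \leq u_t - 1$, and combining this with $k \leq n+1-u_t$ yields
\[
c^k(u_t) + k \;\leq\; (u_t - 1) + (n+1-u_t) \;=\; n,
\]
so $c^k(u_t) \leq n - k < n+2-k$. The case $k = t$ is also quick: by convention $d_0 = 1$, so $c^t(u_t) = 1$, and $k \leq \mu(u_t) \leq n-1$ guarantees $n+2-k > 1$.

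The main case is $k > t$, where $c^k(u_t) = d_{k-t}$ with $k-t \geq 1$. Here the key step is a counting argument to show that $d_{k-t} \leq u_t - 1$. The interval $[2, u_t - 1]$ has $u_t - 2$ integers, of which exactly $u_1, \dots, u_{t-1}$ are upper-barred, so exactly $u_t - t - 1$ are lower-barred. Hence $d_j < u_t$ precisely when $j \leq u_t - t - 1$. From $k \leq \mu(u_t) \leq u_t - 1$ we get $k - t \leq u_t - t - 1$, so $d_{k-t} \leq u_t - 1$. Then
\[
d_{k-t} + k \;\leq\; (u_t - 1) + (n+1-u_t) \;=\; n,
\]
which gives the desired bound.

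The main obstacle is the counting observation identifying how many lower-barred numbers lie below $u_t$; once that is in hand, each case reduces to a single one-line estimate using $k \leq n+1-u_t$. Everything else is direct substitution into the formula from Remark \ref{rem:Coxeter element in cycle notation}.
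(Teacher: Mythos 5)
Your proof is correct, but it takes a different route from the paper's. The paper argues by contradiction: assuming $c^k(u) \geq n+2-k$, the hypothesis $k \leq n+1-u$ forces $c^k(u) \geq u+1$, and then an inspection of the cycle notation shows the orbit segment $\{u, c(u), \dots, c^k(u)\}$ would have to contain all of $[u]$ together with $c^k(u)$, so its cardinality $k+1$ is at least $u+1$, contradicting $k \leq \mu(u) \leq u-1$. You instead compute $c^k(u_t)$ explicitly via the case formula of \Cref{rem:Coxeter element in cycle notation} ($u_{t-k}$, $d_0=1$, or $d_{k-t}$), and in the main case $k>t$ you use the count of lower-barred numbers in $(1,u_t)$ — which is exactly the content of \Cref{lem:content_of_interval}, so you could cite it rather than reprove it — to get $d_{k-t} \leq u_t-1$; in every case the bound $c^k(u_t)+k \leq n$ then follows from $k \leq n+1-u_t$, which is slightly stronger than the required $c^k(u_t) < n+2-k$. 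Both arguments hinge on the same two inequalities $k \leq u-1$ and $k \leq n+1-u$ packaged in $\mu(u)$; the paper's contradiction argument is shorter and avoids any case split, while your direct computation is more explicit, reuses the interval-counting lemma already in the paper, and pins down the exact value of $c^k(u_t)$ in each regime (which is in the spirit of how \Cref{prop.swpi} later uses this proposition).
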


\begin{proof}
Suppose for sake of contradiction that $c^k(u) \geq n+2-k$ for some $k \leq \mu(u)$. Since $k \leq n+1-u$, this implies $c^k(u) \geq u+1$. An inspection of the cycle notation of $c = (\upperbarletter_s ~ ... ~ \upperbarletter_1 ~  1 ~ \lowerbarletter_1 ~ ...~ \lowerbarletter_r ~ (n+1))$ informs us that the set $\{u,c^1(u),c^2(u),\ldots,c^k(u)\}$ must then contain all numbers in $[u]$ as well as $c^k(u)$. In particular, the set$\{u,c^1(u),c^2(u),\ldots,c^k(u)\}$ has size at least $u+1$. It also has size $k+1$, implying  $k \geq u$.  However, we know that $k\leq\mu(u)<u$, so this is a contradiction.

\end{proof}

\begin{remark}\label{rem.ne}
All entries of $\Pi_c$ other than those listed in the above proposition are strictly above the main diagonal.
\end{remark}

We next show that none of the entries recorded by $\Pi_c$ are entries guaranteed to be zero by  Proposition~\ref{prop:ZeroRelationsOnMatrix}. One can notice this is true on the left-hand side of Figure \ref{fig.proj}. 

\begin{proposition}\label{prop:DontRecordZeroes}
There is no intersection between the entries recorded by $\Pi_c$ and those guaranteed to be zero in  Proposition~\ref{prop:ZeroRelationsOnMatrix}. 
\end{proposition}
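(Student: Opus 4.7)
The plan is to verify, case by case across the three families of entries recorded by $\Pi_c$ in \Cref{defn.projection}, that none of them coincides with a position forced to be zero by \Cref{prop:ZeroRelationsOnMatrix}. The first two families are immediate. For entries $(i, d_j)$ with $i \le d_j - 1$ coming from a lower-barred column, the zero region in column $d_j$ starts at row $\max(d_j+1, n+3-d_j) \ge d_j + 1$, which is strictly larger than $d_j - 1$, so the recorded row sits strictly below the zero region. For entries in column $n+1$ there is nothing to check, since $n+1 \notin [2,n]$ is neither upper- nor lower-barred. For the ``additional'' entries $(i, u)$ with $m+1 \le i \le u-1$ that occur only when an upper-barred $u$ satisfies $u > (n+2)/2$, we have $\mu(u) = n+1-u$, so the zero region in column $u$ is exactly $\{i : i \le n+1-u\}$; the recorded rows satisfy $i \ge m+1 = n+2-u > n+1-u$ and so sit strictly above the zero region.

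The main work concerns entries of the form $(n+2-k, c^k(u_t))$ for $1 \le k \le \mu(u_t)$. Using the cycle presentation of $c$ recalled in \Cref{rem:Coxeter element in cycle notation}, together with the convention $d_0 = 1$, we have $c^k(u_t) = u_{t-k}$ when $k < t$, $c^k(u_t) = 1$ when $k = t$, and $c^k(u_t) = d_{k-t}$ when $k > t$; \Cref{prop.sw} moreover rules out $c^k(u_t) = n+1$ for $k \ge 1$. When $c^k(u_t) = 1$ there is no zero relation to worry about. When $c^k(u_t) = u_{t-k}$ is upper-barred, \Cref{prop.sw} gives $u_{t-k} \le n+1-k$, so the row $n+2-k > u_{t-k} > u_{t-k} - 1 \ge \min(u_{t-k}-1, n+1-u_{t-k})$, placing the entry strictly below the corresponding zero region.

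The crux is the case $c^k(u_t) = d_{k-t}$ with $k > t$. The key claim I intend to prove is $d_{k-t} \le k$. Since $k \le \mu(u_t) \le u_t - 1$, we have $u_t > k$, so among the $k-1$ elements of $\{2, \ldots, k\}$ at most $t-1$ can be upper-barred (only $u_1, \ldots, u_{t-1}$ are candidates); consequently at least $k-t$ of them lie in $\underline{[2,n]}$, forcing $d_{k-t} \le k$. Once this inequality is in hand, $n+3 - d_{k-t} \ge n+3 - k > n+2-k$, so the row $n+2-k$ is strictly less than $\max(d_{k-t}+1,\, n+3-d_{k-t})$ and thus lies outside the zero region in column $d_{k-t}$. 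I expect this counting argument to be the main obstacle; once it is in place, the rest of the proof is a routine sequence of inequality checks driven by \Cref{prop.sw} and the formula for $\mu$.
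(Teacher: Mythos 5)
Your proof is correct. Its overall skeleton matches the paper's: the entries in lower-barred columns, in column $n+1$, and the ``additional'' above-diagonal entries in upper-barred columns are dispatched by direct inequalities, and the real content is showing that a below-diagonal entry $(n+2-k,c^k(u_t))$ with $1\le k\le\mu(u_t)$ cannot land in the zero region of a lower-barred column. Both you and the paper reduce this to the same inequality, namely $d\le k$ where $d=c^k(u_t)$, after which $n+2-k< n+3-d\le\max(d+1,\,n+3-d)$ finishes the case. Where you differ is in how that inequality is obtained: the paper works with the one-line word $\sigma_c$, observing that $c^k(u)$ sits $k$ places before $u$ in $\sigma_c$ and counting how many values lie weakly between $u$ and $d$ (first forcing $u>d$, then $k\ge d$), whereas you invoke the explicit formula $c^k(u_t)=u_{t-k}$, $1$, or $d_{k-t}$ from \Cref{rem:Coxeter element in cycle notation} and, in the lower-barred subcase, count barred numbers in $[2,k]$: since $u_t>k$, at most $t-1$ elements of $[2,k]$ are upper-barred, so at least $k-t$ are lower-barred and $d_{k-t}\le k$. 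Your route is a bit more computational and self-contained (it is essentially the counting packaged in \Cref{lem:content_of_interval}), and it also splits the upper-barred and $c^k(u_t)=1$ subcases explicitly, which the paper subsumes into the general observation that those entries lie strictly below the diagonal; the paper's $\sigma$-based argument has the advantage of reusing the object $\sigma_c$ that drives the later sections. Either way the conclusion and the decisive inequality are the same, so this is a sound alternative write-up rather than a different theorem-level strategy. One cosmetic note: your use of ``above''/``below the zero region'' is inverted relative to matrix row order in a couple of places; the inequalities you state are what matter and they are correct, but you may want to phrase the comparisons purely in terms of row indices to avoid confusion.
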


\begin{proof}
We will prove this by showing that none of the entries described in  Proposition~\ref{prop:ZeroRelationsOnMatrix} are listed in the definition of $\Pi_c$.

First, for each upper-barred number $\upperbarletter$, the entries guaranteed to be zero are $X(i,u)$ for $1 \leq i \leq \mu(u)$. 
From Proposition \ref{prop.sw}, we know the entries $(n+2-k,c^k(u))$ for $k\leq\mu(u)$ lie below the main diagonal so they will not coincide with the entries guaranteed to be zero from upper-barred numbers. All of the other entries listed in the definition of $\Pi_c$ are in a column indexed by a lower-barred number, column $n+1$, or a row greater than $\mu(u)$. Therefore the entries $X(i,u)$ for $1 \leq i \leq \mu(u)$ are not recorded under $\Pi_c$.

The other set of entries that are always zero from  Proposition~\ref{prop:ZeroRelationsOnMatrix} are $X(i,\lowerbarletter)$ for each lower-barred number $\lowerbarletter$ and each $i$ satisfying $\max(\lowerbarletter+1,n+3-\lowerbarletter) \leq i \leq n+1$. Since these entries are all below the main diagonal, we just need to show that $(i,\lowerbarletter)$ will not be equal to $(n+2-k, c^k(u))$ for some upper-barred number $u$ and $k \leq \mu(u)$.

Consider the numbers that appear  between $d$ and $u$, inclusive, in the one-line notation of $\sigma$.  That is, the numbers in bold below:
\[
\sigma = (n+1)\ d_r \dots \mathbf{d \dots \ d_1 \ 1\  u_1 \dots u} \dots u_s\ .
\]
These values include all numbers in the interval $[1,\min(d,u)]$. In particular, if $u<d$, there are at least $u+1>u-1\geq\mu(u)$ numbers weakly between $u$ and $d$ in $\sigma$. From the definition of $\sigma$ and from writing $c$ in cycle notation we can see that $c^k(u)$ is the value $k$ places before $u$ in $\sigma$.  Therefore we must have $u > d$ in order for $c^k(u) = d$ for some $k \leq \mu(u)$.

Since $u > d$, we now know that all numbers in $[d]$ appear weakly between $u$ and $d$ in $\sigma$.  This means that, in order for $c^k(u) = d$, we need $k \geq d$. Therefore, if the pair $(n+2-k, c^k(u)) = (n+2-k,d)$ is an entry recorded by $\Pi_c$, we have $n+2-k \leq n+2-d < n+3-d$. Therefore, the entry $X(n+2-k,d)$ is not one guaranteed to be 0 for all $X \in \aff(c)$ by  Proposition~\ref{prop:ZeroRelationsOnMatrix}.  
\end{proof}

For $X \in \aff(c)$, we will refer to values $X(i,j)$ recorded by $\Pi_c$ as well as those guaranteed to be 0 by Proposition~\ref{prop:ZeroRelationsOnMatrix} as ``determined values.'' We will refer to all other values of $X$ as ``undetermined values.'' 

Our main result of this section concerns what happens when we  apply $\Pi_c$ to $\aff(c)$. We prepare for the proof of this with a technical lemma.

\begin{lemma}\label{lem:CanPartitionUnrecordedLowerRow}
If $\frac{n+2}{2}$ is upper-barred, fix $\frac{n+2}{2} \leq k < n+1$, and otherwise fix $\frac{n+2}{2} < k \leq n+1$. Let $\{a_1,\ldots,a_t\}\subseteq [n+1]$ be the set such that the entries $X(k,a_i)$ are the undetermined values of $X$.  Then, $t = n+3-k$ and $\{a_1,\ldots,a_t\} = \{k\} \cup \{\sigma(\alpha),\sigma(\alpha+1),\ldots,\sigma(\alpha + t - 2)\}$ for some positive integer $\alpha > r - \underline{\upsilon}_c + 1$.
\end{lemma}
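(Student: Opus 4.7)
The plan is to explicitly enumerate the determined entries in row $k$, column by column, and then re-index them by positions in the permutation $\sigma = \sigma_c$.

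First I would partition the determined columns in row $k$ into five disjoint sources: (A) column $n+1$, recorded by $\Pi_c$ when $k<n+1$; (B) lower-barred columns $d\geq k+1$, recorded by the first block of $\Pi_c$; (C) lower-barred columns $d\in[n+3-k,k-1]$, zeroed by \Cref{prop:ZeroRelationsOnMatrix}; (D) columns of the form $c^{n+2-k}(u)$ for upper-barred $u\in[n+3-k,k-1]$ (equivalently, $\mu(u)\geq n+2-k$), recorded by step~(2) of the definition of $\Pi_c$; and (E) upper-barred columns $u\geq k+1$, recorded by step~(3). The hypothesis $k\geq \frac{n+2}{2}$ rules out any upper-barred zero relations in row $k$ and guarantees that (B)$\cup$(C) together capture every lower-barred $d$ with $d\geq n+3-k$ except $d=k$ itself.

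Next, I would re-express these in $\sigma$-coordinates. Since $\sigma=(n+1),d_r,\ldots,d_1,1,u_1,\ldots,u_s$ and $c$ acts as a backward shift on positions of $\sigma$, sources (A)+(B)+(C) correspond to the prefix $\{\beta\in[1,r+1]:\sigma(\beta)\geq n+3-k\}=[1,L_1]$ (minus $\beta(k):=\sigma^{-1}(k)$ when $k$ is lower-barred or $k=n+1$), where $L_1=r-J_L+2$ for $J_L=\min\{j:d_j\geq n+3-k\}$; source (E) corresponds to a suffix $[R_1,n+1]$; and source (D) corresponds to a contiguous interval $[T_L+r+k-n,\,T_R+r+k-n]$, where $T_L,T_R$ are the smallest and largest $t$ with $u_t\in[n+3-k,k-1]$. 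The crucial combinatorial identity is
\[
T_L+J_L=n+3-k,
\]
obtained by counting the $n+1-k$ integers in $[2,n+2-k]$ and partitioning them into $J_L-1$ lower-barred and $T_L-1$ upper-barred values. This identity forces the prefix to be immediately adjacent to $D_D$, so $[1,L_1]\cup D_D=[1,\,T_R+r+k-n]$ is one contiguous block, and no ``gap'' appears on the left.

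Consequently, the only remaining hole among determined positions lies between $D_D$ and $[R_1,n+1]$, and a short computation shows it has length $n+2-k$ (resp.\ $n+3-k$) when $k$ is not (resp.\ is) upper-barred. Together with $\beta(k)$, which sits outside the prefix, outside $D_D$, and outside the suffix whenever $k\neq n+1$, this yields exactly $t=n+3-k$ undetermined columns, forming $\{k\}\cup\{\sigma(\alpha),\ldots,\sigma(\alpha+t-2)\}$ with $\alpha=T_R+r+k-n+1$. To verify $\alpha>r-\underline{\upsilon}_c+1$, I would use that every upper-barred value $\leq\frac{n+1}{2}$ is strictly below $k$ (since $k\geq\frac{n+2}{2}$), hence $T_R\geq\overline{\upsilon}_c$, and combine this with the identity $\overline{\upsilon}_c+\underline{\upsilon}_c=\lfloor\frac{n-1}{2}\rfloor$ to obtain the desired strict inequality. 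The main obstacle is careful bookkeeping at the boundary case where $\frac{n+2}{2}$ is an integer and equal to $k$: several of the above inequalities become tight, and one must use the parity of $n$ together with the barred-ness of $\frac{n+2}{2}$ to confirm that $\alpha=r-\underline{\upsilon}_c+2$ rather than $r-\underline{\upsilon}_c+1$.
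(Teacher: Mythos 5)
Your decomposition (A)--(E) of the determined entries in row $k$ is exactly the paper's partition: (A)$\cup$(B)$\cup$(E) is the paper's $I^k_1$, (C) is $I^k_2$, and (D) is $I^k_3$; and your ``crucial identity'' $T_L+J_L=n+3-k$ is precisely the paper's observation that there are exactly $n+2-k$ numbers cyclically between $\overline{[n+3-k,k-1]}$ and $\underline{[n+3-k,k-1]}$ in the cycle of $c$, which is what forces $I^k_3$ to sit flush against the rest. So in the generic case your argument is the paper's proof rewritten in explicit $\sigma$-position coordinates, with the same count $k-2$ of determined columns giving $t=n+3-k$. (One small thing you use tacitly: your position bookkeeping assumes the five sources are pairwise disjoint as column sets; in the paper this is exactly the role of \Cref{prop.sw} and \Cref{prop:DontRecordZeroes}, so you should cite or reprove that.)

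The genuine gap is in the degenerate cases, which you mislocate. Your formula $\alpha=T_R+r+k-n+1$ and your bound via $T_R\geq\overline{\upsilon}_c$ presuppose $\overline{[n+3-k,k-1]}\neq\emptyset$, i.e.\ that source (D) is nonempty. You flag only the boundary case $k=\tfrac{n+2}{2}$, but (D) is empty much more often: for $k=\tfrac{n+3}{2}$ ($n$ odd), whenever every element of $[n+3-k,k-1]$ happens to be lower-barred, and most notably for the Tamari Coxeter element $c=s_1s_2\cdots s_n$, where there are no upper-barred numbers at all, so $T_L$ and $T_R$ are undefined for \emph{every} admissible $k$ and your expression for $\alpha$ and the final inequality are meaningless as written. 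Fortunately these cases are easier, not harder: with (D) empty the determined positions are a prefix $[1,\,r+2-J_L]$ (possibly minus $\sigma^{-1}(k)$) together with a suffix, so the complement is automatically $\{\sigma^{-1}(k)\}$ plus an interval, and the bound on $\alpha$ follows from $J_L\leq\underline{\upsilon}_c+1$, which holds because $n+2-k\leq\tfrac{n+2}{2}$ and, when $n+2-k=\tfrac{n+2}{2}$ is in range, that number is upper-barred. You need to add this branch explicitly (and check, as you do for $k=\tfrac{n+2}{2}$, that the resulting $\alpha$ is $\geq r-\underline{\upsilon}_c+2$); with that addition your outline matches the paper's proof, which handles the same issue by a short case check for $k<\tfrac{n+5}{2}$ after noting $I^k_2=I^k_3=\emptyset$ when $k<\tfrac{n+4}{2}$.
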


\begin{proof}
Our goal is to understand how the values $\{a_1,\ldots,a_t\}$ appear in the cycle notation of $c$. It will be easier for us to first study the complement, $[n+1] \backslash \{a_1,\ldots,a_t\}$. To this end, we introduce three sets and show that they partition $[n+1] \backslash \{a_1,\ldots,a_t\}$.

\begin{enumerate}[(1)]
\item \label{lem:CanPartitionUnrecordedLowerRow:itm1} Let $I^k_1=(k,n+1]$.
\item \label{lem:CanPartitionUnrecordedLowerRow:itm2} Let $I^k_2=\underline{[n+3-k,k-1]}$. 
\item \label{lem:CanPartitionUnrecordedLowerRow:itm3} Let $I^k_3=\{c^{n+2-k}(u): u \in \overline{[n+3-k,k-1]}\}$ if  $k \geq \frac{n+4}{2}$.
\end{enumerate}

The numbers in $I_1^k$ index the columns $j$ of all entries $(k,j)$ recorded by $\Pi_c$ such that either $j$ is lower-barred, $j$ is upper-barred and $(k,j)$ is covered by the third case of Definition~\ref{defn.projection} of $\Pi_c$, or $j = n+1$. The numbers $I_2^k$ index the columns $j$ such that $X(k,j) = 0$ from (Proposition~\ref{prop:ZeroRelationsOnMatrix}). The numbers in $I_3^k$ index the columns $j$ of all entries $(k,j)$ recorded by $\Pi_c$ such that $j$ is upper-barred and $(k,j)$ is covered by the second case of Definition~\ref{defn.projection} of $\Pi_c$.  Thus, together $I_1^k$ and $I_3^k$ exactly index the columns of all entries $(k,j)$ which are recorded by $\Pi_c$.  Note that if $k < \frac{n+4}{2}$, then $I_2^k$ and $I_3^k$ will both be empty.

Recall that $c=(\upperbarletter_s ~ ... ~ \upperbarletter_1 ~  1 ~ \lowerbarletter_1 ~ ...~ \lowerbarletter_r ~ (n+1))=
(1~  \lowerbarletter_1 ~ ...~  \lowerbarletter_r ~ (n+1) ~ \upperbarletter_s ~ ... ~ \upperbarletter_1)$ in cycle notation.  It is clear that $I_1^k \cup \{k\}, I_2^k$, and $I_3^k$ each individually form a connected cyclic interval of $c$.
Proposition~\ref{prop.sw} shows that $I_1^k \cup \{k\}$ and $I_3^k$ are disjoint while Proposition \ref{prop:DontRecordZeroes} shows $I_2^k$ is disjoint from $I_1^k \cup I_3^k$. By definition,  $k \notin I_2^k$. Thus, $I_1^k\cup\{k\}, I_2^k$, and  $I_3^k$  are pairwise disjoint.

Now, it is clear that $I_1^k \cup \{k\}$ and $I_2^k$ are cyclically consecutive in $c$. We have $I_2^k = \underline{[n+3-k,k-1]}$ and $I_3^k$ is the result of shifting all numbers in $\overline{[n+3-k,k-1]}$ by $n+2-k$ in the ordering imposed by $c$, i.e. $I_3^k = c^{n+2-k}(\overline{[n+3-k,k-1]})$. There are exactly $n+2-k$ numbers cyclically after $\overline{[n+3-k,k-1]}$ and before $\underline{[n+3-k,k-1]}$. It is then clear that $I_2^k$ and $I_3^k$ are consecutive. Thus, the complement of $I_1^k \cup I_2^k \cup I_3^k \cup \{k\}$ must be consecutive in $c$. Since $n+1$ is not in the complement, it is easy to see that it is also consecutive in $\sigma$. 

We will now show $t=n+3-k$. We know $I_1^k, I_2^k$, and $I_3^k$ are disjoint.  Also, by their description, all determined values $X(k,j)$ satisfy $j \in I_1^k \sqcup I_2^k \sqcup I_3^k$. There are $(n-k+1) + (2k-n-3)= k - 2$ values in $I_1^k \sqcup I_2^k \sqcup I_3^k$, implying the complement in $[n+1]$ is size $n+3-k$. 

Finally, we show that none of the elements in the complement of $I_1^k \cup I_2^k \cup I_3^k \cup \{k\}$ in $[n+1]$ are lowered-barred numbers larger than $\frac{n+1}{2}$, which will show the desired bound on $\alpha$. In order for a lower-barred number larger than $\frac{n+1}{2}$ to be in the complement of $I_2^k$, we require $\frac{n+1}{2} < n+3-k$, which implies $k < \frac{n+5}{2}$.  Thus we only need to check the cases where $k=\frac{n+2}{2}$, where $k=\frac{n+3}{2}$, and where $k=\frac{n+4}{2}$.  When $k=\frac{n+2}{2}$ or $\frac{n+3}{2}$, there are no integers less than $k$ and greater than $\frac{n+1}{2}$ to check. When $k=\frac{n+4}{2}$, we have $\frac{n+2}{2}\in I_2^k$. Thus, in all cases, the bound on $\alpha$ holds.

\end{proof}

\begin{theorem}\label{thm.lattice.preserving.projection}
The restriction of the projection $\Pi_c: \mathbb{R}^{(n+1)^2} \to \mathbb{R}^{{n+1 \choose 2}}$ to 
$\aff(c)$
is an injective linear transformation which sends integral points to integral points.
\end{theorem}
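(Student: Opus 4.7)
The plan is to treat each of the three assertions separately. Linearity of $\Pi_c$ and the fact that it sends integer points to integer points are immediate, since $\Pi_c$ is a coordinate projection $\mathbb{R}^{(n+1)^2}\to\mathbb{R}^{\binom{n+1}{2}}$; I would dispatch these in a single sentence. The substantive content is injectivity of $\Pi_c|_{\aff(c)}$.

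Suppose $X_1,X_2\in\aff(c)$ with $\Pi_c(X_1)=\Pi_c(X_2)$, and let $Y=X_1-X_2$. Then $Y$ vanishes on every entry recorded by $\Pi_c$ (by hypothesis) and on every entry forced to zero by Proposition \ref{prop:ZeroRelationsOnMatrix} (since both $X_1$ and $X_2$ vanish there), and $Y$ satisfies the homogeneous version of every relation stated in Section \ref{Sec:Relations}. I would show $Y=0$ by a two-sided row induction, peeling rows off from the top and the bottom toward the middle.

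For the top-down step at row $i$, assuming rows $1,\ldots,i-1$ of $Y$ vanish, a case analysis on the column index (distinguishing $1$, lower-barred columns, upper-barred columns, and $n+1$) combined with the definition of $\Pi_c$ and Propositions \ref{prop:ZeroRelationsOnMatrix} and \ref{prop:DontRecordZeroes} shows that the undetermined entries in row $i$ lie in columns $\{1,2,\ldots,i\}$, provided $i$ lies in the range allowed by Theorem \ref{thm:first_half_relations}. By Lemma \ref{lem:how_first_columns_are_labeled}, the values $\nu_c(1),\ldots,\nu_c(i)$ form a connected interval of length $i$ and are therefore pairwise distinct modulo $i$. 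Applying the top-sum relation at $y=i$ and each $z\in\{0,1,\ldots,i-1\}$, the inductive hypothesis together with the vanishing of the row-$i$ entries in columns outside $\{1,\ldots,i\}$ reduces each such relation to an equation isolating a single undetermined entry in row $i$, forcing it to zero.

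The bottom-up induction is the parallel argument using the bottom-sum relations of Theorem \ref{thm:second_half_relations} together with Lemmas \ref{lem:CanPartitionUnrecordedLowerRow} and \ref{lem:LabelSigma}, which describe the undetermined entries of row $k$ in the bottom half as $\{k\}$ together with a $\sigma_c$-consecutive block whose $\nu_{c^{-1}}$-labels form a consecutive interval. The middle row, in the case when $\tfrac{n+2}{2}$ is an integer, is covered by whichever of the top- or bottom-sum relations extends to $y=\tfrac{n+2}{2}$: the top if $\tfrac{n+2}{2}$ is lower-barred, the bottom if it is upper-barred. The hardest part I anticipate is the bottom-half induction: the $\nu_{c^{-1}}$-labels of the undetermined columns in row $k$ need not all lie in distinct residue classes modulo $n+2-k$, so the clean ``one relation per undetermined entry'' argument does not translate verbatim. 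I expect the proof to interleave the bottom-sum relations with the column-sum relations, which become effective on the lower rows once the top rows are known to vanish, allowing the remaining entries to be pinned down row by row. Since every relation used in this reconstruction has integer coefficients with leading coefficient $\pm 1$, the same argument also shows that a preimage of any integer point of $\Pi_c(\aff(c))$ under $\Pi_c|_{\aff(c)}$ is integer, giving the lattice-preserving aspect of the theorem.
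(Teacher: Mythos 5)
Your proposal is correct and takes essentially the same route as the paper: the paper likewise reconstructs the undetermined entries row by row, top-down via the top-sum relations and bottom-up via the bottom-sum relations (which is equivalent to your homogeneous argument for $Y=X_1-X_2$), and it handles the middle row exactly as you describe. The difficulty you flag in the bottom half is resolved just as you predict: the column relation from \Cref{lem:RowAndCol} pins down the diagonal entry $X(k,k)$ first, and the remaining $n+2-k$ undetermined columns in row $k$ have $\nulabel_{c^{-1}}$-labels forming a consecutive interval (\Cref{lem:CanPartitionUnrecordedLowerRow} and \Cref{lem:LabelSigma}), hence lying in distinct residue classes for the relevant modulus, so each bottom-sum relation isolates a single unknown.
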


\begin{proof}
Let $X\in\aff(c)$.
Our objective is to demonstrate that each undetermined value can be deduced from the determined values. 
This will imply that $X$ is uniquely determined by $\Pi_c(X)$, and further, if $\Pi_c(X)$ is integral, $X$ must have been integral. 

We first prove by induction on $k$ that for any $1 \leq k\leq \frac{n+1}{2}$, we can deduce the entire $k$\textsuperscript{th} row of~$X$. First we check this for $k=1$. The only entry that is not yet determined in this case is $X(1,1)$. By Lemma~\ref{lem:RowAndCol}, this value can be computed. Next we assume all entries in rows strictly above row $k$ are known and we will show that the entries in row $k$ can be computed. The undetermined entries in row $k$ are $X(k,j)$ for $j\in [k]$. Theorem~\ref{thm:first_half_relations} gives a linear equation
\[
\sum_{\{j:\nulabel_c(j) \equiv z\}} \ X(k,j) = 1 - \sum_{\{j:\nulabel_c(j) \equiv z\}} \sum_{i=1}^{k-1} X(i,j)
\]
for each $0\leq z\leq k-1$.
By the inductive hypothesis, we can compute the right-hand side for each of these equations.
Since $\{j:\nulabel_c(j) \equiv z\}$ are disjoint for different $z$ values, these $k$ linear equations are linearly independent and thus give unique solution to the unknowns $X(k,j)$ for $j\in [k]$. If $\frac{n+2}{2}$ is an integer and is lower-barred, then we determine entries in this row in the same way; otherwise, we stop at the largest integer less than $\frac{n+2}{2}$.

We next prove by (backwards) induction on $k$ that for any $\frac{n+2}{2} < k \leq n+1$, we can deduce the entire $k$\textsuperscript{th} row. First we check this for row $k=n+1$. The only two entries that are not determined are $X(n+1,n+1)$ and $X(n+1,j)$ where $j = u_s$ if $s\geq 1$ and $j=1$ otherwise. Note, by Lemma~\ref{lem:RowAndCol}, $X(n+1,n+1)$ can be computed as $X(1,n+1),\dots, X(n,n+1)$ are known. Then, by Lemma~\ref{lem:RowAndCol} again, $X(n+1,j)$ can be computed too. 

Next we assume all entries in rows strictly below row $k$ are known and we will show that the entries in row $k$ can be computed for some $\frac{n+2}{2} < k < n+1$.  By Lemma~\ref{lem:CanPartitionUnrecordedLowerRow}, the undetermined entries in row $k$ are $X(k,a_1),\ldots,X(k,a_t)$ where $\{a_1,\ldots,a_t\}=\{k,\sigma(\alpha),\dots,\sigma(\alpha+t-2)\}$ with $\alpha > r - \vert \underline{[2,\frac{n+1}{2}]} \vert + 1$ and $t = n+3-k$. Note that entries $X(i,k)$ are known for all $i \neq k$ by the inductive hypothesis, the definition of $\Pi_c$, and Proposition~\ref{prop:ZeroRelationsOnMatrix}. Therefore, by Lemma~\ref{lem:RowAndCol} the entry $X(k,k)$ can be computed. 

We know $k\in\{a_1,\dots,a_t\}$ by Lemma~\ref{lem:CanPartitionUnrecordedLowerRow}.  Without loss of generality, let $a_1=k$. From Lemma~\ref{lem:LabelSigma} we then have that $\{\nulabel_{c^{-1}}(a_2),\ldots,\nulabel_{c^{-1}}(a_t)\}=[\alpha-(r+2),\alpha+t-2-(r-2)]$.  Since $t = n+3-k$ and $\frac{n+1}{2}  \leq k < n+1$, we know $t\leq k$.  Thus, the values $\nulabel_{c^{-1}}(a_2),\ldots,\nulabel_{c^{-1}}(a_{t})$ are pairwise distinct values modulo $k$. Theorem~\ref{thm:second_half_relations} gives a linear equation
\[
\sum_{\{j:\nulabel_{c^{-1}}(j) \equiv z\}}\  X(k,j) = 1 - \sum_{\{j:\nulabel_{c^{-1}}(j) \equiv z\}}\  \sum_{i=k+1}^{n+1} X(i,j)
\]
for each $0\leq z\leq n+1-k$.  By the inductive hypothesis, we can compute the right-hand side for each of these equations. Since $\{j:\nulabel_{c^{-1}}(j) \equiv z\}$ are disjoint for different $z$ values, these $n+2-k$ linear equations are linearly independent and thus give a unique solution to the remaining $t = n+2-k$ unknowns. When $\frac{n+2}{2}$ is an upper-barred integer, this induction works for $k = \frac{n+2}{2}$ as well. 
\end{proof}

\section{Unimodular equivalence}\label{sec:UnimodularEquivalence}

This section will prove our main result (Theorem \ref{thm:main}) which states the two polytopes $\bir(c)$ and $\ord(\heap(\sort(w_0)))$ are unimodularly equivalent. 
Since we showed in Theorem~\ref{thm.lattice.preserving.projection} that $\Pi_c$ is injective and lattice-preserving on $\aff{(c)}$, it remains to prove there is a unimodular transformation~$\mathcal{U}_c$ from $\Pi_c(\bir(c))$ to $\ord(\heap(\sort(w_0)))$.
 
We begin in Section \ref{sec:heap} with some results concerning the structure of $\heap(\sort(w_0))$. In Section~\ref{subsec:Square}, we define a subset of $\binom{n+1}{2}$  $c$-singletons $\{b_i\mid 1\leqslant i \leqslant \ell(w_0)\}$ and show that the matrix whose columns are the projections of those elements under $\Pi_c$ is an antidiagonal lower unitriangular matrix (Lemma~\ref{lem.lower} and Lemma~\ref{lem.upper}). 
As described in Proposition \ref{prop:vertex-bijection}, each $c$-singleton is associated with an order ideal of $\heap(\sort(w_0))$ and therefore with a vertex of $\ord(\heap(\sort(w_0)))$. The vertices of $\ord(\heap(\sort(w_0)))$ corresponding to the elements of $\{b_i \}$ also form an antidiagonal lower unitriangular matrix. We then define $\mathcal{U}_c$ to be the unique linear transformation which sends the projections of $\{b_i \}$ under $\Pi_c$ to the associated subset of vertices of $\ord(\heap(\sort(w_0)))$. In Section \ref{subsec:SquareToRectangle}, we show that for \emph{every} $c$-singleton $w$,  the vector $\mathcal{U}_c \circ \Pi_c(X(w))$ is the vertex of $\ord(\heap(\sort(w_0)))$ associated with $w$.

\subsection{Construction and properties of the heap of the $c$-sorting word of $w_0$}\label{sec:heap}
This section describes the ``shape" of $\Heapwnot$. 
Let $c$ be a Coxeter element in $A_n$ with 
$r$ lower-barred numbers $1<\underline{\lowerbarletter_1}< \dots < \underline{\lowerbarletter_r} < n+1$ and 
$s$ upper-barred numbers $1< \overline{\upperbarletter_1} < \dots < \overline{\upperbarletter_s} < n+1$. 
We call a decreasing sequence of consecutive integers a \emph{decreasing run}; for example, $5432$ is a decreasing run, but $6532$ is not.
Denote by $\DiagonalReadingWord$ a word which 
is the concatenation of $n$ factors
\begin{equation}\label{eq:Def:Rc}
\DiagonalReadingWord = \rw{(\underline{\lowerbarletter_1} -1) ... 1} \dots  
\rw{(\underline{\lowerbarletter_r} -1) ... 1} 
\rw{n ... 1} 
\rw{n ... (n-\overline{\upperbarletter_s} +2)} \dots  
\rw{n ... (n- \overline{\upperbarletter_1} +2)}
\end{equation}
where each factor is a decreasing run.

Following Definition~\ref{defn:heap of a reduced word}, we can construct the heap $\HeapDiagonalReading \coloneqq \heap(\DiagonalReadingWord)$ of $\DiagonalReadingWord$ 
and its heap diagram. 
Below, we describe an equivalent algorithmic construction of the heap diagram for $\HeapDiagonalReading$. 
We will refer to a subset of a heap where all the elements form a line with slope $-1$ as a \emph{diagonal}. Similarly, we will refer to a subset of a heap where all the elements form a line with slope $1$ as an \emph{antidiagonal}.

\def\LongDiagonal{D_{\text{long}}}
\begin{algorithm}
[Algorithm for constructing the heap diagram for $\HeapDiagonalReading$]

\label{alg:algorithm for heap using diagonal reading word}
Consider the lattice $L\subseteq \mathbb{R}^2$ generated by $\epsilon_1=(1/2,1/2)$ and $\epsilon_2=(-1/2,1/2)$. Let $\Gamma$ be the grid of all points in $L$ which are of the form $x_1 \epsilon_1 + x_2 \epsilon_2$, where $x_1$ and $x_2$  are nonnegative integers satisfying $x_1+x_2 \leq 2n$ and $0 \leq x_1 - x_2 \leq n-1$. We refer to the lattice point $x_1 \epsilon_1 + y_2 \epsilon_2$ as vertex $(a,b)=(x_1-x_2+1, x_2+1)$. The grid $\Gamma$ is then a set of vertices $\{ (a,b) \mid 1 \leq a,b \leq n \}$ as in Figure \ref{fig:longest element heap A7 4321657 alternative}. 
Note that our indexing convention is such that $(a,b)$ and $(a',b')$ are in the same diagonal if and only if $a+b = a'+b'$ and they are in the same antidiagonal if and only if $b= b'$.
\begin{enumerate}[(Step 1)]
\item \label{alg:algorithm for heap using diagonal reading word:itm1}
Take the vertices along the diagonal from 
vertex $(n,1)$ to vertex $(1,n)$. Denote this as $\LongDiagonal$. 
\item \label{alg:algorithm for heap using diagonal reading word:itm2}
For each lower-barred number $\lowerbarletter_i$, we add vertices $(1, n-r+i-1),(2, n-r+i-2),\dots,(\lowerbarletter_i-1, n-r+i-\lowerbarletter_i+1)$.
That is, we add a ``flushed-left" diagonal with $\lowerbarletter_i - 1$ vertices below~$\LongDiagonal$.  
Note that immediately below $\LongDiagonal$ we have a shorter diagonal corresponding to $\lowerbarletter_r$, then an even shorter diagonal corresponding to $\lowerbarletter_{r-1}$, and so on. 

\item \label{alg:algorithm for heap using diagonal reading word:itm3}
For each upper-barred number $\upperbarletter_i$, we add vertices $(n, 2+s-i),(n-1, 2+s-i+1),\dots,(n-\upperbarletter_i+2, s-i+\upperbarletter_i)$.
That is, we add a ``flushed-right" diagonal with $\upperbarletter_i - 1$ vertices above $\LongDiagonal$.  
Similarly to the previous step, immediately above $\LongDiagonal$ we now have a shorter diagonal corresponding to $\upperbarletter_s$, then an even shorter diagonal corresponding to $\upperbarletter_{s-1}$, and so on. 
\end{enumerate} 
For each vertex $(a,b)$ we have collected, label each vertex by $a$. The resulting diagram is the heap diagram of $\HeapDiagonalReading$. 
\end{algorithm}

\begin{example}
Let $c=\rw{4321 65 7}$ with lower-barred numbers $\underline{5}, \underline{7}$ and upper-barred numbers $\overline{2}, \overline{3}, \overline{4}, \overline{6}$. Then the word $\DiagonalReadingWord$ in this case is 
\[\mathcal{R}_{\rw{4321 65 7}}=\rw{4321}\rw{654321}\rw{7654321}\rw{76543}\rw{765}\rw{76}\rw{7}.\]

The heap diagram of $\HeapDiagonalReading$ is shown in Figure \ref{fig:longest element heap A7 4321657 alternative}.  We construct this diagram using Algorithm~\ref{alg:algorithm for heap using diagonal reading word}.  
In \ref{alg:algorithm for heap using diagonal reading word:itm1} we add the diagonal from position $(7,1)$ to $(1,7)$.  We put diamonds around the positions added in this step.  In \ref{alg:algorithm for heap using diagonal reading word:itm2} we add two diagonals of sizes 4 and 6 respectively, corresponding to the lower-barred numbers $\lowerbarletter_1=\underline{5}, \lowerbarletter_2=\underline{7}$. We put circles around the positions added in this step.  Lastly, for \ref{alg:algorithm for heap using diagonal reading word:itm3} we add four diagonals of sizes $5, 3, 2, 1$, respectively, corresponding to the upper-barred integers ${\upperbarletter_4}=\overline{6}, {\upperbarletter_3}=\overline{4}, {\upperbarletter_2}=\overline{3}, {\upperbarletter_1}=\overline{2}$. We put rectangles around the positions added in this final step. 
\end{example}

\def\myscale{.64}
\begin{figure}[htb!]
\centering
\begin{tikzpicture}[scale=\myscale]
\filldraw [yellow!20]
(6+.4,14+.4) -- (3,11+.4) -- (2,12+.4) -- (1,11+.4) --
(0-.4,12+.4) --(0-.4,8-.4) -- 
(3,5-.4) -- (4,6-.4) -- 
(5,5-.4) -- (6+.2,6-.4) -- 
(6+.4,14+.4) ; 

\draw [black!40, 
thin, 
] (0,0) -- (6,6)
(0,2) -- (6, 6+2)
(0,4) -- (6-2, 6+4-2)
(0,6) -- (6-2, 6+6-2)
(0,8) -- (6-2, 8+6-2)
;

\draw [black!40, 
thin, 
] (0,2) -- (1,1)
(0,2+2) -- (2,2)
(0,2+4) -- (3,3)
(0,2+6) -- (4,4)
(0,4) -- (6, 6+4)
(0,6) -- (6, 6+6)
(0,8) -- (6, 8+6)
(0,10) -- (4, 8+6)
(0,12) -- (2, 14)
;

\draw [black!40, 
thin, 
] 
(0,10) -- (5,5)
(0,12) -- (6,6)
(0,14) -- (6,8)
(2,14) -- (6,10)
(4,14) -- (6,12)
;

\node[gray] (s1r1) at (0,0) {11}; 
\node[gray] (s2r2) at (1,1) {21}; 

\node[gray] (s1r3) at (0,2) {12}; 
\node[gray] (s3r3) at (2,2) {31}; 
\node[gray] (22) at (1,3) {22}; 
\node[gray] (41) at (3,3) {41}; 

\node[gray] (13) at (0,2+2) {13}; 
\node[gray] (32) at (2,2+2) {32}; 
\node[gray] (51) at (4,2+2) {51}; 
\node[gray] (23) at (1,3+2) {23}; 
\node (42) at (3,3+2) {\circled{\textbf{42}}}; 
\node (61) at (5,3+2) {\circled{\textbf{61}}}; 

\node[gray] (14) at (0,2+4) {14}; 
\node (33) at (2,2+4) {\circled{\textbf{33}}}; 
\node (52) at (4,2+4) {\circled{\textbf{52}}}; 
\node (71) at (6,2+4) {\starred{\textbf{71}}}; 
\node (24) at (1,3+2+2) {\circled{\textbf{24}}}; 
\node (43) at (3,3+2+2) {\circled{43}}; 
\node (62) at (5,3+2+2) {\starred{62}}; 

\node (15) at (0,2+4+2) {\circled{\textbf{15}}}; 
\node (34) at (2,2+4+2) {\circled{34}}; 
\node (53) at (4,2+4+2) {\starred{53}}; 
\node (72) at (6,2+4+2) {\rectangled{72}}; 
\node (25) at (1,3+2+2+2) {\circled{25}}; 
\node (44) at (3,3+2+2+2) {\starred{44}}; 
\node (63) at (5,3+2+2+2) {\rectangled{63}}; 

\node (16) at (0,2+4+2+2) {\circled{16}}; 
\node (35) at (2,2+4+2+2) {\starred{35}}; 
\node (54) at (4,2+4+2+2) {\rectangled{54}}; 
\node (73) at (6,2+4+2+2) {\rectangled{73}}; 
\node (26) at (1,3+2+2+2+2) {\starred{26}}; 
\node (45) at (3,3+2+2+2+2) {\rectangled{45}}; 
\node (s6r12) at (5,3+2+2+2+2) {\rectangled{64}};

\node (17) at (0,2+4+2+2+2) {\starred{17}}; 
\node (36) at (2,2+4+2+2+2) {\rectangled{36}}; 
\node (55) at (4,2+4+2+2+2) {\rectangled{55}}; 
\node (74) at (6,2+4+2+2+2) {\rectangled{74}}; 
\node[gray] (27) at (1,3+2+2+2+2+2) {27}; 
\node[gray] (46) at (3,3+2+2+2+2+2) {46}; 
\node (65) at (5,3+2+2+2+2+2) {\rectangled{65}}; 

\node[gray] (18) at (0,2+4+2+2+2+2) {18}; 
\node[gray] (37) at (2,2+4+2+2+2+2) {37}; 
\node[gray] (56) at (4,2+4+2+2+2+2) {56}; 
\node (75) at (6,2+4+2+2+2+2) {\rectangled{75}}; 
\end{tikzpicture}
\begin{tikzpicture}[scale=\myscale]
 \node[white] (s1r1) at (0,0) {11}; 

\node (42) at (3,3+2) {$\mathbf{\posetLABELwithS{4}}$}; 
\node (61) at (5,3+2) {$\mathbf{\posetLABELwithS{6}}$};

\node (33) at (2,2+4) {$\mathbf{\posetLABELwithS{3}}$}; 
\node (52) at (4,2+4) {$\mathbf{\posetLABELwithS{5}}$}; 
\node (71) at (6,2+4) {$\mathbf{\posetLABELwithS{7}}$}; 
\node (24) at (1,3+2+2) {$\mathbf{\posetLABELwithS{2}}$}; 
\node (43) at (3,3+2+2) {$\posetLABELwithS{4}$}; 
\node (62) at (5,3+2+2) {$\posetLABELwithS{6}$}; 

\node (15) at (0,2+4+2) {$\mathbf{\posetLABELwithS{1}}$}; 
\node (34) at (2,2+4+2) {$\posetLABELwithS{3}$}; 
\node (53) at (4,2+4+2) {$\posetLABELwithS{5}$}; 
\node (72) at (6,2+4+2) {$\posetLABELwithS{7}$}; 
\node (25) at (1,3+2+2+2) {$\posetLABELwithS{2}$}; 
\node (44) at (3,3+2+2+2) {$\posetLABELwithS{4}$}; 
\node (63) at (5,3+2+2+2) {$\posetLABELwithS{6}$}; 

\node (16) at (0,2+4+2+2) {$\posetLABELwithS{1}$}; 
\node (35) at (2,2+4+2+2) {$\posetLABELwithS{3}$}; 
\node (54) at (4,2+4+2+2) {$\posetLABELwithS{5}$}; 
\node (73) at (6,2+4+2+2) {$\posetLABELwithS{7}$}; 
\node (26) at (1,3+2+2+2+2) {$\posetLABELwithS{2}$}; 
\node (45) at (3,3+2+2+2+2) {$\posetLABELwithS{4}$}; 
\node (64) at (5,3+2+2+2+2) {$\posetLABELwithS{6}$};

\node (17) at (0,2+4+2+2+2) {$\posetLABELwithS{1}$}; 
\node (36) at (2,2+4+2+2+2) {$\posetLABELwithS{3}$}; 
\node (55) at (4,2+4+2+2+2) {$\posetLABELwithS{5}$}; 
\node (74) at (6,2+4+2+2+2) {$\posetLABELwithS{7}$}; 
\node (65) at (5,3+2+2+2+2+2) {$\posetLABELwithS{6}$}; 

\node (75) at (6,2+4+2+2+2+2) {$\posetLABELwithS{7}$};

\draw [black, thick, 
shorten <=-2pt, shorten >=-2pt] (42) -- (33); 
\draw [black, thick, 
shorten <=-2pt, shorten >=-2pt] (33) -- (24); 
\draw [black, thick, 
shorten <=-2pt, shorten >=-2pt] (24) -- (15); 
\draw [black, thick, 
shorten <=-2pt, shorten >=-2pt] (61) -- (52); 
\draw [black, thick, 
shorten <=-2pt, shorten >=-2pt] (71) -- (61); 

\draw [black, thick, 
shorten <=-2pt, shorten >=-2pt] (52) -- (43); 
\draw [black, thick, 
shorten <=-2pt, shorten >=-2pt] (43)-- (34);  
\draw [black, thick, 
shorten <=-2pt, shorten >=-2pt] (34) -- (25);  
\draw [black, thick, 
shorten <=-2pt, shorten >=-2pt] (25) -- (16); 

\draw [black, thick, 
shorten <=-2pt, shorten >=-2pt] (71) -- (62); 
\draw [black, thick, 
shorten <=-2pt, shorten >=-2pt] (62) -- (53); 
\draw [black, thick, 
shorten <=-2pt, shorten >=-2pt] (53) -- (44); 
\draw [black, thick, 
shorten <=-2pt, shorten >=-2pt] (44) -- (35); 

\draw [black, thick, 
shorten <=-2pt, shorten >=-2pt] (26) -- (17); 
\draw [black, thick, 
shorten <=-2pt, shorten >=-2pt] (35) -- (26); 
\draw [black, thick, 
shorten <=-2pt, shorten >=-2pt] (26) -- (17); 

\draw [black, thick, 
shorten <=-2pt, shorten >=-2pt] (72) -- (63);
\draw [black, thick, 
shorten <=-2pt, shorten >=-2pt] (63) -- (54);
\draw [black, thick, 
shorten <=-2pt, shorten >=-2pt] (54) -- (45);
\draw [black, thick, 
shorten <=-2pt, shorten >=-2pt] (45) -- (36); 

\draw [black, thick, 
shorten <=-2pt, shorten >=-2pt] (73) -- (64); 
\draw [black, thick, 
shorten <=-2pt, shorten >=-2pt] (64) -- (55); 

\draw [black, thick, 
shorten <=-2pt, shorten >=-2pt] (74) -- (65);

\draw [black, thick, 
shorten <=-2pt, shorten >=-2pt] (16) -- (26); 
\draw [black, thick, 
shorten <=-2pt, shorten >=-2pt] (26) -- (36);

\draw [black, thick, 
shorten <=-2pt, shorten >=-2pt] (15) -- (25); 
\draw [black, thick, 
shorten <=-2pt, shorten >=-2pt] (25) -- (35); 
\draw [black, thick, 
shorten <=-2pt, shorten >=-2pt] (35) -- (45); 
\draw [black, thick, 
shorten <=-2pt, shorten >=-2pt] (45) -- (55); 
\draw [black, thick, 
shorten <=-2pt, shorten >=-2pt] (55) -- (65);
\draw [black, thick, 
shorten <=-2pt, shorten >=-2pt] (65) -- (75); 

\draw [black, thick, 
shorten <=-2pt, shorten >=-2pt] (24) -- (34); 
\draw [black, thick, 
shorten <=-2pt, shorten >=-2pt] (34) -- (44); 
\draw [black, thick, 
shorten <=-2pt, shorten >=-2pt] (44) -- (54); 
\draw [black, thick, 
shorten <=-2pt, shorten >=-2pt] (54) -- (64); 
\draw [black, thick, 
shorten <=-2pt, shorten >=-2pt] (64) -- (74); 

\draw [black, thick, 
shorten <=-2pt, shorten >=-2pt] (33) -- (43); 
\draw [black, thick, 
shorten <=-2pt, shorten >=-2pt] (43) -- (53); 
\draw [black, thick, 
shorten <=-2pt, shorten >=-2pt] (53) -- (63); 
\draw [black, thick, 
shorten <=-2pt, shorten >=-2pt] (63) -- (73);

\draw [black, thick, 
shorten <=-2pt, shorten >=-2pt] (42) -- (52); 
\draw [black, thick, 
shorten <=-2pt, shorten >=-2pt] (52) -- (62); 
\draw [black, thick, 
shorten <=-2pt, shorten >=-2pt] (62) -- (72); 
\end{tikzpicture}
\caption{Left: Algorithm for constructing the heap diagram for $\HeapDiagonalReading=\heap(\DiagonalReadingWord)$ for $c=\rw{4321657}=(1~ \underline{5}~\underline{7} ~ 8 ~ \overline{6}~\overline{4}~\overline{3}~ \overline{2})$ (equivalently, the lower-barred letters are $\underline{5}$, $\underline{7}$, and the upper-barred letters are  
$\overline{6}, \overline{4}, \overline{3}, \overline{2}$) in $A_7$. 
Right: The heap diagram for $\HeapDiagonalReading$.}
\label{fig:longest element heap A7 4321657 alternative}
\end{figure}

Notice that by construction $\DiagonalReadingWord$ is the ``diagonal reading word" of $\HeapDiagonalReading$, that is, the linear extension of $\HeapDiagonalReading$ formed by concatenating the diagonals of the diagram from left to right and within each diagonal reading from southeast to northwest. In addition, note that the bottom layer of $\HeapDiagonalReading$ is $\heap(c)$ and 
the rightmost vertex in this bottom layer is labeled $n$ and is at position $(n,1)$. In Figure \ref{fig:longest element heap A7 4321657 alternative}, this bottom layer of shape $\heap(c)$ is in bold.

It is well-known from the literature that the heap of $\sort(w_0)$ matches the description of Algorithm~\ref{alg:algorithm for heap using diagonal reading word}; for example, see 
\cite[Section 2.4]{CharmedRoots} and 
\cite[Section 6.2]{DefantLi23}. 

\begin{proposition}\label{prop:diagonal reading word}
The heap diagrams of $\HeapDiagonalReading$ 
and 
$\heap(\sort(w_0))$ are the same. 
\end{proposition}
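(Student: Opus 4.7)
The plan is to show that $\DiagonalReadingWord$ is a reduced word for $w_0$ lying in the commutation class of $\sort(w_0)$; by \Cref{prop:set of labeled linear extensions is commutativity class Stembridge}, this immediately implies that $\heap(\DiagonalReadingWord)$ and $\heap(\sort(w_0))$ coincide as labeled posets, yielding equality of the two heap diagrams.

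First I would verify that $\ell(\DiagonalReadingWord) = \binom{n+1}{2}$. Summing the lengths of the $n$ decreasing factors in \eqref{eq:Def:Rc} and using $\{d_1,\ldots,d_r\} \sqcup \{u_1,\ldots,u_s\} = [2,n]$ (so $r+s = n-1$) gives
\[
\ell(\DiagonalReadingWord) = \sum_{i=1}^{r}(d_i - 1) + n + \sum_{i=1}^{s}(u_i - 1) = \Big(\sum_{i=2}^{n} i\Big) - (n-1) + n = \binom{n+1}{2}.
\]
To see that $\DiagonalReadingWord$ is reduced and represents $w_0$, I would construct $\heap(\DiagonalReadingWord)$ directly from \Cref{defn:heap of a reduced word} and confirm that it matches the diagram produced by \Cref{alg:algorithm for heap using diagonal reading word}: no label $i$ produces two incomparable elements in this diagram (each vertical chain of $i$-labels is totally ordered), so $\DiagonalReadingWord$ is reduced, and since $\DiagonalReadingWord$ has full support $\{s_1, \ldots, s_n\}$ and the correct length, it must represent $w_0$.

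The main content is to match the two heaps layer by layer. On the $\sort(w_0)$ side, \Cref{remark:layers of H are subwords of c}\ref{itm:remark:layers of H are subwords of c:1} decomposes $\heap(\sort(w_0))$ into stacked layers $\heap(K_1),\ldots,\heap(K_p)$ where $K_1 \supseteq K_2 \supseteq \cdots$ are nested subwords of $c$. On the $\DiagonalReadingWord$ side, the vertices placed by \Cref{alg:algorithm for heap using diagonal reading word} partition naturally into horizontal strips indexed by the second coordinate $b$, each of which is a shifted sub-heap of $\heap(c)$. I would show by induction on $b$ that the strip at height $b$ carries precisely the letters appearing in $K_b$: namely, those remaining in the cyclic sequence $(1,\, d_1,\ldots,d_r,\, n+1,\, u_s,\ldots,u_1)$ after the $b-1$ outermost $d_i$ (from Step~(\ref{alg:algorithm for heap using diagonal reading word:itm2})) and $u_i$ (from Step~(\ref{alg:algorithm for heap using diagonal reading word:itm3})) have been exhausted.

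The main obstacle is this combinatorial bookkeeping: matching, at each level $b$, the set of letters exhausted by \Cref{alg:algorithm for heap using diagonal reading word} with the set of letters removed when the $(b-1)$-st round of the $c$-sorting algorithm is applied to $w_0$. One would verify that the initial letters of the residual permutation after $b-1$ sorting rounds are exactly those $d_i$ whose ``flushed-left'' diagonal still contains a vertex at height $b$ and those $u_i$ whose ``flushed-right'' diagonal still contains one, matching Steps~(\ref{alg:algorithm for heap using diagonal reading word:itm2})-(\ref{alg:algorithm for heap using diagonal reading word:itm3}) exactly. Once this stratum-by-stratum identification is established, the cover relations between consecutive strata are forced by the commutation structure inherited from $\heap(c)$, so the two labeled posets coincide, and \Cref{prop:set of labeled linear extensions is commutativity class Stembridge} converts this into the claimed equality of heap diagrams.
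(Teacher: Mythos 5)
The paper itself does not prove this proposition from scratch: it invokes the known identification of $\heap(\sort(w_0))$ with the diagram of \Cref{alg:algorithm for heap using diagonal reading word}, citing the literature. So a self-contained argument along your lines (show $\DiagonalReadingWord$ is reduced, represents $w_0$, and lies in the commutation class of $\sort(w_0)$, then apply \Cref{prop:set of labeled linear extensions is commutativity class Stembridge}) would be a genuine addition, and your length count is correct. However, as written there are two real gaps. First, your reducedness criterion is vacuous: the construction in \Cref{defn:heap of a reduced word}, applied verbatim to an arbitrary word, imposes $x\prec y$ whenever $|a_x-a_y|\leq 1$, which includes the case $a_x=a_y$, so equal-labeled letters form a chain in the heap of \emph{any} word (cf.\ \Cref{rem:poset elements with the same label are comparable}); for instance the non-reduced word $\rw{11}$ already has a totally ordered heap. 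Hence ``each chain of $i$-labels is totally ordered'' cannot certify that $\DiagonalReadingWord$ is reduced. The standard repair is to compute directly that the product of the $n$ decreasing runs in \eqref{eq:Def:Rc} is the longest permutation; since the word has length $\binom{n+1}{2}=\ell(w_0)$, it is then automatically reduced (full support plays no role in this step).

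Second, and more seriously, the identification of the layers $\heap(K_1),\dots,\heap(K_p)$ of $\heap(\sort(w_0))$ inside $\HeapDiagonalReading$ is exactly the content of the proposition, and you leave it as ``combinatorial bookkeeping'' organized around a structural claim that is false: the vertices of $\HeapDiagonalReading$ with a fixed second coordinate $b$ form an antidiagonal, not a shifted copy of $\heap(K_b)$. For $c=\rw{4321657}$ (\Cref{fig:longest element heap A7 4321657 alternative}), the layer $K_1=\heap(c)$ (drawn in bold) occupies vertices whose second coordinates range from $1$ to $5$, while the antidiagonal $b=1$ contains only the labels $6$ and $7$; so the ``strip at height $1$'' does not carry the letters of $K_1$ (nor are the layers the diagonals). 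Likewise, asserting that the cover relations between consecutive layers are ``forced by the commutation structure inherited from $\heap(c)$'' is a restatement, not an argument. A correct version of your induction must track, after $b-1$ rounds of \Cref{algorithmForSortC} applied to $w_0$, which letters are initial in the residual permutation, and match them with the appropriate translate of $\heap(K_b)$ inside $\HeapDiagonalReading$ together with its cover relations to the previous layer; until that is carried out (or a reference of the kind the paper cites is invoked), the proof is incomplete.
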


Note that the above proposition is equivalent to the statement: 
the word $\DiagonalReadingWord$
is a reduced word in the commutation class of $\sort(w_0)$.

\begin{example}
For $\cTamari=\rw{12 ... n}$, all of $2,\dots,n$ are lower-barred, and thus 
\[\mathcal{R}_{\cTamari}= \rw{1} \rw{21} \rw{321}  ...  \rw{n(n-1) ... 21 }. \] 
Following Algorithm~\ref{alg:algorithm for heap using diagonal reading word}, $H_{\cTamari}$ is given in Figure \ref{fig:heapTamari}.  Recall from Example~\ref{eg.tamari.sort} that the $\cTamari$-sorting word for $w_0$ is 
\[\SpecialSort{\cTamari}(w_0)=\rw{1 ... (n-1)n \mid 1 ... (n-1) \mid ... \mid 12 \mid 1 }.
\]
As guaranteed by Proposition~\ref{prop:diagonal reading word}, $\heap(\SpecialSort{\cTamari}(w_0))$ is the same heap shown in Figure~\ref{fig:heapTamari}.
\end{example}

\begin{figure}[htb!]
\begin{tikzpicture}[scale=0.9]
\node (s1a) at (0,0) {$s_1$};
\node (s2a) at (1,1) {$s_2$};
\node (s3a) at (2,2) {$s_3$};
\node (s4a) at (3,3) {$s_4$};
\node (s1b) at (0,2) {$s_1$};
\node (s2b) at (1,3) {$s_2$};
\node (s3b) at (2,4) {$s_3$};
\node (s1c) at (0,4) {$s_1$};
\node (s2c) at (1,5) {$s_2$};
\node (s1d) at (0,6) {$s_1$};

\draw [black, thick, shorten <=-2pt, shorten >=-2pt] (s1a) -- (s2a) node[pos=0.5, left=5mm]{};
\draw [black, thick, shorten <=-2pt, shorten >=-2pt] (s2a) -- (s3a) node[pos=0.5, left=5mm]{};
\draw [black, thick, shorten <=-2pt, shorten >=-2pt] (s3a) -- (s4a) node[pos=0.5, left=5mm]{};

\draw [black, thick, shorten <=-2pt, shorten >=-2pt] (s1b) -- (s2b) node[pos=0.5, left=5mm]{};
\draw [black, thick, shorten <=-2pt, shorten >=-2pt] (s2b) -- (s3b) node[pos=0.5, left=5mm]{};

\draw [black, thick, shorten <=-2pt, shorten >=-2pt] (s1c) -- (s2c) node[pos=0.5, left=5mm]{};

\draw [thick, shorten <=-2pt, shorten >=-2pt] (s2a) -- (s1b); 
\draw [thick, shorten <=-2pt, shorten >=-2pt] (s3a) -- (s2b) -- (s1c);
\draw [thick, shorten <=-2pt, shorten >=-2pt] (s4a) -- (s3b) -- (s2c) -- (s1d);
\end{tikzpicture}
\hspace{0.5in}
\begin{tikzpicture}
\node (s1a) at (0,0) {$s_1$};
\node (s2a) at (1,1) {$s_2$};
\node (s3a) at (2,2) {$s_3$};
\node (s4a) at (3,3) {$s_4$};
\node (s1b) at (0,2) {$s_1$};
\node (s2b) at (1,3) {$s_2$};
\node (s3b) at (2,4) {$s_3$};
\node (s1c) at (0,4) {$s_1$};
\node (s2c) at (1,5) {$s_2$};
\node (s1d) at (0,6) {$s_1$};

\node at (0.5,6.5) {$\vdots$};
\node at (3,4) {$\vdots$};
\node (sn1) at (3,5) {$s_{n-1}$};
\node (sn) at (4,4) {$s_n$};
\node at (2,6) {$\ddots$};
\node (s1) at (0,8) {$s_1$};
\node (s2) at (1,7) {$s_2$};
\draw [black, thick, shorten <=-2pt, shorten >=-2pt] (sn1) -- (sn); 
\draw [black, thick, shorten <=-2pt, shorten >=-2pt] (s1) -- (s2);

\draw [black, thick, shorten <=-2pt, shorten >=-2pt] (s1a) -- (s2a) node[pos=0.5, left=5mm]{};
\draw [black, thick, shorten <=-2pt, shorten >=-2pt] (s2a) -- (s3a) node[pos=0.5, left=5mm]{};
\draw [black, thick, shorten <=-2pt, shorten >=-2pt] (s3a) -- (s4a) node[pos=0.5, left=5mm]{};

\draw [black, thick, shorten <=-2pt, shorten >=-2pt] (s1b) -- (s2b) node[pos=0.5, left=5mm]{};
\draw [black, thick, shorten <=-2pt, shorten >=-2pt] (s2b) -- (s3b) node[pos=0.5, left=5mm]{};

\draw [black, thick, shorten <=-2pt, shorten >=-2pt] (s1c) -- (s2c) node[pos=0.5, left=5mm]{};

\draw [thick, shorten <=-2pt, shorten >=-2pt] (s2a) -- (s1b); 
\draw [thick, shorten <=-2pt, shorten >=-2pt] (s3a) -- (s2b) -- (s1c);
\draw [thick, shorten <=-2pt, shorten >=-2pt] (s4a) -- (s3b) -- (s2c) -- (s1d);
\end{tikzpicture}
\caption{Left: Heap of $s_1s_2s_3s_4$-sorting word of the longest element $w_0$ in $A_4$.  Right: Heap of $s_1s_2s_3s_4 \dots s_n$-sorting word of the longest element $w_0$ in $A_n$.}
\label{fig:heapTamari}
\end{figure}

\begin{remark}\label{rem:prop:vertex-bijection}
In light of Proposition~\ref{prop:diagonal reading word}, we can consider the poset isomorphism in
Proposition~\ref{prop:vertex-bijection} to be a poset isomorphism from the lattice of $c$-singletons onto $J(\HeapDiagonalReading)$. 
By abuse of notation, from now on 
we write $f(w)$ to mean an order ideal of $\HeapDiagonalReading$. 
\end{remark}

\begin{remark}
We summarize some basic properties of $\HeapDiagonalReading$ below.
\label{rem:diagonal above main diagonal in Heap has an element labeled n}
\label{rem:shape of H}
\begin{enumerate} [(a)]
\item \label{rem:diagonal above main diagonal in Heap has an element labeled n:itm}
A diagonal in $\HeapDiagonalReading$ has an element labeled $1$ if and only if it is a diagonal created in \ref{alg:algorithm for heap using diagonal reading word:itm1} or \ref{alg:algorithm for heap using diagonal reading word:itm2}; this element labeled 1 is at the northwest-most vertex of the diagonal.
Similarly, 
a diagonal has an element labeled $n$ if and only if it is a diagonal created in \ref{alg:algorithm for heap using diagonal reading word:itm1} or \ref{alg:algorithm for heap using diagonal reading word:itm3}; this element labeled $n$ is at the southeast-most vertex of the diagonal.

\item \label{rem:shape of H:item:when not smallest elt with its label}
Let $t$ be an element of $\HeapDiagonalReading$ with label $p$. Suppose $t$ is not the smallest element in $\HeapDiagonalReading$ with label $p$.
If $1<p<n$, then $H_c$ has 
an interval whose heap diagram is of the form 
\begin{center}
\begin{tikzpicture}[xscale=1, yscale=.8]
\node (plower) at (0,0) {$p$};
\node (pupper) at (0,2) {$p$};
\node (pminus) at (-1,1) {$p-1$};
\node (pplus) at (1,1) {$p+1$}; \draw [black, thick, shorten <=-1pt,shorten >=-1pt] 
(plower) -- (pplus) 
(plower) -- (pminus) 
(pupper) -- (pplus) 
(pupper) -- (pminus) ;     
\end{tikzpicture}
\end{center}

If $p=1$, then $H_c$ has an interval whose heap diagram is of the form 
\begin{center}
\begin{tikzpicture}[xscale=1, yscale=.7]
\node (plower) at (0,0) {$1$};
\node (pupper) at (0,2) {$1$};
\node (pplus) at (1,1) {$2$}; \draw [black, thick, shorten <=-1pt,shorten >=-1pt] (plower) -- (pplus);
\draw [black, thick, shorten <=-1pt,shorten >=-1pt] (pupper) -- (pplus);     
\end{tikzpicture}
\end{center}

If $p=n$, then $H_c$ has an interval 
whose heap diagram is of the form 
\begin{center}
\begin{tikzpicture}[xscale=1, yscale=.8]
\node (plower) at (0,0) {$n$};
\node (pupper) at (0,2) {$n$};
\node (pminus) at (-1,1) {$n-1$}; \draw [black, thick, shorten <=-1pt,shorten >=-1pt] (plower) -- (pminus);
\draw [black, thick, shorten <=-1pt,shorten >=-1pt] (pupper) -- (pminus);     
\end{tikzpicture}
\end{center}

In each of these three cases, the maximal vertex corresponds to the poset element $t$.
\end{enumerate}
\end{remark}

Recall from Definition~\ref{defn:heap of a reduced word} that the elements in $\HeapDiagonalReading$ are $\{1,\dots, \Coxeterlength(w_0)\}$ where the poset element $i$ corresponds to the $i$\textsuperscript{th} letter of $\DiagonalReadingWord$. For $x$ in $\HeapDiagonalReading$, let $D_x$ denote the diagonal of $\HeapDiagonalReading$ containing $x$, and let  $A_x$ denote the antidiagonal of $\HeapDiagonalReading$ containing $x$. 
The following states that the poset element $j$ is comparable to every poset element $i\in \{1,\dots,j\}$ labeled $n$. 
\begin{lemma}\label{lem:j is comparable to smaller integers i labeled n}
Let $j \in \{ 1, \dots, \Coxeterlength(w_0)\}$ be a poset element of $\HeapDiagonalReading$. 
If $i \leq j$ and  
the label of the poset element $i$ is $n$, 
then 
$i \HeapLEQ j$.  
\end{lemma}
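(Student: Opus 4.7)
The plan is to work directly with the factored form of $\DiagonalReadingWord$ given in~(2.1) and to exploit its relation to $\HeapDiagonalReading$. The factors of $\DiagonalReadingWord$, in reading order, are the lower factors $[(d_k-1)\cdots 1]$ for $k=1,\ldots,r$, then the long factor $[n\cdots 1]$, then the upper factors $[n\cdots(n-u_k+2)]$ for $k=s,s-1,\ldots,1$. Since each $d_k\leq n$, no lower factor contains the letter $n$, so every occurrence of the label $n$ in $\HeapDiagonalReading$ sits at the starting position of either the long factor or one of the upper factors, consistent with what Remark~\ref{rem:shape of H} records geometrically. Moreover, within a single factor consecutive letters have labels differing by exactly $1$, which by Definition~\ref{defn:heap of a reduced word} makes each such pair a $\HeapLessThan$-cover, so every factor of $\DiagonalReadingWord$ is a chain in $\HeapLEQ$.

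I would next note that the set of $n$-labeled poset elements of $\HeapDiagonalReading$ is itself a chain in $\HeapLEQ$: any two $n$-labeled positions $p<p'$ satisfy $|a_p-a_{p'}|=0\leq 1$, hence $p\HeapLessThan p'$. Using this chain, I reduce to the case where $i$ is the \emph{maximal} $n$-labeled position satisfying $i\leq j$; for any other $n$-labeled $i'\leq j$ the chain gives $i'\HeapLEQ i$, so establishing $i\HeapLEQ j$ in this reduced situation suffices.

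The remainder is a short case analysis on the factor containing $j$. If $j$ lies in a lower factor, then $j$ precedes every $n$-labeled position in $\DiagonalReadingWord$, so the hypothesis of the lemma is unsatisfiable and there is nothing to prove. Otherwise $j$ lies in the long factor or in some upper factor $F$; let $j^\ast$ be the first position of $F$. Then $a_{j^\ast}=n$ and $j^\ast\leq j$; because $j^\ast$ is the only $n$-labeled position in $F$, and because every factor beginning after $F$ starts at a position strictly greater than $j$, the maximal $n$-labeled position $\leq j$ is exactly $j^\ast$. Thus $i=j^\ast$, and the within-factor chain yields $j^\ast\HeapLessThan j^\ast+1\HeapLessThan\cdots\HeapLessThan j$, so $i=j^\ast\HeapLEQ j$.

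The argument is entirely structural, and the only slightly delicate step is the identification $i=j^\ast$ at the end, which rests on the precise location of the letter $n$ inside each factor of $\DiagonalReadingWord$ as prescribed by~(2.1). I do not anticipate any significant obstacle beyond bookkeeping.
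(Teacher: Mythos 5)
Your proof is correct and is essentially the paper's own argument translated from the heap-diagram language into the language of the factorization of $\DiagonalReadingWord$ into maximal decreasing runs: your $j^{\ast}$, the first letter of the run containing $j$, is exactly the $n$-labeled element at the southeast end of the diagonal $D_j$ that the paper extracts from Remark \ref{rem:diagonal above main diagonal in Heap has an element labeled n}, and your chain of $n$-labeled positions is the explicit form of the paper's appeal to Remark \ref{rem:poset elements with the same label are comparable} (equal labels force comparability, with the smaller position lying below). The only slip is cosmetic: the factorization you invoke is the displayed definition of $\DiagonalReadingWord$ in \Cref{sec:heap}, not ``(2.1)''.
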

\begin{proof}
Remark \ref{rem:diagonal above main diagonal in Heap has an element labeled n}\ref{rem:diagonal above main diagonal in Heap has an element labeled n:itm} tells us that all poset elements labeled $n$ are in the long diagonal $\LongDiagonal$ or the diagonals above $\LongDiagonal$. So either $D_i=\LongDiagonal$ or $D_i$ is above $\LongDiagonal$.

Since $j\geq i$ as integers, either $D_j=D_i$ or $D_j$ is above $D_i$.
So, either $D_j=\LongDiagonal$ or $D_j$ is above $\LongDiagonal$. Therefore $D_j$ contains an element $j'$ labeled $n$ such that $j' \HeapLEQ j$, by Remark \ref{rem:diagonal above main diagonal in Heap has an element labeled n}\ref{rem:diagonal above main diagonal in Heap has an element labeled n:itm}.

Remark \ref{rem:poset elements with the same label are comparable} tells us that two elements with the same label are comparable, so
$j'$ and $i$ are comparable. In particular, since $D_{j'}=D_j$ is either equal to $D_i$ or above $D_i$, we have $i \HeapLEQ j'$. 
Thus, $i \HeapLEQ j$. 
\end{proof}

\begin{lemma}
\label{lem:if z is in a diagonal to the right and in an antidiagonal to the right}
Let $y$ be a poset element of $\HeapDiagonalReading$ with label $p$. 
If $z$ is in a diagonal (strictly) above
$D_y$ and in an antidiagonal (strictly) below $A_y$, then the label of $z$ 
is in 
$\{p+2,...,n\}$.
\end{lemma}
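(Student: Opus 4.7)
The proof reduces to a short coordinate calculation using the grid description of $\HeapDiagonalReading$ from Algorithm~\ref{alg:algorithm for heap using diagonal reading word}. Recall that every poset element of $\HeapDiagonalReading$ sits at a grid position $(a,b)$ and carries the label $a$, with the convention that two elements lie in the same diagonal if and only if their coordinates have the same sum, and in the same antidiagonal if and only if they share the same second coordinate $b$.

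The plan is as follows. Write $y$ at position $(a_y,b_y)=(p,b_y)$ and $z$ at position $(a_z,b_z)$, where $a_z$ is by definition the label of $z$. First I would confirm the orientation conventions: by inspecting Steps~\ref{alg:algorithm for heap using diagonal reading word:itm1}--\ref{alg:algorithm for heap using diagonal reading word:itm3} (and the picture in Figure~\ref{fig:longest element heap A7 4321657 alternative}), the diagonals above $\LongDiagonal$ have strictly larger coordinate sum than $n+1$ and those below have strictly smaller coordinate sum, so ``strictly above $D_y$'' translates to $a_z+b_z>a_y+b_y$. Similarly, antidiagonals are vertical lines in the diagram and a lower antidiagonal corresponds to a smaller second coordinate, so ``strictly below $A_y$'' means $b_z<b_y$.

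Combining these two inequalities gives $a_z-a_y>b_y-b_z$. Since $b_y$ and $b_z$ are positive integers with $b_z<b_y$, the right-hand side is at least $1$, so $a_z\geq a_y+2=p+2$. On the other hand, every label in $\HeapDiagonalReading$ lies in $\{1,\ldots,n\}$, so $a_z\leq n$. Therefore the label of $z$ lies in $\{p+2,\ldots,n\}$, as claimed.

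There is essentially no obstacle in this proof; the only care needed is to verify the orientation conventions (``above'' corresponds to larger coordinate sum, ``below'' to smaller second coordinate) against the conventions fixed in Algorithm~\ref{alg:algorithm for heap using diagonal reading word}, after which the argument is a one-line integer inequality.
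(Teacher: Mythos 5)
Your proof is correct and follows essentially the same route as the paper's: both translate ``above $D_y$'' and ``below $A_y$'' into the coordinate inequalities $a_z+b_z>a_y+b_y$ and $b_z<b_y$, and deduce $a_z\geq a_y+2$ by the same one-line integer argument. No gaps.
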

\begin{proof}
Let $(a,b)$ and $(a',b')$ denote the positive integer tuples which index the positions of $y$ and $z$ in the heap diagram of $\HeapDiagonalReading$ (see  Algorithm~\ref{alg:algorithm for heap using diagonal reading word}).

Since $z$ is in  an antidiagonal below $A_y$, we have $b'<b$, which implies that $b-b'$ is a positive integer. 
Since $z$ is in a diagonal above $D_y$, we have $a+b < a' + b'$, and thus
\[ a+ (b-b') < a' .\]
This implies $a+1<a'$, and so $a' \in \{ a+2, ..., n\}$. 
By the last step in the algorithm,  
the labels of 
$y$ and $z$ are $a$ and $a'$, and the conclusion follows.
\end{proof}

\begin{lemma}\label{lem:southeast vertex of a diagonal whose label is p not n is the minimum element of Heap with label p}
Let $i$ be the poset element at the southeast vertex of a diagonal $D_i$ such that its label is 
$p \in \{2,...,n-1 \}$.
Then we have the following. 
\begin{enumerate}[(a)]
\item \label{lem:southeast vertex of a diagonal whose label is p not n is the minimum element of Heap with label p:itm1}
$i$ is the minimum element of $\HeapDiagonalReading$ with label $p$. 

\item \label{lem:southeast vertex of a diagonal whose label is p not n is the minimum element of Heap with label p:itm2}
If a poset element $j$ is in a diagonal below $D_i$, then the label of $j$ is in $\{1,\dots, p-1\}$.
\end{enumerate}
\end{lemma}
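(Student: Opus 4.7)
The plan is to first identify which diagonal $D_i$ of $\HeapDiagonalReading$ must be, then prove part~(a) by tracking the first occurrence of label $p$ in $\DiagonalReadingWord$ and part~(b) by stratifying the diagonals of $\HeapDiagonalReading$ via coordinate sums. The identification step is direct: by Steps~1 and~3 of \Cref{alg:algorithm for heap using diagonal reading word}, the southeast vertex of $\LongDiagonal$ lies at $(n,1)$ with label $n$, and the southeast vertex of each upper-barred diagonal $D_{\upperbarletter_m}$ lies at $(n,2+s-m)$, also with label $n$. By Step~2, the southeast vertex of each lower-barred diagonal $D_{\lowerbarletter_m}$ lies at $(\lowerbarletter_m-1,\,n-r+m-\lowerbarletter_m+1)$ with label $\lowerbarletter_m-1$. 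Since $p\leq n-1$ rules out the label $n$, we must have $D_i=D_{\lowerbarletter_k}$ for the unique lower-barred number $\lowerbarletter_k=p+1$.

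For~(a), I would use the explicit factorization of $\DiagonalReadingWord$ in~\eqref{eq:Def:Rc}. For each $m<k$, the $m$-th factor $\rw{(\lowerbarletter_m-1)\cdots 1}$ has maximum letter $\lowerbarletter_m-1<\lowerbarletter_k-1=p$, so none of the first $k-1$ factors contains the label $p$. The $k$-th factor $\rw{p\,(p-1)\cdots 1}$ begins with $p$, and this leading letter corresponds precisely to the element $i$. Hence $i$ is the smallest integer in $\{1,\ldots,\Coxeterlength(w_0)\}$ carrying the label $p$. By \Cref{rem:poset elements with the same label are comparable}, the elements of $\HeapDiagonalReading$ with label $p$ form a chain in $\HeapLEQ$; since $x\HeapLessThan y$ forces $x<y$ as integers, the chain's minimum in the partial order coincides with its minimum as integers, namely $i$.

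For~(b), I would stratify the diagonals of $\HeapDiagonalReading$ by their common coordinate sum $a+b$: every vertex of $\LongDiagonal$ satisfies $a+b=n+1$, every vertex of $D_{\lowerbarletter_m}$ satisfies $a+b=n-r+m$, and every vertex of $D_{\upperbarletter_m}$ satisfies $a+b=n+s-m+2\geq n+2$. Diagonals with strictly smaller sum lie geometrically below, so the diagonals strictly below $D_{\lowerbarletter_k}$ (which has sum $n-r+k$) are exactly $D_{\lowerbarletter_1},\ldots,D_{\lowerbarletter_{k-1}}$. Each such $D_{\lowerbarletter_m}$ carries the label set $\{1,2,\ldots,\lowerbarletter_m-1\}$; since $m<k$ forces $\lowerbarletter_m<\lowerbarletter_k=p+1$, every such label lies in $\{1,\ldots,p-1\}$, proving~(b). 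The only subtle point is verifying that the geometric notion of ``below'' agrees with the coordinate-sum stratification, which is immediate from the way Steps~1--3 of the algorithm place diagonals relative to $\LongDiagonal$; beyond this bookkeeping, no serious obstacle arises.
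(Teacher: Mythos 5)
Your proof is correct. For part (b) you take essentially the paper's route: both arguments come down to the observation that, since the southeast vertices of the long diagonal and of every flushed-right diagonal from \ref{alg:algorithm for heap using diagonal reading word:itm3} are labeled $n$, the hypothesis $p\le n-1$ forces $D_i$ to be the flushed-left diagonal of the lower-barred number $d_k=p+1$ from \ref{alg:algorithm for heap using diagonal reading word:itm2}, and every diagonal strictly below it is a shorter flushed-left diagonal carrying labels $1,\dots,q$ with $q\le p-1$; your coordinate-sum stratification is just a more explicit rendering of ``below.'' For part (a), however, your route is genuinely different from the paper's. The paper argues by contradiction: if $i$ were not the minimal element with label $p$, then by \Cref{rem:shape of H}\ref{rem:shape of H:item:when not smallest elt with its label} the element $i$ would sit at the top of an interval containing an element labeled $p+1$ below it, which would lie southeast of $i$ in $D_i$ and contradict $i$ being the southeast vertex; this uses only the local interval structure of the heap. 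You instead argue directly from the word: by the factorization \eqref{eq:Def:Rc}, the letter $p$ first occurs in $\DiagonalReadingWord$ as the leading letter of the $k$-th factor (the earlier factors have maximum letter $d_m-1<p$), that leading letter is exactly the southeast vertex $i$ under the diagonal reading convention, and since same-labeled elements form a chain (\Cref{rem:poset elements with the same label are comparable}) in an order that refines the integer order on positions, the integer-minimal occurrence is the poset minimum. Your argument has the advantage of constructively locating the minimum (the first letter of the $d_k$-factor) and avoiding a case analysis on $p$, at the cost of leaning on the explicit correspondence between positions of $\DiagonalReadingWord$ and vertices of the diagram; the paper's contradiction argument is more local and does not track positions in the word. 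Both are sound.
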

\begin{proof}
\begin{enumerate}[(a)] 
\item 
Suppose for the sake of contradiction that $H$ contains a smaller element also with label $p$. 
Then by Remark \ref{rem:shape of H}\ref{rem:shape of H:item:when not smallest elt with its label} we have an interval whose heap subdiagram is of the form 
\begin{center}
\begin{tikzpicture}[xscale=1, yscale=.8]
\node (plower) at (0,0) {$p$};
\node (pupper) at (0,2) {$p$};
\node (pminus) at (-1,1) {$p-1$};
\node (pplus) at (1,1) {$p+1$}; \draw [black, thick, shorten <=-1pt,shorten >=-1pt] 
(plower) -- (pplus) 
(plower) -- (pminus) 
(pupper) -- (pplus) 
(pupper) -- (pminus) ;     
\end{tikzpicture}
\end{center}
where $i$ corresponds to the top row. This contradicts the fact that $i$ is the southeast vertex of $D_i$. Therefore, $i$ is indeed the minimum element of $H$ with label $p$.

\item 
Since $D_i$ is below the long diagonal, \ref{alg:algorithm for heap using diagonal reading word:itm2} of the algorithm tells us that any diagonal~$D_j$ below $D_i$ is shorter than $D_i$. Both $D_i$ and $D_j$ have northwest-most vertex labeled $1$. Since the vertices of $D_i$ have labels $1,\dots,p$, it must be that $D_j$ have labels $1,\dots,q$ for some $q \leq p-1$. 
\end{enumerate}
\end{proof}

The following remark explains the connection between $\Heapwnot$ and an Auslander--Reiten quiver.

\begin{remark}
\label{rem:combinatorial AR quiver}
Consider the following minor modifications to the heap diagram of $\Heapwnot$.
\begin{enumerate}
\item 
Replace each edge $x$ --- $y$ (which corresponds to cover relation $x \HeapLessThan y$)  with an arrow $x \to y$.
\item 
Replace each label $j$ by $s_j$ (note that we have already been doing this for clarity in all of our figures)
\item 
Rotate the diagram by $90$ degree clockwise.
\end{enumerate}
The resulting labeled directed graph is known as the
``combinatorial $AR$ quiver" for the $c$-sorting word of $w_0$, defined in \cite[Section 9.2]{Cataland}; see also \cite[Section 2.4]{CharmedRoots}. Note that all arrows in this  directed graph are pointing either diagonally or antidiagonally from left to right.

This terminology comes from the fact that this directed graph is isomorphic to the Auslander--Reiten (AR) quiver of finite-dimensional representations of a 
quiver $Q(c)$ corresponding to $c$. The quiver $Q(c)$ is an orientation of the type $A_n$ Dynkin diagram with vertices $1,2,\dots,n$ and with arrows defined as follows:
 if $i$ appears to the right of $i-1$ in a reduced word of $c$, 
then $Q(c)$ has an arrow $i  \to i-1$; otherwise $Q(c)$ has an arrow $i-1 \to i$. For more information about representations of quivers, see for example the textbooks~\cite{ASS06,SchifflerBook}.

There is a well-known procedure for constructing the AR quiver for $Q(c)$ known as the \emph{knitting algorithm}; it recursively computes the indecomposable representations of $Q(c)$. For a reference on knitting, see for example \cite[Chapter 3]{SchifflerBook}. 
One can view a linear extension as a prescription of the order in which to perform the knitting algorithm. In particular, the number of linear extensions gives the number of ways to compute the AR quiver of a given quiver via knitting together with a choice for adding indecomposable projective representations. 
Later in this paper 
we will show that the normalized volume of our $c$-Birkhoff polytope $\bir(c)$ counts the linear extensions of $\Heapwnot$, and thus this volume also counts the ways to perform the knitting algorithm (see Corollary~\ref{cor:volume}).

The AR quiver has also been used in \cite{Bed99} to compute the number of reduced words in a commutation class. 
\end{remark}

\subsection{Defining a unitriangular matrix $\mathcal{U}_c$}\label{subsec:Square}

The main goal of this subsection is to prove Proposition~\ref{prop.square}. We first define the notation used in this result. 

Note that the length of $\mathcal{R}_c$ is $\binom{n+1}{2}=\ell(w_0)$, and write $\mathcal{R}_c = \rw{\DiagWordSubseq{1} \dots \DiagWordSubseq{\ell(w_0)}}$. 
For each $1\leqslant i \leqslant \ell(w_0)$,  
define $\prefixR{i}$ to be the permutation with reduced word given by the length $i$ prefix of  $\mathcal{R}_c$,  i.e. $\prefixR{i}$ has reduced word $\rw{\DiagWordSubseq{1} \dots \DiagWordSubseq{i}}$. 
Since $\mathcal{R}_c$ is a labeled linear extension of $\heap(\sort(w_0))$, 
Proposition \ref{prop:set of labeled linear extensions is commutativity class Stembridge} tells us that
$\mathcal{R}_c$ is in the commutation class of $\sort(w_0)$.  Thus, Theorem~\ref{thm:c-sing-prefix} tells us that each $\prefixR{i}$ is a $c$-singleton.

Given a $c$-singleton $w$, let $f(w)$ be the corresponding order ideal of $\HeapDiagonalReading$ given in  Proposition~\ref{prop:vertex-bijection} (see Remark~\ref{rem:prop:vertex-bijection}).
Consider the vector in $\mathbb{R}^{\ell(w_0)}$ defined by the indicator function of $f(w)$, following the linear extension given by $\mathcal{R}_c$. 
Let $o(w)$ denote this vector in reverse order.

\begin{example}\label{eg.basis}
Let $c$ be as in Example~\ref{eg.proj}
with $\underline{\lowerbarletter_1}, \underline{\lowerbarletter_2}, \underline{\lowerbarletter_3} = \underline{2}, \underline{5}, \underline{7}$ and $\overline{\upperbarletter_1} , \overline{\upperbarletter_2}, \overline{\upperbarletter_3} = \overline{3}, \overline{4}, \overline{6}$.
We have $\mathcal{R}_c = \rw{1\,\, 4321\,\, 654321\,\, 7654321\,\, 76543\,\, 765\,\, 76}$. Therefore $\prefixR{4} = s_1s_4s_3s_2=25134678$.
The heap 
$\HeapDiagonalReading$ 
is in Figure \ref{fig:eg.basis}, with the highlighted elements corresponding to $f(b_4)$. 
Since the highlighted elements are the first four elements in $\mathcal{R}_c$, we have
\[
o(\prefixR{4}) = (0,0,0,0,0, 0,0,0,0,0, 0,0,0,0,0, 0,0,0,0,0, 0,0,0,0, 1,1,1,1)^T\,.
\]
\end{example}

\def\myscale{.8}
\def\mycirclecolor{violet!10}
\def\mycirclesize{2ex}
\begin{figure}[htb!]
\begin{tikzpicture}[scale=\myscale]
\draw[\mycirclecolor,fill=\mycirclecolor] (3,0) circle (\mycirclesize);
\node (2) at (3,0) {$2$};
\node (6) at (5,0) {$6$};

\draw[\mycirclecolor,fill=\mycirclecolor] (0,1) circle (\mycirclesize);
\node (1) at (0,1) {$1$};
\draw[\mycirclecolor,fill=\mycirclecolor] (2,1) circle (\mycirclesize);
\node (3) at (2,1) {$3$};
\node (7) at (4,1) {$7$};
\node (12) at (6,1) {$12$};

\draw[\mycirclecolor,fill=\mycirclecolor] (1,2) circle (\mycirclesize);
\node (4) at (1,2) {$4$};
\node (8) at (3,2) {$8$};
\node (13) at (5,2) {$13$};

\node (5) at (0,3) {$5$};
\node (9) at (2,3) {$9$};
\node (14) at (4,3) {$14$};
\node (19) at (6,3) {$19$};

\node (10) at (1,4) {$10$};
\node (15) at (3,4) {$15$};
\node (20) at (5,4) {$20$};

\node (11) at (0,5) {$11$};
\node (16) at (2,5) {$16$};
\node (21) at (4,5) {$21$};
\node (24) at (6,5) {$24$};

\node (17) at (1,6) {$17$};
\node (22) at (3,6) {$22$};
\node (25) at (5,6) {$25$};

\node (18) at (0,7) {$18$};
\node (23) at (2,7) {$23$};
\node (26) at (4,7) {$26$};
\node (27) at (6,7) {$27$};

\node (28) at (5,8) {$28$};

\begin{scope}[thick]
\draw (2) -- (3) -- (4) -- (5);
\draw (6) -- (7) -- (8) -- (9) -- (10) -- (11);
\draw (12) -- (13) -- (14) -- (15) -- (16) -- (17) -- (18);
\draw (19) -- (20) -- (21) -- (22) -- (23);
\draw (24) -- (25) -- (26);
\draw (27) -- (28);

\draw (6) -- (12);
\draw (2) -- (7) -- (13) -- (19);
\draw (3) -- (8) -- (14) -- (20) -- (24);
\draw (1) -- (4) -- (9) -- (15) -- (21) -- (25) -- (27);
\draw (5) -- (10) -- (16) -- (22) -- (26) -- (28);
\draw (11) -- (17) -- (23);
\end{scope}
\end{tikzpicture}
\qquad
\begin{tikzpicture}[scale=\myscale]

\draw[\mycirclecolor,fill=\mycirclecolor] (3,0) circle (\mycirclesize);
\node (2) at (3,0) {$s_4$};
\node (6) at (5,0) {$s_6$};

\draw[\mycirclecolor,fill=\mycirclecolor] (0,1) circle (\mycirclesize);
\node (1) at (0,1) {$s_1$};
\draw[\mycirclecolor,fill=\mycirclecolor] (2,1) circle (\mycirclesize);
\node (3) at (2,1) {$s_3$};
\node (7) at (4,1) {$s_5$};
\node (12) at (6,1) {$s_7$};

\draw[\mycirclecolor,fill=\mycirclecolor] (1,2) circle (\mycirclesize);
\node (4) at (1,2) {$s_2$};
\node (8) at (3,2) {$s_4$};
\node (13) at (5,2) {$s_6$};

\node (5) at (0,3) {$s_1$};
\node (9) at (2,3) {$s_3$};
\node (14) at (4,3) {$s_5$};
\node (19) at (6,3) {$s_7$};

\node (10) at (1,4) {$s_2$};
\node (15) at (3,4) {$s_4$};
\node (20) at (5,4) {$s_6$};

\node (11) at (0,5) {$s_1$};
\node (16) at (2,5) {$s_3$};
\node (21) at (4,5) {$s_5$};
\node (24) at (6,5) {$s_7$};

\node (17) at (1,6) {$s_2$};
\node (22) at (3,6) {$s_4$};
\node (25) at (5,6) {$s_6$};

\node (18) at (0,7) {$s_1$};
\node (23) at (2,7) {$s_3$};
\node (26) at (4,7) {$s_5$};
\node (27) at (6,7) {$s_7$};

\node (28) at (5,8) {$s_6$};

\begin{scope}[thick]
\draw (2) -- (3) -- (4) -- (5);
\draw (6) -- (7) -- (8) -- (9) -- (10) -- (11);
\draw (12) -- (13) -- (14) -- (15) -- (16) -- (17) -- (18);
\draw (19) -- (20) -- (21) -- (22) -- (23);
\draw (24) -- (25) -- (26);
\draw (27) -- (28);

\draw (6) -- (12);
\draw (2) -- (7) -- (13) -- (19);
\draw (3) -- (8) -- (14) -- (20) -- (24);
\draw (1) -- (4) -- (9) -- (15) -- (21) -- (25) -- (27);
\draw (5) -- (10) -- (16) -- (22) -- (26) -- (28);
\draw (11) -- (17) -- (23);
\end{scope}
\end{tikzpicture}
\caption{Accompanying figure for Example~\ref{eg.basis}. Left: Hasse diagram of the underlying poset of $\HeapDiagonalReading$ with the elements in $f(\prefixR{4})$ highlighted. Right: the heap diagram of $\HeapDiagonalReading$ with the elements in $f(\prefixR{4})$ highlighted.}
\label{fig:eg.basis} 
\end{figure}

\begin{remark}\label{rmk:o(bi)}
In general,  $o(\prefixR{i})$ is the vector with 1's in the last $i$ entries and 0's everywhere else.
\end{remark}

\begin{prop}\label{prop.square}
Let $c$ be a Coxeter element of $A_n$. There exists a unique $\binom{n+1}{2} \times \binom{n+1}{2}$ lower unitriangular matrix $\mathcal{U}_c$ such that
\begin{equation}
\label{eq:U Pi M bi equals o of bi}
\mathcal{U}_c\circ\Pi_c(\PermMatrix{\prefixR{i}}) = o(\prefixR{i}) \text{ for all }  1\leqslant i \leqslant \binom{n+1}{2}.
\end{equation}
\end{prop}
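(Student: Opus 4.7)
Let $N = \binom{n+1}{2}$. The plan is to stack the given data as $N \times N$ matrices $P$ and $O$ whose $i$-th columns are $\Pi_c(\PermMatrix{\prefixR{i}})$ and $o(\prefixR{i})$, respectively, and to solve $\mathcal{U}_c P = O$. By \Cref{rmk:o(bi)}, $O_{j,i} = 1$ iff $i + j \geq N + 1$, so $O$ has $1$'s on and below its antidiagonal and $0$'s strictly above. The key step will be to show that $P$ has the same pattern on and above the antidiagonal: $P_{j, i} = 0$ whenever $i + j \leq N$, and $P_{N+1-i,\, i} = 1$ for every $i$. Granting this, $\mathcal{U}_c P = O$ will solve uniquely row-by-row: in row $k$, the equations indexed by $i$ with $i + k \leq N$ automatically reduce to $0 = 0$, the equation with $i + k = N + 1$ reduces to $1 = 1$, and the remaining $k-1$ equations form a system in the unknowns $U_{k,1}, \ldots, U_{k, k-1}$ whose coefficient matrix is the top-right $(k-1) \times (k-1)$ submatrix of $P$, which has $1$'s along its antidiagonal and $0$'s above (by the structural property), hence is invertible.

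The structural property of $P$ will reduce to the following assertion. Enumerate the $N$ matrix positions in the order $\Pi_c$ reads them (before the final reversal) as $e_1, e_2, \ldots, e_N$. Then $\PermMatrix{\prefixR{i}}_{e_j} = 0$ for all $i < j$ and $\PermMatrix{\prefixR{j}}_{e_j} = 1$. Since the $j$-th coordinate of $\Pi_c(\PermMatrix{\prefixR{i}})$ equals $\PermMatrix{\prefixR{i}}_{e_{N+1-j}}$, this translates directly into the desired $P$-structure. To prove the assertion, I will use the row-swap recursion $\PermMatrix{\prefixR{i}} = \PermMatrix{s_{a_i}} \PermMatrix{\prefixR{i-1}}$: the reflection $s_{\ell_j}$ applied at step $j$ (where $\ell_j$ is the label of the $j$-th letter of $\DiagonalReadingWord$) swaps rows $\ell_j$ and $\ell_j + 1$, thereby producing exactly two newly-appearing $1$-entries $(\ell_j, q)$ and $(\ell_j + 1, p)$ with $p < q$. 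I must show that $e_j$ coincides with one of these two entries and was not previously a $1$ at any earlier step.

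Both tasks will be handled by a case analysis according to which diagonal of $\HeapDiagonalReading$ contains the $j$-th element of $\DiagonalReadingWord$, using the identification of $\DiagonalReadingWord$ with a linear extension of $\HeapDiagonalReading$ in \Cref{prop:diagonal reading word}. If this element lies on a left diagonal for $\lowerbarletter_i$ or on the long diagonal, the decreasing run of labels along that diagonal pushes a single $1$-entry rightward one step at a time until it settles into column $\lowerbarletter_i$ (respectively $n+1$), producing precisely the entries $\Pi_c$ reads in those columns per \Cref{defn.projection}. If instead the element lies on a right diagonal for some upper-barred $\upperbarletter$, the analogous statement is that the $1$'s migrate leftward along the cycle of $c$ and land at the positions $(n+1, c(\upperbarletter)),\ (n, c^2(\upperbarletter)), \ldots$, before (when $\upperbarletter > (n+2)/2$) continuing upward along column $\upperbarletter$, again matching $\Pi_c$'s reading order. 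The main obstacle will be this upper-barred case: one must simultaneously verify that the $1$'s arriving from earlier diagonals have reached -- but not overshot -- the positions being recorded, and this bookkeeping is controlled by the cycle description of $c$ in \Cref{rem:Coxeter element in cycle notation}, by the labeling formula in \Cref{lem:LabelSigma}, and by the diagonal structure of $\HeapDiagonalReading$ given in \Cref{alg:algorithm for heap using diagonal reading word}.
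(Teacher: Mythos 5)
Your proposal is essentially the paper's own proof: the paper likewise reduces the statement to showing that the matrix with columns $\Pi_c(\PermMatrix{\prefixR{i}})$ is antidiagonal lower unitriangular (Lemmas \ref{lem.lower} and \ref{lem.upper} prove exactly your assertion that the first $1$ met by $\Pi_c$ in $\PermMatrix{\prefixR{i}}$ sits at the $i$\textsuperscript{th} read position, with zeros at all later-read positions, by the same one-transposition-at-a-time induction split into the lower-barred/long-diagonal runs and the upper-barred runs), and then concludes by the same linear algebra against the antidiagonal unitriangular matrix of the $o(\prefixR{i})$'s. The only quibble is cosmetic: in the lower-barred case the newly created $1$ climbs up column $\lowerbarletter_i$ rather than moving ``rightward,'' but the substantive claim you defer --- that the newly appearing $1$'s are exactly the entries $\Pi_c$ reads, in order, and never land on a later-read or $\swpi$ position --- is precisely the bookkeeping the paper's two lemmas carry out.
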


Before we prove this proposition, we first introduce some notation building on Definition~\ref{defn.projection}. The projection $\Pi_c$ picks $\binom{n+1}{2}$ elements from an ${(n+1)} \times {(n+1)}$ matrix. We denote the entries that come from strictly above the main diagonal as $\nepi$, and those that come from strictly below the main diagonal as $\swpi$. By abuse of notation, we will also use $\nepi$ (resp. $\swpi$) to denote entries in $X(w)$ that are recorded by $\Pi_c$ and are above the main diagonal (resp. below the main diagonal).

\begin{proposition}\label{prop.swpi}
Let $c\in A_n$ be a Coxeter element and suppose $(p,q)\in \swpi$. There exists $1\leq t \leq s$ such that:
\begin{enumerate}[(1)]
\item \label{it.d} if $q = d_a$, then $p = n+2-t-a$;
\item \label{it.u} if $q = u_a$, then $p = n+2-t+a$ and $a<t$. 
\end{enumerate}
\end{proposition}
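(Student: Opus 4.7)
The plan is to prove \Cref{prop.swpi} by directly unpacking \Cref{defn.projection} and applying the cycle-notation formula from \Cref{rem:Coxeter element in cycle notation}. The key preliminary observation is to identify exactly which entries picked out by $\Pi_c$ can possibly lie strictly below the main diagonal.

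First, I would go through \Cref{defn.projection} step by step and show that every entry recorded outside of step (2) of the upper-barred portion lies weakly above the main diagonal. Indeed, the entries $(j,d_i)$ from the lower-barred columns satisfy $j < d_i$, and the entries $(j,n+1)$ from column $n+1$ satisfy $j \leq n < n+1$, so both are above the diagonal. For step (3), when $u > \tfrac{n+2}{2}$ the entries have the form $(j,u)$ with $m+1 \leq j \leq u-1$, hence row index strictly less than column index, again above the diagonal. By \Cref{prop.sw}, however, every entry from step (2), namely $(n+2-k,c^k(u))$ for $1 \leq k \leq \mu(u)$, is strictly below the main diagonal. Therefore every $(p,q) \in \swpi$ must arise as $(p,q) = (n+2-k, c^k(u_t))$ for some $1 \leq t \leq s$ and some $1 \leq k \leq \mu(u_t)$.

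The remainder is a case split on the form of $c^k(u_t)$ given by \Cref{rem:Coxeter element in cycle notation}. If $q = d_a$, then $c^k(u_t) = d_a$ forces $t \leq k$ and $a = k - t$, so $k = a+t$ and $p = n+2-k = n+2-t-a$, giving case \ref{it.d}. If $q = u_a$, then $c^k(u_t) = u_a$ forces $t > k$ and $a = t - k$, so $k = t-a$ and $p = n+2-k = n+2-t+a$; the requirement $k \geq 1$ translates to $a < t$, giving case \ref{it.u}.

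I don't anticipate a genuine obstacle here: the statement is essentially a bookkeeping consequence of the definition of $\Pi_c$ together with the cycle-notation description of $c$. The only point requiring a little care is confirming that none of the other entries produced by $\Pi_c$ (those from columns indexed by lower-barred numbers, column $n+1$, or step (3)) can contribute to $\swpi$, but each of these has row index strictly smaller than column index, so the check is immediate. With that in hand, the two cases fall out directly from substituting the two branches of the formula for $c^k(u_t)$.
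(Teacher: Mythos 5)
Your proposal is correct and follows essentially the same route as the paper: reduce to the entries $(n+2-k,c^k(u_t))$ from step (2) of \Cref{defn.projection} via \Cref{prop.sw} (your explicit check that the other recorded entries lie above the diagonal is just \Cref{rem.ne}), then split cases using the formula for $c^k(u_t)$ in \Cref{rem:Coxeter element in cycle notation}. The index bookkeeping in both cases, including the condition $a<t$ from $k\geq 1$, matches the paper's argument.
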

\begin{proof}
By Proposition~\ref{prop.sw} and Remark~\ref{rem.ne}, if the entry $(p,q)\in \swpi$, then $p = n+2-k$ and $q = c^k(u_t)$ for some  $1\leq t \leq s$ and $1\leq k \leq m = \mu(u_t)$. The statement then follows from the observation in Remark~\ref{rem:Coxeter element in cycle notation} that
\[
c^k(u_t) = \begin{cases}
    u_{t-k}, & \text{if } t>k\,,\\
    d_{k-t}, & \text{if } t\leq k\,.
 \end{cases}
\]
\end{proof}

\begin{proposition}
Let $c\in A_n$ be a Coxeter element. Then $(p,q)\in \nepi$ if and only if one of the following holds:
\begin{enumerate}
\item $q = d_a$ and $p\in [1,d_a-1]$.
\item $q = u_a$ and $p\in [m+1,u_a-1]$ where $m = \mu(u_a)$.
\end{enumerate}
\end{proposition}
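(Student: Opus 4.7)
The plan is to verify both directions by unpacking Definition~\ref{defn.projection}, which explicitly lists every entry that $\Pi_c$ records, sorted into the initial lower-barred phase (columns $d_1,\dots,d_r$ together with column $n+1$) and the per-upper-barred-letter phases. Throughout I adopt the convention $d_{r+1}=n+1$, so that case~(1) subsumes the column $n+1$ as well; the task then reduces to matching each listed entry against the two claimed cases and invoking Proposition~\ref{prop.sw} to remove entries below the main diagonal.

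For the $(\Leftarrow)$ direction, suppose $(p,q)$ satisfies (1) or (2). In case~(1), writing $q=d_a$, the entry $(p,d_a)$ with $p\in[1,d_a-1]$ appears verbatim in the list defining the lower-barred phase of $\Pi_c$; since $p<d_a$ it lies strictly above the main diagonal and therefore in $\nepi$. In case~(2) with $q=u_a$, if $u_a\leq (n+2)/2$ then $m=u_a-1$ and the interval $[m+1,u_a-1]$ is empty, so there is nothing to verify; if $u_a>(n+2)/2$ then $m=n+1-u_a$, and the entries $(p,u_a)$ with $p\in[m+1,u_a-1]$ are exactly those collected in step~(3) of the upper-barred phase for $u_a$ in Definition~\ref{defn.projection}, again with $p<u_a$.

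For the $(\Rightarrow)$ direction, suppose $(p,q)\in\nepi$, so $(p,q)$ is recorded by $\Pi_c$ and $p<q$. By inspection of Definition~\ref{defn.projection}, every recorded entry falls into one of two classes: those produced in the lower-barred phase, for which $q\in\{d_1,\dots,d_r,n+1\}$ and $p\in[1,q-1]$, giving case~(1); and those associated to some upper-barred integer $u_a$, which are either of the form $(n+2-k,c^k(u_a))$ for $1\leq k\leq m$, or (when $u_a>(n+2)/2$) of the form $(p,u_a)$ for $p\in[m+1,u_a-1]$. Proposition~\ref{prop.sw} shows that entries of the first sort lie strictly below the main diagonal and so belong to $\swpi$, not $\nepi$; hence $(p,q)$ must be of the second sort, yielding case~(2).

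Since the statement is essentially careful bookkeeping from the definition, no step is a genuine obstacle; the only minor subtleties are adopting the convention $d_{r+1}=n+1$ so that the column $n+1$ is captured by case~(1), and noting that case~(2) is vacuous precisely when $u_a\leq (n+2)/2$, in agreement with the fact that step~(3) of the upper-barred phase is only invoked for large $u_a$.
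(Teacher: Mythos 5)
Your proposal is correct and follows essentially the same route as the paper, whose entire proof is the one-line citation of Definition~\ref{defn.projection}, Proposition~\ref{prop.sw}, and Remark~\ref{rem.ne}; your write-up simply spells out that bookkeeping, including the convention $d_{r+1}=n+1$ for the column $n+1$ and the observation that case (2) is vacuous when $u_a\leq\frac{n+2}{2}$.
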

\begin{proof}
This follows directly from Definition~\ref{defn.projection}, Proposition~\ref{prop.sw}, and Remark~\ref{rem.ne}.
\end{proof}

Recall from \eqref{eq:Def:Rc} that 
\[
\mathcal{R}_c = \rw{(\underline{\lowerbarletter_1} -1) ... 1} \dots  
\rw{(\underline{\lowerbarletter_r} -1) ... 1} 
\rw{(\underline{\lowerbarletter_{r+1}}-1) ... 1} 
\rw{n ... (n-\overline{\upperbarletter_s} +2)} \dots  
\rw{n ... (n- \overline{\upperbarletter_1} +2)}\,.
\]
We will refer to each maximal decreasing run of $\RC$ as the ``run of $\lowerbarletter_i$ (resp. $\upperbarletter_i$)'', denoted $\RC[\lowerbarletter_i]$ (resp. $\RC[\upperbarletter_i]$).  With this notation we can write  \[
\mathcal{R}_c =\RC[\lowerbarletter_1] \dots \RC[\lowerbarletter_r] \, \RC[\lowerbarletter_{r+1}] \, \RC[\upperbarletter_s] \dots \RC[\upperbarletter_1].
\]
For $1\leq k \leq r+1$ and $1\leq j \leq d_k-1$ we will let $d^{(k)}_j$ be the concatenation
\[
\RC[\lowerbarletter_1] \dots \RC[\lowerbarletter_{k-1}]
\,\rw{(d_k-1)\dots (d_k-j)}.
\]
In particular, we define $d^{(k)} := d^{(k)}_{d_k-1}=\RC[\lowerbarletter_1] \dots \RC[\lowerbarletter_{k}]$ for $1\leq k \leq r+1$ and $d^{(k)}_0 := d^{(k-1)} = d^{(k-1)}_{d_{k-1}-1}$ for $2 \leq k \leq r+1$. 

Similarly, for $1\leq k \leq s$ and $1\leq j \leq u_k-1$ we will use the notation $u^{(k)}_j$ to be the concatenation of 
\[
\RC[\lowerbarletter_1] \dots \RC[\lowerbarletter_{r+1}] \RC[\upperbarletter_s]\dots \RC[\upperbarletter_{k+1}]\, \rw{n\dots (n+1-j)}.
\]
In particular, define $u^{(k)} = u^{(k)}_{u_k-1}=\RC[\lowerbarletter_1] \dots \RC[\lowerbarletter_{r+1}] \RC[\upperbarletter_s]\dots \RC[\upperbarletter_{k}]$ for $1\leq k \leq s$ and $u_0^{(k)} := u^{(k+1)} := u^{(k+1)}_{u_{k+1}-1}$ for $1\leq k \leq s-1$.

\begin{example}
\label{ex.notation}
Continuing from Example~\ref{eg.basis}, we have $d_1 = 2, d_2 = 5, d_3 = 7, d_4 = 8$ and $u_1 = 3, u_2 = 4, u_3 = 6$. So we get $d^{(3)}_2 = \rw{1\ 4321\ 65}$ and $u^{(2)}_3 = \rw{1\ 4321\ 654321\ 7654321\ 76543\ 765}$.
\end{example}

To prove Proposition~\ref{prop.square}, we will need to show that for $1\leq i \leq \binom{n+1}{2}$, there is a 1 in the $i$\textsuperscript{th}-to-last  position of $\Pi_c(X(\prefixR{i}))$ and there are 0's in all earlier positions of $\Pi_c(X(\prefixR{i}))$.
We break this analysis into two cases, based on whether $b_i = d_j^{(k)}$ or $b_i = u_j^{(k)}$ in Lemmas~\ref{lem.lower} and \ref{lem.upper} respectively.

\begin{remark}\label{rmk:Described^(k)andu^(k)}
Here we summarize some computations which are useful for the next lemmas. 
\begin{enumerate}[(a)]
\item We can describe the permutation matrix for $d^{(k)}$ as follows:
\begin{itemize}
    \item In column $d_a$, if $a \leq k$, there is a 1 in row $k+1-a$ and if $a > k$, there is a 1 in row $d_a$\,;
    \item In column $u_a$, there is a 1 in row $u_a + \vert \{ d_b > u_a : d_b \leq d_k\} \vert$. Using Lemma~\ref{lem:content_of_interval}, we can show this is equal to $k+a+1$. 
\end{itemize}

\item \label{rmk:Described^(k)andu^(k):itm:2}
We can describe the permutation matrix for $u^{(k)}$ as follows
\begin{itemize}
    \item In column $j \geq u_k$, there is a $1$ in row $n+2-j$;
    \item In column $d_a < u_k$, there is a $1$ in row $(r+2-a) + (s-k+1) = r+s+3-a-k = n+2-a-k$\,;
    \item In column $u_a < u_k$, there is a $1$ in row $(n+1-s+a) + (s-k+1) = n+2+a-k$\,.
\end{itemize}
To see the above is true, we can verify this for $k = 1$  and then do induction on $k$.
\end{enumerate}

\end{remark}

\begin{example}
Continuing from Example~\ref{ex.notation}, we compute the changes of the permutation matrix from $d^{(2)}$ to $d^{(3)}$. In this process we multiply $d^{(2)}$ on the right by $\rw{654321}$. This is demonstrated in Figure~\ref{fig.lower}. We also compute the changes of the permutation matrix from $u^{(3)}$ to $u^{(2)}$. In this process we multiply $u^{(3)}$ on the right by $\rw{765}$. This is demonstrated in Figure~\ref{fig.upper}.

\begin{figure}[htb!]
\begin{center}
\begin{minipage}{0.49\textwidth}
\begin{tabular}{|c|c|c|c|c|c|c|c|}
\hline
  & \cellcolor{green!50} & $\X$  & $\X$ & \cellcolor{green!50}\Circled{1} & $\X$ &  \cellcolor{green!50} &  \cellcolor{green!50}\\ \hline
  &  \Circled{1}  & $\X$  & $\X$ & \cellcolor{green!50} & $\X$ & \cellcolor{green!50} &  \cellcolor{green!50}\\ \hline
\Circled{1}  &    &    & $\X$ & \cellcolor{green!50} & \cellcolor{green!50} & \cellcolor{green!50} & \cellcolor{green!50} \\ \hline
  &    &  \Circled{1}  &   &  & \cellcolor{green!50} & \cellcolor{green!50} & \cellcolor{green!50} \\ \hline
  &    &    & \Circled{1}  &    & \cellcolor{green!50} & \cellcolor{green!50} & \cellcolor{green!50} \\ \hline
  &  \cellcolor{yellow!50} &   &   &  $\X$  & \Circled{1}  & \cellcolor{green!50} & \cellcolor{green!50} \\ \hline
\cellcolor{yellow!50} &  \cellcolor{yellow!50} & \cellcolor{yellow!50} &   &  $\X$  &   &  \Circled{1}   & \cellcolor{green!50} \\ \hline
\cellcolor{yellow!50} & $\X$  & \cellcolor{yellow!50}  & \cellcolor{yellow!50} &  $\X$  &   & $\X$   & \Circled{1}   \\ \hline
\end{tabular}
\end{minipage}\hfill
\begin{minipage}{0.49\textwidth}
\begin{tabular}{|c|c|c|c|c|c|c|c|}
\hline
  & \cellcolor{green!50} & $\X$  & $\X$ & \cellcolor{green!50} & $\X$ &  \cellcolor{green!50}\Circled{1}&  \cellcolor{green!50}\\ \hline
  &    & $\X$  & $\X$ & \cellcolor{green!50}\Circled{1} & $\X$ & \cellcolor{green!50} &  \cellcolor{green!50}\\ \hline
  &  \Circled{1}  &    & $\X$ & \cellcolor{green!50} & \cellcolor{green!50} & \cellcolor{green!50} & \cellcolor{green!50} \\ \hline
\Circled{1}  &    &    &   &  & \cellcolor{green!50} & \cellcolor{green!50} & \cellcolor{green!50} \\ \hline
  &    &  \Circled{1}   &  &    & \cellcolor{green!50} & \cellcolor{green!50} & \cellcolor{green!50} \\ \hline
  &  \cellcolor{yellow!50} &   & \Circled{1}   &  $\X$  &  & \cellcolor{green!50} & \cellcolor{green!50} \\ \hline
\cellcolor{yellow!50} &  \cellcolor{yellow!50} & \cellcolor{yellow!50} &   &  $\X$  &  \Circled{1}  &    & \cellcolor{green!50} \\ \hline
\cellcolor{yellow!50} & $\X$  & \cellcolor{yellow!50}  & \cellcolor{yellow!50} &  $\X$  &   & $\X$   & \Circled{1}   \\ \hline
\end{tabular}
\end{minipage}
\caption{On the left we have the permutation matrix associated to the reduced word $d^{(2)}$ and on the right we have the same for $d^{(3)}$, where $c = [1432657]$ as in previous examples. Comparing these illustrates the main point of Lemma \ref{lem.lower}.}
 \label{fig.lower}
\end{center}
\begin{center}
\begin{minipage}{0.49\textwidth}
\begin{tabular}{|c|c|c|c|c|c|c|c|}
\hline
& \cellcolor{green!50} & $\X$  & $\X$ & \cellcolor{green!50} & $\X$ &  \cellcolor{green!50}& \Circled{1} \cellcolor{green!50}\\ \hline
&    & $\X$  & $\X$ & \cellcolor{green!50} & $\X$ & \cellcolor{green!50}\Circled{1} &  \cellcolor{green!50}\\ \hline
  &    &    & $\X$ & \cellcolor{green!50} & \cellcolor{green!50}\Circled{1} & \cellcolor{green!50} & \cellcolor{green!50} \\ \hline
  &    &    &   &\Circled{1}  & \cellcolor{green!50} & \cellcolor{green!50} & \cellcolor{green!50} \\ \hline
  &  \Circled{1}  &    &   &    & \cellcolor{green!50} & \cellcolor{green!50} & \cellcolor{green!50} \\ \hline
\Circled{1}  &  \cellcolor{yellow!50} &   &   &  $\X$  &  & \cellcolor{green!50} & \cellcolor{green!50} \\ \hline
\cellcolor{yellow!50} &  \cellcolor{yellow!50} & \cellcolor{yellow!50}\Circled{1} &   &  $\X$  &   &     & \cellcolor{green!50} \\ \hline
\cellcolor{yellow!50} & $\X$  & \cellcolor{yellow!50}  & \cellcolor{yellow!50}\Circled{1} &  $\X$  &   & $\X$   &    \\ \hline
\end{tabular}
\end{minipage}\hfill
\begin{minipage}{0.49\textwidth}
\begin{tabular}{|c|c|c|c|c|c|c|c|}
\hline
& \cellcolor{green!50} & $\X$  & $\X$ & \cellcolor{green!50} & $\X$ &  \cellcolor{green!50}& \Circled{1} \cellcolor{green!50}\\ \hline
&    & $\X$  & $\X$ & \cellcolor{green!50} & $\X$ & \cellcolor{green!50}\Circled{1} &  \cellcolor{green!50}\\ \hline
  &    &    & $\X$ & \cellcolor{green!50} & \cellcolor{green!50}\Circled{1} & \cellcolor{green!50} & \cellcolor{green!50} \\ \hline
  &    &    &   &\Circled{1}  & \cellcolor{green!50} & \cellcolor{green!50} & \cellcolor{green!50} \\ \hline
  &    &    & \Circled{1}  &    & \cellcolor{green!50} & \cellcolor{green!50} & \cellcolor{green!50} \\ \hline
  &  \cellcolor{yellow!50}\Circled{1} &   &   &  $\X$  &  & \cellcolor{green!50} & \cellcolor{green!50} \\ \hline
\cellcolor{yellow!50}\Circled{1} &  \cellcolor{yellow!50} & \cellcolor{yellow!50} &   &  $\X$  &   &     & \cellcolor{green!50} \\ \hline
\cellcolor{yellow!50} & $\X$  & \cellcolor{yellow!50} \Circled{1}  & \cellcolor{yellow!50} &  $\X$  &   & $\X$   &    \\ \hline
\end{tabular}
\end{minipage}
\caption{On the left we have the permutation matrix associated to the reduced word $u^{(3)}$ and on the right we have the same for $u^{(2)}$, where $c = [1432657]$ as in previous examples. Comparing these illustrates the main point of Lemma \ref{lem.upper}.}
\label{fig.upper}
\end{center}
\end{figure}
\end{example}

\begin{lem}
\label{lem.lower}
Let $1 \leq k \leq r+1$ and $1 \leq j \leq d_k-1$. If $i = j + \sum_{a = 1}^{k-1}(d_a -1)$, that is, if $\prefixR{i} = d^{(k)}_j$, then the $\left(\binom{n+1}{2}-i+1\right)$\textsuperscript{th} entry of the vector $\Pi_c(X(\prefixR{i}))$ is $1$,  
and all earlier entries of $\Pi_c(\prefixR{i})$ are 0.
\end{lem}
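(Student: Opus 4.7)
My plan is to work directly with the one-line notation of $b_i = d_j^{(k)}$. Writing $b_i = d^{(k-1)} \cdot s_{d_k - 1} s_{d_k - 2} \cdots s_{d_k - j}$ and using that $d^{(k-1)}(d_k) = d_k$ by \Cref{rmk:Described^(k)andu^(k)}, right-multiplication by each successive simple reflection shifts the value $d_k$ one position leftward in one-line notation, so $b_i(d_k - j) = d_k$. Since the $i$\textsuperscript{th} pre-reversal entry of $\Pi_c$ is precisely $(d_k - j, d_k)$, the entry $X(b_i)(d_k - j, d_k) = 1$ gives the claimed $1$ in the $(\binom{n+1}{2}-i+1)$\textsuperscript{th} position of $\Pi_c(X(b_i))$ after reversal.

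For the vanishing of the earlier entries (equivalently, the later-in-pre-reversal-order entries), the crucial observation is that the reduced word for $b_i$ uses only $s_1, \ldots, s_{d_k-1}$, so $b_i$ permutes $\{1, \ldots, d_k\}$ and fixes $\{d_k+1, \ldots, n+1\}$ pointwise. For any later $\Pi_c$ entry $(p, q)$: if $p > d_k$ then $b_i(p) = p$, and since every $\Pi_c$ entry is strictly off-diagonal (by \Cref{prop.sw} and \Cref{rem.ne}), we get $b_i(p) \neq q$; and if $p \leq d_k$ with $q > d_k$, then $b_i(p) \leq d_k < q$. These two cases together handle all later lower-barred entries in columns $d_a$ with $a > k$, as well as any upper-barred entries whose row or column exceeds $d_k$.

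The remaining later entries have $p, q \leq d_k$ and are of two kinds. Entries $(p, d_k)$ with $p < d_k - j$ are immediate since $b_i(d_k - j) = d_k$ and $b_i$ is a bijection. For upper-barred entries $(p, q)$ with $q \leq d_k$, I will compute $b_i^{-1}(q)$ explicitly from \Cref{rmk:Described^(k)andu^(k)} combined with the leftward shift, and verify $b_i^{-1}(q) \neq p$ by case analysis on $q$. For an SW entry with $q \in \{1, d_1, \ldots, d_{k-1}\} \cup \{u_a : u_a < d_{k-1}\}$, one finds $b_i^{-1}(q) \in \{k, k-a, k+a\}$ (possibly incremented by $1$ due to the shift), and matching $p = n+2-l$ forces $k + t \in \{n+1, n+2\}$, which contradicts $k + t \leq (r+1) + s = n$. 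When $q$ is an upper-barred number in $(d_{k-1}, d_k)$, $b_i^{-1}(q) = q$ or $q+1$: the first violates the strict inequality $p > q$ defining SW entries, and the second would force $u_{t-l} = n+1-l$, contradicting $u_t > u_{t-l}$ together with $u_t \leq n + 1 - l$ imposed by $l \leq \mu(u_t)$. NE entries ($q = u_t$) admit analogous contradictions using \Cref{lem:content_of_interval} to count lower-barred numbers less than $u_t$.

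The main obstacle is this last case analysis: the subcases depend on where $q$ lies among the three regions of $b_i$'s one-line notation (the ``barred-structured'' part $\leq d_{k-1}$, the identity part $(d_{k-1}, d_k - j)$, and the shifted part $[d_k - j, d_k - 1]$), and the precise description of $b_i$ itself splits according to whether $j \leq d_k - d_{k-1} - 1$ or $j \geq d_k - d_{k-1}$. However, each subcase reduces to a short numerical inequality following either from the Coxeter identity $r + s + 1 = n$ or from the strict off-diagonal positioning of SW entries (equivalently, the no-overlap property underlying \Cref{prop:DontRecordZeroes}), so the difficulty is bookkeeping rather than any new idea.
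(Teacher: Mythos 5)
Your argument is correct, but it is organized quite differently from the paper's. The paper proves the lemma by induction on $i$: each step is right-multiplication by one simple reflection, which swaps two entries of the permutation matrix; the $1$ in column $d_k$ rises to the recorded position $(d_k-j,d_k)$, and the only verification needed is that the single displaced $1$ (dropping from row $d_k-j$ to row $d_k-j+1$ in its column $p$) does not land in a position read after position $i$ --- settled, when $p=d_a$, by the $\swpi$ row formula of \Cref{prop.swpi} (giving $k+t=n+1$, impossible since $k+t\le (r+1)+s=n$), and, when $p=u_a<d_k$, by \Cref{lem:content_of_interval}, since the landing spot is strictly below the diagonal but its row is too small to be in $\swpi$. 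You instead avoid the induction: you write the one-line notation of $b_i=d^{(k)}_j$ in closed form from \Cref{rmk:Described^(k)andu^(k)} plus the leftward shift of the value $d_k$, and check directly that every entry read after position $i$ vanishes. This obliges you to treat more recorded positions --- notably the $\nepi$ entries $(p,u_t)$ with $u_t<d_k$, which the paper never mentions because its displaced $1$ in such a column automatically falls below the diagonal --- but your support observation ($\supp(b_i)\subseteq\{1,\dots,d_k-1\}$, so $b_i$ fixes $[d_k+1,n+1]$ pointwise and preserves $[1,d_k]$) combined with strict off-diagonality (\Cref{prop.sw}, \Cref{rem.ne}) kills most of them at once, and your remaining preimage computations are sound: $b_i^{-1}(q)\in\{k-a,\,k,\,k+a\}$ up to the $+1$ shift gives the same contradiction $k+t\in\{n+1,n+2\}$; for upper-barred $q\in(d_{k-1},d_k)$ the alternatives $b_i^{-1}(q)\in\{q,q+1\}$ are excluded exactly as you say; and for the $\nepi$ case with $u_t<d_{k-1}$, \Cref{lem:content_of_interval} yields $u_t\le k+t-1$, so $b_i^{-1}(u_t)\ge u_t+1>u_t-1\ge p$. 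The trade-off is clear: the paper's induction localizes the check to one moved entry per step, while your non-inductive version has heavier bookkeeping but makes $X(b_i)$ completely explicit; both ultimately rest on the same three ingredients, \Cref{rmk:Described^(k)andu^(k)}, \Cref{prop.swpi}, and \Cref{lem:content_of_interval}.
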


\begin{proof}
It suffices to show that for all $1\leq i \leq \sum_{a=1}^{r+1}(d_a-1)$, we have
\begin{enumerate}[(a)]
\item $X(\prefixR{i})$ is $0$ in all entries of $\swpi$, and
\item $X(\prefixR{i})(d_k-j,d_k) = 1$ and this is the first nonzero entry in $\nepi$.
\end{enumerate}

We will prove this by induction on $i$. One can check the statements when $i = 1$ directly. Suppose the claim is true for $i-1$, that is, $X(b_{i-1})(d_k - (j-1),d_k) = 1$ and is the first nonzero entry in $\Pi_c(X(b_{i-1}))$. Note $b_{i-1} = d^{(k)}_{j-1}$.

Since $b_i$ and $b_{i-1}$ differ by a transposition, $s_{\mathcal{R}(i)}$, the difference between $X(b_{i})$ and $X(b_{i-1})$ is that two 1's and two 0's swap places in rows $\mathcal{R}(i)$ and $\mathcal{R}(i)+1$.  We have $\mathcal{R}(i)=d_k-j$, so inducting on~$j$ we can see $X(b_i)(d_k-j,d_k) = 1$, as desired.

It remains to verify that we have not moved a $1$ to an entry recorded by $\Pi_c$ with smaller index than $\binom{n+1}{2} - i$. We compute the change done by the transposition $(d_k-j,d_k-j+1)$ and discuss the two cases below. Note everything to the right of column $d_k$ remain unchanged, so we don't need to consider these columns.

Let $p$ be such that $X(b_{i})(d_k-j+1,p)  = 1$, or equivalently, $X(b_{i-1})(d_k-j,p) = 1$.
\begin{enumerate}
\item Suppose $p = d_a$ where $0\leq a < k$. In this case $k-a = d_k-j$ therefore $j = d_k-k+a$. The entry $(d_k-j,d_a)$ moves downwards when we apply the transposition. If this entry moves to another spot in $\nepi$ then it moves to a spot lower in the same column and thus is recorded later than the entry $(d_k-j+1,p)$ in $\Pi_c$. Otherwise, it moves to a spot on or below the main diagonal, so it is either not recorded by $\Pi_c$ or it is in $\swpi$. Suppose this spot is in $\swpi$. In other words, the entry $(k+1-a,d_a)$ is in $\swpi$.  Then by Proposition~\ref{prop.swpi}\ref{it.d} we have
\[
k+1-a = n+2-t-a\quad \text{ for some } 1\leq t \leq s\,.
\]
This gives us $k + t = n+1$ which contradicts the fact that $0\leq k \leq r+1$ and $r+s = n-1$.

\item If $p = u_a$ where $u_a < d_k$. In this case
\[
d_k-j + 1 = u_a + \#\{d_b > u_a \mid b \leq k\}\,.
\]
This entry is clearly below the main diagonal, so it will not be in $\nepi$ and we just need to show that it will not be in $\swpi$.

The number $\#\{d_b > u_a\}$ is $(n+1) - u_a - (s-a)$, according to Lemma~\ref{lem:content_of_interval} (recall $d_{r+1} = n+1$), so 
\[
u_a + \#\{d_b > u_a: b \leq k\} \leq u_a + ((n+1) - u_a - (s-a)) = n+1-s+a\,.
\]
Therefore, $d_k-j+1 \leq n+1-s+a$. Recall that the entries in both column $u_a$ and $\swpi$ are in columns $n+2-t+a$ for $a < t \leq s$. Since $n+1-s+a < n+2-t+a$, we conclude $(d_k-j,u_a) \notin \swpi$.
\end{enumerate}
\end{proof}

\begin{lem}
\label{lem.upper}
Let $1 \leq k \leq s$ and $1 \leq j \leq u_k -1$. If $i = \left(\sum_{a = 1}^{r+1}(d_a -1)\right) + j+  \sum_{a = s}^{k+1}(u_a-1)$, that is, if  $\prefixR{i} = u^{(k)}_j$, then $\Pi_c(X(\prefixR{i}))$ is a vector where the $\left(\binom{n+1}{2}-i+1\right)$\textsuperscript{th} entry of the vector $\Pi_c(X(\prefixR{i}))$  is $1$,  
and all earlier entries of $\Pi_c(X(\prefixR{i}))$ are zero.
\end{lem}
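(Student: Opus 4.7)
The plan is to prove the lemma by induction on $i$, paralleling the structure of \Cref{lem.lower}. The base case is $i_0=\sum_{a=1}^{r+1}(d_a-1)+1$ (so $k=s$ and $j=1$): here $b_{i_0-1}=d^{(r+1)}$ has its permutation matrix explicitly described by item (a) of \Cref{rmk:Described^(k)andu^(k)}, and \Cref{lem.lower} ensures that the nonzero entries of $\Pi_c(X(d^{(r+1)}))$ sit exactly at reading-order positions $1,\dots,i_0-1$. The transposition $s_n$ can then be applied directly to $X(d^{(r+1)})$ to verify the base case.

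For the inductive step from $b_{i-1}=u^{(k)}_{j-1}$ to $b_i=u^{(k)}_j$, right multiplication by $s_{n-j+1}$ swaps positions $n-j+1$ and $n-j+2$ in one-line notation. Iterating through the $j-1$ transpositions already applied within $\RC[\upperbarletter_k]$ starting from $u^{(k+1)}$ shows that the one-line notation of $u^{(k)}_{j-1}$ agrees with that of $u^{(k+1)}$ in positions $1,\dots,n-j+1$, places the value $u_k$ (originally at position $n+1$ of $u^{(k+1)}$) into position $n-j+2$, and shifts the former entries of $u^{(k+1)}$ at positions $n-j+2,\dots,n$ one step to the right. Combining this description with the formula for $u^{(k+1)}$ from \Cref{rmk:Described^(k)andu^(k)}\ref{rmk:Described^(k)andu^(k):itm:2} and with \Cref{rem:Coxeter element in cycle notation} identifies the value at position $n-j+1$ of $u^{(k+1)}$ as $c^j(u_k)$. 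Hence the transposition deletes the $1$'s at $(n-j+1,c^j(u_k))$ and $(n-j+2,u_k)$ and inserts $1$'s at $(n-j+1,u_k)$ and $(n-j+2,c^j(u_k))$.

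Two things must then be checked. First, one of the inserted $1$'s must coincide with the target of $\Pi_c$ at reading-order position $i$: within the $u_k$-block, this target is $(n+2-j,c^j(u_k))$ when $j\leq m:=\mu(u_k)$, and $(u_k-(j-m),u_k)=(n-j+1,u_k)$ when $j>m$ (the latter only possible when $u_k>(n+2)/2$, in which case $m=n+1-u_k$); in both cases exactly one inserted $1$ matches. Second, neither inserted $1$ may land at a recorded entry of $\Pi_c$ with reading-order position strictly greater than $i$, and the two deleted $1$'s must have been at reading-order positions $\leq i-1$ or not recorded at all, so that the inductive hypothesis about positions $i,i+1,\dots,\binom{n+1}{2}$ remains intact.

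The second point is the main obstacle. A case split on whether $c^j(u_k)$ is the lower-barred column $d_{j-k}$ (case $j\geq k$, by \Cref{rem:Coxeter element in cycle notation}) or the upper-barred column $u_{k-j}$ (case $j<k$) controls where each of the four affected entries can lie in $\Pi_c$'s reading order. Lower-barred blocks are read before the upper-barred phase begins, so entries in column $d_{j-k}$ all occur at positions $\leq\sum(d_a-1)<i$. The remaining potentially problematic subcase is when an inserted $1$ would fall into a $\swpi$-entry of some $u_{k-j}$-block with reading-order position greater than $i$; here \Cref{prop.swpi}(\ref{it.u}) forces a row identity of the form $n+2-t+a=(\text{explicit row})$ for some $a<t\leq s$, and combining this with $r+s=n-1$ and the bounds $1\leq k\leq s$, $1\leq j\leq u_k-1$ produces a numerical contradiction that closely mirrors the final case in the proof of \Cref{lem.lower}.
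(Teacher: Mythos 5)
Your outline follows the same route as the paper's proof: induct over the prefixes $b_i$ in the stated order, track the two $1$'s exchanged by the transposition $s_{n+1-j}$, identify the descending value as $c^j(u_k)$ from the explicit description of $X(u^{(k+1)})$, and confirm that exactly one inserted $1$ hits the entry of $\Pi_c$ at reading position $i$ in both regimes $j\leq \mu(u_k)$ and $j>\mu(u_k)$. That part of your argument is correct. The gap is in the second half, the verification that neither inserted $1$ lands on a recorded entry read after position $i$.

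Concretely: your claim that ``entries in column $d_{j-k}$ all occur at positions $\leq\sum(d_a-1)<i$'' is false. Only the above-diagonal entries of a lower-barred column are read in the initial phase of \Cref{defn.projection}; the below-diagonal entries $(n+2-l,c^l(u_t))$ of $\swpi$ lie in lower-barred columns whenever $l\geq t$, and they are read during the block of $u_t$, possibly after the block of $u_k$ (namely when $t<k$). So the subcase $c^j(u_k)=d_{j-k}$ is not disposed of by reading order alone: one must apply \Cref{prop.swpi}(\ref{it.d}) to the row $n+2-j$ to force $t=k$, and then (for $j>\mu(u_k)$) use that row $n+2-j$ lies outside the rows $n+2-\mu(u_k),\dots,n+1$ occupied by the $\swpi$ entries of the $u_k$ block. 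Relatedly, your ``numerical contradiction'' in the upper-barred subcase cannot follow from $r+s=n-1$ and $1\leq j\leq u_k-1$ alone: \Cref{prop.swpi}(\ref{it.u}) yields $t=k$, which is no contradiction when $j\leq\mu(u_k)$ (in that regime the entry in question \emph{is} the target); the contradiction again needs $j>\mu(u_k)$ plus the row range of the $u_k$ block. Finally, you never check the other inserted $1$, at $(n+1-j,u_k)$, in the regime $j\leq\mu(u_k)$: one must observe that $n+1-j\geq n+1-\mu(u_k)\geq u_k$, so it is not among the $\nepi$ entries $(\mu(u_k)+1,u_k),\dots,(u_k-1,u_k)$ of the $u_k$ block (which are read after position $i$), and that if it lies in $\swpi$ then \Cref{prop.swpi}(\ref{it.u}) places it in a block $u_t$ with $t>k$, read before position $i$. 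These are precisely the checks the paper carries out; once they replace the incorrect reading-order claim and the vague contradiction, your proposal coincides with the paper's proof.
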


\begin{proof}
It suffices to show that the first $1$ in $X(b_i)$ that is selected by $\Pi_c$ comes from position
\[
\begin{cases}
    (n+2-j,c^j(u_k)), & \text{if } 1\leq j \leq m = \mu(u_k)\,,\\
    (u_k-j+m,u_k), & \text{if } m+1 \leq j \leq u_k-1\,.
 \end{cases}
\]
We induct first on $k$, working backwards from $s$ to $1$, and then on $j$, from $j =1$ to $j = u_k-1$, where we recall that $u_0^{(k)} = u^{(k+1)}_{u_{k+1}-1}$. For convenience of notation, we will set $u_{s+1} = n$ in this proof. As a base case, we know the statement holds for $u^{(s)}_0 = u^{(s+1)} = d^{(r+1)}=n+1$ from Lemma~\ref{lem.lower}.

Suppose the statement holds for $u_{j-1}^{(k)}$.  We compute the changes in the permutation matrix between
$u_{j-1}^{(k)}$ and $u_j^{(k)}$.
We obtain $u_j^{(k)}$ from $u_{j-1}^{(k)}$ by applying $s_{n+1-j}$, which exchanges the $(n+1-j)$\textsuperscript{th} row and the $(n+2-j)$\textsuperscript{th} row. By Remark~\ref{rmk:Described^(k)andu^(k)}\ref{rmk:Described^(k)andu^(k):itm:2}, this will cause a $1$ in column $q$ to move from the $(n+1-j)$\textsuperscript{th} row down to the $(n+2-j)$\textsuperscript{th} row for some $q< u_k$, and a $1$ in column $u_k$ to move up from the $(n+2-j)$\textsuperscript{th} row to the $(n+1-j)$\textsuperscript{th} row. We consider two cases regarding $j$.

\begin{enumerate}
\item Suppose $1\leq j \leq m = \mu(u_k)$. 
In this case, we claim that $q = c^j(u_{k})$. We discuss the two cases.
\begin{enumerate}
\item When $k+1 > j$, we check column $u_{k-j}$ in $X(b_{i-1}) = X(u^{(k)}_{j-1})$. By definition of $u^{(k+1)}$ and $u^{(k)}$, this column is the same as in $X(u^{(k+1)})$ at this point. According to Remark~\ref{rmk:Described^(k)andu^(k)}, there is a $1$ in row 
\[
n+2+(k-j) -(k+1)= n+1-j.
\]
Therefore in this case we have $q = u_{k-j} = c^{j}(u_{k})$.
\item When $k+1 \leq j$, we check column $d_{j-k}$ in $X(b_{i-1}) = X(u^{(k)}_{j-1})$. This column is the same as in $X(u^{(k+1)})$ at this point. According to Remark~\ref{rmk:Described^(k)andu^(k)}, there is a $1$ in row 
\[
n+2-(j-k)-(k+1) = n+1-j.
\]
Therefore in this case we also have $q = d_{j-k} = c^{j}(u_{k})$.
\end{enumerate}
In particular, the above claim indicates that $(n+2-j,q)$ is an entry in $\swpi$ that is associated to $u_k$.

Since the $1$ in column $u_k$ has not moved up to $\nepi$, the position $(n+1-j,u_k)$ is either not recorded by $\Pi_c$, or it is in $\swpi$. In the latter case, Proposition~\ref{prop.swpi}\ref{it.u} indicates it is associated to some $u_t$ with $t>k$.
Together with the inductive hypothesis, we conclude that the first $1$ in $\Pi_c(b_i)$ is indeed at $(n+2-j,c^j(u_k))$.

\item If $m = n+1-u_k < u_k-1$, then we must also consider $j$ such that $m+1 \leq j \leq u_k-1$. We then have $u_k-j+m = n+1-j$.
We need to show that $(n+2-j,q)$ is not an entry in~$\swpi$ associated to some $u_t$ with $t \leq k$.

\begin{enumerate}
\item When $q = d_a$, $X(u^{(k+1)})$ has a 1 in column $q$ and row $(n+1-a-k)$ by Remark~\ref{rmk:Described^(k)andu^(k)}.  Since column $q$ in $X(b_{i-1}) = X(u^{(k)}_{j-1})$ is the same as in $X(u^{(k+1)})$, $n+2-a-k = n+2-j$, which implies $j = a+k$. Suppose $(n+2-j, d_a) \in \swpi$ and is associated to some $u_t$ where $t\leq k$ in Definition~\ref{defn.projection}. Then by Proposition~\ref{prop.swpi} we have
\[
n+2-j = n+2-t-a\,.
\]
This means $ t = k$. However the highest row in $\swpi$ associated to $u_k$ in Definition~\ref{defn.projection} is $n+2-m > n+2-j$. This is a contradiction, so if $(n+2-j, d_a) \in \swpi$ it is associated to some $u_t$ where $t>k$ in Definition~\ref{defn.projection}.

\item When $q = u_a<u_k$, $X(u^{(k+1)})$ has a 1 in column $q$ and row $(n+1+a-k)$ by Remark~\ref{rmk:Described^(k)andu^(k)}.  Since column $q$ in $X(b_{i-1}) = X(u^{(k)}_{j-1})$ is the same as in $X(u^{(k+1)})$, $n+2+a-k = n+2-j$, which implies $j = k-a$.  Suppose $(n+2-j,u_a)\in \swpi$ determined by some $u_t$ with $t\leq k$, then by Proposition~\ref{prop.swpi} we have
\[
n+2-j = n+2-t+a\,.
\]
Again we have $t = k$. As in the previous case, this is impossible.
\end{enumerate}
So, $(n+2-j,q)$ is not an entry in $\swpi$ associated to some $u_t$ with $t \leq k$.  This means that the 1 in column $u_k$ is the first 1 to appear in $\Pi_c(X(u^{(k)}_j))$.
Since $X(u^{(k)}_{j-1})$ has a 1 in column~$u_k$ and row $n+2-j$, as $j$ increases this $1$ will move up one row at a time and it will stay the first $1$ in $\Pi_c$.
\end{enumerate}
\end{proof}

\begin{proof}[Proof of Proposition~\ref{prop.square}]
By Lemma~\ref{lem.lower} and Lemma~\ref{lem.upper}, the matrix whose columns are $\Pi_c(X(\prefixR{i}))$
for $1\leqslant i \leqslant \binom{n+1}{2}$ 
is a lower antidiagonal triangular matrix with $1$'s along the antidiagonal.  By Remark~\ref{rmk:o(bi)},  the matrix whose columns are $o(\prefixR{i})$
for $1\leqslant i \leqslant \binom{n+1}{2}$ 
is also a lower antidiagonal triangular matrix with $1$'s along the antidiagonal. The statement follows from linear algebra.
\end{proof}

\subsection{Main Results}
\label{subsec:SquareToRectangle}

Let $\mathcal{U}_c$ be the 
$\binom{n+1}{2}\times \binom{n+1}{2}$ lower-triangular matrix $\mathcal{U}_c$ from Proposition~\ref{prop.square}. The goal of this section is to prove the following theorem.

\begin{theorem}\label{thm.square.to.rectangle}
For each $c$-singleton $w$, 
we have $\mathcal{U}_c \circ \Pi_c(\PermMatrix{w}) = o(w)$.    
\end{theorem}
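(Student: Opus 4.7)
The plan is to prove the matrix identity
\begin{equation}\label{eq:plan-telescope}
X(w) = X(e) + \sum_{i \in f(w)}\bigl(X(b_i) - X(b_{i-1})\bigr),
\end{equation}
where $e$ is the identity permutation and $b_0 \coloneqq e$. Applying the linear map $\mathcal{U}_c \circ \Pi_c$ to \eqref{eq:plan-telescope}, using $\mathcal{U}_c \circ \Pi_c(X(e)) = \mathcal{U}_c(0) = 0 = o(e)$ (because $\Pi_c$ only records off-diagonal entries) and $\mathcal{U}_c \circ \Pi_c(X(b_i)) = o(b_i)$ from \Cref{prop.square}, yields
\[
\mathcal{U}_c \circ \Pi_c(X(w)) \;=\; \sum_{i \in f(w)} \bigl(o(b_i) - o(b_{i-1})\bigr),
\]
which by \Cref{rmk:o(bi)} equals $\sum_{i \in f(w)} \mathbf{e}_{\binom{n+1}{2}-i+1}$, matching $o(w)$ directly from its definition.

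To prove \eqref{eq:plan-telescope}, I would induct on $|f(w)|$, with the base case $w = e$ being trivial. Write $f(w) = \{i_1 < i_2 < \cdots < i_t\}$ and set $w_j \coloneqq s_{\mathcal{R}(i_1)} \cdots s_{\mathcal{R}(i_j)}$. Since $\mathcal{R}_c$ is a linear extension of $H$ and $f(w)$ is an order ideal of $H$, each prefix $\{i_1, \ldots, i_j\}$ is itself an order ideal of $H$, so $w_j$ is a $c$-singleton with $f(w_j) = \{i_1, \ldots, i_j\}$, and setting $k \coloneqq \mathcal{R}(i_j)$, the multiplication $w_j = w_{j-1}s_k$ is length-increasing. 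The inductive step then reduces to
\[
X(w_j) - X(w_{j-1}) \;=\; X(b_{i_j}) - X(b_{i_j-1}).
\]
Both sides are the elementary swap matrix supported on rows $k$ and $k+1$; since $s_k$ is a right ascent for both $w_{j-1}$ and $b_{i_j-1}$, the pairs of swapped values are in increasing order and the signs agree, so the identity reduces to the combinatorial claim $w_{j-1}(k) = b_{i_j-1}(k)$ and $w_{j-1}(k+1) = b_{i_j-1}(k+1)$.

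The combinatorial claim is the main obstacle, which I would settle as follows. Right-multiplication by $s_\ell$ with $|\ell - k| > 1$ fixes positions $k$ and $k+1$, so the values $v(k)$ and $v(k+1)$ after applying a reduced word depend only on the subword of letters whose labels lie in $\{k-1, k, k+1\}$. The reduced words for $b_{i_j-1}$ and $w_{j-1}$ differ exactly by the ``extra'' letters $s_{\mathcal{R}(p)}$ at positions $p \in \{1, \ldots, i_j-1\} \setminus f(w)$, so it suffices to show $|\mathcal{R}(p) - k| > 1$ for each such $p$. Since $i_j \in f(w)$ and $f(w)$ is an order ideal of $H$, every predecessor of $i_j$ in $H$ lies in $f(w)$ and in fact in $f(w_{j-1})$ (as such predecessors occur at positions strictly less than $i_j$ in the linear extension $\mathcal{R}_c$). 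Therefore any extra $p \notin f(w)$ is not a predecessor of $i_j$ in $H$; combined with $p < i_j$ as integers, the direct-relation clause of \Cref{defn:heap of a reduced word} forces $|\mathcal{R}(p) - k| > 1$, which completes the argument.
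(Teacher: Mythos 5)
Your telescoping strategy is sound and genuinely different from the paper's proof (which splits into the cases $1 \in f(w)$ and $1 \notin f(w)$ and telescopes along a coarser decreasing sequence of prefixes), and your reduction of the inductive step to the equalities $w_{j-1}(k) = b_{i_j-1}(k)$ and $w_{j-1}(k+1) = b_{i_j-1}(k+1)$ is correct. The gap is in how you justify these equalities. The principle you invoke --- that the values at positions $k$ and $k+1$ of a product of simple reflections depend only on the subword of letters with labels in $\{k-1,k,k+1\}$ --- is false: for $k=1$, the word $s_3s_2$ gives the permutation $1423$, whose value at position $2$ is $4$, while its subword $s_2$ gives $1324$, with value $3$ at position $2$. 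A letter whose label is far from $k$ can still change which values the nearby letters $s_{k\pm1}$ later sweep into positions $k$ and $k+1$. Consequently, proving only that every extra letter at position $p\in\{1,\dots,i_j-1\}\setminus f(w)$ satisfies $|\mathcal{R}(p)-k|>1$ (which you do establish correctly from $p\not\preccurlyeq i_j$) does not by itself yield the claim, so the crux of your inductive step is unjustified as written.

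The claim is nevertheless true, and the missing ingredient is the order-ideal property of $f(w_{j-1})$ applied to all of its elements rather than just to $i_j$: if $p$ is extra, $q\in f(w_{j-1})$, and $p<q$, then $p\not\preccurlyeq q$ (otherwise $p\in f(w)$), so $|\mathcal{R}(p)-\mathcal{R}(q)|>1$ and the two letters commute. Hence each extra letter commutes past every common letter to its right, which lets you rewrite $b_{i_j-1}=w_{j-1}\,v$ with $v$ the product of the extra letters in their original order; since every letter of $v$ has label in $[1,k-2]\cup[k+2,n]$, the permutation $v$ fixes $k$ and $k+1$, whence $b_{i_j-1}(k)=w_{j-1}(v(k))=w_{j-1}(k)$ and similarly at $k+1$. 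With this patch your argument goes through, and it is in fact close in spirit to the commutation/support-separation step in Lemma \ref{lem.contains.1} of the paper, while your element-by-element telescoping avoids the paper's case analysis on whether $1\in f(w)$ (your base point $X(e)$ contributes nothing under $\Pi_c$ precisely because $\Pi_c$ records no diagonal entries, which is the same observation the paper uses in Lemma \ref{lem.complex}).
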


Let $1,2,..., \Coxeterlength(w_0)$ denote the poset elements of the heap  $\HeapDiagonalReading$ of $\DiagonalReadingWord$, as in Definition \ref{defn:heap of a reduced word}. With this notation, we can think of the order ideal $f(w)$ 
of $\HeapDiagonalReading$ 
given in  Proposition~\ref{prop:vertex-bijection} (see Remark~\ref{rem:prop:vertex-bijection})
as a subset of $\{1,2,..., \Coxeterlength(w_0)\}$.  To prove Theorem~\ref{thm.square.to.rectangle}, we first prove an important identity for $c$-singletons $w$ where $1\in f(w)$, see Lemma~\ref{lem.easy}. Lemma~\ref{lem.complex} allows us to extend this to $c$-singletons $w$ where $1\not\in f(w)$.  We then prove Theorem~\ref{thm.square.to.rectangle} by combining these cases with Proposition~\ref{prop.square}.

Recall that $D_{x}$ denotes the diagonal in $\HeapDiagonalReading$ containing $x$, and $A_{x}$ denotes the antidiagonal in $\HeapDiagonalReading$ containing ~$x$.

\begin{notation}\label{nota:contains.1}
Suppose $w$ is a $c$-singleton such that $f(w)$ contains the poset element $1$ in $\HeapDiagonalReading$. 
We construct a decreasing sequence of positive integers $a_0>a_1>\dots>a_{2k}$ (where $k\geq 0$) iteratively as follows:
\begin{enumerate}[I.]
\item Let $a_0$ be the largest number in $f(w)$.
\item Suppose $i\geq 1$, and we have 
assigned a number in $f(w)$ to $a_{2(i-1)}$ which lives in $\HeapDiagonalReading$. 
If $f(w)$ contains all the integers $1, 2, \dots, a_{2(i-1)}$, we stop and let $k=i-1$. 
Otherwise, we append two more integers $a_{2i-1}$ and $a_{2i}$ to our decreasing sequence. 
\begin{itemize} 
\item 
Let $a_{2i-1}$ be the largest integer such that $a_{2i-1}<a_{2(i-1)}$  and  $a_{2i-1}$ is \emph{not} in $f(w)$. Note that $a_{2i-1}$ is the northwest vertex of $D_{a_{2i-1}}$ which is below $D_{a_{2(i-1)}}$, the diagonal containing $a_{2(i-1)}.$ 
\item 
Let $a_{2i}$ be the largest integer such that $a_{2i}<a_{2i-1}$ and $a_{2i}$ is in $f(w)$ (we know such integer exists because the number $1$ is in $f(w)$ by assumption). 
Note that the diagonal $D_{a_{2i}}$ containing $a_{2i}$ is either equal to $D_{a_{2i-1}}$ or is 
below $D_{a_{2i-1}}$.
\end{itemize}
\end{enumerate}
\end{notation}

\begin{example}\label{ex:nota:contains.1.PartI}
Consider $H_c$ for $c=\rw{21365487}$, whose Hasse diagram is depicted in Figure~\ref{fig:ex:nota:contains.1} (left) and heap diagram is depicted in Figure \ref{fig:ex:nota:contains.1} (right). 
Let $w$ denote the $c$-singleton with $f(w)=\{1, 6, 7, 12, 13,14,20,21,27 \}$, the order ideal highlighted in round disks in the figure. 
Then the decreasing sequence defined in Notation~\ref{nota:contains.1} is
\[\begin{matrix}
&a_0, &a_1, &a_2, &a_3, &a_4, &a_5, &a_6, &a_7, &a_8 =\\
&27, &26, &21, &19, &14, &11, &7, &5, &1
\end{matrix}
\]
\end{example}

\def\myscale{.85}
\def\mycirclecolor{violet!10}
\def\mycirclesize{2ex}
\newcommand*\RectangledFilled[1]{\tikz[baseline=(char.base)]{
\node[fill=yellow!50,shape=rectangle,draw=yellow!50,inner sep=2.1pt] (char) {#1};}}
\begin{figure}
\begin{tikzpicture}[scale=\myscale]
\draw[\mycirclecolor,fill=\mycirclecolor] (1,1) circle (\mycirclesize);
\node (s2r2) at (1,1) [draw, star, very thick, densely dotted] {$1$}; 
\draw (2.5,1) node {$=a_8=x$};

\draw[\mycirclecolor,fill=\mycirclecolor] (5,1) circle (\mycirclesize);
\node (s6r2) at (5,1) {$6$}; 

\draw[\mycirclecolor,fill=\mycirclecolor] (7,1) circle (\mycirclesize);
\node (s8r2) at (7,1)  {$12$};

\node (s1r3) at (0,2) {\RectangledFilled{$2$}}; 

\node (s3r3) at (2,2) {\RectangledFilled{$3$}};
\draw (2.55,2) node {$=y'$};

\draw[\mycirclecolor,fill=\mycirclecolor] (4,2) circle (\mycirclesize);
\node (s5r3) at (4,2) [draw, star, very thick, densely dotted] {$7$}; 
\draw (4.8,2) node {$=a_6$};

\draw[\mycirclecolor,fill=\mycirclecolor] (6,2) circle (\mycirclesize);
\node (s7r3) at (6,2) {$13$}; 

\node (s2r4) at (1,3) {\RectangledFilled{$4$}}; 
\node (s4r4) at (3,3) {$8$}; 

\draw[\mycirclecolor,fill=\mycirclecolor] (5,3) circle (\mycirclesize);
\node (s6r4) at (5,3) [draw, star, very thick, densely dotted] {$14$}; 
\draw (5.9,3) node {$=a_4$};

\draw[\mycirclecolor,fill=\mycirclecolor] (7,3) circle (\mycirclesize);
\node (s8r4) at (7,3) {$20$}; 

\node (s1r5) at (0,2+2) [draw, rectangle, very thick, densely dotted] {\RectangledFilled{$5$}}; 
\draw (s1r5); 
\draw (-1.2,2+2) node {$y=a_7=$};

\node (s3r5) at (2,2+2) {$9$}; 
\node (s5r5) at (4,2+2) {$15$} ;

\draw[\mycirclecolor,fill=\mycirclecolor] (6,2+2) circle (\mycirclesize);
\node (s7r5) at (6,2+2) [draw, star, very thick, densely dotted] {$21$}; 
\draw (6.9,2+2) node {$=a_2$};

\node (s2r6) at (1,3+2) {$10$}; 
\node (s4r6) at (3,3+2) {$16$}; 
\node (s6r6) at (5,3+2) {$22$}; 

\draw[\mycirclecolor,fill=\mycirclecolor] (7,3+2) circle (\mycirclesize);
\node (s8r6) at (7,3+2) [draw, star, very thick, densely dotted] {$27$}; 
\draw (7.8,5) node {$=a_0$};

\node (s1r7) at (0,2+4) [draw, rectangle, very thick, densely dotted ]{$11$}; 
\draw (-.9,2+4) node {$a_5=$};

\node (s3r7) at (2,2+4) {$17$}; 
\node (s5r7) at (4,2+4) {$23$}; 
\node (s7r7) at (6,2+4) {$28$}; 
\node (s2r8) at (1,3+2+2) {$18$}; 
\node (s4r8) at (3,3+2+2) {$24$}; 
\node (s6r8) at (5,3+2+2) {$29$}; 
\node (s8r8) at (7,3+2+2) {$32$};

\node (s1r9) at (0,2+4+2) [draw, rectangle, very thick, densely dotted] {$19$}; 
\draw (-.9,2+4+2) node {$a_3=$};

\node (s3r9) at (2,2+4+2) {$25$}; 
\node (s5r9) at (4,2+4+2) {$30$}; 
\node (s7r9) at (6,2+4+2) {$33$}; 

\node (s2r10) at (1,3+2+2+2)  [draw, rectangle, very thick, densely dotted]  {$26$}; 
\draw (.1,3+2+2+2) node {$a_1=$};

\node (s4r10) at (3,3+2+2+2) {$31$}; 
\node (s6r10) at (5,3+2+2+2) {$34$}; 
\node (s8r10) at (7,3+2+2+2) {$36$}; 

\node (s5r11) at (4,2+4+2+2) {$35$}; 

\draw [thick, shorten <=-2pt, shorten >=-2pt] (s1r3) -- (s2r2);

\draw [thick, shorten <=-2pt, shorten >=-2pt]
(s2r2) -- (s3r3);

\draw [,thick, shorten <=-2pt, shorten >=-2pt]
(s5r3) -- (s6r2);

\draw [,thick, shorten <=-2pt, shorten >=-2pt]
(s6r2) -- (s7r3);

\draw [,thick, shorten <=-2pt, shorten >=-2pt]
(s7r3) -- (s8r2);

\draw [thick, shorten <=-2pt, shorten >=-2pt] (s1r3) -- (s2r4) ;

\draw [thick, shorten <=-2pt, shorten >=-2pt]
(s2r4) -- (s3r3);

\draw [thick, shorten <=-2pt, shorten >=-2pt]
(s3r3) -- (s4r4);

\draw [thick, shorten <=-2pt, shorten >=-2pt]
(s4r4) -- (s5r3);

\draw [thick, shorten <=-2pt, shorten >=-2pt]
(s5r3) -- (s6r4);

\draw [thick, shorten <=-2pt, shorten >=-2pt]
(s6r4) -- (s7r3);

\draw [thick, shorten <=-2pt, shorten >=-2pt]
(s7r3) -- (s8r4);

\draw [, thick, shorten <=-2pt, shorten >=-2pt] (s1r5) -- (s2r4);

\draw [, thick, shorten <=-2pt, shorten >=-2pt]
(s2r4) -- (s3r5);

\draw [, thick, shorten <=-2pt, shorten >=-2pt]
(s3r5) -- (s4r4);

\draw [, thick, shorten <=-2pt, shorten >=-2pt]
(s4r4) -- (s5r5);

\draw [, thick, shorten <=-2pt, shorten >=-2pt]
(s5r5) -- (s6r4);

\draw [, thick, shorten <=-2pt, shorten >=-2pt]
(s6r4) -- (s7r5);

\draw [, thick, shorten <=-2pt, shorten >=-2pt]
(s7r5) -- (s8r4);

\draw [, thick, shorten <=-2pt, shorten >=-2pt] (s1r5) -- (s2r6);

\draw [, thick, shorten <=-2pt, shorten >=-2pt] 
(s2r6) -- (s3r5);

\draw [, thick, shorten <=-2pt, shorten >=-2pt]
(s3r5) -- (s4r6);

\draw [, thick, shorten <=-2pt, shorten >=-2pt]
(s4r6)
-- (s5r5);

\draw [, thick, shorten <=-2pt, shorten >=-2pt]
(s5r5) -- (s6r6); 

\draw [, thick, shorten <=-2pt, shorten >=-2pt]
(s6r6) -- (s7r5); 

\draw [, thick, shorten <=-2pt, shorten >=-2pt]
(s7r5) -- (s8r6);

\draw [, thick, shorten <=-2pt, shorten >=-2pt] (s1r7) -- (s2r6);

\draw [, thick, shorten <=-2pt, shorten >=-2pt]
(s2r6)
-- (s3r7);

\draw [, thick, shorten <=-2pt, shorten >=-2pt]
(s3r7) 
-- (s4r6);

\draw [, thick, shorten <=-2pt, shorten >=-2pt]
(s4r6) 
-- (s5r7);

\draw [, thick, shorten <=-2pt, shorten >=-2pt]
(s5r7) 
-- (s6r6);

\draw [, thick, shorten <=-2pt, shorten >=-2pt]
(s6r6) 
-- (s7r7);

\draw [, thick, shorten <=-2pt, shorten >=-2pt]
(s7r7)
--
(s8r6);

\draw [, thick, shorten <=-2pt, shorten >=-2pt] (s1r7) -- (s2r8);

\draw [, thick, shorten <=-2pt, shorten >=-2pt] 
(s2r8)
-- (s3r7);

\draw [, thick, shorten <=-2pt, shorten >=-2pt] 
(s3r7) 
-- (s4r8);

\draw [, thick, shorten <=-2pt, shorten >=-2pt] 
(s4r8) 
-- (s5r7);

\draw [, thick, shorten <=-2pt, shorten >=-2pt] 
(s5r7) 
-- (s6r8);

\draw [, thick, shorten <=-2pt, shorten >=-2pt] 
(s6r8) 
-- (s7r7);

\draw [, thick, shorten <=-2pt, shorten >=-2pt] 
(s7r7) 
-- 
(s8r8);

\draw [, thick, shorten <=-2pt, shorten >=-2pt] (s1r9) -- (s2r8);

\draw [, thick, shorten <=-2pt, shorten >=-2pt]
(s2r8) -- (s3r9);

\draw [, thick, shorten <=-2pt, shorten >=-2pt]
(s3r9) -- (s4r8);

\draw [, thick, shorten <=-2pt, shorten >=-2pt]
(s4r8) -- (s5r9);

\draw [, thick, shorten <=-2pt, shorten >=-2pt]
(s5r9) -- (s6r8);

\draw [, thick, shorten <=-2pt, shorten >=-2pt]
(s6r8) -- (s7r9);

\draw [, thick, shorten <=-2pt, shorten >=-2pt] (s7r9) -- (s8r8);

\draw [, thick, shorten <=-2pt, shorten >=-2pt] (s7r9) -- 
(s8r10);

\draw [, thick, shorten <=-2pt, shorten >=-2pt] (s1r9) -- (s2r10);

\draw [, thick, shorten <=-2pt, shorten >=-2pt] 
(s2r10) 
-- (s3r9);

\draw [, thick, shorten <=-2pt, shorten >=-2pt] 
(s3r9) 
-- (s4r10);

\draw [, thick, shorten <=-2pt, shorten >=-2pt] 
(s4r10) 
-- (s5r9);

\draw [, thick, shorten <=-2pt, shorten >=-2pt] 
(s5r9) 
-- (s6r10);

\draw [, thick, shorten <=-2pt, shorten >=-2pt] 
(s6r10) 
-- (s7r9);

\draw [, thick, shorten <=-2pt, shorten >=-2pt] 
(s7r9) 
-- 
(s8r10);

\draw [, thick, shorten <=-2pt, shorten >=-2pt] (s4r10) -- (s5r11);

\draw [, thick, shorten <=-2pt, shorten >=-2pt]
(s5r11) -- (s6r10);

\end{tikzpicture}
\begin{tikzpicture}[scale=\myscale]
\node (s2r2) at (1,1) {$s_2$};
\node (s6r2) at (5,1) {$s_6$};
\node (s8r2) at (7,1) {$s_8$}; 

\node (s1r3) at (0,2) {$s_1$};
\node (s3r3) at (2,2) {$\Circled{s_3}=s_p$};
\node (s5r3) at (4,2) {$s_5$};
\node (s7r3) at (6,2) {$s_7$};
\node (s2r4) at (1,3) {$s_2$};
\node (s4r4) at (3,3) {$s_4$};
\node (s6r4) at (5,3) {$s_6$};
\node (s8r4) at (7,3) {$s_8$}; 

\node (s1r5) at (0,2+2) {$s_1$};
\node (s3r5) at (2,2+2) {$s_3$};
\node (s5r5) at (4,2+2) {$s_5$};
\node (s7r5) at (6,2+2) {$s_7$};
\node (s2r6) at (1,3+2) {$s_2$};
\node (s4r6) at (3,3+2) {$s_4$};
\node (s6r6) at (5,3+2) {$s_6$};
\node (s8r6) at (7,3+2) {$s_8$}; 

\node (s1r7) at (0,2+4) {$s_1$};
\node (s3r7) at (2,2+4) {$s_3$};
\node (s5r7) at (4,2+4) {$s_5$};
\node (s7r7) at (6,2+4) {$s_7$};
\node (s2r8) at (1,3+2+2) {$s_2$};
\node (s4r8) at (3,3+2+2) {$s_4$};
\node (s6r8) at (5,3+2+2) {$s_6$};
\node (s8r8) at (7,3+2+2) {$s_8$};

\node (s1r9) at (0,2+4+2) {$s_1$};
\node (s3r9) at (2,2+4+2) {$s_3$};
\node (s5r9) at (4,2+4+2) {$s_5$};
\node (s7r9) at (6,2+4+2) {$s_7$};

\node (s2r10) at (1,3+2+2+2) {$s_2$};
\node (s4r10) at (3,3+2+2+2) {$s_4$}; 
\node (s6r10) at (5,3+2+2+2) {$s_6$}; 
\node (s8r10) at (7,3+2+2+2) {$s_8$}; 

\node (s5r11) at (4,2+4+2+2) {$s_5$};

\draw [black, thick, shorten <=-2pt, shorten >=-2pt] (s1r3) -- (s2r2) -- (s3r3)  (s5r3) -- (s6r2) -- (s7r3) -- 
(s8r2)
node[pos=0.5, left=5mm]{};

\draw [black, thick, shorten <=-2pt, shorten >=-2pt] (s1r3) -- (s2r4) -- (s3r3) -- (s4r4) -- (s5r3) -- (s6r4) -- (s7r3) -- 
(s8r4)
node[pos=0.5, left=5mm]{};
\draw [black, thick, shorten <=-2pt, shorten >=-2pt] (s1r5) -- (s2r4) -- (s3r5) -- (s4r4) -- (s5r5) -- (s6r4) -- (s7r5) -- 
(s8r4)
node[pos=0.5, left=5mm]{};

\draw [black, thick, shorten <=-2pt, shorten >=-2pt] (s1r5) -- (s2r6) -- (s3r5) -- (s4r6) -- (s5r5) -- (s6r6) -- (s7r5) --
(s8r6)
node[pos=0.5, left=5mm]{};
\draw [black, thick, shorten <=-2pt, shorten >=-2pt] (s1r7) -- (s2r6) -- (s3r7) -- (s4r6) -- (s5r7) -- (s6r6) -- (s7r7)  --
(s8r6)
node[pos=0.5, left=5mm]{};

\draw [black, thick, shorten <=-2pt, shorten >=-2pt] (s1r7) -- (s2r8) -- (s3r7) -- (s4r8) -- (s5r7) -- (s6r8) -- (s7r7) -- 
(s8r8)
node[pos=0.5, left=5mm]{};

\draw [black, thick, shorten <=-2pt, shorten >=-2pt] (s1r9) -- (s2r8) -- (s3r9) -- (s4r8) -- (s5r9) -- (s6r8) -- (s7r9) -- 
(s8r8)
node[pos=0.5, left=5mm]{};

\draw [black, thick, shorten <=-2pt, shorten >=-2pt] (s7r9) -- 
(s8r10)
node[pos=0.5, left=5mm]{};

\draw [black, thick, shorten <=-2pt, shorten >=-2pt] (s1r9) -- (s2r10) -- (s3r9) -- (s4r10) -- (s5r9) -- (s6r10) -- (s7r9) -- 
(s8r10)
node[pos=0.5, left=5mm]{};

\draw [black, thick, shorten <=-2pt, shorten >=-2pt] (s4r10) -- (s5r11) -- (s6r10)
node[pos=0.5, left=5mm]{};
\end{tikzpicture}
\caption{Heap of $\rw{21365487}$-sorting word of the longest element $w_0$ in $A_8$, used in Examples \ref{ex:nota:contains.1.PartI}, \ref{ex:nota:contains.1.PartII}, \ref{ex:nota:contains.1.PartIII}, and \ref{ex:complex}} 
\label{fig:ex:nota:contains.1}    
\end{figure}

\begin{lemma}
\label{lem.eqn.o}
Let $w$ be a $c$-singleton such that $f(w)$ contains the poset element $1$ of $\HeapDiagonalReading$, and let $a_0 > a_1 > \dots > a_{2k}$ denote the decreasing sequence  defined in Notation \ref{nota:contains.1}. 
Then 
we have
\begin{equation}
o(w) = o(\prefixR{a_0}) - \sum_{i=1}^{k}[o(\prefixR{a_{2i-1}})-o(\prefixR{a_{2i}})]\,.
\label{eqn.o}
\end{equation}
\end{lemma}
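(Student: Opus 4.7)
The plan is to verify \eqref{eqn.o} entry by entry. For $1 \leq j \leq \ell(w_0)$ set $m \coloneqq \ell(w_0)-j+1$, so that $m$ ranges over the elements of $\HeapDiagonalReading$ as $j$ ranges over the coordinates of $\mathbb{R}^{\ell(w_0)}$. Unwinding the definition of $o(\cdot)$ as a reversed indicator vector, the $j$th entry of $o(v)$ equals $[m \in f(v)]$ for any $c$-singleton $v$, and by \Cref{rmk:o(bi)} the $j$th entry of $o(\prefixR{i})$ equals $[m \leq i]$, where $[\cdot]$ is the Iverson bracket. Using $[m \leq a_{2i-1}]-[m \leq a_{2i}] = [a_{2i} < m \leq a_{2i-1}]$ (valid since $a_{2i-1}>a_{2i}$), the lemma reduces to the scalar identity
\[
[m \in f(w)] \;=\; [m \leq a_0] \;-\; \sum_{i=1}^{k} [a_{2i} < m \leq a_{2i-1}].
\]

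I would then case-split on where $m$ lies relative to the strictly decreasing sequence $a_0 > a_1 > \cdots > a_{2k}$, which partitions $\{1,\dots,\ell(w_0)\}$ into the blocks $(a_0,\ell(w_0)]$, the intervals $(a_{t}, a_{t-1}]$ for $1 \leq t \leq 2k$, and $[1, a_{2k}]$. In the top block $m > a_0$ both sides vanish, since $a_0 = \max f(w)$ forces $m \notin f(w)$ and every Iverson bracket on the right is $0$. For an ``odd-to-even'' block $(a_{2i-1}, a_{2i-2}]$ with $1 \leq i \leq k$, the defining property of $a_{2i-1}$ as the largest integer below $a_{2i-2}$ \emph{not} in $f(w)$ guarantees that every integer in the interval lies in $f(w)$; simultaneously, monotonicity of the $a_t$ causes every bracket $[a_{2j} < m \leq a_{2j-1}]$ to be $0$, so the RHS collapses to $[m \leq a_0] = 1$. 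For an ``even-to-odd'' block $(a_{2i}, a_{2i-1}]$, the defining property of $a_{2i}$ as the largest integer below $a_{2i-1}$ \emph{in} $f(w)$ forces each intermediate integer out of $f(w)$, while exactly one bracket (the $i$th) contributes, giving RHS $=1-1=0$. Finally, in the bottom block $m \leq a_{2k}$ the stopping criterion of \Cref{nota:contains.1} ensures $\{1,\dots,a_{2k}\} \subseteq f(w)$, and every bracket in the sum vanishes since $m \leq a_{2k} \leq a_{2j}$ for all $j \leq k$, so both sides equal $1$.

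No significant obstacle is anticipated: the bulk of the argument is a clean reduction to the Iverson-bracket identity above, and each case is then an immediate appeal to the defining properties of the numbers $a_t$ recorded in \Cref{nota:contains.1}. The only genuine bookkeeping worry is keeping the indexing straight around the boundary cases $i=1$ and $i=k$, and confirming that the alternating ``in/not in $f(w)$'' pattern dictated by the recursive selection of the $a_t$'s truly exhausts the interval $[1, a_0]$---but this is a direct consequence of the recursion itself, which alternately picks the largest remaining integer outside and then inside $f(w)$ until all integers up to $a_{2k}$ are forced to belong to $f(w)$.
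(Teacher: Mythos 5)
Your proposal is correct and follows essentially the same route as the paper: the paper's proof likewise observes that $o(\prefixR{j})-o(\prefixR{i})$ (for $j>i$) is the indicator vector of the positions strictly between $i$ and $j$, and then invokes the alternating in/not-in structure of the sequence from Notation \ref{nota:contains.1}. Your entrywise Iverson-bracket case analysis simply spells out the details the paper leaves implicit, and each case checks out.
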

\def\mysmall{i}
\def\mybig{j}
\begin{proof}
Note that if $\mybig>\mysmall$, then $o(\prefixR{\mybig})-o(\prefixR{\mysmall})$ is the vector with $1$'s in the positions $\mysmall+1$ through $\mybig$ (inclusive) and $0$'s elsewhere.
That is, the transpose of $o(\prefixR{\mybig})-o( \prefixR{\mysmall})$ is 
\begin{align*}
( 0 \, 0 \,  \dots  \, 0 \,  
\overbrace{
1 \, 1 \, \dots \, 1
}
^{\substack{
 \text{positions } \mysmall+1\\ \text{to } \mybig}} 
\,
0 \, 0 \, \dots \, 0 )
\end{align*}
The relation \eqref{eqn.o} 
follows from Notation~\ref{nota:contains.1}.
\end{proof}

\def\myWV{wv}

\begin{notation}\label{nota:Various_words}
Suppose $w$ is a $c$-singleton such that $f(w)$ contains the poset element $1$ in $\HeapDiagonalReading$ and $f(w)$ is not of the form $\{1,2,\ldots,x\}$. 
Define the sequence $a_0 > a_1 > \cdots > a_{2k}$ as in Notation~\ref{nota:contains.1}; necessarily, $k \geq 1$. 
\begin{itemize}
\item 
Let $x:= a_{2k}$ and $y:= a_{2k-1}$.
\item Let $v$ denote the permutation such that $\prefixR{y}=\prefixR{x} v$ with reduced word  
$\mathbf{v}\coloneqq \rw{\DiagWordSubseq{x+1}
\dots \DiagWordSubseq{y}}$ corresponding to $\{x+1,...,y \}\subset H_c$. 

\item 
Let $\mathbf{w}$ denote the reduced word of $w$ constructed by taking the subsequence $\DiagonalReadingWord$ corresponding to the order ideal $f(w)$.
\item 
Let $\wsuff$ denote the permutation such that 
$w = \prefixR{x} ~ \wsuff$, and let $\mathbf{\wsuff}$ be the reduced word of $\wsuff$ which is a 
suffix of $\mathbf{w}$.
\end{itemize}
\end{notation}

\begin{example}\label{ex:nota:contains.1.PartII}
Let $c$ and $w$ be as in Example~\ref{ex:nota:contains.1.PartI}. 
Following Notation~\ref{nota:Various_words}, we have the following. 
\begin{itemize}
\item 
$x=a_8=1$ and $y=a_7=5$ 
\item $\mathbf v=\rw{\DiagWordSubseq{x+1}...\DiagWordSubseq{y}}=\rw{\DiagWordSubseq{2}...\DiagWordSubseq{5}}=\rw{1321}$. The elements of the poset corresponding to the letters of $\mathbf{v}$ are highlighted in rectangles in Figure \ref{fig:ex:nota:contains.1}.

\item The reduced word $\mathbf{w}$ for the permutation $w$ is  \[\rw{\DiagWordSubseq{1} \cdot  \DiagWordSubseq{6}\DiagWordSubseq{7} \cdot  \DiagWordSubseq{12}\DiagWordSubseq{13}\DiagWordSubseq{14} \cdot \DiagWordSubseq{20}\DiagWordSubseq{21} \cdot \DiagWordSubseq{27}}=\rw{2 65 876 87 8}.\] 

\item Since $\prefixR{x}=\prefixR{1}=s_2$, the reduced word $\mathbf{\wsuff}$ for the permutation $\wsuff$ is  \[\rw{\DiagWordSubseq{6}\DiagWordSubseq{7} \cdot  \DiagWordSubseq{12}\DiagWordSubseq{13}\DiagWordSubseq{14} \cdot \DiagWordSubseq{20}\DiagWordSubseq{21} \cdot \DiagWordSubseq{27}}=\rw{65 876 87 8}.\] 
\end{itemize}
\end{example}

\begin{lemma}\label{lem.contains.1}
Assume the situation and notation given in 
Notation~\ref{nota:Various_words}. 
View the word $\mathbf{v}$ 
as a concatenation of maximal decreasing runs (each supported on a single diagonal). 
Let $p\geq 1$ be the integer such that  $\rw{p(p-1)\dots }$ is the rightmost maximal decreasing run of $\mathbf{v}$.

Then the following results hold. 
\begin{enumerate} [(1)]
\item 
\label{lem.contains.1.support.of.v.is.up.to.p} 
$\supp(v) \subseteq \{ 1,...,p\}$.

\item 
\label{lem.contains.1.support.of.w''}
We have $\supp(\wsuff)\subseteq \{p+2,\dots,n\}$; 
in particular,  
we have $w(j) = \prefixR{x}(j)$ for each $1 \leq j \leq p+1$.

\item \label{lem.contains.1.wv is c-singleton containing 1.itm a}
$\myWV$ is a $c$-singleton such that $f(\myWV)$ contains $1$.

\item \label{lem.contains.1.wv is c-singleton containing 1.itm b}
Furthermore, if we construct a decreasing sequence $a_0' > a_1 ' > ...$ for $\myWV$ following the iterative algorithm given in Notation \ref{nota:contains.1}, we get precisely \[a_0 > a_1 > ... > a_{2k-2},\] that is, the same sequence for $w$ but with the last two integers removed.

\end{enumerate}
\end{lemma}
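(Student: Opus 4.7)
The plan is to split each part into cases according to whether $D_x=D_y$ (Case~A) or $D_x$ lies strictly below $D_y$ in $H_c$ (Case~B). I will first record that Case~B forces $D_y$ to be a step~(I) or step~(II) diagonal of $H_c$: if instead $D_y=u_i$ were a step~(III) diagonal, then $a_{2k-2}\in f(w)$ would lie in a higher step~(III) diagonal $u_{i'}$ with $i'<i$, and the chain of direct same-label relations through the SE-most (label-$n$) vertices of $u_s,u_{s-1},\dots,u_{i'}$ would force the SE-most vertex of $D_y$ to lie below $a_{2k-2}$ in $H_c$; but that vertex has index in $(x,y)$ and so is excluded from $f(w)$, contradicting the order-ideal property at $a_{2k-2}$.

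For part~(\ref{lem.contains.1.support.of.v.is.up.to.p}), I observe that in the reading word $\mathcal{R}_c$ defined by \Cref{alg:algorithm for heap using diagonal reading word} each diagonal of $H_c$ forms its own maximal decreasing run, since the first label of each new diagonal exceeds the last label of the previous one by at least $2$. Hence the rightmost decreasing run of $\mathbf{v}$ is precisely the contribution of $D_y$ to $\mathbf{v}$. In Case~A, $\mathbf{v}$ lies inside $D_y$ and is itself that single run, so $p$ equals one less than the label of $x$ and $\supp(v)\subseteq\{1,\dots,p\}$ is immediate. In Case~B, $p$ equals the label $q_y$ of the SE-most vertex of $D_y$, and \Cref{lem:southeast vertex of a diagonal whose label is p not n is the minimum element of Heap with label p}\ref{lem:southeast vertex of a diagonal whose label is p not n is the minimum element of Heap with label p:itm2} gives that every vertex in a diagonal strictly below $D_y$ has label at most $p-1$; the same bound applies to the NW-portion of $D_x$ that sits in $\mathbf{v}$, yielding $\supp(v)\subseteq\{1,\dots,p\}$.

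For part~(\ref{lem.contains.1.support.of.w''}), fix $z\in f(w)$ with index greater than $x$. Since $f(w)$ is an order ideal and $\{x+1,\dots,y\}\cap f(w)=\emptyset$, we must have $z\not\succeq x+j$ for every $j\in\{1,\dots,y-x\}$. Two direct consequences of \Cref{defn:heap of a reduced word} then constrain the label of $z$: first, by \Cref{rem:poset elements with the same label are comparable}, if $z$ shares a label with some $x+j$ then $z\succeq x+j$; second, if the label of $z$ differs by $1$ from that of some $x+j$, then $x+j\prec z$ directly. Applied to the labels of $\{x+1,\dots,y\}$---all of $D_y$'s labels $\{1,\dots,q_y\}$ in Case~B, and the window running from the label of $y$ up to one less than the label of $x$ in Case~A---these two observations rule out every potential label of $z$ up to $p+1$. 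In Case~A with $D_y$ of type~(III) a gap of small labels remains, but inspection of \Cref{alg:algorithm for heap using diagonal reading word} shows that every vertex whose label is smaller than the label of $y$ minus $1$ lies in a diagonal strictly below $D_y$, hence has index at most $x$, so no such $z$ exists. Therefore the label of $z$ is at least $p+2$, giving $\supp(\wsuff)\subseteq\{p+2,\dots,n\}$; the ``in particular'' statement follows because every $s_i$ with $i\geq p+2$ fixes positions $1,\dots,p+1$.

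For parts~(\ref{lem.contains.1.wv is c-singleton containing 1.itm a}) and~(\ref{lem.contains.1.wv is c-singleton containing 1.itm b}), the previous parts put the simple reflections of $v$ and of $\wsuff$ at distance at least $2$, so they pairwise commute and $wv=b_x\wsuff v=b_xv\wsuff=b_y\wsuff$. The set $I:=\{1,\dots,y\}\cup\bigl(f(w)\cap\{z>y\}\bigr)$ is an order ideal of $H_c$: its initial segment is downward closed, and for any added $z>y$ its principal lower set sits in $f(w)$ (by the order-ideal property of $f(w)$) and misses $\{x+1,\dots,y\}$, so it lies inside $I$. \Cref{prop:vertex-bijection} then identifies $wv$ as the $c$-singleton with heap $I$, and clearly $1\in I$. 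Re-running \Cref{nota:contains.1} on $f(wv)=\{1,\dots,a_{2k-2}\}\cup\bigcup_{j=1}^{k-1}(a_{2j-1},a_{2j-2}]$ produces $a_i'=a_i$ for $i=0,1,\dots,2k-2$ and then terminates because $\{1,\dots,a_{2k-2}\}\subseteq f(wv)$. The main obstacle I expect is the step~(III) analysis inside part~(\ref{lem.contains.1.support.of.w''}): both ruling out Case~B with $D_y$ of type~(III) and closing the label gap in Case~A with $D_y$ of type~(III) rely on the geometric observation about step~(III) diagonals recorded in the opening paragraph.
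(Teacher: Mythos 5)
Your proposal is correct in substance and reaches all four conclusions, but it takes a genuinely different route from the paper in two places. For part (2), the paper works through the element $y'$ (the southeast vertex of the rightmost run) and the diagonal/antidiagonal geometry: every element of $\wsuff$ lies in a diagonal above $D_y$ and an antidiagonal below $A_{y'}$, and \Cref{lem:if z is in a diagonal to the right and in an antidiagonal to the right} then forces its label into $\{p+2,\dots,n\}$. You instead argue directly from \Cref{defn:heap of a reduced word}: since every element of $\{x+1,\dots,y\}$ has smaller index than $z$, a label of $z$ equal or adjacent to a label occurring on that segment would put $z$ above an excluded element, contradicting that $f(w)$ is an order ideal; computing the label set of the segment case by case (with your extra patch when $D_x=D_y$ is a flushed-right diagonal) gives $\supp(\wsuff)\subseteq\{p+2,\dots,n\}$. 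This is more elementary — no antidiagonal bookkeeping — at the cost of that one additional geometric observation. For part (3), your remark that $f(w)\sqcup\{x+1,\dots,y\}=f(w)\cup\{1,\dots,y\}$ is a union of two order ideals (the heap order refines the index order) is simpler than the paper's argument via the absence of cover relations between the added elements and $\wsuff$; you still need the support separation, but only to commute $v$ past $\wsuff$ and identify $wv$ with the permutation of that ideal. Parts (1) and (4) essentially follow the paper.

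Two boundary points should be tightened. First, in Case~B you exclude only flushed-right (step~(III)) diagonals for $D_y$, but your appeal to Lemma~\ref{lem:southeast vertex of a diagonal whose label is p not n is the minimum element of Heap with label p}\ref{lem:southeast vertex of a diagonal whose label is p not n is the minimum element of Heap with label p:itm2} in part (1) requires $2\le p\le n-1$: you must also exclude $p=n$, i.e.\ $D_y$ being the long diagonal — the same label-$n$ comparability argument you use for step~(III) handles it, and this is exactly how the paper rules out $p=n$ via \Cref{lem:j is comparable to smaller integers i labeled n} — and note that $p=1$ cannot occur in Case~B, since a diagonal whose southeast vertex is labeled $1$ is the first diagonal and has nothing below it. Neither endpoint threatens the statement (for $p=n$ part (1) is vacuous, and your part (2) computation would force $\wsuff$ to be trivial, contradicting the hypothesis of \Cref{nota:Various_words}), but as written the lemma is invoked outside its hypotheses. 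Second, your parenthetical reason that each diagonal forms its own maximal decreasing run of $\DiagonalReadingWord$ — that the first label of a new diagonal exceeds the last label of the previous one by at least $2$ — is false in general (consecutive lower-barred numbers give adjacent runs such as $\rw{1}\,\rw{21}$); the correct reason is simply that the letter following a run is never one less than that run's final letter, so no run continues across a diagonal boundary. Both repairs are immediate.
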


\begin{proof} 
It follows from Notation~\ref{nota:Various_words} that  
$f(\prefixR{y})=f(\prefixR{x}) \sqcup \{ x+1, ..., y\}$. 
By construction, 
each of the poset elements $1,\dots, x$ is either in $D_y$ or in a diagonal below $D_y$. 
Let $y'$ denote the poset element in $\HeapDiagonalReading$ (corresponding to the simple reflection $s_p$) which is the first number in the rightmost maximal decreasing run of $\mathbf{v}$. 
Note that $D_{y'}=D_y$.

\begin{enumerate}
\item [\ref{lem.contains.1.support.of.v.is.up.to.p}]
We consider two cases. 

\begin{enumerate}[({Case} I)]
\item 
Suppose $D_x = D_y$.  
Then $\{ x+1, ..., y\}$ are in the same diagonal, and thus $\mathbf{v}$ consists of a single decreasing run $\rw{p(p-1)\dots}$. 

\item \label{lem.contains.1:caseII}
Otherwise, $D_x$ is below $D_y$. 
Then $D_y$ contains no element of $f(w)$. 
This means $y'$ is the southeast-most vertex of $D_y$ and $y'$ corresponds to the simple reflection $s_p$ in the heap diagram of $H_c$. 

We cannot have $p=1$ because then it would be impossible to have a diagonal below $D_y$ by Lemma~\ref{lem:southeast vertex of a diagonal whose label is p not n is the minimum element of Heap with label p}\ref{lem:southeast vertex of a diagonal whose label is p not n is the minimum element of Heap with label p:itm2}. 
We also cannot have $p=n$ because of the following argument: for the sake of contradiction, suppose $p=n$. Since the element $a_0$ is comparable to every poset element in $\{1,\dots,a_0 \}$ labeled $n$ (by Lemma \ref{lem:j is comparable to smaller integers i labeled n}), we have that $y' \HeapLessThan a_0$ in $\HeapDiagonalReading$. Since $f(w)$ contains $a_0$ but not $y'$, this contradicts the fact that $f(w)$ is an order ideal.

Since $y'$ is the southeast vertex of a diagonal in $\HeapDiagonalReading$ and since $p<n$, Lemma \ref{lem:southeast vertex of a diagonal whose label is p not n is the minimum element of Heap with label p}\ref{lem:southeast vertex of a diagonal whose label is p not n is the minimum element of Heap with label p:itm1} tells us that  $y'$ is the minimum element of $\HeapDiagonalReading$ with label $p$.

If $\mathbf{v}$ has other decreasing runs, they correspond to diagonals below $D_{y'}$, so by Lemma \ref{lem:southeast vertex of a diagonal whose label is p not n is the minimum element of Heap with label p}\ref{lem:southeast vertex of a diagonal whose label is p not n is the minimum element of Heap with label p:itm2} they consist of letters smaller than $p$. 
Therefore, $\supp(v) \subseteq \{ 1,...,p\}$. 
\end{enumerate}

\smallskip

\item [\ref{lem.contains.1.support.of.w''}] 
First, we will prove that, 
for any $z$ in $\HeapDiagonalReading$ which corresponds to a letter in $\mathbf{\wsuff}$, the vertex $z$ is in a diagonal above $D_y$ and in an antidiagonal below $A_{y'}$.

Let $z$ be such a poset element in $\HeapDiagonalReading$. 
We know that $z$ is in a diagonal above 
$D_y$ by definition of the algorithm given in Notation \ref{nota:contains.1}. 

To show that $z$ is in an antidiagonal below 
$A_{y'}$, suppose otherwise. Then $z$ is in either in $A_{y'}$ or in an antidiagonal above $A_{y'}$. 
So there must be some element $y''\in \{ y', ..., y\}$ in $D_y$ such that $y''<z$ for.  This contradicts the fact that $f(w)$ is an order ideal, since $y''\notin f(w)$ and $z\in f(w)$.

Since $D_{y'}=D_y$, we have that $z$ is in a diagonal above $D_{y'}$ and $z$ is in an antidiagonal below $A_{y'}$. 
Since $y'$ is a poset element of $\HeapDiagonalReading$ with label $p$, Lemma \ref{lem:if z is in a diagonal to the right and in an antidiagonal to the right} tells us that $z$ has label in $\{p+2,...,n\}$. 
Thus the letters in $\mathbf{\wsuff}$ belong to the set $\{p+2,\dots,n\}$.  
In particular, since $w=\prefixR{x}\wsuff$, 
this implies $w(j) = \prefixR{x}(j)$ for each $1 \leq j \leq p+1$. 

\smallskip

\item[\ref{lem.contains.1.wv is c-singleton containing 1.itm a}]
Observe that
$f(\prefixR{x})\sqcup \{ x+1,...,y\}$ is an order ideal because it is equal to $f(\prefixR{y})$. 
Furthermore, 
none of the newly added poset elements $x+1, ..., y$ is related by a covering relation to any poset element $z$ corresponding to a letter in $\mathbf{\wsuff}$ because 
$\supp(v) \subset \{ 1, ..., p\}$ while 
$\supp(\wsuff) \subset \{ p+2, ..., n\}$, due to parts \ref{lem.contains.1.support.of.v.is.up.to.p} and \ref{lem.contains.1.support.of.w''}, respectively. 
Since $f(w)$ is an order ideal consisting of $f(\prefixR{x})$ and the elements corresponding to $\mathbf{\wsuff}$, we can conclude that 
$f(w) \sqcup \{ x+1, ...,y\}$ 
is an order ideal; it contains $1$ because $f(w)$ does.  
Hence $\myWV$ 
is a $c$-singleton containing $1$.

\item[\ref{lem.contains.1.wv is c-singleton containing 1.itm b}]
Finally, since $f(w)$ and $f(\myWV)$ are identical 
except that \begin{align*}
f(\myWV)-f(w) &= 
\{ x+1, ..., y\}\\
&=
\{ (a_{2k})+1, ..., a_{2k-1}\},
\end{align*}
every step in the iteration produces the same $a_0 > a_1 > ... > a_{2(k-1)}$.
Since $f(\myWV)$ contains all poset elements $1,...,y=a_{2k-1}$, and also all poset elements $y+1, ... , a_{2(k-1)}$, the algorithm tells us to that the last entry of the sequence for $f(\myWV)$ is $a_{2(k-1)}$. 
\end{enumerate}
\end{proof}

\begin{example}
\label{ex:nota:contains.1.PartIII}
Continuing with Examples \ref{ex:nota:contains.1.PartI} and \ref{ex:nota:contains.1.PartII}, Lemma~\ref{lem.contains.1} tells us that we have the following. 
\begin{enumerate}
\item [\ref{lem.contains.1.support.of.v.is.up.to.p}]
The rightmost maximal decreasing run of $\mathbf v$ is $\rw{321}$, so $p=3$. 
The poset element $y'$ from the proof of Lemma~\ref{lem.contains.1} is also equal to the number $3$; in general $p$ and $y'$ need not be equal.

\item [\ref{lem.contains.1.support.of.w''}]
We see that $\supp(\wsuff)\subset \{p+2,...,n\}=\{5,6,7,8 \}$. 
\item [\ref{lem.contains.1.wv is c-singleton containing 1.itm a}]
We have 
$f(wv)=f(w) \sqcup \{ 2,3,4,5 \}$, all the highlighted vertices (in both round disks and rectangles). 
The permutation $\myWV$ is 
a $c$-singleton containing the poset element~$1$. 
\item[\ref{lem.contains.1.wv is c-singleton containing 1.itm b}] The decreasing sequence $a_0 > ... > a_6$ for $\myWV$ is $27 > 26 > 21 > 19 > 14 > 11 > 7$. 
\end{enumerate}    
\end{example}

\begin{lemma}\label{lem.easy}
Let $w$ be a $c$-singleton such that $f(w)$ contains the poset element $1$ in $\HeapDiagonalReading$. Then 
\begin{equation}
\PermMatrix{w}
= \PermMatrix{\prefixR{a_0}} - 
\sum_{i=1}^{k} [ \PermMatrix{\prefixR{a_{2i-1}}} -\PermMatrix{\prefixR{a_{2i}}}
]\,
\label{eqn.m}
\end{equation}
where 
$a_0 > a_1 > ... > a_{2k}$  are as defined in Notation \ref{nota:contains.1}. 
\end{lemma}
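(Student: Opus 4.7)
The plan is to prove the identity by induction on $k$, using Lemma \ref{lem.contains.1} to reduce $w$ to a simpler $c$-singleton whose decreasing sequence has length two shorter. Lemma \ref{lem.eqn.o} gives the analogous identity for the indicator vectors $o(w)$, but here we need the statement at the level of permutation matrices themselves, so the argument must use actual factorizations of $w$.

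The base case $k=0$ is immediate: by \Cref{nota:contains.1}, $f(w)=\{1,2,\dots,a_0\}$, so $w=\prefixR{a_0}$ and the identity is trivial. For the inductive step, assume the result holds whenever the decreasing sequence has $2(k-1)+1$ terms. Adopt the notation of \Cref{nota:Various_words}, so $x=a_{2k}$, $y=a_{2k-1}$, $\prefixR{y}=\prefixR{x}v$, and $w=\prefixR{x}\wsuff$. By \Cref{lem.contains.1}\ref{lem.contains.1.wv is c-singleton containing 1.itm a} and \ref{lem.contains.1.wv is c-singleton containing 1.itm b}, the permutation $wv$ is a $c$-singleton containing~$1$ whose decreasing sequence is exactly $a_0>a_1>\dots>a_{2k-2}$. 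Applying the inductive hypothesis to $wv$ yields
\[
\PermMatrix{wv}=\PermMatrix{\prefixR{a_0}}-\sum_{i=1}^{k-1}\bigl[\PermMatrix{\prefixR{a_{2i-1}}}-\PermMatrix{\prefixR{a_{2i}}}\bigr],
\]
so the lemma will follow once we show the crucial identity
\[
\PermMatrix{wv}-\PermMatrix{w}=\PermMatrix{\prefixR{y}}-\PermMatrix{\prefixR{x}}.
\]

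This is where the main content lies. Parts \ref{lem.contains.1.support.of.v.is.up.to.p} and \ref{lem.contains.1.support.of.w''} of \Cref{lem.contains.1} give $\supp(v)\subseteq\{1,\dots,p\}$ and $\supp(\wsuff)\subseteq\{p+2,\dots,n\}$, so $v$ and $\wsuff$ have disjoint supports with no adjacent letters. Hence they commute, and we may write $wv=\prefixR{x}\wsuff v=\prefixR{x}v\wsuff=\prefixR{y}\wsuff$. Thus both $w=\prefixR{x}\wsuff$ and $wv=\prefixR{y}\wsuff$ differ from $\prefixR{x}$ and $\prefixR{y}$ respectively only by right-multiplication by $\wsuff$, which permutes positions $p+2,\dots,n+1$ and fixes positions $1,\dots,p+1$. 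Using also that $\prefixR{x}(j)=\prefixR{y}(j)$ for $j>p+1$ (since $v$ permutes only the first $p+1$ entries), a direct row-by-row comparison of the four permutation matrices shows that in rows $1,\dots,p+1$ the matrices $X(w)$ and $X(\prefixR{x})$ coincide, as do $X(wv)$ and $X(\prefixR{y})$, while in rows $p+2,\dots,n+1$ the matrices $X(w)$ and $X(wv)$ coincide, as do $X(\prefixR{x})$ and $X(\prefixR{y})$. These four row-level equalities immediately yield the displayed identity, completing the induction.

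The main obstacle is the bookkeeping in that last step: one must carefully track which rows of each of the four permutation matrices agree. The technical heart of the argument is the commutation $\wsuff v=v\wsuff$, which is precisely why the support separation established in \Cref{lem.contains.1} was needed. Once this commutation is in hand, the identity reduces to comparing one-line notations block by block in the positions $\{1,\dots,p+1\}$ and $\{p+2,\dots,n+1\}$.
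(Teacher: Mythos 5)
Your proof is correct and takes essentially the same route as the paper: induction on $k$, passing from $w$ to $wv$ via Lemma~\ref{lem.contains.1}\ref{lem.contains.1.wv is c-singleton containing 1.itm a}--\ref{lem.contains.1.wv is c-singleton containing 1.itm b}, and establishing the key identity $\PermMatrix{wv}-\PermMatrix{w}=\PermMatrix{\prefixR{y}}-\PermMatrix{\prefixR{x}}$ by comparing rows $1,\dots,p+1$ and $p+2,\dots,n+1$ separately using the support separation $\supp(v)\subseteq\{1,\dots,p\}$, $\supp(\wsuff)\subseteq\{p+2,\dots,n\}$. The only cosmetic difference is that you obtain the row equalities by commuting $\wsuff$ past $v$ to write $wv=\prefixR{y}\wsuff$, whereas the paper computes $(wv)(j)=w(v(j))=\prefixR{x}(v(j))=\prefixR{y}(j)$ directly for $j\leq p+1$; both rest on the same facts from Lemma~\ref{lem.contains.1}.
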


\begin{proof}
We prove this by induction on $k$. When $k = 0$, we have $w = \prefixR{a_0}$, and the result holds.

Suppose the equation holds for all $c$-singletons $w$ such that $f(w)$ contains $1$ for some $k \geq 0$.
Let $w$ be a $c$-singleton such that $f(w)$ contains $1$, and its decreasing sequence of integers $a_0 > a_1 > ... > a_{2(k+1)}$  are as defined in Notation \ref{nota:contains.1}. 
We need to show that 
\begin{equation}
\label{eq: conclusion X(w) for k+1}
\PermMatrix{w} = 
\PermMatrix{\prefixR{a_0}} - \sum_{i=1}^{k+1} [\PermMatrix{\prefixR{a_{2i-1}}} -\PermMatrix{\prefixR{a_{2i}}}
]\,.
\end{equation}

Let $v$ be the permutation such that 
$\prefixR{a_{2k+1}} = \prefixR{a_{2k+2}} v$, and  
let $w' = w v$. 
By Lemma \ref{lem.contains.1}\ref{lem.contains.1.wv is c-singleton containing 1.itm a} and~\ref{lem.contains.1.wv is c-singleton containing 1.itm b}, 
$w'$ is a $c$-singleton such that $f(w')$ contains $1$ and the decreasing sequence of integers for $w'$  is 
$a_0 > a_1 > ... > a_{2k}$.

By the inductive hypothesis, Equation~\eqref{eqn.m} holds for $w'$:
\begin{equation}
\label{eq:X(wv)}
\PermMatrix{w'} = \PermMatrix{\prefixR{a_0}} - \sum_{i=1}^{k}[\PermMatrix{\prefixR{a_{2i-1}}}-\PermMatrix{\prefixR{a_{2i}}}]\,.
\end{equation}

We now compare the four matrices 
$\PermMatrix{w'}, \PermMatrix{w}, \PermMatrix{\prefixR{2k+1}}$, and  $\PermMatrix{\prefixR{2k+2}}$. 
Note that there exists $p$ such that the following holds: 

\begin{itemize} 
\item 
$\supp(v) \subset \{ 1, \dots, p\}$, by Lemma \ref{lem.contains.1}\ref{lem.contains.1.support.of.v.is.up.to.p}, 
and 
\item 
$w(j)=\prefixR{2k+2} (j)$ for each 
$j=1,\dots,p+1$, by Lemma \ref{lem.contains.1}\ref{lem.contains.1.support.of.w''}.
\end{itemize}

Since $\supp(v)\subset \{ 1, ..., p \}$, 
the permutation $v$ sends the set $\{ 1, ..., p, p+1 \}$ to itself, so 
\[w'(j)=w(v(j))=\prefixR{2k+2}(v (j)) = \prefixR{2k+1}(j)   \text{ for each 
$j=1,\dots,p+1$}. 
\]
Thus, the first $p+1$ rows of $\PermMatrix{w}$ and $\PermMatrix{\prefixR{2k+2}}$ are the same, and the first $p+1$ rows of $\PermMatrix{w'}$ and $\PermMatrix{\prefixR{2k+1}}$ are the same.  This means the first $p+1$ rows of $\PermMatrix{w'}-\PermMatrix{w}$ and $\PermMatrix{\prefixR{2k+1}}-\PermMatrix{\prefixR{2k+2}}$ are the same.

Furthermore, since $\supp(v) \subset \{ 1,\dots, p\}$, 
we have \[\text{$w'(j)=w(j)$ and $\prefixR{2k+1}(j)=\prefixR{2k+2}(j)$ for each $j \geq p+2$.} \]  
So, rows $p+2$ through $n+1$ of the matrices $\PermMatrix{w'}-\PermMatrix{w}$ and $\PermMatrix{\prefixR{2k+1}}-\PermMatrix{\prefixR{2k+2}}$ are all $0$'s.
Therefore, we have
\begin{equation*}
\PermMatrix{w'} - \PermMatrix{w} = \PermMatrix{\prefixR{2k+1}} - \PermMatrix{\prefixR{2k+2}}.
\end{equation*}

Then 
\begin{align*}
\PermMatrix{w} &= 
\PermMatrix{w'} - [ \PermMatrix{w'} - \PermMatrix{w}] 
\\
&=
\PermMatrix{w'} - [ \PermMatrix{\prefixR{2k+1}} - \PermMatrix{\prefixR{2k+2}} ]\\
&=
\PermMatrix{\prefixR{a_0}} - \left( \sum_{i=1}^{k}[\PermMatrix{\prefixR{a_{2i-1}}}-\PermMatrix{\prefixR{a_{2i}}}] \right) 
- 
[ \PermMatrix{\prefixR{2k+1}} - \PermMatrix{\prefixR{2k+2}} ]
\text{ by \eqref{eq:X(wv)}}
\\
&=
\PermMatrix{\prefixR{a_0}} - \sum_{i=1}^{k+1}[\PermMatrix{\prefixR{a_{2i-1}}}-\PermMatrix{\prefixR{a_{2i}}}], \,
\end{align*}
completing our argument for \eqref{eq: conclusion X(w) for k+1}.
\end{proof}

\def\myLabelFirstElt{q}
\begin{lemma}\label{lem.complex}
Let $w$ be a $c$-singleton such that the order ideal $f(w)$ does \emph{not} contain the poset element $1$ in $\HeapDiagonalReading$.  
Let $\myLabelFirstElt:= \DiagWordSubseq{1}$, so that $\prefixR{1} = s_\myLabelFirstElt$. 
Then the following holds.
\begin{enumerate}[(1)]
\item \label{itm:lem.complex:1}
$\supp(w) \subset \{\myLabelFirstElt+2,\dots, n\}$, that is,  $w(j) = j$ for $1\leq j \leq \myLabelFirstElt+1$.
\item  \label{itm:lem.complex:2}
Let $w' = ws_\myLabelFirstElt$. Then $w'$ is a $c$-singleton and $f(w')$ contains the poset element $1$ of $\HeapDiagonalReading$.
\item 
\label{itm:o for w}
Let the decreasing sequence $a_0 > a_1 > \dots > a_{2k}$ be the sequence for $w'$ defined in Notation~\ref{nota:contains.1}. Then
\begin{equation*}
o(w) = o(\prefixR{a_0}) - \sum_{i=1}^{k}[o(\prefixR{a_{2i-1}})-o(\prefixR{a_{2i}})] - o(\prefixR{1})\,.
\end{equation*}
\item \label{itm:lem.complex:Two matrices only differ along the main diagonal}
The two matrices $\PermMatrix{w}$ and 
\begin{equation*}
\PermMatrix{\prefixR{a_0}} - \sum_{i=1}^{k}[\PermMatrix{\prefixR{a_{2i-1}}}-\PermMatrix{\prefixR{a_{2i}}}] - \PermMatrix{\prefixR{1}}
\end{equation*}
only differ on the main diagonal.
\end{enumerate}
\end{lemma}

\begin{proof}

First we prove \ref{itm:lem.complex:1}. Since the order ideal $f(w)$ does not contain the poset element $1$ in $\HeapDiagonalReading$, every element of $f(w)$ is in a diagonal above $D_1$ and in an antidiagonal below $A_1$. Since the label of poset element 1 is $q$, by Lemma~\ref{lem:if z is in a diagonal to the right and in an antidiagonal to the right} the label of every element of $f(w)$ is in $\{\myLabelFirstElt+2,\dots, n\}$.

Claim \ref{itm:lem.complex:2} is immediate. 
For \ref{itm:o for w}, we have
$$o(w') = o(\prefixR{a_0}) - \sum_{i=1}^{k}[o(\prefixR{a_{2i-1}})-o(\prefixR{a_{2i}})]$$
by Lemma~\ref{lem.eqn.o}.  Then the claim follows from the fact that $o(w') = o(w) + o(\prefixR{1})$.

It remains to prove part \ref{itm:lem.complex:Two matrices only differ along the main diagonal}. 
We subtract the two matrices: 
\begin{align}
\label{eq:differ at the main diagonal}
&\PermMatrix{w} - \left[\PermMatrix{\prefixR{a_0}} - \sum_{i=1}^{k}[\PermMatrix{\prefixR{a_{2i-1}}}-\PermMatrix{\prefixR{a_{2i}}}] - \PermMatrix{\prefixR{1}}\right]\\
\nonumber
=& \PermMatrix{w} - \left[ \PermMatrix{w'} - \PermMatrix{\prefixR{1}} \right] \text{ by Lemma \ref{lem.easy}, since $f(w')$ contains the poset element 1}
\\
\nonumber
=& \PermMatrix{\prefixR{1}}-\PermMatrix{w'}+\PermMatrix{w} 
\end{align}

Since $w' = ws_\myLabelFirstElt$, and since $\myLabelFirstElt$ and $\myLabelFirstElt+1$ are fixed points of $w$, 
we have 
\begin{align*}
w'(\myLabelFirstElt)&=\myLabelFirstElt+1, 
&
w(\myLabelFirstElt) &= \myLabelFirstElt, 
\\
w'(\myLabelFirstElt+1)&=\myLabelFirstElt,
&
w(\myLabelFirstElt+1)&=\myLabelFirstElt+1,
\end{align*}
$$w'(j)=w(j)  \text{ for each 
$j \neq \myLabelFirstElt, \myLabelFirstElt+1$}.$$
This means $\PermMatrix{w'}-\PermMatrix{w}$ is all zeros except for the $2\times 2$ square along the diagonal in rows $\myLabelFirstElt$ and $\myLabelFirstElt+1$.  This square has $-1$'s on the diagonal and $1$'s as the other two elements.

Since $\prefixR{1}=s_\myLabelFirstElt$, the matrix $\PermMatrix{\prefixR{1}}$ is  the identity matrix with the rows $\myLabelFirstElt$ and $\myLabelFirstElt+1$ swapped. Subtracting $\PermMatrix{w'}-\PermMatrix{w}$ from $\PermMatrix{\prefixR{1}}$, we get that $\PermMatrix{\prefixR{1}}-\PermMatrix{w'}+\PermMatrix{w}$ is the identity matrix.  Therefore, the matrix \eqref{eq:differ at the main diagonal} has 0's everywhere except along the main diagonal.

\end{proof}

\begin{example}\label{ex:complex} 
As in Examples~\ref{ex:nota:contains.1.PartI}, \ref{ex:nota:contains.1.PartII}, and \ref{ex:nota:contains.1.PartIII}, consider $\HeapDiagonalReading$ for $c=\rw{21365487}$.
Its heap diagram is given in Figure \ref{fig:ex:nota:contains.1} (right).
The $c$-singleton $w$ with reduced word $\rw{65 876 87 8}$ and one-line notation $123479865$ satisfies Lemma \ref{lem.complex} because it does not contain the poset element $1$ in $\HeapDiagonalReading$. 
In this example, $\myLabelFirstElt = 2$, so $b_1 = s_2$; and $w' = \rw{65 876 87 8 2}$.  
We see that

\begin{align*}
\PermMatrix{b_1} - \PermMatrix{w'} + \PermMatrix{w} 
&= 
\begingroup 
\setlength\arraycolsep{2.5pt}
\begin{pmatrix}
1 & 0 & 0 & 0 & 0 & 0 & 0 & 0 & 0 \\
0 & 0 & 1 & 0 & 0 & 0 & 0 & 0 & 0 \\
0 & 1 & 0 & 0 & 0 & 0 & 0 & 0 & 0 \\
0 & 0 & 0 & 1 & 0 & 0 & 0 & 0 & 0 \\
0 & 0 & 0 & 0 & 1 & 0 & 0 & 0 & 0 \\
0 & 0 & 0 & 0 & 0 & 1 & 0 & 0 & 0 \\
0 & 0 & 0 & 0 & 0 & 0 & 1 & 0 & 0 \\
0 & 0 & 0 & 0 & 0 & 0 & 0 & 1 & 0 \\
0 & 0 & 0 & 0 & 0 & 0 & 0 & 0 & 1
\end{pmatrix}
\endgroup
-
\begingroup 
\setlength\arraycolsep{2.5pt}
\begin{pmatrix}
1 & 0 & 0 & 0 & 0 & 0 & 0 & 0 & 0 \\
0 & 0 & 1 & 0 & 0 & 0 & 0 & 0 & 0 \\
0 & 1 & 0 & 0 & 0 & 0 & 0 & 0 & 0 \\
0 & 0 & 0 & 1 & 0 & 0 & 0 & 0 & 0 \\
0 & 0 & 0 & 0 & 0 & 0 & 1 & 0 & 0 \\
0 & 0 & 0 & 0 & 0 & 0 & 0 & 0 & 1 \\
0 & 0 & 0 & 0 & 0 & 0 & 0 & 1 & 0 \\
0 & 0 & 0 & 0 & 0 & 1 & 0 & 0 & 0 \\
0 & 0 & 0 & 0 & 1 & 0 & 0 & 0 & 0
\end{pmatrix} 
\endgroup
+
\begingroup 
\setlength\arraycolsep{2.5pt}
\begin{pmatrix}
1 & 0 & 0 & 0 & 0 & 0 & 0 & 0 & 0 \\
0 & 1 & 0 & 0 & 0 & 0 & 0 & 0 & 0 \\
0 & 0 & 1 & 0 & 0 & 0 & 0 & 0 & 0 \\
0 & 0 & 0 & 1 & 0 & 0 & 0 & 0 & 0 \\
0 & 0 & 0 & 0 & 0 & 0 & 1 & 0 & 0 \\
0 & 0 & 0 & 0 & 0 & 0 & 0 & 0 & 1 \\
0 & 0 & 0 & 0 & 0 & 0 & 0 & 1 & 0 \\
0 & 0 & 0 & 0 & 0 & 1 & 0 & 0 & 0 \\
0 & 0 & 0 & 0 & 1 & 0 & 0 & 0 & 0
\end{pmatrix}
\endgroup
\\
&=
\begingroup 
\setlength\arraycolsep{2.5pt}
\begin{pmatrix}
1 & 0 & 0 & 0 & 0 & 0 & 0 & 0 & 0 \\
0 & 1 & 0 & 0 & 0 & 0 & 0 & 0 & 0 \\
0 & 0 & 1 & 0 & 0 & 0 & 0 & 0 & 0 \\
0 & 0 & 0 & 1 & 0 & 0 & 0 & 0 & 0 \\
0 & 0 & 0 & 0 & 1 & 0 & 0 & 0 & 0 \\
0 & 0 & 0 & 0 & 0 & 1 & 0 & 0 & 0 \\
0 & 0 & 0 & 0 & 0 & 0 & 1 & 0 & 0 \\
0 & 0 & 0 & 0 & 0 & 0 & 0 & 1 & 0 \\
0 & 0 & 0 & 0 & 0 & 0 & 0 & 0 & 1
\end{pmatrix}
\endgroup
\end{align*}
which is the identity matrix.
\end{example}

\begin{proof}[Proof of Theorem~\ref{thm.square.to.rectangle}]
Let $w$ be any $c$-singleton. 
If $f(w)$ contains the element $1$ of the poset $H_c$,  
let the sequence $a_0 > a_1 > \dots > a_{2k}$ be as defined in Notation \ref{nota:contains.1}.
Then, by Lemma~\ref{lem.easy}, we have
\begin{equation*}
\PermMatrix{w} = 
\PermMatrix{\prefixR{a_0}} - 
\sum_{i=1}^{k}[\PermMatrix{\prefixR{a_{2i-1}}} -\PermMatrix{\prefixR{a_{2i}}}
]\,.
\end{equation*}
We apply $U_c \circ \Pi_c$ to both sides of the above equation: 
\begin{align*}
U_c \circ \Pi_c (\PermMatrix{w})  
&= U_c \circ \Pi_c (\PermMatrix{\prefixR{a_0}}) - \sum_{i=1}^{k}[U_c \circ \Pi_c (\PermMatrix{\prefixR{a_{2i-1}}})
-
U_c \circ \Pi_c (\PermMatrix{\prefixR{a_{2i}}}) ]
\\ 
&=
o(\prefixR{a_0}) - \sum_{i=1}^{k}[  o(\prefixR{a_{2i-1}})-
o(\prefixR{a_{2i}}) ] \text{ by Proposition~\ref{prop.square}}
\\
&= o(w) \text{ by Lemma~\ref{lem.eqn.o}}
\end{align*}

Otherwise, suppose $f(w)$ does not contain the element $1$ of the poset $H_c$. 
Then, by Lemma~\ref{lem.complex}\ref{itm:lem.complex:Two matrices only differ along the main diagonal} and the fact that $\Pi_c$ does not take entries in the main diagonal, the statement is also true.
\end{proof}

We will now prove our main result, which is about $\heap(\sort(w_0))$.
Recall from Proposition~\ref{prop:diagonal reading word} that $\heap(\sort(w_0))$ and $\HeapDiagonalReading$
have the same heap diagram (see Remark~\ref{rem:prop:vertex-bijection}).  

\begin{theorem}\label{thm:main}
The $c$-Birkhoff polytope $\bir(c)$ is unimodularly equivalent to the order polytope $\ord(H)$ where $H=\heap(\sort(w_0))$.
\end{theorem}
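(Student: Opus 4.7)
The plan is to define the affine map $T = \rho \circ \mathcal{U}_c \circ \Pi_c \colon \mathbb{R}^{(n+1)^2} \to \mathbb{R}^{\binom{n+1}{2}}$, where $\rho$ is the linear map reversing the coordinates of $\mathbb{R}^{\binom{n+1}{2}}$, and to verify directly from the preceding results that $T$ realizes the desired unimodular equivalence between $\bir(c)$ and $\ord(H)$. Since the technical heavy lifting is already contained in Theorems~\ref{thm.lattice.preserving.projection} and \ref{thm.square.to.rectangle}, what remains is essentially a bookkeeping argument.

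First I would verify the vertex correspondence. By Theorem~\ref{thm.square.to.rectangle}, $\mathcal{U}_c \circ \Pi_c(\PermMatrix{w}) = o(w)$ for every $c$-singleton $w$; applying $\rho$ converts the reversed indicator vector $o(w)$ into the honest indicator vector $\sum_{i \in f(w)} \mathbf{e}_i$, which by Theorem~\ref{thm:BasicFactsOrderPolytope}(2) is precisely the vertex of $\ord(H)$ corresponding to the order ideal $f(w)$. Proposition~\ref{prop:vertex-bijection} provides a bijection between $c$-singletons and order ideals of $H$, and Remark~\ref{rem:vertices of birk c} identifies the vertices of $\bir(c)$ with the permutation matrices $\PermMatrix{w}$ for $c$-singletons $w$; hence $T$ restricts to a bijection between the two vertex sets. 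Because $\bir(c)$ and $\ord(H)$ are the convex hulls of their vertex sets and $T$ is affine, this upgrades to a bijection $T \colon \bir(c) \to \ord(H)$.

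Next I would confirm the lattice-preservation condition in the definition of unimodular equivalence. By Theorem~\ref{thm.lattice.preserving.projection}, $\Pi_c$ restricted to $\aff(c)$ is injective and maps $\aff(c) \cap \mathbb{Z}^{(n+1)^2}$ into $\mathbb{Z}^{\binom{n+1}{2}}$; the integer unitriangular matrix $\mathcal{U}_c$ and the signed permutation $\rho$ each biject $\mathbb{Z}^{\binom{n+1}{2}}$ with itself, so $T$ injects $\aff(c) \cap \mathbb{Z}^{(n+1)^2}$ into $\mathbb{Z}^{\binom{n+1}{2}}$. Because $\ord(H)$ is full-dimensional in $\mathbb{R}^{\binom{n+1}{2}}$ by Theorem~\ref{thm:BasicFactsOrderPolytope}(1), we have $\aff(\ord(H)) \cap \mathbb{Z}^{\binom{n+1}{2}} = \mathbb{Z}^{\binom{n+1}{2}}$. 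For surjectivity, note that $T(\PermMatrix{\mathrm{id}}) = \mathbf{0}$ since $f(\mathrm{id}) = \emptyset$, and, setting $\prefixR{0} := \mathrm{id}$, one has $T(\PermMatrix{\prefixR{i}}) - T(\PermMatrix{\prefixR{i-1}}) = \rho(o(\prefixR{i}) - o(\prefixR{i-1})) = \mathbf{e}_i$ for $1 \leq i \leq \binom{n+1}{2}$; any $v = (v_1, \ldots, v_{\binom{n+1}{2}}) \in \mathbb{Z}^{\binom{n+1}{2}}$ is therefore $T(X)$ for the integer point $X = \PermMatrix{\mathrm{id}} + \sum_i v_i (\PermMatrix{\prefixR{i}} - \PermMatrix{\prefixR{i-1}})$ of $\aff(c)$, so $T$ surjects onto $\mathbb{Z}^{\binom{n+1}{2}}$.

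I do not anticipate a genuine obstacle at this stage; the only care needed is matching conventions, in particular tracking how the reversal $\rho$ converts the vectors $o(w)$ into the indicator vectors used to describe the vertices of the order polytope in Theorem~\ref{thm:BasicFactsOrderPolytope}(2).
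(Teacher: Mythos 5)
Your proposal is correct and follows essentially the same route as the paper's own proof: compose the lattice-preserving injection $\Pi_c$ of \Cref{thm.lattice.preserving.projection} with the unitriangular map $\mathcal{U}_c$ of \Cref{prop.square}, use \Cref{thm.square.to.rectangle} to match vertices with the order ideals of \Cref{prop:vertex-bijection}, and conclude by taking convex hulls. The only differences are cosmetic refinements the paper leaves implicit, namely your explicit coordinate reversal $\rho$ (the paper identifies the reversed indicator vectors $o(w)$ with vertices of $\ord(H)$ directly) and your explicit verification that the lattice map is onto $\mathbb{Z}^{\binom{n+1}{2}}$ via the points $\PermMatrix{\prefixR{i}}$.
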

\begin{proof}
The projection $\Pi_c$ is injective and preserves lattice points (Theorem~\ref{thm.lattice.preserving.projection}), so it is a unimodular transformation on $\bir(c)$.  
By Proposition~\ref{prop.square}, the transformation  $\mathcal{U}_c$ has determinant $1$. 
Thus $\mathcal{U}_c \circ \Pi_c$ is a unimodular transformation on $\bir(c)$.  From Theorem~\ref{thm.square.to.rectangle} we know $\mathcal{U}_c \circ \Pi_c$ sends vertices of $\bir(c)$ to vertices of $\ord(H)$.  
Since the vertices of $\bir(c)$ are the $c$-singleton permutation matrices (Remark~\ref{rem:vertices of birk c}), there are the same number of vertices in $\bir(c)$ and $\ord(H)$ by Remark~\ref{rmk:vertex-bij}.  
All points of $\ord(H)$ are in the convex hull of its vertices, so we have that $\ord(H)$ is the image of $\bir(c)$ under $\mathcal{U}_c \circ \Pi_c$.  
Therefore, $\mathcal{U}_c \circ \Pi_c$ is a unimodular transformation which sends $\bir(c)$ to $\ord(H)$.
\end{proof}

If $\text{dim}(\ord)$ denotes the dimension of an integral polytope $\ord$ and $\text{Vol}(\ord)$ denotes the usual relative volume of $\ord$,  the \emph{normalized volume} of $\ord$ is equal to $\text{dim}(\ord)! ~ \text{Vol}(\ord)$. The following corollary recovers, and generalizes, a result of Davis and Sagan in \cite{DS18}.

\begin{corollary}\label{cor:volume}
The normalized volume of the $c$-Birkhoff polytope counts the following:
\begin{enumerate}[(1)]
\item 
linear extensions of $\Heapwnot$
\item \label{cor:volume:itm2}
reduced words in the commutation class of $\sort(w_0)$
\item 
maximal chains in $\LatticeC$ 
\item 
longest chains in the  $c$-Cambrian lattice
\item \label{cor:volume:itm5}
maximal chains in the lattice of permutations which avoid the four patterns $31\underline{2}$,  
$\overline{2}31$, $13\underline{2}$, and $\overline{2}13$, as a sublattice of the weak order on the symmetric group $A_n$

\item \label{cor:volume:itm6}
ways to add indecomposable projective representations
and perform the knitting algorithm for constructing the Auslander--Reiten quiver of the quiver $Q(c)$

\end{enumerate}
\end{corollary}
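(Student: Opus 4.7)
The proof will be essentially a bookkeeping corollary, assembling \Cref{thm:main} with the volume formula for order polytopes and the chain of bijections already established in \Cref{lem:threesets}. The plan is as follows. First I would invoke \Cref{thm:main} to conclude that $\bir(c)$ and $\ord(H)$, where $H = \heap(\sort(w_0))$, share the same normalized volume, since unimodularly equivalent integral polytopes have equal Ehrhart polynomials and in particular equal normalized volumes.

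Next I would compute the normalized volume of $\ord(H)$ directly from \Cref{thm:BasicFactsOrderPolytope}. Since $H$ is a poset on $\ell(w_0) = \binom{n+1}{2}$ elements, part (1) of that theorem gives $\dim \ord(H) = \binom{n+1}{2}$, and part \ref{itm:thm:BasicFactsOrderPolytope:volume} gives $\mathrm{Vol}(\ord(H)) = e(H)/\binom{n+1}{2}!$, where $e(H)$ is the number of linear extensions of $H$. Multiplying by $\binom{n+1}{2}!$ yields that the normalized volume equals $e(H)$, establishing (1).

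The equalities (1)=(2)=(3)=(4)=(5) are then immediate from \Cref{lem:threesets}, which already identifies linear extensions of $\Heapwnot$ with each of: reduced words in the commutation class of $\sort(w_0)$ (via \Cref{prop:set of labeled linear extensions is commutativity class Stembridge}), maximal chains in $\LatticeC$ (via \Cref{prop:vertex-bijection}), longest chains in the $c$-Cambrian lattice (via \Cref{rem:emily temp} together with distributivity and hence gradedness of $\LatticeC$), and maximal chains in the pattern-avoiding sublattice of weak order (via \Cref{prop:pattern avoidance characterization type A}). No new argument is required beyond citing this lemma.

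The only step that is not a direct restatement is (6), which requires passing from linear extensions of $\Heapwnot$ to executions of the knitting algorithm on the AR quiver of $Q(c)$. Here I would appeal to \Cref{rem:combinatorial AR quiver}: rotating the heap diagram and reversing its cover relations yields exactly the combinatorial AR quiver for the $c$-sorting word of $w_0$, which is isomorphic to the AR quiver of $Q(c)$. A run of the knitting algorithm, together with a choice of when to introduce each indecomposable projective, is precisely a prescription for processing the vertices of this quiver in an order compatible with its arrows, i.e.\ a linear extension of the underlying poset $H$. Thus the count in (6) matches $e(H)$ as well. I expect this last identification to be the only place where one must be careful, since the equivalence between ``valid knitting orders'' and linear extensions needs the observation that each newly knitted indecomposable depends only on those already produced along incoming arrows — a standard but slightly folklore fact — while the other five items are immediate transcriptions of prior results in the paper.
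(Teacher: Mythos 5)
Your proposal is correct and follows essentially the same route as the paper's own proof: cite \Cref{thm:main} for equality of normalized volumes, \Cref{thm:BasicFactsOrderPolytope} to identify the normalized volume of $\ord(H)$ with the number of linear extensions, \Cref{lem:threesets} for items (2)--(5), and \Cref{rem:combinatorial AR quiver} for item (6). Your extra remarks on why valid knitting orders correspond to linear extensions merely elaborate on what the paper delegates to that remark.
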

\begin{proof} 
Let $H=\Heapwnot$. Since $\bir(c)$ and $\ord(H)$ are unimodularly equivalent, they have the same volume. 
By 
Theorem~\ref{thm:BasicFactsOrderPolytope}\ref{itm:thm:BasicFactsOrderPolytope:volume}, the normalized volume of $\ord(H)$ is equal to the number of linear extensions of $H$.

By Lemma~\ref{lem:threesets}, each of the sets \ref{cor:volume:itm2} through \ref{cor:volume:itm5}
is in bijection with the linear extensions of $H$. 
Furthermore, by Remark~\ref{rem:combinatorial AR quiver}, set \ref{cor:volume:itm6} is also in bijection with the linear extensions of $H$. 

\end{proof} 

\begin{remark}\label{rmk:RelationsAreEverything}
In Corollary~\ref{cor:Affc has dimension at most n+1 choose 2}, we gave an upper bound of $\binom{n+1}{2}$ on the dimension of $\aff(c)$.  
A consequence of Theorem~\ref{thm:main} is that $\bir(c)$ and $\ord(\Heapwnot)$ have the same dimension.
Since the latter is full-dimensional, we know this dimension is $\binom{n+1}{2}$. This implies that $\aff(c)$ is also $\binom{n+1}{2}$-dimensional. In particular, the relations in Proposition~\ref{prop:IndependenceOfRelations} are a maximal set of independent relations on $\aff(c)$. 
\end{remark}

\section{Examples}
\label{Sec:ex}
\subsection{Tamari orientation}
\label{subsec.tamari}
We consider the Tamari Coxeter element $c = s_1s_2 \cdots s_n$ throughout this subsection. 
The statement of Theorem \ref{thm:main} in this case gives an affirmative answer to Question 5.1 of Davis and Sagan's paper \cite{DS18}.

The Tamari case is special as there are no upper-barred numbers. 
This means that, by Proposition~\ref{prop:pattern avoidance characterization type A}, the $c$-singletons in the Tamari case are exactly $132$ and $312$ avoiding permutations. Note that there are $2^n$ permutations in $A_n$ which avoid $132$ and $312$ (see \cite{simion1985restricted}), hence there are $2^n$ vertices of the $c$-Birkhoff polytope and $2^n$ order ideals of $\Heapwnot$. Applying Algorithm~\ref{alg:algorithm for heap using diagonal reading word} in this case produces a heap as in Figure \ref{fig:heapTamari}, which shows the heap for $A_4$ on the left and the heap for general $A_n$ on the right.

The relations on $\aff(c)$ are simpler to describe in the Tamari case. The statement of  Proposition~\ref{prop:ZeroRelationsOnMatrix} reduces to the following: for all $X\in\aff(c)$, the entries strictly below both the main diagonal and main antidiagonal are $0$. Figure \ref{fig:ZerosAndPicTamari} shows the case of $A_7$; the entries that are always 0 are depicted by red $\X$'s.

\begin{figure}
\centering
\begin{tabular}{|c|c|c|c|c|c|c|c|}
\hline
 \hspace{0.25cm} & $28$ & $26$  & $23$ & $19$ & $14$ & $8$ & $1$ \\ \hline
  &    & $27$  & $24$ & $20$ & $15$ & $9$ & $2$ \\ \hline
  &    &    & $25$ & $21$ & $16$ & $10$ & $3$ \\ \hline
  &    &    &   & $22$ & $17$ & $11$ & $4$ \\ \hline
  &    &    &   &    & $18$ & $12$ & $5$ \\ \hline
  &   &    &  $\X$ &  $\X$  &   & $13$ & $6$ \\ \hline
 &  &  $\X$&  $\X$ &  $\X$  & $\X$  &    & $7$ \\ \hline
 & $\X$  &  $\X$   &  $\X$  &  $\X$  &  $\X$   & $\X$   &    \\ \hline
\end{tabular}
\caption{For $c = [1234567]$ mark with red $\textcolor{red}{\X}$'s the entries which are guaranteed to be zero by  Proposition~\ref{prop:ZeroRelationsOnMatrix}, and we show the projection $\Pi_c$ with (black) numbers.}\label{fig:ZerosAndPicTamari}
\end{figure}

The function $\nulabel_c$ reduces to $\nulabel_c(i) = i-1$ in the Tamari case, simplifying Lemma \ref{lem:first_entries_distinct_equiv_classes}, which allows Theorems \ref{thm:first_half_relations} and \ref{thm:second_half_relations} to be simplified as well.

\begin{corollary}\label{cor:SummingRelationsTamari}
Let $c = s_1s_2 \cdots s_n$ and let $w$ be a $c$-singleton. Then, for each $y \in [n+1]$ and $0 \leq z \leq y-1$, there is exactly one value in $\{w(1),\ldots,w(y)\}$ which is equivalent to $z$ modulo $y$.  

Consequently, for any point $X = (X(i,j)) \in \aff(c)$, we have \[
\sum_{j \equiv z} \sum_{i=1}^y X(i,j) = 1
\]
where our equivalence in the first sum is modulo $y$. 
\end{corollary}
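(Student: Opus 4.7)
The plan is to exploit the specialization of \Cref{prop:pattern avoidance characterization type A} to the Tamari case. In this case, no element of $\{2,\ldots,n\}$ is upper-barred and every element of $\{2,\ldots,n\}$ is lower-barred, so the upper-bar patterns become vacuous, the lower-bar patterns lose their decoration, and the bijection $\nulabel_c$ reduces to $\nulabel_c(i) = i - 1$. Consequently, a $c$-singleton is precisely a permutation avoiding both $132$ and $312$, and the residue classes under $\nulabel_c$ modulo $y$ are simply the ordinary residue classes modulo $y$.

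I will first establish the key combinatorial strengthening: for any $132,312$-avoiding permutation $w$ and any $y \in [n+1]$, the set $\{w(1),\ldots,w(y)\}$ is an interval of $y$ consecutive integers. The argument is by contrapositive. If the set is not an interval, I pick $a < b < c$ with $a, c$ in the set but $b$ outside; then $b = w(k)$ for some $k > y$, and a short case split on whether $a$ appears before or after $c$ within the first $y$ positions exhibits either a $132$ pattern (when $a$ precedes $c$) or a $312$ pattern (when $c$ precedes $a$), using $a$, $c$, $b$ at those positions. Since an interval of length $y$ contains exactly one integer from each residue class modulo $y$, the first assertion of the corollary follows at once.

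The second assertion is then a routine consequence of the first, mirroring the derivation of \Cref{thm:first_half_relations} from \Cref{lem:first_entries_distinct_equiv_classes}. The equation $\sum_{j \equiv z} \sum_{i=1}^{y} X(i,j) = 1$ holds at every $c$-singleton vertex $X = X(w)$ of $\bir(c)$, because the inner sum is the indicator of $j \in \{w(1),\ldots,w(y)\}$ and the interval property guarantees exactly one such $j$ in each residue class modulo $y$. Linearity then extends the relation to the affine hull $\aff(c)$. The only genuine work is the case analysis in the combinatorial step; once the interval property is in hand, the rest is formal.
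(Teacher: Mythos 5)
Your proof is correct, but it follows a genuinely different route from the paper's. The paper handles the range $y \leq \frac{n+2}{2}$ by citing \Cref{lem:first_entries_distinct_equiv_classes} directly, and for larger $y$ it observes that Case i of that lemma's proof (the all-lower-barred case, which is the only case occurring for the Tamari element) goes through verbatim without the bound on $y$; pigeonhole then yields exactly one value per residue class, and linearity extends the relation to $\aff(c)$. You instead prove a stronger structural fact: for a $132,312$-avoiding permutation, every prefix set $\{w(1),\ldots,w(y)\}$ is an interval of $y$ consecutive integers, shown by the clean contrapositive you describe (a gap element $b$ with $a<b<c$, $a,c$ in the prefix and $b=w(k)$ with $k>y$, produces a $132$ or $312$ occurrence depending on the relative order of $a$ and $c$; note $2\leq b\leq n$, so the lower-bar requirement on the ``2'' is automatic, as you say). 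The interval property immediately gives one representative of each residue class modulo $y$ for \emph{all} $y\in[n+1]$ simultaneously, with no case split at $y\approx\frac{n+2}{2}$ and no modular bookkeeping, and the passage to $\aff(c)$ is the same linearity argument as in the paper. Your approach is more self-contained and in fact isolates a characterization of Tamari $c$-singletons (prefix sets are intervals) that is of independent interest, whereas the paper's proof is shorter on the page only because it recycles machinery already built for general $c$. Both rest on \Cref{prop:pattern avoidance characterization type A}, which you correctly specialize; the only direction of that proposition you need is that a $c$-singleton avoids $132$ and $312$, and that is exactly what you use.
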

\begin{proof}
When $y \leq \frac{n+2}{2}$, the first statement follows from Lemma \ref{lem:first_entries_distinct_equiv_classes}, so suppose $y > \frac{n+2}{2}$. We can use the same argument as in part 1, case i of the proof of Lemma \ref{lem:first_entries_distinct_equiv_classes} to show we cannot have distinct values $1 \leq a < b \leq y$ such that $w(a) \equiv w(b) \pmod{y}$ even when $y > \frac{n+2}{2}$. Then, by the pigeonhole principle, for each $z$, there is one $i \in [y]$ such that $w(i) \equiv z \pmod{y}$. The final statement of the corollary again follows from the fact that this statement is true for all generators of $\aff(c)$. 
\end{proof}

The projection $\Pi_c$ is also simple to describe in this case. The entries chosen are exactly those strictly above the main diagonal, with order given by reading the columns from right to left, and in each column reading from top to bottom. See Figure \ref{fig:ZerosAndPicTamari} for the example of $A_7$.

We conclude by giving an example of the unimodular transformation from the projection of $\bir(c)$ to $\ord(\HeapDiagonalReading)$ in the Tamari case for $A_3$. The leftmost matrix is $\mathcal{U}_c$ and the middle is the result of applying $\Pi_c$ to the vertices of $\bir(c)$. The column vectors to the left of the vertical line are the projections of the vectors $\PermMatrix{b_i}$ for $1 \leq i \leq 6$.  The vectors to the right of the vertical line are the projections of the identity permutation, Id, and $s_1s_2s_3$, the two $c$-singletons which are not non-trivial prefixes of~$\DiagonalReadingWord$. In the rightmost matrix, we have the indicator vectors of $\HeapDiagonalReading=\heap(\DiagonalReadingWord) = \heap([121321])$, where again to the left of the vertical line we have all the vectors of the form $o(b_i)$, and the vectors to the right of the vertical line are $o(\text{Id})$ and  $o(s_1s_2s_3)$.

\[
\begin{pmatrix}
1 & 0 & 0 & 0 & 0 & 0\\
1 & 1 & 0 & 0 & 0 & 0\\
1 & 1 & 1 & 0 & 0 & 0\\
1 & 0 & 0 & 1 & 0 & 0\\
0 & 0 & 0 & 1 & 1 & 0\\
1 & 0 & 0 & 1 & 0 & 1
\end{pmatrix}
\begin{pmatrix}
 0 & 0 & 0 & 0 & 0 & 1 &\bigm|& 0 & 0 \\
0 & 0 & 0 & 0 & 1 & 0 &\bigm|& 0 & 0 \\
 0 & 0 & 0 & 1 & 0 & 0 &\bigm|& 0 & 1 \\
 0 & 0 & 1 & 1 & 1 & 0 &\bigm|& 0 & 0 \\
 0 & 1 & 0 & 0 & 0 & 1 &\bigm|& 0 & 1 \\
 1 & 1 & 0 & 0 & 0 & 0 &\bigm|& 0 & 1
\end{pmatrix}
=
\begin{pmatrix}
 0 & 0 & 0 & 0 & 0 & 1 &\bigm|& 0 & 0 \\
 0 & 0 & 0 & 0 & 1 & 1 &\bigm|& 0 & 0 \\
 0 & 0 & 0 & 1 & 1 & 1 &\bigm|& 0 & 1 \\
 0 & 0 & 1 & 1 & 1 & 1 &\bigm|& 0 & 0 \\
 0 & 1 & 1 & 1 & 1 & 1 &\bigm|& 0 & 1 \\
 1 & 1 & 1 & 1 & 1 & 1 &\bigm|& 0 & 1 
\end{pmatrix}\,.
\]

\subsection{Bipartite orientation}
\label{subsec.bipartite}
Let $c$ be the Coxeter element where the odd-indexed simple transpositions all appear before the even-indexed transpositions throughout this section.  That is, if $n$ is odd let $c = s_1s_3 \cdots s_n s_2s_4 \cdots s_{n-1}$ and if $n$ is even let $c = s_1s_3 \cdots s_{n-1} s_2 s_4 \cdots s_n$.  We refer to these as \emph{bipartite} Coxeter elements as for each $n$ the heap $\heap(c)$ consists of only maximal and minimal elements. 
 Figure \ref{fig:longest element heap bipartite A7 and A8} displays the heaps for $\sort(w_0)$ for bipartite Coxeter elements in $A_7$ and $A_8$.

\begin{figure}[htb!]
\begin{minipage}{0.45\textwidth}
\begin{tikzpicture}[scale = 0.8]
\node (s1r1) at (0,0) {$s_1$};
\node (s3r1) at (2,0) {$s_3$};
\node (s5r1) at (4,0) {$s_5$};
\node (s7r1) at (6,0) {$s_7$};
\node (s2r2) at (1,1) {$s_2$};
\node (s4r2) at (3,1) {$s_4$};
\node (s6r2) at (5,1) {$s_6$};

\node (s1r3) at (0,2) {$s_1$};
\node (s3r3) at (2,2) {$s_3$};
\node (s5r3) at (4,2) {$s_5$};
\node (s7r3) at (6,2) {$s_7$};
\node (s2r4) at (1,3) {$s_2$};
\node (s4r4) at (3,3) {$s_4$};
\node (s6r4) at (5,3) {$s_6$};

\node (s1r5) at (0,2+2) {$s_1$};
\node (s3r5) at (2,2+2) {$s_3$};
\node (s5r5) at (4,2+2) {$s_5$};
\node (s7r5) at (6,2+2) {$s_7$};
\node (s2r6) at (1,3+2) {$s_2$};
\node (s4r6) at (3,3+2) {$s_4$};
\node (s6r6) at (5,3+2) {$s_6$};

\node (s1r7) at (0,2+4) {$s_1$};
\node (s3r7) at (2,2+4) {$s_3$};
\node (s5r7) at (4,2+4) {$s_5$};
\node (s7r7) at (6,2+4) {$s_7$};
\node (s2r8) at (1,3+2+2) {$s_2$};
\node (s4r8) at (3,3+2+2) {$s_4$};
\node (s6r8) at (5,3+2+2) {$s_6$};

\draw [black, thick, shorten <=-2pt, shorten >=-2pt] (s1r1) -- (s2r2) -- (s3r1) -- (s4r2) -- (s5r1) -- (s6r2) -- (s7r1) node[pos=0.5, left=5mm]{};
\draw [black, thick, shorten <=-2pt, shorten >=-2pt] (s1r3) -- (s2r2) -- (s3r3) -- (s4r2) -- (s5r3) -- (s6r2) -- (s7r3) node[pos=0.5, left=5mm]{};

\draw [black, thick, shorten <=-2pt, shorten >=-2pt] (s1r3) -- (s2r4) -- (s3r3) -- (s4r4) -- (s5r3) -- (s6r4) -- (s7r3) node[pos=0.5, left=5mm]{};
\draw [black, thick, shorten <=-2pt, shorten >=-2pt] (s1r5) -- (s2r4) -- (s3r5) -- (s4r4) -- (s5r5) -- (s6r4) -- (s7r5) node[pos=0.5, left=5mm]{};

\draw [black, thick, shorten <=-2pt, shorten >=-2pt] (s1r5) -- (s2r6) -- (s3r5) -- (s4r6) -- (s5r5) -- (s6r6) -- (s7r5) node[pos=0.5, left=5mm]{};
\draw [black, thick, shorten <=-2pt, shorten >=-2pt] (s1r7) -- (s2r6) -- (s3r7) -- (s4r6) -- (s5r7) -- (s6r6) -- (s7r7)  node[pos=0.5, left=5mm]{};

\draw [black, thick, shorten <=-2pt, shorten >=-2pt] (s1r7) -- (s2r8) -- (s3r7) -- (s4r8) -- (s5r7) -- (s6r8) -- (s7r7) node[pos=0.5, left=5mm]{};
\end{tikzpicture}
\end{minipage}
\begin{minipage}{0.45\textwidth}
\begin{tikzpicture}[scale = 0.8]
\node (s1r1) at (0,0) {$s_1$};
\node (s3r1) at (2,0) {$s_3$};
\node (s5r1) at (4,0) {$s_5$};
\node (s7r1) at (6,0) {$s_7$};
\node (s2r2) at (1,1) {$s_2$};
\node (s4r2) at (3,1) {$s_4$};
\node (s6r2) at (5,1) {$s_6$};
\node (s8r2) at (7,1) {$s_8$};

\node (s1r3) at (0,2) {$s_1$};
\node (s3r3) at (2,2) {$s_3$};
\node (s5r3) at (4,2) {$s_5$};
\node (s7r3) at (6,2) {$s_7$};
\node (s2r4) at (1,3) {$s_2$};
\node (s4r4) at (3,3) {$s_4$};
\node (s6r4) at (5,3) {$s_6$};
\node (s8r4) at (7,3) {$s_8$};

\node (s1r5) at (0,2+2) {$s_1$};
\node (s3r5) at (2,2+2) {$s_3$};
\node (s5r5) at (4,2+2) {$s_5$};
\node (s7r5) at (6,2+2) {$s_7$};
\node (s2r6) at (1,3+2) {$s_2$};
\node (s4r6) at (3,3+2) {$s_4$};
\node (s6r6) at (5,3+2) {$s_6$};
\node (s8r6) at (7,3+2) {$s_8$};

\node (s1r7) at (0,2+4) {$s_1$};
\node (s3r7) at (2,2+4) {$s_3$};
\node (s5r7) at (4,2+4) {$s_5$};
\node (s7r7) at (6,2+4) {$s_7$};
\node (s2r8) at (1,3+2+2) {$s_2$};
\node (s4r8) at (3,3+2+2) {$s_4$};
\node (s6r8) at (5,3+2+2) {$s_6$};
\node (s8r8) at (7,3+2+2) {$s_8$};

\node (s1r9) at (0,2+4+2) {$s_1$};
\node (s3r9) at (2,2+4+2) {$s_3$};
\node (s5r9) at (4,2+4+2) {$s_5$};
\node (s7r9) at (6,2+4+2) {$s_7$};

\draw [black, thick, shorten <=-2pt, shorten >=-2pt] (s1r1) -- (s2r2) -- (s3r1) -- (s4r2) -- (s5r1) -- (s6r2) -- (s7r1) --
(s8r2)
node[pos=0.5, left=5mm]{};
\draw [black, thick, shorten <=-2pt, shorten >=-2pt] (s1r3) -- (s2r2) -- (s3r3) -- (s4r2) -- (s5r3) -- (s6r2) -- (s7r3) -- 
(s8r2)
node[pos=0.5, left=5mm]{};

\draw [black, thick, shorten <=-2pt, shorten >=-2pt] (s1r3) -- (s2r4) -- (s3r3) -- (s4r4) -- (s5r3) -- (s6r4) -- (s7r3) -- 
(s8r4)
node[pos=0.5, left=5mm]{};
\draw [black, thick, shorten <=-2pt, shorten >=-2pt] (s1r5) -- (s2r4) -- (s3r5) -- (s4r4) -- (s5r5) -- (s6r4) -- (s7r5) -- 
(s8r4)
node[pos=0.5, left=5mm]{};

\draw [black, thick, shorten <=-2pt, shorten >=-2pt] (s1r5) -- (s2r6) -- (s3r5) -- (s4r6) -- (s5r5) -- (s6r6) -- (s7r5) --
(s8r6)
node[pos=0.5, left=5mm]{};
\draw [black, thick, shorten <=-2pt, shorten >=-2pt] (s1r7) -- (s2r6) -- (s3r7) -- (s4r6) -- (s5r7) -- (s6r6) -- (s7r7)  --
(s8r6)
node[pos=0.5, left=5mm]{};

\draw [black, thick, shorten <=-2pt, shorten >=-2pt] (s1r7) -- (s2r8) -- (s3r7) -- (s4r8) -- (s5r7) -- (s6r8) -- (s7r7) -- 
(s8r8)
node[pos=0.5, left=5mm]{};

\draw [black, thick, shorten <=-2pt, shorten >=-2pt] (s1r9) -- (s2r8) -- (s3r9) -- (s4r8) -- (s5r9) -- (s6r8) -- (s7r9) -- 
(s8r8)
node[pos=0.5, left=5mm]{};
\end{tikzpicture}
\end{minipage}
\caption{Left: Heap of the $\rw{1357246}$-sorting word of the longest element $w_0$ in~$A_7$. Right: Heap of the $\rw{13572468}$-sorting word of the longest element $w_0$ in $A_8$.}
\label{fig:longest element heap bipartite A7 and A8}
\end{figure}

Notice that in the bipartite case all even numbers in $[2,n]$ will be lower-barred and all odd numbers in $[2,n]$ will be upper-barred. In particular, from Proposition \ref{prop:pattern avoidance characterization type A}, we have that $w \in A_n$ is a $c$-singleton if and only if $w$ avoids patterns $312$ and $132$ where ``$2$'' is an even number and patterns $213$ and $231$ where ``$2$'' is an odd number.  This set of conditions is referred to as the alternating scheme in \cite{fishburn1996acyclic} and the number of $c$-singletons for each $n$ is enumerated in Theorem~4 of \cite{GR08}. We mark the entries which are always 0 by $\X$ in each permutation matrix for the bipartite $c$-singleton in $A_7$ and $A_8$ below.

\begin{center}
\begin{minipage}{0.45\textwidth}
\begin{tabular}{|c|c|c|c|c|c|c|c|}
\hline
  & $28$ & $\X$  & $25$ & $\X$ & $20$ & $\X$ & $13$ \\ \hline
  &    & $\X$  & $26$ & $\X$ & $21$ & $7$ & $14$ \\ \hline
  &    &    & $27$ & $\X$ & $22$ & $8$ & $15$ \\ \hline
  &    &    &   & $3$ & $23$ & $9$ & $16$ \\ \hline
  &    &    &   &    & $24$ & $10$ & $17$ \\ \hline
  & $4$  &    &   $\X$ &   &   & $11$ & $18$ \\ \hline
$5$ & $1$  &  & $\X$  &    &  $\X$ &    & $19$ \\ \hline
$2$ & $\X$  & $6$  & $\X$ &  $12$  &  $\X$ &    &    \\ \hline
\end{tabular}
\end{minipage}\hfill
\begin{minipage}{0.45\textwidth}
\begin{tabular}{|c|c|c|c|c|c|c|c|c|}
\hline
 & $36$ & $\X$  & $33$ & $\X$ & $28$ & $\X$ & $21$ & $13$ \\ \hline
  &    & $\X$  & $34$ & $\X$ & $29$ & $\X$ & $22$ & $14$ \\ \hline
  &    &    & $35$ & $\X$ & $30$ & $7$ & $23$ & $15$ \\ \hline
  &    &    &   & $\X$ & $31$ & $8$ & $24$ & $16$ \\ \hline
  &    &    &   &    & $32$ & $9$ & $25$ & $17$ \\ \hline
  &   &    & $3$ &   &   & $10$ & $26$ & $18$ \\ \hline
 & $4$  &  & $\X$  &    &  $\X$ &    & $27$ & $19$ \\ \hline
$5$ & $1$  & $11$  & $\X$ &  &  $\X$ &    &   & $20$ \\ \hline
$2$ & $\X$  & $6$  & $\X$ & $12$ &  $\X$ &    & $\X$  &  \\ \hline
\end{tabular}
\end{minipage}\hfill
\end{center}

Now, we calculate some examples from Lemma \ref{lem:first_entries_distinct_equiv_classes}. First, we compute $\nulabel_c$ and $\nulabel_{c^{-1}}$ when $c$ is the bipartite Coxeter element $\rw{1357246}$ in $A_7$. Notice that if $i \leq \frac{n+1}{2}$, $\nulabel_c(i) + \nulabel_{c^{-1}}(i) = 0$ and otherwise $\nulabel_c(i) + \nulabel_{c^{-1}}(i) = n+1$.

\begin{center}
\begin{tabular}{*{9}{|c}|}
\hline
$i$&1&2&3&4&5&6&7&8\\\hline
$\nulabel_c(i)$&0&1&-1&2&6&3&5&4\\ \hline
$\nulabel_{c^{-1}}(i)$&0&-1&1&-2&2&5&3&4\\ \hline
\end{tabular}
\end{center}

We pictorially exhibit the relations from Theorems \ref{thm:first_half_relations} and \ref{thm:second_half_relations} for the bipartite case in $A_7$. In each figure, the sum of all entries with the same suit will be 1. We begin with the cases $y = 2$ (left) and $y = 3$ (right) from Theorem \ref{thm:first_half_relations}.

\begin{center}
\begin{minipage}{0.2\textwidth}
\end{minipage}
\begin{minipage}{0.45\textwidth}
\begin{tabular}{|c|c|c|c|c|c|c|c|}
\hline
$\clubsuit$ & $\diamondsuit$ & $\X$  & $\clubsuit$ & $\X$ & $\diamondsuit$ &  $\X$ & $\clubsuit$\\ \hline
$\clubsuit$ & $\diamondsuit$ & $\X$  & $\clubsuit$ & $\X$ & $\diamondsuit$ &  $\diamondsuit$ & $\clubsuit$\\ \hline
&  &  &  & $\X$ & &  &  \\ \hline
  &   & &   & &  & &  \\ \hline
  &    &    &   &    &  &  &  \\ \hline
  &   &   & $\X$  &    &   &  &  \\ \hline
 &   &  &  $\X$ &    & $\X$  &   &  \\ \hline
 & $\X$  &  &  $\X$  &   & $\X$   & & \hspace{0.25cm}   \\ \hline
\end{tabular}
\end{minipage}
\begin{minipage}{0.45\textwidth}
\begin{tabular}{|c|c|c|c|c|c|c|c|}
\hline
$\clubsuit$ & $\spadesuit$ & $\X$  & $\heartsuit$ & $\X$ & $\clubsuit$ &  $\X$ & $\spadesuit$\\ \hline
$\clubsuit$ & $\spadesuit$ & $\X$  & $\heartsuit$ & $\X$ & $\clubsuit$ &  $\heartsuit$ & $\spadesuit$\\ \hline
$\clubsuit$ & $\spadesuit$ & $\heartsuit$  & $\heartsuit$ & $\X$ & $\clubsuit$ &  $\heartsuit$ & $\spadesuit$\\ \hline
  &   & &   & &  & &  \\ \hline
  &    &    &   &    &  &  &  \\ \hline
  &   &   & $\X$  &    &   &  &  \\ \hline
 &   &  &  $\X$ &    & $\X$  &   &  \\ \hline
 & $\X$  &  &  $\X$  &   & $\X$   & & \hspace{0.25cm}   \\ \hline
\end{tabular}
\end{minipage}
\end{center}

Finally, from Theorem \ref{thm:first_half_relations} when $y = 4$, we get the following relations for $A_7$. These relations along with column relations imply the relations for $y = 4$ from Theorem \ref{thm:second_half_relations}.

\begin{center}
\begin{tabular}{|c|c|c|c|c|c|c|c|}
\hline
$\clubsuit$ & $\diamondsuit$& $\X$  & $\spadesuit$ & $\X$ & $\heartsuit$ &  $\X$ &  $\clubsuit$\\ \hline
$\clubsuit$ & $\diamondsuit$ & $\X$  & $\spadesuit$& $\X$ & $\heartsuit$ & $\diamondsuit$ &  $\clubsuit$\\ \hline
$\clubsuit$ & $\diamondsuit$ & $\heartsuit$ & $\spadesuit$ & $\X$ & $\heartsuit$ & $\diamondsuit$ & $\clubsuit$ \\ \hline
$\clubsuit$  & $\diamondsuit$  & $\heartsuit$ & $\spadesuit$ & $\spadesuit$ & $\heartsuit$ & $\diamondsuit$ & $\clubsuit$ \\ \hline
  &    &    &   &    &  &  &  \\ \hline
  &   &   & $\X$  &    &   &  &  \\ \hline
 &   &  &  $\X$ &    & $\X$  &   &  \\ \hline
 & $\X$  &  &  $\X$  &   & $\X$   & & \hspace{0.25cm}   \\ \hline
\end{tabular}
\end{center}

Lastly, we give the relations for $y = 2$ (left) and $y = 3$ (right) from Theorem \ref{thm:second_half_relations}.

\begin{center}
\begin{minipage}{0.2\textwidth}
\end{minipage}
\begin{minipage}{0.45\textwidth}
\begin{tabular}{|c|c|c|c|c|c|c|c|}
\hline
\hspace{0.25cm} &  & $\X$  &  & $\X$ &  &  $\X$ &  \\ \hline
&   & $\X$  &  & $\X$ &  &  &  \\ \hline
 &    &    &  & $\X$ & &  &  \\ \hline
  &    &    &   &  &  &  & \\ \hline
  &    &    &   &    &  &  &  \\ \hline
 &  &   & $\X$  &   &   &  & \\ \hline
$\clubsuit$ & $\diamondsuit$  & $\diamondsuit$ &  $\X$ &  $\clubsuit$  & $\X$  & $\diamondsuit$  &  $\clubsuit$\\ \hline
$\clubsuit$ & $\X$  & $\diamondsuit$ &  $\X$  &  $\clubsuit$ & $\X$   & $\diamondsuit$ & $\clubsuit$   \\ \hline
\end{tabular}
\end{minipage}
\begin{minipage}{0.45\textwidth}
\begin{tabular}{|c|c|c|c|c|c|c|c|}
\hline
\hspace{0.25cm} &  & $\X$  &  & $\X$ &  &  $\X$ &  \\ \hline
&   & $\X$  &  & $\X$ &  &  &  \\ \hline
 &    &    &  & $\X$ & &  &  \\ \hline
  &    &    &   &  &  &  & \\ \hline
  &    &    &   &    &  &  &  \\ \hline
$\clubsuit$  & $\spadesuit$  & $\heartsuit$  & $\X$  &  $\spadesuit$  & $\spadesuit$  & $\clubsuit$ & $\heartsuit$ \\ \hline
$\clubsuit$ & $\spadesuit$  & $\heartsuit$ &  $\X$ &  $\spadesuit$  & $\X$  & $\clubsuit$  &  $\heartsuit$\\ \hline
$\clubsuit$ & $\X$  & $\heartsuit$ &  $\X$  &  $\spadesuit$ & $\X$   & $\clubsuit$ & $\heartsuit$   \\ \hline
\end{tabular}
\end{minipage}
\end{center}

For our example, the ``diagonal reading word" of $H_c$, as defined in \eqref{eq:Def:Rc}, is \[\DiagonalReadingWord = \rw{1}\rw{321}\rw{54321}\rw{7654321}\rw{765432}\rw{7654}\rw{76}.\]  Using $\DiagonalReadingWord$, we can compute $b_1,b_2,\dots, b_{28}$.  Then the unimodular transformation matrix $\mathcal{U}_c$ from the projection of $\bir(c)$ to $\ord(\HeapDiagonalReading)$ defined in Proposition~\ref{prop.square} is below.
\scriptsize
\setcounter{MaxMatrixCols}{28}
\begingroup 
\setlength\arraycolsep{4.5pt}
\[
\begin{pmatrix}
1 & 0 & 0 & 0 & 0 & 0 & 0 & 0 & 0 & 0 & 0 & 0 & 0 & 0 & 0 & 0 & 0 & 0 & 0 & 0 & 0 & 0 & 0 & 0 & 0 & 0 & 0 & 0 \\
0 & 1 & 0 & 0 & 0 & 0 & 0 & 0 & 0 & 0 & 0 & 0 & 0 & 0 & 0 & 0 & 0 & 0 & 0 & 0 & 0 & 0 & 0 & 0 & 0 & 0 & 0 & 0 \\
0 & 0 & 1 & 0 & 0 & 0 & 0 & 0 & 0 & 0 & 0 & 0 & 0 & 0 & 0 & 0 & 0 & 0 & 0 & 0 & 0 & 0 & 0 & 0 & 0 & 0 & 0 & 0 \\
1 & 0 & 0 & 1 & 0 & 0 & 0 & 0 & 0 & 0 & 0 & 0 & 0 & 0 & 0 & 0 & 0 & 0 & 0 & 0 & 0 & 0 & 0 & 0 & 0 & 0 & 0 & 0 \\
0 & 1 & 0 & 0 & 1 & 0 & 0 & 0 & 0 & 0 & 0 & 0 & 0 & 0 & 0 & 0 & 0 & 0 & 0 & 0 & 0 & 0 & 0 & 0 & 0 & 0 & 0 & 0 \\
0 & 1 & 0 & 0 & 0 & 1 & 0 & 0 & 0 & 0 & 0 & 0 & 0 & 0 & 0 & 0 & 0 & 0 & 0 & 0 & 0 & 0 & 0 & 0 & 0 & 0 & 0 & 0 \\
0 & 0 & 0 & 0 & 0 & 0 & 1 & 0 & 0 & 0 & 0 & 0 & 0 & 0 & 0 & 0 & 0 & 0 & 0 & 0 & 0 & 0 & 0 & 0 & 0 & 0 & 0 & 0 \\
0 & 0 & 0 & 0 & 0 & 0 & 1 & 1 & 0 & 0 & 0 & 0 & 0 & 0 & 0 & 0 & 0 & 0 & 0 & 0 & 0 & 0 & 0 & 0 & 0 & 0 & 0 & 0 \\
0 & 0 & 0 & 0 & 0 & 0 & 1 & 1 & 1 & 0 & 0 & 0 & 0 & 0 & 0 & 0 & 0 & 0 & 0 & 0 & 0 & 0 & 0 & 0 & 0 & 0 & 0 & 0 \\
0 & 0 & 0 & 0 & 0 & 0 & 1 & 1 & 1 & 1 & 0 & 0 & 0 & 0 & 0 & 0 & 0 & 0 & 0 & 0 & 0 & 0 & 0 & 0 & 0 & 0 & 0 & 0 \\
0 & 0 & 0 & 0 & 0 & 0 & 1 & 1 & 1 & 1 & 1 & 0 & 0 & 0 & 0 & 0 & 0 & 0 & 0 & 0 & 0 & 0 & 0 & 0 & 0 & 0 & 0 & 0 \\
0 & 1 & 0 & 0 & 0 & 1 & 0 & 0 & 0 & 0 & 0 & 1 & 0 & 0 & 0 & 0 & 0 & 0 & 0 & 0 & 0 & 0 & 0 & 0 & 0 & 0 & 0 & 0 \\
0 & 0 & 0 & 0 & 0 & 0 & 0 & 0 & 0 & 0 & 0 & 0 & 1 & 0 & 0 & 0 & 0 & 0 & 0 & 0 & 0 & 0 & 0 & 0 & 0 & 0 & 0 & 0 \\
0 & 0 & 0 & 0 & 0 & 0 & 0 & 0 & 0 & 0 & 0 & 0 & 1 & 1 & 0 & 0 & 0 & 0 & 0 & 0 & 0 & 0 & 0 & 0 & 0 & 0 & 0 & 0 \\
0 & 0 & 0 & 0 & 0 & 0 & 0 & 0 & 0 & 0 & 0 & 0 & 1 & 1 & 1 & 0 & 0 & 0 & 0 & 0 & 0 & 0 & 0 & 0 & 0 & 0 & 0 & 0 \\
0 & 0 & 0 & 0 & 0 & 0 & 0 & 0 & 0 & 0 & 0 & 0 & 1 & 1 & 1 & 1 & 0 & 0 & 0 & 0 & 0 & 0 & 0 & 0 & 0 & 0 & 0 & 0 \\
0 & 0 & 0 & 0 & 0 & 0 & 0 & 0 & 0 & 0 & 0 & 0 & 1 & 1 & 1 & 1 & 1 & 0 & 0 & 0 & 0 & 0 & 0 & 0 & 0 & 0 & 0 & 0 \\
0 & 0 & 0 & 0 & 0 & 0 & 0 & 0 & 0 & 0 & 0 & 0 & 1 & 1 & 1 & 1 & 1 & 1 & 0 & 0 & 0 & 0 & 0 & 0 & 0 & 0 & 0 & 0 \\
0 & 0 & 0 & 0 & 0 & 0 & 0 & 0 & 0 & 0 & 0 & 0 & 1 & 1 & 1 & 1 & 1 & 1 & 1 & 0 & 0 & 0 & 0 & 0 & 0 & 0 & 0 & 0 \\
0 & 0 & 0 & 0 & 0 & 0 & 0 & 0 & 0 & 0 & 0 & 0 & 1 & 0 & 0 & 0 & 0 & 0 & 0 & 1 & 0 & 0 & 0 & 0 & 0 & 0 & 0 & 0 \\
0 & 0 & 0 & 0 & 0 & 0 & 1 & 0 & 0 & 0 & 0 & 0 & 0 & 0 & 0 & 0 & 0 & 0 & 0 & 1 & 1 & 0 & 0 & 0 & 0 & 0 & 0 & 0 \\
0 & 0 & 0 & 0 & 0 & 0 & 0 & 0 & 0 & 0 & 0 & 0 & 0 & 0 & 0 & 0 & 0 & 0 & 0 & 1 & 1 & 1 & 0 & 0 & 0 & 0 & 0 & 0 \\
0 & 0 & 0 & 0 & 0 & 0 & 0 & 0 & 0 & 0 & 0 & 0 & 0 & 0 & 0 & 0 & 0 & 0 & 0 & 1 & 1 & 1 & 1 & 0 & 0 & 0 & 0 & 0 \\
0 & 0 & 0 & 0 & 0 & 0 & 0 & 0 & 0 & 0 & 0 & 0 & 0 & 0 & 0 & 0 & 0 & 0 & 0 & 1 & 1 & 1 & 1 & 1 & 0 & 0 & 0 & 0 \\
0 & 0 & 0 & 0 & 0 & 0 & 0 & 0 & 0 & 0 & 0 & 0 & 1 & 0 & 0 & 0 & 0 & 0 & 0 & 1 & 0 & 0 & 0 & 0 & 1 & 0 & 0 & 0 \\
0 & 0 & 0 & 0 & 0 & 0 & 0 & 0 & 0 & 0 & 0 & 0 & 1 & 1 & 0 & 0 & 0 & 0 & 0 & 0 & 0 & 0 & 0 & 0 & 1 & 1 & 0 & 0 \\
0 & 0 & 0 & 0 & 0 & 0 & 1 & 1 & 0 & 0 & 0 & 0 & 0 & 0 & 0 & 0 & 0 & 0 & 0 & 0 & 0 & 0 & 0 & 0 & 1 & 1 & 1 & 0 \\
0 & 0 & 0 & 0 & 0 & 0 & 0 & 0 & 0 & 0 & 0 & 0 & 1 & 0 & 0 & 0 & 0 & 0 & 0 & 1 & 0 & 0 & 0 & 0 & 1 & 0 & 0 & 1
\end{pmatrix}
\]
\endgroup
\normalsize
This is a unitriangular matrix, and thus has determinant 1.

\section{Future Work}
\label{Sec:future}

In Definition~\ref{def:cBirkhoff}, we defined $\bir(c)$ to be the convex hull of the permutation matrices $X(w)$ of all $c$-singletons $w$; and we proved that $\bir(c)$ is unimodularly equivalent to the order polytope $\ord(\heap(\rw{u}))$ of $\heap(\rw{u})$, where $\rw{u}=\sort(w_0)$ which is a reduced word for $w_0$. Since the $c$-singletons are in bijection with the order ideals of $\heap(\rw{u})$ (see the bijections $\MapPermutation$ and $f$ defined in Proposition~\ref{prop:I=Heap([r])} and Proposition~\ref{prop:vertex-bijection}), the polytope $\bir(c)$ is equal to the convex hull of
\begin{equation}
\label{eq:sec8.1}
\{ X(\MapPermutation(I)) \ \vert \ I \text{ is an order ideal of } \heap(\rw{u}) \}. 
\end{equation}

Inspired by \eqref{eq:sec8.1}, we could similarly define a subpolytope of the Birkhoff polytope for an arbitrary reduced word. This leads us to the following open question.

\begin{question}\label{question:generalization in type A}
Let $w\in A_n$ and $\rw{u}$ be a reduced word for $w$. Is the order polytope of $\heap(\rw{u})$ unimodularly equivalent to the convex hull of the permutation matrices corresponding to order ideals of $\heap(\rw{u})$? 
\end{question}

We used SageMath \cite{sagemath} to compare several polytope invariants including dimensions, (normalized) volumes, $f$-vectors for the two polytopes in all cases in $A_1,A_2, A_3$ and $A_4$, and found that they are always equal except for two counterexamples of $\rw{u}$: $\rw{3432312343}$ and $\rw{2123243212}$. While not conclusive, this data suggests that our main result, Theorem~\ref{thm:main}, can potentially be extended much further.  In the next example, we explore the counterexample of $\rw{2123243212}$.

\begin{example}\label{ex:counterexample in A4}
Consider the reduced word $R=\rw{2123243212}$ for $w_0 \in A_4$. 
The heap $H=\heap(R)$ of $R$ is illustrated in Figure \ref{fig:counterexample2123243212}. Since $H$ has ten elements, the order polytope $\ord(H)$ is of dimension~$10$ (see Section \ref{sec:order polytope}). 
\begin{figure}[htb!]
\begin{tikzpicture}[scale=0.9]
\node (s2a) at (1,1) {$s_2$};
\node (s4a) at (3,5) {$s_4$};
\node (s1b) at (0,2) {$s_1$};
\node (s2b) at (1,3) {$s_2$};
\node (s3b) at (2,4) {$s_3$};
\node (s3c) at (2,6) {$s_3$};
\node (s2c) at (1,5) {$s_2$};
\node (s2d) at (1,7) {$s_2$};
 \node (s1e) at (0,8) {$s_1$};
 \node (s2e) at (1,9) {$s_2$};

 \draw [thick, shorten <=-2pt, shorten >=-2pt] (s3c) -- (s2d) node[pos=0.5, left=5mm]{};
 \draw [thick, shorten <=-2pt, shorten >=-2pt] (s2d) -- (s1e) node[pos=0.5, left=5mm]{};
  \draw [thick, shorten <=-2pt, shorten >=-2pt] (s2e) -- (s1e) node[pos=0.5, left=5mm]{};
 \draw [thick, shorten <=-2pt, shorten >=-2pt] (s4a) -- (s3c) node[pos=0.5, left=5mm]{};
\draw [thick, shorten <=-2pt, shorten >=-2pt] (s2c) -- (s3c) node[pos=0.5, left=5mm]{};

\draw [thick, shorten <=-2pt, shorten >=-2pt] (s1b) -- (s2b) node[pos=0.5, left=5mm]{};
\draw [thick, shorten <=-2pt, shorten >=-2pt] (s2b) -- (s3b) node[pos=0.5, left=5mm]{};

\draw [thick, shorten <=-2pt, shorten >=-2pt] (s2a) -- (s1b); 
\draw [thick, shorten <=-2pt, shorten >=-2pt] (s4a) -- (s3b) -- (s2c);
\end{tikzpicture}
\caption{Heap of $\rw{2123243212}$, a reduced word for the longest element $w_0$ in $A_4$}
\label{fig:counterexample2123243212}
\end{figure}

On the other hand, consider the subpolytope $B$ of the Birkhoff polytope in $A_4$ defined by taking the convex hull of the permutations corresponding to the order ideals of $H$. In contrast to $\ord(H)$, the dimension of $B$ is at most $9$. To see this, first we list the twelve permutations. 

\begin{center}
\begin{tabular}{ccccc}
$\begin{pmatrix}
1&\mathbf{0}&0&\mathbf{0}&0\\
0&1&0&0&0\\
0&0&1&0&0\\
0&0&\mathbf{0}&1&0\\
0&\mathbf{0}&\mathbf{0}&\mathbf{0}&1\\
\end{pmatrix}$&
$\begin{pmatrix}
1&\mathbf{0}&0&\mathbf{0}&0\\
0&0&1&0&0\\
0&1&0&0&0\\
0&0&\mathbf{0}&1&0\\
0&\mathbf{0}&\mathbf{0}&\mathbf{0}&1\\
\end{pmatrix}$&
$\begin{pmatrix}
0&\mathbf{0}&1&\mathbf{0}&0\\
1&0&0&0&0\\
0&1&0&0&0\\
0&0&\mathbf{0}&1&0\\
0&\mathbf{0}&\mathbf{0}&\mathbf{0}&1\\
\end{pmatrix}$&
$\begin{pmatrix}
0&\mathbf{0}&1&\mathbf{0}&0\\
0&1&0&0&0\\
1&0&0&0&0\\
0&0&\mathbf{0}&1&0\\
0&\mathbf{0}&\mathbf{0}&\mathbf{0}&1\\
\end{pmatrix}$\\
Id & $s_2$ & $s_2s_1$&$s_2s_1s_2$\\
$\begin{pmatrix}
0&\mathbf{0}&1&\mathbf{0}&0\\
0&1&0&0&0\\
0&0&0&1&0\\
1&0&\mathbf{0}&0&0\\
0&\mathbf{0}&\mathbf{0}&\mathbf{0}&1\\
\end{pmatrix}$&
$\begin{pmatrix}
0&\mathbf{0}&1&\mathbf{0}&0\\
0&0&0&1&0\\
0&1&0&0&0\\
1&0&\mathbf{0}&0&0\\
0&\mathbf{0}&\mathbf{0}&\mathbf{0}&1\\
\end{pmatrix}$&
$\begin{pmatrix}
0&\mathbf{0}&1&\mathbf{0}&0\\
0&1&0&0&0\\
0&0&0&1&0\\
0&0&\mathbf{0}&0&1\\
1&\mathbf{0}&\mathbf{0}&\mathbf{0}&0\\
\end{pmatrix}$&
$\begin{pmatrix}
0&\mathbf{0}&1&\mathbf{0}&0\\
0&0&0&1&0\\
0&1&0&0&0\\
0&0&\mathbf{0}&0&1\\
1&\mathbf{0}&\mathbf{0}&\mathbf{0}&0\\
\end{pmatrix}$\\
$s_2s_1s_2s_3$ & $s_2s_1s_2s_3s_2$& $s_2s_1s_2s_3s_4$ &  $s_2s_1s_2s_3s_2s_4$ & \\
$\begin{pmatrix}
0&\mathbf{0}&1&\mathbf{0}&0\\
0&0&0&1&0\\
0&0&0&0&1\\
0&1&\mathbf{0}&0&0\\
1&\mathbf{0}&\mathbf{0}&\mathbf{0}&0\\
\end{pmatrix}$&
$\begin{pmatrix}
0&\mathbf{0}&1&\mathbf{0}&0\\
0&0&0&0&1\\
0&0&0&1&0\\
0&1&\mathbf{0}&0&0\\
1&\mathbf{0}&\mathbf{0}&\mathbf{0}&0\\
\end{pmatrix}$&
$\begin{pmatrix}
0&\mathbf{0}&0&\mathbf{0}&1\\
0&0&1&0&0\\
0&0&0&1&0\\
0&1&\mathbf{0}&0&0\\
1&\mathbf{0}&\mathbf{0}&\mathbf{0}&0\\
\end{pmatrix}$&
$\begin{pmatrix}
0&\mathbf{0}&0&\mathbf{0}&1\\
0&0&0&1&0\\
0&0&1&0&0\\
0&1&\mathbf{0}&0&0\\
1&\mathbf{0}&\mathbf{0}&\mathbf{0}&0\\
\end{pmatrix}$\\
$s_2s_1s_2s_3s_2s_4s_3$ & $s_2s_1s_2s_3s_2s_4s_3s_2$& $s_2s_1s_2s_3s_2s_4s_3s_2s_1$&
$s_2s_1s_2s_3s_2s_4s_3s_2s_1s_2$\\
\end{tabular}
\end{center}

We notice that none of the permutation matrices has a 1 in entries $(1,2), (1,4),(4,3),(5,2),(5,3),$ or $(5,4)$. Moreover, we see that each matrix has exactly one nonzero entry out of the following set $\{(1,1),(2,1),(3,1),(4,2),(4,3)\}$. Therefore,  we get the following relations on the affine span of the vectors given by the permutations above:\[
X(1,2) = 0 \quad X(1,4) = 0 \quad X(4,3) = 0 \quad X(5,2) = 0 \quad X(5,3) = 0 \quad X(5,4) = 0
\]
\[
X(1,1) + X(2,1) + X(3,1) + X(4,2) + X(4,3) = 1
\]

Of the ten row and column relations from Lemma \ref{lem:RowAndCol}, we have that any set of 9 is linearly independent but the last would be a linear combination of the 9. Each of these extra relations is linearly independent with a set of 9 row and column relations. This implies that the affine span of these vectors lives in at most a $25 - (9 + 6 + 1) = 9$ dimensional subspace of $\mathbb{R}^{25}$. In particular, the convex hull of these vectors cannot be unimodularly equivalent to the order polytope of $\heap([2123243212])$ as the latter is 10-dimensional.
\end{example}

We note that the answer to Question~\ref{question:generalization in type A} 
does depend on the reduced word $\rw{u}$, not just the permutation $w$.  For example, our main theorem shows that the answer to Question~\ref{question:generalization in type A} is yes when $\rw{u}=\sort(w_0)$ but Example~\ref{ex:counterexample in A4} shows that the answer to Question~\ref{question:generalization in type A} is no when $\rw{u}=[2123243212]$, even though both are reduced words for $w_0\in A_4$.

Our second future research direction is to consider other types.

\begin{question}
Does Theorem~\ref{thm:main} extend beyond type $A$?
\end{question}

Experiments in SageMath suggest that the answer is generally no.  In particular, this does not seem to hold for type $D$.  However, preliminary computations in SageMath are promising for type $B$.

Finally, in the paper that inspired our work, \cite{DS18}, Davis and Sagan explored both pattern-avoiding Birkhoff polytopes $B_n(\Pi)$ and pattern-avoiding permutahedra $P_n(\Pi)$.  In this paper we defined $c$-Birkhoff polytopes $\bir(c)$, which coincides with $B_n(\Pi)$ when $c=s_1\dots s_{n-1}$ and $\Pi=\{132,312\}$.  Similarly, we can define $c$-permutahedra as $$P(c)=\{(a_1,\dots,a_n)\ |\ a_1\dots a_n\text{ is a }c\text{-singleton}\}$$ so that $P(c)$ and $P_n(\Pi)$ again coincide when $c=s_1\dots s_{n-1}$ and $\Pi=\{132,312\}$.

\begin{question}
Could any of Davis and Sagan's results in \cite{DS18} be recovered or extended by investigating $c$-singleton permutahedra?
\end{question}

\subsection*{Acknowledgements}
We thank the Minnesota Research Workshop in Algebraic Combinatorics 2022 (MRWAC 2022) organizers for bringing us together. 
This project benefited from the input and feedback of many people including  
Emily Barnard,
Rob Davis,
Jesus De Loera, 
Galen Dorpalen-Barry, 
William Dugan, 
Peter J{\o}rgensen, 
Elizabeth Kelley, 
Jean-Philippe Labb\'{e}, 
Fu Liu, 
Emily Meehan,
Alejandro Morales,
David Nkansah,
Sasha Pevzner, 
Nathan Reading,
Bruce Sagan, 
Amit Shah,
Frank Sottile, 
Jessica Striker, 
and Sylvester Zhang. 
We are also grateful to the anonymous reviewer whose suggestions helped improve and clarify this paper.
Esther Banaian was supported by Research Project 2 from the Independent Research Fund Denmark (grant no. 1026-00050B). 

\printbibliography

\end{document}